\newtheorem{theorem}{Theorem}[section]
\newtheorem{lemma}[theorem]{Lemma}
\newtheorem{proposition}[theorem]{Proposition}
\newtheorem{corollary}[theorem]{Corollary}
\newtheorem{definition}[theorem]{Definition}
\newtheorem{conjecture}[theorem]{Conjecture}
\DeclareMathOperator{\graph}{graph}
\DeclareMathOperator{\trace}{tr}
\DeclareMathOperator{\diam}{diam}
\DeclareMathOperator{\loc}{loc}
\DeclareMathOperator{\dist}{dist}
\DeclareMathOperator{\Div}{div}
\title[Rotational symmetry of ancient solutions]{Rotational symmetry of ancient solutions to fully nonlinear curvature flows}
\author{A. Cogo}
\address{Eberhard Karls Universit\"at T\"ubingen, Fachbereich Mathematik, Auf der Morgenstelle 10, 72076 T\"{u}bingen, Germany}\email{albachiara.cogo@uni-tuebingen.de}
\author{S. Lynch}
\address{Department of Mathematics, Imperial College London, London SW7 2AZ, United Kingdom}
\email{stephen.lynch@imperial.ac.uk} 
\author{O. Vi\v{c}\'{a}nek Mart\'{i}nez}
\address{Eberhard Karls Universit\"at T\"ubingen, Fachbereich Mathematik, Auf der Morgenstelle 10, 72076 T\"{u}bingen, Germany}
\email{olivia.vicanek-martinez@math.uni-tuebingen.de}
\begin{document}

\begin{abstract}
We address the classification of ancient solutions to fully nonlinear curvature flows for hypersurfaces. Under natural conditions on the speed of motion we classify ancient solutions which are convex, noncollapsing, uniformly two-convex and noncompact. There are exactly two possibilities---every such solution is either a self-similarly shrinking cylinder, or else is a rotationally symmetric translating soliton. For a large class of flows this yields a complete classification of the blow-up limits that can arise at a singularity of a solution which is compact, embedded and two-convex. 
\end{abstract}
	
\maketitle

\tableofcontents

\pagebreak
\section{Introduction}\label{intro}

The study of singularity formation in geometric flows began with the fundamental work of Hamilton \cite{Hamilton_3D}. There it was shown that positively curved three-manifolds become singular in finite time under the Ricci flow, and are deformed to a constant curvature space in the process. This is an example of a phenomenon which has since been observed in many other situations---close to a singularity, solutions to geometric flows tend to become highly symmetric. This allows for a very precise description of the geometric structure of singularities; an essentially complete picture has now been established e.g. for the three-dimensional Ricci flow and for two-convex mean curvature flow. In these settings it has been possible to extend the flow in question beyond singularities using a surgery procedure (see \cite{Perelman1,Perelman2} and \cite{Huisken--Sinestrari_Surgery, Brendle--Huisken_Surgery, Haslhofer--Kleiner_Surgery}), enabling profound insights into questions in geometry and topology. 

A solution of a geometric flow is called ancient if it exists on a time interval of the form $(-\infty, T]$. Ancient solutions arise as dilations of singularities, so their classification is of central importance in understanding singularity formation. In this paper we are concerned with the classification of ancient solutions to fully nonlinear geometric flows. Consider a family of hypersurfaces $M_t \subset \mathbb{R}^{n+1}$, $t \in (-\infty, T]$, which move with pointwise normal speed equal to a symmetric function of their principal curvatures. Under natural assumptions on the speed function, we show that if $M_t$ is convex, noncollapsing, uniformly two-convex and noncompact then it is rotationally symmetric. Moreover, $M_t$ is either a self-similarly shrinking round $\mathbb{R}\times S^{n-1}$, or else moves by translation in a constant direction (i.e. it is a `bowl soliton'). 

Brendle and Choi proved the corresponding statement for ancient solutions of the mean curvature flow \cite{Brendle_Choi_a, Brendle_Choi_b} (see also \cite{ADS,ADS_uniqueness} and \cite{CHH_MCnbhd, CHHW}). One principal difficulty in our more general setting is that we may not appeal to Huisken's monotonicity formula. Huisken's formula asserts that, up to a certain nonlinear change of variables, the mean curvature flow is the gradient flow for the Gaussian area functional. The vast majority of the flows with which we are concerned lack an analogous interpretation. One of our main contributions is to reinterpret a number of deep phenomena, usually derived from the monotonicity formula, as consequences of mild concavity properties for the speed of motion. 

In certain situations our results yield a complete classification of the possible ancient solutions that can arise by dilating a singularity. A key example we have in mind is the flow which moves $M_t$ with speed
    \begin{equation}\label{2-harmonic mean}
    \gamma(\lambda) = \bigg(\sum_{i<j} \frac{1}{\lambda_i + \lambda_j - 2\kappa}\bigg)^{-1},
    \end{equation}
where the $\lambda_i$ are the principal curvatures. This flow was first considered by Brendle and Huisken \cite{Brendle_Huisken}. They developed a surgery procedure for continuing the flow past singularities, and thereby proved a remarkable classification result for compact Riemannian manifolds. Namely, if $(X,g)$ has boundary satisfying $\lambda_1 + \lambda_2 > 2\kappa$ and its curvature satisfies $R_{ikik} + R_{jkjk} \geq -\kappa^2$, then $X$ is diffeomorphic to a standard ball or 1-handlebody. 

The surgery procedure implemented by Brendle--Huisken relies on a priori cylindrical and gradient estimates (cf. \cite{Huisken--Sinestrari_Surgery}), which yield a detailed picture of regions of large curvature in the solution. Our results go further, showing that any blow-up at a singularity of the flow is rotationally symmetric, and either a round sphere, cylinder or bowl soliton (see Corollary~\ref{singularities_Riemannian} for a precise statement). 

A key point in \cite{Brendle_Huisken} is that the flow with speed \eqref{2-harmonic mean} preserves two-convexity in an arbitrary ambient space. This is not true of the mean curvature flow, for example. Fully nonlinear flows were also employed to study the global geometry of Riemannian manifolds in the earlier works \cite{Andrews_Riemannian, Andrews_Sphere}. Each of the papers we have mentioned employs a flow which is tailored to the specific geometric problem at hand. This highlights the necessity of developing a theory for the widest possible range of flows, in order to enable a range of geometric applications in the future. 

\subsection{Main results} Let $\Gamma \subset \mathbb{R}^n$ denote an open, symmetric (under permutations), convex cone. Fix a function $\gamma:\Gamma \to \mathbb{R}$ which is smooth, positive, symmetric, strictly increasing in each argument, and homogeneous of degree one. We assume $\gamma$ is either \textbf{convex} or \textbf{concave}. 

We also assume that $\Gamma$ contains the positive cone $\mathbb{R}^n_+ := \{\lambda : \min_i \lambda_i > 0\}$ and the cone $\{0\} \times \mathbb{R}_+^{n-1}$, and that both $\gamma|_{\mathbb{R}^n_+}$ and $\gamma|_{\{0\} \times \mathbb{R}_+^{n-1}}$ are strictly inverse-concave functions. (These conditions ensure that we have a splitting theorem \cite{Lynch_22_b} and a differential Harnack inequality \cite{Lynch_Harnack}.) Note that strict inverse-concavity is automatic if $\gamma$ is convex. 

For a hypersurface $M$ with principal curvatures $\lambda = (\lambda_1, \dots, \lambda_n)$ in $\Gamma$ we write $G = \gamma(\lambda)$. The principal curvatures are the eigenvalues of the second fundamental form $A$ with respect to the induced metric $g$. We label these so that $\lambda_1 \leq \dots \leq \lambda_n$. The outward unit normal to $M$ is denoted $\nu$.

A $G$-flow is a smooth one-parameter family of embedded hypersurfaces $M_t$ in $\mathbb{R}^{n+1}$ which move inwards with pointwise velocity $-G\nu$. We say that a $G$-flow is: 

\begin{enumerate}
\item convex if $M_t = \partial \Omega_t$, where $\Omega_t$ is convex;
\item uniformly two-convex if there is a constant $\beta > 0$ such that $\lambda_1 + \lambda_2 \geq \beta H$, where $H$ is the mean curvature;
\item noncollapsing if there is a constant $\alpha > 0$ such that every point $x \in M_t$ admits an inscribed ball of radius at least $\alpha G(x,t)$.
\end{enumerate}

We can now state our main result. 

\begin{theorem}\label{main}
Let $M_t$, $t \in (-\infty, T]$, be a convex ancient $G$-flow which is noncollapsing and uniformly two-convex. Suppose also that $M_t$ is noncompact. Then $M_t$ is either a self-similarly shrinking round $\mathbb{R}\times S^{n-1}$, or else is a rotationally symmetric translating soliton.
\end{theorem}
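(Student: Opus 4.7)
I will adapt the Brendle--Choi framework to the fully nonlinear setting, using the splitting theorem of \cite{Lynch_22_b} and the differential Harnack inequality of \cite{Lynch_Harnack} as the principal substitutes for the tools ordinarily supplied by Huisken's monotonicity formula. The argument splits into two cases depending on whether $M_t$ is strictly convex, with the concavity or convexity of $\gamma$ playing a crucial role in each case. In one case we identify $M_t$ as the shrinking round cylinder; in the other, we realize it as a translating soliton and then establish its rotational symmetry.

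\textbf{Case 1 (degenerate convexity).} Suppose $\lambda_1 = 0$ at some spacetime point. Since $\gamma|_{\{0\} \times \mathbb{R}_+^{n-1}}$ is strictly inverse-concave, the splitting theorem from \cite{Lynch_22_b} yields a global product structure $M_t = \mathbb{R} \times N_t$, with $N_t \subset \mathbb{R}^n$ evolving by the induced flow with speed $\gamma(0, \cdot)$. Uniform two-convexity of $M_t$ translates into uniform convexity of $N_t$, which together with noncollapsing forces $N_t$ to be compact. A Huisken--Sinestrari/Andrews-type classification of compact, uniformly convex, noncollapsing ancient solutions --- applied in one lower dimension to the flow determined by $\gamma(0, \cdot)$, which satisfies the requisite hypotheses by assumption --- then shows $N_t$ is a shrinking round sphere; consequently $M_t$ is the shrinking round cylinder $\mathbb{R}\times S^{n-1}$.

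\textbf{Case 2 (strict convexity).} Assume $\lambda_1 > 0$ everywhere. The plan is first to show $M_t$ is a translating soliton. Applying \cite{Lynch_Harnack}, $G$ is monotone along the flow, and from the ancient hypothesis together with noncompactness one can extract an asymptotic ``tip'' --- a sequence $(x_k,t_k)$ with $t_k \to -\infty$ along which $G$ and $\lambda_1/H$ remain bounded below --- and pass to a smooth limit $\widetilde M_t$ using noncollapsing-based regularity estimates. This limit saturates the equality case of the Harnack inequality and, by its rigidity, is a translating soliton. A Liouville-type argument using convexity of $M_t$ and the cylindrical asymptotics at spatial infinity (dictated by the structure of the backward tangent flow) will then identify $M_t$ with $\widetilde M_t$ up to isometry. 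To conclude rotational symmetry, I will follow Brendle's uniqueness-of-the-bowl strategy: linearizing $\gamma$ about the translator yields an elliptic operator whose structure is controlled by the convexity or concavity of $\gamma$, and rotation vector fields generate Jacobi fields whose vanishing --- forced by comparison with the cylindrical end and a maximum-principle argument --- exhibits the full axial symmetry.

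\textbf{Main obstacle.} The most delicate step is to execute Case 2 in the absence of Huisken monotonicity. In MCF, the translator extraction, the Liouville step, and the rotational symmetry argument all exploit the very explicit spectral structure of the Ornstein--Uhlenbeck operator on the shrinking cylinder and on the bowl. In our setting this explicit structure is unavailable, and the corresponding maximum-principle and eigenfunction-ordering arguments must be carried out using only the ellipticity of the linearization of $\gamma$, with convexity or concavity of $\gamma$ supplying the sign-definite curvature terms and monotone quantities needed at each step.
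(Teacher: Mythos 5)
Your Case 1 is essentially the paper's argument (the splitting theorem of \cite{Lynch_22_b} plus the classification of the compact cross-section), and that part is fine. The problem is Case 2, where the entire weight of the theorem rests on the sentence ``A Liouville-type argument using convexity of $M_t$ and the cylindrical asymptotics at spatial infinity will then identify $M_t$ with $\widetilde M_t$ up to isometry.'' This is not a routine step; it is the theorem. Extracting a limit of time-shifted flows at the tip and invoking the rigidity case of the Harnack inequality only shows that $M_t$ is \emph{asymptotic} to a translator near its tip as $t\to-\infty$ (and even this extraction needs a uniform positive lower bound on $\max_{M_t}G$ as $t\to-\infty$, which is itself nontrivial: in the paper it is Proposition~\ref{curvature at tip}, proved by comparison with the shrinker barriers of Section~\ref{barrier section} after the sharp asymptotics of Section~\ref{asymptotics section} are in place). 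Knowing the tip converges to a translator does not identify $M_t$ itself, and no known maximum-principle or Liouville argument closes this gap directly: for mean curvature flow the analogous identification is exactly the Brendle--Choi theorem, whose proof requires the precise $O(e^{\tau/2})$ decay of the rescaled solution to the cylinder, obtained there via Huisken's monotonicity formula and here (Sections~\ref{blow-down section}--\ref{asymptotics section}) via the blow-down analysis, the construction of the rotationally symmetric shrinkers $\Sigma_a$ with sharp asymptotics, the inner--outer estimates controlling the Hessian error terms, and the Merle--Zaag spectral argument. None of these ingredients, nor any substitute for them, appears in your outline; you flag their absence as the ``main obstacle'' but do not supply the mechanism that overcomes it.

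There is also a structural issue with the order of your Case 2. You propose to prove the translator property first and rotational symmetry second, but if one knew $M_t$ were a convex, noncollapsing, uniformly two-convex translator, rotational symmetry would follow immediately from the classification of such translators in \cite{Bourni--Langford}, so your final ``Jacobi field / linearization about the translator'' step would be redundant. The actual difficulty is the reverse implication, and the paper's route reflects this: it first proves the sharp cylindrical asymptotics for the rescaled flow, then the Neck Improvement Theorem (Section~\ref{neck improvement section}), then rotational symmetry by propagating approximate symmetry from the neck region to the cap with the vector-field maximum principle (Section~\ref{rotational symmetry section}), and only then deduces, by a delicate ODE analysis of the rotationally symmetric profile together with the Harnack inequality, that the solution translates (Section~\ref{translating section}). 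As written, your proposal replaces this chain with a single unproved identification step, so it does not constitute a proof.
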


We say that $M_t$, $t \in (-\infty, T]$, is a translating soliton if there is a constant vector $V \in \mathbb{R}^{n+1}$ such that $M_t = M_T + (t-T)V$. Up to rigid motions of the ambient space and parabolic rescalings, there is a unique rotationally symmetric translating $G$-flow (the `bowl' soliton). This solution was constructed rigorously in \cite{rengaswami2021rotationally}.

As we mentioned above, Theorem~\ref{main} applies when $\gamma$ is the speed \eqref{2-harmonic mean} introduced in \cite{Brendle_Huisken}. It also applies to the speeds introduced in \cite{Lynch_Convexity}, and when $\gamma$ is a ratio of elementary symmetric polynomials $\sigma_k/\sigma_{k-1}$ such that $n \geq k + 2$. 

Under some further conditions on $\gamma$, Theorem~\ref{main} provides a complete classification of the possible limits that can arise when we blow up a singularity of a compact $G$-flow.

\begin{corollary}\label{singularities}
Suppose $\gamma$ is concave. Suppose also that $\Gamma$ is contained in the two-positive cone $\{\lambda : \min_{i < j} \lambda_i + \lambda_j > 0\}$, and that $\gamma$ admits a continuous extension to $\bar \Gamma$ which vanishes on $\partial \Gamma$. Consider a compact embedded hypersurface $M_0$ in $\mathbb{R}^{n+1}$ whose principal curvatures lie in $\Gamma$ (this implies strict two-convexity). Let $M_t$ denote the maximal evolution of $M_0$ which moves inwards with pointwise velocity $-G\nu$. The hypersurfaces $M_t$ form a singularity in finite time, and every blow-up of $M_t$ at the singular time is a self-similarly shrinking round $S^n$ or $\mathbb{R} \times S^{n-1}$, or else is the bowl soliton. 
\end{corollary}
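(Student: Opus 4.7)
The plan is to analyze blow-ups at a singular point and reduce to Theorem~\ref{main} together with a companion rigidity statement for compact blow-ups. First I would verify finite-time singularity formation. Since $\gamma$ is positive on $\Gamma$ and degree-one homogeneous, any round sphere enclosing $M_0$ shrinks to a point in finite time, so the avoidance principle forces $M_t$ to become singular by then. Because the initial data have principal curvatures in a compact subcone of $\Gamma$, the flow is uniformly parabolic and smooth short-time existence holds; the maximal time of existence $T$ is therefore finite, and $\sup_{M_t}|A|$ blows up as $t\nearrow T$.

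Next I would perform a blow-up. Choose a sequence $(p_k,t_k)$ with $t_k\nearrow T$ and $\lambda_k := G(p_k,t_k)\to\infty$, and consider the rescaled flows $M^k_s := \lambda_k(M_{t_k+\lambda_k^{-2}s} - p_k)$. The heart of the argument is to establish four a priori estimates that survive the blow-up: (i) uniform two-convexity $\lambda_1+\lambda_2\geq \beta H$, obtained by a maximum-principle argument exploiting that $\Gamma$ is contained in the two-positive cone, that $\gamma$ vanishes on $\partial\Gamma$, and that the initial data lie in a compact subcone; (ii) a Huisken--Sinestrari-type convexity estimate of the form $\lambda_1 \geq -\varepsilon H - C(\varepsilon)$ for every $\varepsilon>0$, which in our setting follows from concavity of $\gamma$ together with uniform two-convexity (cf.\ the framework of \cite{Lynch_Convexity}); (iii) Andrews' noncollapsing estimate, valid for concave, monotone, degree-one homogeneous speeds; and (iv) Shi-type interior derivative bounds. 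Passing to a subsequence, the $M^k$ converge smoothly on compact subsets of spacetime to a convex, noncollapsing, uniformly two-convex ancient $G$-flow $M^\infty_s$.

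Finally I would identify $M^\infty_s$. If $M^\infty_s$ is compact, it must be a self-similarly shrinking round sphere: the convergence-to-a-round-point theorem for convex hypersurfaces under concave, degree-one homogeneous flows, together with noncollapsing, implies that a compact convex ancient solution is the standard shrinking $S^n$. If $M^\infty_s$ is noncompact, Theorem~\ref{main} applies directly and identifies it as a shrinking round cylinder or the bowl soliton.

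The main obstacle is the convexity estimate (ii). Unlike for the mean curvature flow, Huisken's monotonicity formula is unavailable, so the argument must proceed purely through concavity of $\gamma$, uniform two-convexity, and iterated pointwise estimates. For the class of speeds described in the statement this estimate is available in the existing fully nonlinear literature; the role of the corollary is to collect it together with the cylindrical and noncollapsing estimates and then invoke Theorem~\ref{main}.
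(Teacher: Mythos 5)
Your overall skeleton (a priori estimates, blow-up, then the dichotomy compact/noncompact with Theorem~\ref{main} handling the noncompact case) is the same as the paper's, but the compact case contains a genuine error. It is \emph{not} true that a compact, convex, noncollapsing, uniformly two-convex ancient $G$-flow must be a self-similarly shrinking sphere, and no convergence-to-a-round-point theorem yields such a classification: the ancient ovals constructed in \cite{Lu} (see also \cite{Risa--Sinestrari_ovals}) are compact, convex, noncollapsing, uniformly two-convex ancient solutions which are not round --- indeed Conjecture~\ref{3-convex} in this paper is precisely about their uniqueness. Andrews' theorems concern the forward evolution of compact pinched initial data, not ancient solutions. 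The paper's route is different: if a blow-up limit is compact, then by the splitting theorem it is strictly convex, and since the rescaled hypersurfaces are connected, the \emph{entire} original hypersurface $M_{t_k}$ converges after rescaling, so for large $k$ the original flow satisfies a pinching $\lambda_1 \geq \varepsilon H$; the preservation of this pinching and the convergence results of \cite{Andrews_Pinching, Andrews_Euclidean}, applied to the original flow, then force $M_t$ to shrink to a round point, so all blow-ups are round spheres (cf.\ the argument of Lemma~\ref{blow-down=sphere}). Your argument needs to be replaced by this ``transfer back to the original flow'' step; as written it asserts a false classification statement.

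A second, smaller gap is the convergence step: ``Shi-type interior derivative bounds'' presuppose curvature bounds and are not the right tool for a fully nonlinear speed. To extract a smooth limit of the rescalings about an arbitrary sequence with $G(p_k,t_k)\to\infty$ one needs a local curvature bound at the rescaled scale, which the paper takes from the gradient and cylindrical estimates of \cite{Brendle_Huisken} (see also \cite{lynch2020convexity}), with higher regularity obtained from Krylov's $C^{2,\alpha}$ estimate and Schauder theory as in Proposition~\ref{compactness} --- this is where concavity of $\gamma$ enters the regularity theory. You should also make the mechanism for uniform two-convexity explicit: concavity and the maximum principle preserve the bound $\max_{M_t} H/G$, and this bound together with $\Gamma$ lying in the two-positive cone and $\gamma$ vanishing on $\partial\Gamma$ gives $\lambda_1+\lambda_2 \geq \beta H$ (and uniform ellipticity). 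The noncollapsing estimate is that of \cite{ALM_Noncollapsing}, and the convexity estimate is cited from \cite{Brendle_Huisken,LangfordLynch} rather than derived, exactly as in your sketch. Your avoidance-principle argument for finite-time singularity is fine, though the paper instead uses the preserved lower bound on $G$ and the Riccati-type inequality for $G$, which it needs anyway.
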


Corollary~\ref{singularities} applies, for example, when $n \geq 3$, $\gamma$ is the speed \eqref{2-harmonic mean}, and $\Gamma$ is the two-positive cone.

Let us indicate how Corollary~\ref{singularities} is proven. We write $C$ for positive constant depending on $M_0$. Assuming $\gamma$ is concave, the parabolic maximum principle implies that $\min_{M_t} G > C^{-1}$ and $\max_{M_t} H/G \leq C$. Assuming that $\Gamma$ sits inside the two-positive cone and $\gamma$ vanishes on $\partial \Gamma$, the bound $H/G \leq C$ implies uniform two-convexity, and also allows us to estimate
    \[\bigg(\frac{\partial}{\partial t} - \frac{\partial \gamma}{\partial A_{ij}} \nabla_i \nabla_j\bigg) G = \frac{\partial \gamma}{\partial A_{ij}} A_i^kA_{kj} G \geq \frac{1}{C} G^3.\]
Therefore, by the maximum principle, a singularity must form in finite time. The flow $M_t$ is noncollapsing by \cite{ALM_Noncollapsing}, and satisfies a convexity estimate (implying that blow-ups are convex) by \cite{Brendle_Huisken} (see also \cite{LangfordLynch}). The gradient estimates in \cite{Brendle_Huisken} (see also \cite{lynch2020convexity}) imply that if $G(p_k, t_k) \to \infty$ then the sequence 
    \[G(p_k,t_k)(M_{t/G(p_k,t_k)^2 + t_k} - p_k)\]
subconverges in $C^\infty_{\loc}$ to an ancient $G$-flow. This ancient flow is either compact, in which case it is a sphere by \cite{Andrews_Pinching, Andrews_Euclidean}, or else is noncompact and satisfies the hypotheses of Theorem~\ref{main}. The claim follows. 

Corollary~\ref{singularities} also applies when $M_t$ is the boundary of a region of a compact Riemannian $(n+1)$-manifold, provided the ambient space satisfies a natural curvature condition. In particular, it applies in the setting of \cite{Brendle_Huisken}. 

\begin{corollary}\label{singularities_Riemannian}
Let $\gamma: \Gamma \to \mathbb{R}$ be as in the statement of Corollary~\ref{singularities}. Additionally, we require that $\sup_{\Gamma} |\frac{\partial \gamma}{\partial \lambda_i}| < \infty$ for each $1 \leq i \leq n$. Let $N^{n+1}$ be a compact Riemannian manifold, and suppose the curvature tensor $R$ of $N$ satisfies
    \[(R(e_1,e_{n+1},e_1,e_{n+1})+\kappa^2, \dots, R(e_n,e_{n+1},e_n,e_{n+1})+\kappa^2) \in \bar \Gamma\]
for every orthonormal frame $\{e_i\}_{i=1}^{n+1}$, where $\kappa \geq 0$ is a constant. Let $M_0 = \partial \Omega_0$ be a smooth hypersurface in $N$ whose principal curvatures satisfy
    \[(\lambda_1 - \kappa, \dots, \lambda_n - \kappa) \in \Gamma,\]
and let $M_t$ denote the maximal evolution of $M_0$ which moves inwards with pointwise velocity $-G_\kappa \nu$, where $G_\kappa :=\gamma(\lambda_1 - \kappa, \dots, \lambda_n-\kappa)$. The hypersurfaces $M_t$ form a singularity in finite time, and (after pulling back by the exponential map) every blow-up of $M_t$ at the singular time is a self-similarly shrinking round $S^n$ or $\mathbb{R} \times S^{n-1}$, or else is the bowl soliton, in $\mathbb{R}^{n+1}$.
\end{corollary}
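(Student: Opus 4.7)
The plan is to mirror the strategy sketched for Corollary~\ref{singularities} in the Euclidean setting, lifting each of the required a priori estimates to the Riemannian case using the curvature condition on $N$ and the global bound on $|\partial\gamma/\partial\lambda_i|$. Throughout, I write $C$ for a constant depending on $M_0$, $N$ and $\gamma$.

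First I would show that the cone condition $(\lambda_1 - \kappa, \dots, \lambda_n - \kappa) \in \Gamma$ is preserved. Computing the evolution of the shifted Weingarten map, the Riemannian Gauss/Codazzi corrections produce precisely a zeroth-order term involving $R(e_i,\nu,e_i,\nu) + \kappa^2$, which by hypothesis lies in $\bar\Gamma$. A tensor maximum principle of Andrews/Brendle--Huisken type (using that $\Gamma$ is symmetric, convex and invariant under $\partial\gamma/\partial A$) then gives invariance of the cone. This already yields, by homogeneity and distance from $\partial\Gamma$, the pinching $H - n\kappa \leq C G_\kappa$ and uniform two-convexity whenever $G_\kappa$ is large.

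Next I would establish a lower bound on $G_\kappa$ and finite-time singularity formation. The evolution equation reads
\begin{align*}
\bigg(\partial_t - \tfrac{\partial\gamma}{\partial A_{ij}}\nabla_i\nabla_j\bigg) G_\kappa = \tfrac{\partial\gamma}{\partial A_{ij}}(A-\kappa g)_i^{\,k}(A-\kappa g)_{kj}\, G_\kappa + \mathcal{E},
\end{align*}
where $\mathcal{E}$ collects ambient curvature contributions. Since $N$ is compact and $\sup_\Gamma |\partial\gamma/\partial\lambda_i| < \infty$, one has $|\mathcal{E}| \leq C$. Combined with the homogeneity bound $\partial\gamma/\partial A_{ij}\,(A-\kappa g)_i^k (A-\kappa g)_{kj} \geq c G_\kappa^2$ (a consequence of the preserved cone condition), the maximum principle yields $\partial_t (\min G_\kappa) \geq c (\min G_\kappa)^3 - C$, which forces a finite-time singularity and a uniform lower bound on $\min G_\kappa$ on any time interval bounded away from the initial time.

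For the blow-up analysis I would pick $(p_k,t_k)$ with $Q_k := G_\kappa(p_k,t_k) \to \infty$ and rescale by the exponential map, setting
\begin{align*}
\tilde M_k(s) := Q_k\cdot \exp_{p_k}^{-1}\bigl(M_{t_k + s Q_k^{-2}}\bigr),
\end{align*}
regarded as hypersurfaces in $(T_{p_k}N, Q_k^2 \exp_{p_k}^{*} g_N)$. The rescaled ambient metrics $C^\infty_{\loc}$-converge to the Euclidean metric, and the Riemannian curvature contributions to the evolution equations scale like $Q_k^{-2} \to 0$. Moreover $\kappa/Q_k \to 0$, so the rescaled speed $G_\kappa/Q_k$ converges to $\gamma$ applied to the rescaled principal curvatures. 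Using the Riemannian extensions of the noncollapsing estimate \cite{ALM_Noncollapsing}, the convexity estimates of \cite{Brendle_Huisken,LangfordLynch}, and the gradient estimates of \cite{Brendle_Huisken,lynch2020convexity}, together with standard compactness for immersions, I would extract a subsequential $C^\infty_{\loc}$ limit which is an ancient $G$-flow in $\mathbb{R}^{n+1}$ that is convex, uniformly two-convex and noncollapsing. If compact, it must be a shrinking round sphere by \cite{Andrews_Pinching, Andrews_Euclidean}; otherwise it satisfies the hypotheses of Theorem~\ref{main} and is a shrinking cylinder or a bowl soliton.

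The main obstacle will be the careful book-keeping needed to propagate the Euclidean a priori estimates (noncollapsing, convexity, gradient) into the Riemannian setting: each is proved by a maximum principle argument whose natural auxiliary quantity acquires ambient curvature error terms under the flow. The hypotheses that $N$ has bounded curvature and that $|\partial\gamma/\partial\lambda_i|$ is globally bounded are exactly what is needed to dominate these errors by constants, so that they become negligible relative to the leading (cubic in curvature) terms once $G_\kappa$ is large. Verifying that each proof goes through with these perturbations, and that the resulting constants are uniform along the blow-up sequence, is the technical heart of the argument.
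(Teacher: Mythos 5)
Your overall route is the one the paper intends: the paper gives no separate argument for this corollary beyond saying it follows the proof of Corollary~\ref{singularities}, with the Riemannian adjustments (cone preservation under the ambient curvature hypothesis, error terms controlled by compactness of $N$ and $\sup_\Gamma|\partial\gamma/\partial\lambda_i|<\infty$, blow-up through the exponential map with curvature contributions scaling like $Q_k^{-2}$ and $\kappa/Q_k\to 0$) extracted from Brendle--Huisken, and your proposal reproduces exactly that scheme, including the final dichotomy via \cite{Andrews_Pinching, Andrews_Euclidean} and Theorem~\ref{main}.

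There is, however, one step that does not work as you state it. You claim that preservation of the cone condition $(\lambda_1-\kappa,\dots,\lambda_n-\kappa)\in\Gamma$ ``already yields, by homogeneity and distance from $\partial\Gamma$,'' the scale-invariant pinching $H-n\kappa\leq C\,G_\kappa$ and uniform two-convexity. Membership in the \emph{open} cone gives no uniform distance of $\lambda/|\lambda|$ from $\partial\Gamma$: as the singular time is approached the normalized curvatures may drift towards $\partial\Gamma$, where $\gamma$ vanishes while $H$ stays comparable to $|\lambda|$, so no bound on $H/G_\kappa$ (and hence neither uniform two-convexity nor uniform ellipticity along the blow-up sequence) follows from cone invariance alone. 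In the paper's sketch of Corollary~\ref{singularities} this is precisely where concavity of $\gamma$ is used: one applies the parabolic maximum principle to the ratio $H/G$ (here $(H-n\kappa)/G_\kappa$, with ambient curvature error terms again dominated using compactness of $N$ and the bound on $\dot\gamma$, as in Brendle--Huisken) to get $\max H/G_\kappa\leq C$, and only then does the inclusion $\Gamma\subset\{\lambda_1+\lambda_2>0\}$ together with the vanishing of $\gamma$ on $\partial\Gamma$ yield uniform two-convexity and uniform ellipticity at points of large curvature. Since these scale-invariant bounds are exactly what you need to pass to a convex, noncollapsing, uniformly two-convex ancient limit satisfying the hypotheses of Theorem~\ref{main}, you should replace the ``distance from $\partial\Gamma$'' justification by this ratio maximum-principle argument; with that repair the rest of your plan goes through as in the paper. (A smaller remark: your lower bound $\dot\gamma^{ij}(A-\kappa g)_i^k(A-\kappa g)_{kj}\geq cG_\kappa^2$ is fine, but it follows from Cauchy--Schwarz together with $\operatorname{tr}\dot\gamma\leq C$, i.e.\ from the extra hypothesis on $\dot\gamma$, rather than from the cone condition.)
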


The proof of Corollary~\ref{singularities_Riemannian} is very similar to that of Corollary~\ref{singularities}. All of the adjustments that need to be made can be readily extracted from \cite{Brendle_Huisken}. 

Based on Theorem~1 and the works \cite{ADS, ADS_uniqueness}, we are led to the following

\begin{conjecture}\label{3-convex}
Up to parabolic rescalings and rigid motions of the ambient space, there is a unique convex ancient $G$-flow which is noncollapsing, uniformly two-convex and compact. This solution is the ancient `oval' constructed in \cite{Lu} (see also \cite{Risa--Sinestrari_ovals}).
\end{conjecture}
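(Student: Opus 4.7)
The plan is to adapt the Angenent--Daskalopoulos--Sesum (ADS) strategy for ancient compact mean curvature flows to the fully nonlinear setting, using Theorem~\ref{main} as the key structural input. The conjecture asserts that a compact, convex, noncollapsing, uniformly two-convex ancient $G$-flow is the oval of \cite{Lu}. The strategy proceeds in three stages: (i) establish the asymptotic decomposition of $M_t$ into a cylindrical region and two tip regions as $t \to -\infty$; (ii) obtain a universal asymptotic expansion for the cylindrical region via spectral analysis of the linearized flow; (iii) promote the asymptotic expansion to full rigidity.

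For (i), I would blow up $M_t$ at suitably chosen spacetime points (its "center of mass" at a sequence of times $t_k \to -\infty$, rescaled parabolically by $\sqrt{-t_k}$) to produce a limiting ancient flow satisfying the hypotheses of Theorem~\ref{main}. Since $M_t$ is compact and convex, it must be thin along a preferred axis in the limit, which is only compatible with the shrinking cylinder (the bowl is noncompact and one-sidedly bounded). Separately, blowing up at points of maximal speed $G$ on $M_t$ would yield a noncompact ancient flow satisfying Theorem~\ref{main}'s hypotheses; the uniformly two-convex, noncollapsing nature together with a convexity estimate forces the limit to be the bowl. This constrains the solution geometrically: an outer cylindrical region over a shrinking $\mathbb{R} \times S^{n-1}$ with two bowl-shaped tips.

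For (ii), in the cylindrical region I would write $M_t$ as a rotationally symmetric graph over the shrinking cylinder after rescaling, reducing the $G$-flow to a quasilinear PDE whose linearization at the cylinder is an Ornstein--Uhlenbeck-type operator with coefficients determined by $\partial \gamma/\partial \lambda_i$ at the cylindrical configuration. By the rotational symmetry (inherited from the ADS-Brendle--Choi rigidity via Theorem~\ref{main} applied to blow-ups), one may work in one spatial variable and analyze the spectrum of this operator on a weighted $L^2$ space. Neutral modes correspond to translations and rescalings; positive modes must vanish asymptotically; the unique ancient solution is selected by its projection onto the positive eigenspace, yielding the oval profile. Step (iii) then runs a fixed-point or energy-type argument in these weighted spaces to show that any two ancient solutions with the same leading asymptotics must coincide, completing the classification.

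The main obstacle, exactly as highlighted in the introduction, is the absence of Huisken's monotonicity formula. ADS use it to (a) identify the correct Gaussian-weighted Hilbert space in which to do the spectral theory, (b) force a unique cylindrical blow-down, and (c) provide the entropy controlling convergence rates. Replacing (a)--(c) in our setting requires synthesizing the existing tools: the Harnack inequality of \cite{Lynch_Harnack} to control the change of $G$ under dilations, the splitting theorem of \cite{Lynch_22_b} to rigidly identify cylindrical factors, the noncollapsing constant $\alpha$ to control the geometry at infinity, and the concavity or convexity of $\gamma$ to produce substitute monotone quantities (e.g.\ via integration of evolution inequalities against convex weights on $\Omega_t$). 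Producing a suitable weighted functional framework adapted to the nonlinear speed $\gamma$, without access to a gradient-flow interpretation, is where I expect the most substantial new ideas will be required, and is the reason the statement is presented as a conjecture rather than a theorem.
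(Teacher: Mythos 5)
This statement is presented in the paper as Conjecture~\ref{3-convex}; the paper offers no proof of it, and your text is likewise a roadmap rather than a proof, as you yourself concede in the final paragraph. Beyond the admitted absence of a substitute for Huisken's monotonicity formula, there are concrete gaps in the sketch. First, rotational symmetry of a \emph{compact} ancient solution does not follow from Theorem~\ref{main} ``applied to blow-ups'': Theorem~\ref{main} classifies noncompact solutions, and the symmetry argument of Sections~\ref{asymptotics section}--\ref{rotational symmetry section} uses noncompactness in an essential way --- the existence and control of the tip point $p_t$ via the Harnack inequality and the bowl limit, and, crucially, the monotonicity of the profile $u$ in $z$, which is what rules out the neutral modes in the Merle--Zaag dichotomy (Proposition~\ref{positive mode dominates}). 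For a compact solution one would have to rerun the neck-improvement and cap-symmetry machinery in the spirit of \cite{ADS_uniqueness} and \cite{Brendle_Choi_b}, and this is not a formal consequence of the noncompact theorem.

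Second, your spectral picture in step (ii) is backwards for the compact case. For ancient ovals it is the \emph{neutral} eigenmode of the Ornstein--Uhlenbeck-type operator (the $z^2/a-2$ direction) that dominates as $\tau\to-\infty$ and produces the inward quadratic bending of the profile; the positive modes are the ones killed by recentering in space and time, whereas in the noncompact (bowl) case treated in this paper the positive mode dominates precisely because monotonicity in $z$ excludes the neutral one. Consequently ``the unique ancient solution is selected by its projection onto the positive eigenspace'' is not the right selection mechanism, and the uniqueness scheme in step (iii) would have to control the neutral-mode dynamics in the intermediate and tip regions --- in \cite{ADS_uniqueness} this requires the barrier constructions and matched asymptotics in three regions together with a delicate parameter-adjustment argument, none of which is supplied here, and whose fully nonlinear analogues (in particular compact analogues of the shrinkers $\Sigma_a$ of Section~\ref{barrier section} and the inner--outer estimates of Section~\ref{asymptotics section}) are exactly the missing ingredients that keep the statement a conjecture.
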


We note that there is an ongoing program \cite{Zhu, CHH, Du-Haslhofer_a, Du-Haslhofer_b, CDDHS} aimed at classifying noncollapsing convex ancient solutions to mean curvature flow which lie in $\mathbb{R}^4$, or are uniformly three-convex in $\mathbb{R}^{n+1}$. The present work initiates a corresponding program for fully nonlinear flows. 

While all of our results concern noncollapsing ancient solutions, we would like to point out that there are many interesting phenomena to be studied in the collapsing case. Collapsing ancient solutions to curvature flows appear to exhibit far less symmetry, in general, than their noncollapsing cousins \cite{BLT_polytope}. A first step towards understanding this rich class of solutions was recently taken in \cite{Langford--Rengaswami}, which constructed collapsing ancient `pancake' solutions in great generality. 

\subsection{Overview of the proof} Let $M_t$ denote a noncompact, convex ancient $G$-flow which is noncollapsing and uniformly two-convex. We write $\bar M_\tau$ for the rescaled family of hypersurfaces $e^{\tau/2} M_{-e^{-\tau}}$, which move with velocity $-(G - \frac{1}{2}\langle x, \nu\rangle) \nu$.

In Section~\ref{preliminaries section} we collect preliminary results to be used throughout the paper. We argue in particular that, unless $M_t$ is strictly convex, it splits off a line and hence is a shrinking cylinder by a result in \cite{LangfordLynch}. So for the remainder of this discussion let us assume $M_t$ is strictly convex. The claim is that $M_t$ must then be the bowl soliton. 

In Section~\ref{blow-down section} we begin studying the behaviour of the hypersurfaces $M_t$ at very early times. We show that as $\tau \to -\infty$, up to a choice of ambient Euclidean coordinates, the family $\bar M_\tau$ converges in $C^\infty_{\loc}$ to the cylinder $\Sigma = \mathbb{R}\times S^{n-1}((2\gamma(0,1,\dots,1))^{1/2})$. In the case of mean curvature flow, this property would typically be established using Huisken's monotonicity formula, but no such tool is available for fully nonlinear flows; the majority of these certainly lack any kind of variational structure. In \cite{Wang}, Wang found another approach which does not rely on the monotonicity formula. The idea is to extract a (subsequential) blow-down limit, and then translate it infinitely far along its asymptotic cone. The limit of the translated solutions is again an ancient solution, which splits off a line and encloses the blow-down. Moreover, both the translated solution and the blow-down form a singularity at the origin at the same time. Wang then exploits the fact that lower-dimensional slices through a convex mean curvature flow satisfy an avoidance principle to conclude that the translated solution actually coincides with the blow-down. In particular, the blow-down splits off a line. Since it is also uniformly two-convex, the blow-down is a cylinder by \cite{Huisken_Convex} (this also follows from \cite{Huisken--Sinestrari_ancient, Haslhofer--Hershkovits}).

Unfortunately, for flows by other curvature functions, there is no avoidance principle for lower-dimensional slices. The validity of such a principle for the mean curvature flow traces back to the fact that the graphical mean curvature operator is the Laplacian (and in particular is isotropic in the Hessian) at points with horizontal tangent planes. This does not turn out to be a major issue---the translated solution and blow-down can still be shown to coincide via a barrier argument (this argument appeared in \cite{Lynch_22_b}, where it was used by the second author to classify Type I convex ancient solutions to a large class of fully nonlinear flows). In short, we are able to adapt Xu-Jia Wang's approach to prove subsequential convergence of the family $\bar M_\tau$ to a cylinder of radius $(2\gamma(0,1,\dots,1))^{1/2}$ whose axis passes through the origin. This leaves open the possibility that $\bar M_\tau$ might slowly rotate, so that along different subsequences we might see cylinders with different axes arising in the limit. We rule out this possibility by showing that the asymptotic cone of $\bar M_\tau$ is independent of $\tau$. The subsequential convergence can then be upgraded to full convergence to $\Sigma$. 

The next step is to prove a sharp asymptotic estimate for the rate at which $\bar M_{\tau}$ converges to $\Sigma$. For this we make use of certain barrier solutions, which are constructed in Section~\ref{barrier section}. For each sufficiently large $a > 0$ we construct an embedded disc $\Sigma_a$ which is strictly convex, rotationally symmetric, and generates a self-similarly shrinking $G$-flow. Equivalently, each $\Sigma_a$ is a stationary solution of the rescaled flow. We also establish rather precise asymptotics for these solutions as $a \to \infty$, which allow us to use them as barriers for $\bar M_\tau$. To describe these asymptotics let us express $\Sigma_a$ as the set of points of the form 
    \[(z,\Psi_a(z)\theta), \qquad z \in [L_0, a], \; \theta \in S^{n-1},\]
where $\Psi_a$ is a nonnegative concave function. We show that $L_0$ can be fixed independently of $a$. The tip of $\Sigma_a$ is located at $(a, 0)$, and we have $\Psi_a(z)^2 < 2\gamma(0,1,\dots,1)$, meaning $\Sigma_a$ lies inside the region bounded by $\Sigma$. In addition, there is a constant $C = C(n,\gamma)$ such that, for every bounded $L$, when $a$ is sufficiently large we have
    \[\Psi_a(z)^2 \leq 2\gamma(0,1,\dots,1)\bigg(1-\bigg(1-C\frac{\log a}{a^2}\bigg)\frac{z^2 - C}{a^2}\bigg).\]
for $L_0 \leq z \leq L$. We also have the lower bound
    \[\Psi_a(z)^2 \geq 2\gamma(0,1,\dots,1)\bigg(1 - \frac{z^2}{a^2}\bigg)\]
for $L_0 \leq z \leq a$. Analogous self-similarly shrinking solutions to the mean curvature flow were constructed in \cite{ADS} and played a central role in \cite{Brendle_Choi_a, Brendle_Choi_b, ADS_uniqueness}. Our construction is similar to that in \cite{ADS}, but is made substantially more difficult by the fact that the profile function $\Psi_a$ now solves a fully nonlinear ODE. 

By the results of Section~\ref{blow-down section}, we may express $\bar M_\tau$ as the graph of a function $u(z, \theta, \tau)$ over the $z$-axis, and we have $u \to 0$ in $C^\infty_{\loc}$ as $\tau \to -\infty$. In Section~\ref{asymptotics section} we prove the following sharp asymptotic estimate for $u$: for each $L < \infty$, we have
    \begin{equation}
    \label{asymptotic intro}
    \sup_{|z| \leq L} |u(\cdot,\tau)| \leq O(e^{-\tau/2}).
    \end{equation}
This is achieved by performing a spectral decomposition of $u$ with respect to the linearised rescaled $G$-flow about the cylinder $\Sigma$, and determining that this decomposition is dominated by unstable modes as $\tau \to -\infty$. The argument is one of the key steps in our analysis, so let us describe it in more detail. 

We show that $u$ satisfies
    \begin{equation}\label{linearised rescaled flow intro}
    \frac{\partial u}{\partial \tau} = a\frac{\partial^2 u}{\partial z^2} + \frac{1}{2(n-1)} \Delta_{S^{n-1}} u - \frac{1}{2} z \frac{\partial u}{\partial z} + u + E,
    \end{equation}
where $a:= \dot \gamma^1(0,1,\dots,1)$ and $E = O(|u|^2 + |\nabla u|^2 + |\nabla^2 u|^2)$. For each large $-\tau$ this holds in a large but bounded subset of space. The linear elliptic operator on the right-hand side of \eqref{linearised rescaled flow intro} admits a complete set of $L^2$-eigenfunctions with respect to the measure $e^{-z^2/4a}\,dzd\theta$, and its eigenvalues are $1-\frac{k}{2} - \frac{1}{2(n-1)}l(l+n-2)$, where $k, l \geq 0$ are integers. After cutting off in space we decompose $u$ into pieces lying in the positive, neutral and negative eigenspaces. These evolve as one would expect for a solution to the linearised equation, up to error terms depending on $E$ and the cutoff function. We show that all of these errors decay rapidly as $\tau \to -\infty$ by employing the hypersurfaces $\Sigma_a$ as barriers. Here the asymptotics for $\Sigma_a$ established in Section~\ref{barrier section} play a decisive role. A standard argument (due to Merle and Zaag) shows that either positive or neutral modes dominate as $\tau \to -\infty$. The latter possibility is ruled out using the fact that $\bar M_\tau$ is noncompact---this forces $u(z,\theta,\tau)$ to be monotone in $z$, whereas no nonzero linear combination of zero modes has this property. The estimate \eqref{asymptotic intro} follows. 

The proof of \eqref{asymptotic intro} is broadly similar to the spectral analysis of convex ancient solutions to the mean curvature flow carried out in \cite{Brendle_Choi_a, Brendle_Choi_b} and \cite{ADS, ADS_uniqueness}. In these works sufficient decay of errors could be proven by using Huisken's monotonicity formula to derive a powerful reverse-Poincar\'{e} inequality for the profile function $u$. Our approach using barriers is instead informed by Brendle's work on ancient $\kappa$-solutions to the 3-dimensional Ricci flow \cite{Brendle}. We note however that error terms involving the Hessian $|\nabla^2 u|$ do not arise when considering the mean curvature flow or Ricci flow, as these equations are quasilinear, whereas for fully nonlinear equations such higher-order errors are inevitable. 

In our setting we are able to control these higher-order errors, roughly speaking, as follows. Using a gradient bound furnished by the barriers, we can appeal to the interior $C^2$-estimate for radial graph solutions due to Brendle--Huisken \cite{Brendle_Huisken}, followed by Krylov's $C^{2,\alpha}$-estimate for convex parabolic PDE \cite{Krylov82}, and then use Schauder estimates to get uniform bounds on higher derivatives. Next we use interpolation inequalities and interior estimates for linear parabolic equations to obtain an improved bound for the supremum of $|u| + |\nabla u| + |\nabla^2 u|$. Crucially, this bound is only slightly worse than linear in the $L^2$-norm of $u$ with respect to the measure $e^{-z^2/4a}dzd\theta$ (see Lemma~\ref{error estimates}). This is sufficient to carry out the Merle--Zaag argument. Note that all of the estimates just described hold in a subset of space which grows at a definite rate as $\tau \to -\infty$. 

With the sharp asymptotic \eqref{asymptotic intro} in hand, the remainder of the proof of Theorem~\ref{main} is quite similar to its counterpart for solutions of mean curvature flow \cite{Brendle_Choi_a, Brendle_Choi_b}. In Section~\ref{neck improvement section} we prove the Neck Improvement Theorem for $G$-flows (cf. \cite[Theorem~4.4]{Brendle_Choi_a} and \cite[Theorem~4.4]{Brendle_Choi_b}). Using this, we show that $M_t$ is rotationally symmetric (Section~\ref{rotational symmetry section}) and is therefore a translating soliton (Section~\ref{translating section}). This completes the proof of Theorem~\ref{main}. 

The arguments in Section~\ref{rotational symmetry section} and Section~\ref{translating section} make use of a differential Harnack inequality for noncompact convex solutions to fully nonlinear flows, which was recently proven in \cite{Lynch_Harnack}. For solutions of mean curvature flow this inequality is due to Hamilton \cite{Hamilton_Harnack_MCF}, and for compact convex solutions to fully nonlinear flows it is due to Andrews \cite{Andrews_Harnack}. The Harnack inequality lets us show that near its `tip', our solution looks like a bowl soliton. This step also relies on work of Bourni--Langford \cite{Bourni--Langford}, who showed that every convex, noncollapsing, uniformly two-convex $G$-translator is the bowl soliton.

 
\section{Preliminaries}\label{preliminaries section}

In this section we collect auxhiliary results which will be needed in the proof of Theorem~\ref{main}. 

We may view $\gamma$ as a symmetric function defined in $\Gamma$, or as an $O(n)$-invariant function on the space of symmetric matrices with eigenvalues in $\Gamma$. We alternate between these viewpoints. We write $\dot \gamma^i(\lambda)$ and $\ddot \gamma^{ij}(\lambda)$ for derivatives with respect to eigenvalues, so that 
    \[\frac{d}{ds}\bigg|_{s=0} \gamma(\lambda + s \mu) = \dot \gamma^i(\lambda)\mu_i, \qquad \frac{d}{ds}\bigg|_{s=0} \dot \gamma^i(\lambda + s\mu) = \ddot \gamma^{ij}(\lambda)\mu_j,\]
and write $\dot \gamma^{ij}(A)$ and $\ddot \gamma^{ij,kl}(A)$ for derivatives with respect to matrix entries, so that 
    \[\frac{d}{ds}\bigg|_{s=0} \gamma(A + s B) = \dot \gamma^{ij}(A)B_{ij}, \qquad \frac{d}{ds}\bigg|_{s=0} \dot \gamma^{ij}(A + sB) = \ddot \gamma^{ij,kl}(A)B_{kl}.\]
If $A$ is a diagonal matrix with entries $\lambda$, $\dot\gamma^{ij}(A)$ is also diagonal, with entries $\dot\gamma^i(\lambda)$.

Recall that we assume $\gamma$ is convex or concave. We also assume 
    \begin{equation}\label{inverse-concavity}
        \text{the functions } \gamma|_{\mathbb{R}^n_+} \text{ and } \gamma|_{\{0\}\times\mathbb{R}^{n-1}_+} \text{ are strictly inverse-concave.}
    \end{equation}
That is, 
    \[\lambda \mapsto -\gamma(\lambda_1^{-1},\dots,\lambda_n^{-1})\]
is strictly concave for $\lambda \in \mathbb{R}_+^n$, and 
    \[\mu \mapsto -\gamma(0, \mu_1^{-1},\dots,\mu_{n-1}^{-1})\]
is strictly concave for $\mu \in \mathbb{R}_+^{n-1}$. When $\gamma$ is convex the hypotheses \eqref{inverse-concavity} are redundant.

We often use the notation
    \[F(\lambda,\mu) = \gamma(\lambda, \mu, \dots, \mu).\]

\subsection{Uniform ellipticity} Let $M_t$, $t \in I$, denote a $G$-flow. We say that $\gamma$ is uniformly elliptic on $M_t$ if there is a constant $\delta > 0$, independent of time, such that 
    \begin{equation}\label{unif parabolicity}
        \inf_{M_t} \dist \left(\tfrac{\lambda}{|\lambda|}, \partial \Gamma\right) \geq \delta.
    \end{equation}
This implies that for any symmetric $(0,2)$-tensor $B$ we have
    \[C^{-1} |B| \leq \dot \gamma^{ij}(A)B_{ij} \leq C |B|\]
and 
    \[|\ddot\gamma^{ij,kl}(A)B_{ij}B_{kl}| \leq CG^{-1}|B|^2,\]
where $C$ is a positive constant depending only on $n$, $\gamma$ and $\delta$.

Recall that we assume $\Gamma$ contains the cones $\mathbb{R}_+^n$ and $\{0\} \times \mathbb{R}_+^{n-1}$. Given these hypotheses, weak convexity and uniform two-convexity imply uniform ellipticity. That is, if $\lambda_1 \geq 0$ and $\lambda_1 + \lambda_2 \geq \beta H$ at every point on a $G$-flow, then there is a constant $\delta = \delta(n,\gamma,\beta)$ such that \eqref{unif parabolicity} holds for $t \in I$. 

\begin{lemma}\label{lemma uniform parabolicity}
Let $M_t$ be a convex, uniformly two-convex $G$-flow. Then $\gamma$ is uniformly elliptic on $M_t$.
\end{lemma}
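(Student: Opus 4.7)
The plan is to exploit the fact that both the hypothesis (uniform two-convexity) and the conclusion (uniform ellipticity) are scale-invariant conditions on the vector of principal curvatures, so I reduce the problem to a compactness statement on the unit sphere in $\mathbb{R}^n$. Define
\[
K = \bigl\{\mu \in \bar\Gamma : |\mu| = 1,\ \mu_1 \leq \mu_2 \leq \dots \leq \mu_n,\ \mu_1 \geq 0,\ \mu_1 + \mu_2 \geq \beta\textstyle\sum_i \mu_i\bigr\}.
\]
This set is closed and bounded in $\mathbb{R}^n$, hence compact. Every point of $M_t$ has principal curvature vector $\lambda$ which (after dividing by $|\lambda|$) gives a point of $K$. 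So it suffices to show $K \subset \Gamma$, because then the continuous function $\mu \mapsto \dist(\mu,\partial\Gamma)$ attains a positive minimum $\delta > 0$ on $K$.

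To show $K \subset \Gamma$, fix any $\mu \in K$. First I rule out $\mu_2 = 0$: if $\mu_1 = \mu_2 = 0$, then $0 = \mu_1 + \mu_2 \geq \beta\sum_i \mu_i$, which together with $\mu_i \geq \mu_2 = 0$ for all $i$ forces $\mu = 0$, contradicting $|\mu| = 1$. Hence $\mu_2 > 0$, and by the ordering $\mu_3, \dots, \mu_n > 0$ as well. Now split into cases. If $\mu_1 > 0$, then $\mu \in \mathbb{R}^n_+ \subset \Gamma$. If $\mu_1 = 0$, then $\mu \in \{0\}\times \mathbb{R}^{n-1}_+ \subset \Gamma$. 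Either way $\mu \in \Gamma$, which is open, so $\dist(\mu, \partial\Gamma) > 0$.

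Putting the two steps together, there exists $\delta = \delta(n,\gamma,\beta) > 0$ such that $\dist(\mu, \partial\Gamma) \geq \delta$ for every $\mu \in K$, which is exactly the desired uniform ellipticity bound \eqref{unif parabolicity} on $M_t$. The bounds on the derivatives of $\gamma$ mentioned after \eqref{unif parabolicity} are then immediate consequences of smoothness, symmetry, and one-homogeneity of $\gamma$ on the compact set $K$.

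I don't anticipate any serious obstacle here; the only nontrivial point is recognising that uniform two-convexity together with ordering $\lambda_1 \leq \lambda_2$ prevents the limit direction from degenerating to a vector with $\mu_2 = 0$, which is what allows the $\{0\}\times\mathbb{R}^{n-1}_+ \subset \Gamma$ hypothesis to cover the edge case $\mu_1 = 0$.
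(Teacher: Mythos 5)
Your proof is correct and is essentially the paper's argument in direct (compactness) form: the paper argues by contradiction along a sequence with $\lambda/|\lambda|\to\partial\Gamma$, while you minimise $\dist(\cdot,\partial\Gamma)$ over the compact set $K$, but both hinge on the same facts — convexity gives nonnegativity, the inclusions $\mathbb{R}^n_+\subset\Gamma$ and $\{0\}\times\mathbb{R}^{n-1}_+\subset\Gamma$ show a normalised boundary point would need $\mu_1=\mu_2=0$, and uniform two-convexity excludes that direction.
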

\begin{proof}
If not, let $(x_k,t_k)$ be a sequence such that $\lambda(x_k,t_k)/|\lambda(x_k,t_k)| \to \partial \Gamma$. Given that $\Gamma$ contains $\mathbb{R}_+^n$ and $\{0\} \times \mathbb{R}_+^{n-1}$, it must be the case that both $\lambda_1(x_k,t_k)$ and $\lambda_2(x_k,t_k)$ tend to zero. But convexity implies
    \[\frac{1}{n} H(x_k,t_k)^2 \leq |\lambda(x_k,t_k)|^2 \leq H(x_k,t_k)^2,\]
so we have $\lambda_1(x_k,t_k) + \lambda_2(x_k,t_k)/H(x_k,t_k) \to 0$, which contradicts uniform two-convexity. 
\end{proof}

\subsection{Compactness of noncollapsing convex ancient solutions} The following interior curvature estimate was proven in \cite[Proposition 5.1]{Brendle_Huisken} for the speed $\gamma(\lambda) = (\sum_{i<j}(\lambda_i+\lambda_j)^{-1})^{-1}$. However the same proof works for any $\gamma$ satisfying our hypotheses (see \cite{Lynch_22_b} for further discussion).

\begin{lemma}\label{interior curvature}
Let $M_t = \partial\Omega_t, t \in [-T, 0],$ be a convex $G$-flow which is $\delta$-uniformly parabolic. If $B(0,r)$ is contained in $\Omega_0$, then
    \[\sup_{B(0,R) \times [- T/2 , 0]}   |A| \leq C r^{-3}  R^3 \Big(r^{-1} + T^{-1/2}\Big),\]
where $C$ depends only on $n$, $\gamma$, $\delta$ and $R/r$. 
\end{lemma}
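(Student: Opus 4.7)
The plan is to follow the maximum-principle strategy employed by Brendle--Huisken, applied to an auxiliary function built from the speed $G$, the support function $\bar\sigma(x,t) := \langle x, \nu(x,t)\rangle$ with respect to the origin, and spatial and temporal cutoffs. The key algebraic observation is that $G$ and $\bar\sigma$ share the same reaction coefficient under the parabolic operator $\partial_t - \mathcal{L}$ (where $\mathcal{L} := \dot\gamma^{ij}\nabla_i\nabla_j$), which produces a useful cancellation in their quotient. To begin, Lemma~\ref{lemma uniform parabolicity}, convexity, and the Cauchy--Schwarz inequality $G^2 = (\dot\gamma^{ij}A_{ij})^2 \leq (\trace \dot\gamma)(\dot\gamma^{ij}A_{ik}A^k_j)$ together yield the reaction estimate $|A|^2_{\dot\gamma} \geq c\,G^2$ with $c = c(n,\gamma,\delta) > 0$, as well as $|A| \leq CG$, reducing the claim to an upper bound on $G$. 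Because the flow moves inwards, $\Omega_t \supset \Omega_0 \supset B(0,r)$ for every $t \in [-T, 0]$, and convexity of $\Omega_t$ forces $\bar\sigma \geq r$ on $M_t$ throughout.

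Using $\partial_t x = -G\nu$ together with degree-one homogeneity, the standard evolution equations give
\[
(\partial_t - \mathcal{L})G = G\,|A|^2_{\dot\gamma}, \qquad (\partial_t - \mathcal{L})\bar\sigma = \bar\sigma\,|A|^2_{\dot\gamma} - 2G.
\]
Setting $h := \bar\sigma - r/2 \geq r/2$, a direct computation then produces
\[
(\partial_t - \mathcal{L})(G/h) = \frac{2G^2}{h^2} - \frac{r\,G\,|A|^2_{\dot\gamma}}{2h^2} + \frac{2\dot\gamma^{ij}\nabla_i G\,\nabla_j h}{h^2} - \frac{2G\,|\nabla h|^2_{\dot\gamma}}{h^3}.
\]
The two gradient terms cancel at a critical point of $G/h$, and the parabolic maximum principle gives $(\partial_t - \mathcal{L})(G/h) \geq 0$ at a spacetime maximum. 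Combined with $|A|^2_{\dot\gamma} \geq cG^2$, this forces $G \leq 4/(cr)$ there, so $G/h \leq C/r^2$ and hence $G \leq C\bar\sigma/r^2 \leq CR/r^2$ on $M_t \cap B(0,R)$ whenever the maximum of $G/h$ is attained in the interior of the spacetime domain.

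Since $M_t$ may be noncompact and $M_{-T}$ may have unbounded curvature, I would instead apply the argument to
\[
\phi(x,t) := \frac{G(x,t)}{h(x,t)}\,\eta(|x|)^{\beta}\,(t + T),
\]
where $\eta:[0,\infty)\to[0,1]$ is a smooth cutoff equal to $1$ on $[0,R]$, supported in $[0,2R]$, with $|\eta'|\leq C/R$ and $|\eta''|\leq C/R^2$, and $\beta > 0$ is to be chosen. Since $\eta(|x|)(t+T)$ vanishes on the parabolic boundary, the spacetime maximum of $\phi$ is attained at some interior point $(x_0, t_0)$. Repeating the maximum-principle analysis on $\phi$, the vanishing of $\nabla \log \phi$ expresses $\nabla G/G$ as a linear combination of $\nabla h/h$ and $\nabla \eta/\eta$. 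On the support of $\eta$ one has $\bar\sigma \leq |x| \leq 2R$ and $|\nabla \bar\sigma| = |A\cdot x^T| \leq 2|A|R \leq CGR$; substituting these bounds and using $|\nabla\eta|/\eta \leq C/R$, every cross-term generated by the cutoffs can be absorbed into the good reaction $-\tfrac{cr}{2h^2}G^3$ via Young's inequality, provided $G$ is large. Tracking the resulting powers of $R/r$ produces the cubic loss $(R/r)^3$ in the final estimate; the time cutoff $(t+T)$ contributes the $T^{-1/2}$ scale when the maximum is near $t = 0$. This yields $G \leq Cr^{-3}R^3(r^{-1} + T^{-1/2})$ on $M_t \cap B(0,R) \times [-T/2, 0]$, which combined with $|A|\leq CG$ gives the claim.

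The main obstacle is the cutoff bookkeeping in the last step: one must fix $\beta$ and the precise form of $\phi$ so that every cross-term produced by differentiating $\eta$ and $(t+T)$ is strictly dominated by the single good term $-\tfrac{cr}{2h^2}G^3$. This requires pairing each cutoff derivative with a compensating factor of $G$ via Young's inequality at just the right rate, and is what introduces the power $R^3/r^3$ (rather than the $R/r$ that one obtains in the compact-case analysis). A secondary subtlety is that $\gamma$ is only assumed to be one of convex or concave, so $\ddot\gamma$ has no definite sign; however, nothing in the argument uses $\ddot\gamma$ at all---only uniform ellipticity and the reaction estimate $|A|^2_{\dot\gamma} \geq cG^2$---so both cases are handled uniformly.
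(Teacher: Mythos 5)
The paper does not actually prove Lemma~\ref{interior curvature}; it cites Brendle--Huisken and notes their proof goes through verbatim, and that proof is precisely the strategy you describe: a maximum-principle argument for the speed divided by the shifted support function, localized by spatial and temporal weights. Your preliminary work is correct: $(\partial_t-\mathcal L)G=G|A|^2_{\dot\gamma}$, $(\partial_t-\mathcal L)\bar\sigma=\bar\sigma|A|^2_{\dot\gamma}-2G$, the quotient identity for $G/h$ with the cancellation of gradient terms at a critical point, and the reductions $|A|\le CG$ and $|A|^2_{\dot\gamma}\ge cG^2$ from convexity and $\delta$-uniform parabolicity (and you are right that $\ddot\gamma$ never enters, so convexity/concavity of $\gamma$ plays no role here).

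Two things keep the write-up from being a proof. First, the pointwise claim $|\nabla\eta|/\eta\le C/R$ is false for any cutoff vanishing on the boundary of its support; you must either work with $\eta^\beta$ and distribute the leftover factors $\eta^{\beta-1}$ at the critical point, or, cleaner, localize with $\psi:=(4R^2-|x|^2)_+$, for which $|\nabla\psi|\le CR$ and, thanks to homogeneity ($\dot\gamma^{ij}A_{ij}=G$), one has the exact identity $(\partial_t-\mathcal L)|x|^2=-2\operatorname{tr}\dot\gamma$, so every cutoff term is explicit and bounded without ever dividing by the cutoff's gradient. Second, the decisive absorption step—the only place the stated powers can come from—is asserted rather than carried out. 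It does close: taking $\varphi=(t+T)\,\psi^2\,G/h$ with $h=\bar\sigma-r/2\ge r/2$, at an interior spacetime maximum the relation $\nabla G/G=\nabla h/h-2\nabla\psi/\psi$ together with $|\nabla h|\le CGR$, $|\nabla\psi|\le CR$, $\operatorname{tr}\dot\gamma\le C$ yields, after dividing by $\psi^2G/h^2$ and applying Young's inequality,
\begin{equation*}
r\,G^2 \;\le\; C\Big(\tfrac{h}{t+T} + G + \tfrac{GR^2}{\psi} + \tfrac{h}{\psi} + \tfrac{R^2h}{\psi^2}\Big),
\end{equation*}
and since $h\le 2R$ on $\operatorname{supp}\psi$, while $\psi\ge 3R^2$, $t+T\ge T/2$ at the points of interest, this gives $G\le C(n,\gamma,\delta,R/r)\,(r^{-1}+T^{-1/2})$ there, which is the assertion (the explicit factor $R^3r^{-3}$ is immaterial because $C$ is allowed to depend on $R/r$). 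So your plan is the right one and matches the cited argument, but as submitted it contains one incorrect quantitative claim about the cutoff and leaves the crucial computation undone.
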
  

As a consequence of Lemma \ref{interior curvature} and the avoidance principle, noncollapsing $G$-flows satisfy the following curvature estimate (cf. \cite{Haslhofer--Kleiner_MC}). For the proof we refer to \cite[Theorem~4.17]{lynch2020convexity} (note however that the argument there simplifies considerably when $M_t$ is convex). 

Here and throughout the paper we only consider ancient $G$-flows which exist for $t \in (-\infty,0]$. This is purely for convenience, and can always be arranged by shifting the time variable.

\begin{proposition}\label{lower curvature}
Let $M_t = \partial \Omega_t$, $t \in (-\infty,0]$, be a convex ancient $G$-flow which is $\delta$-uniformly parabolic and $\alpha$-noncollapsing. Suppose $x_0 \in M_{t_0}$. We then have 
	\[C^{-1} G(x_0,t_0) \leq G(x,t) \leq C G(x_0,t_0),\] 
for $(x,t) \in B(x_0, LG(x_0,t_0)^{-1})\times[t_0-L^2G(x_0,t_0)^{-2},t_0]$, where the constant $C$ depends only on $n$, $\gamma$, $\delta$, $\alpha$ and $L$. 
\end{proposition}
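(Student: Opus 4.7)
\emph{Proof plan.} Write $G_0 := G(x_0,t_0)$. The upper and lower bounds require different arguments. For the upper bound, $\alpha$-noncollapsing provides an inscribed ball $B(p_0,\alpha/G_0) \subset \overline{\Omega}_{t_0}$ tangent to $M_{t_0}$ at $x_0$, and since $\Omega_t$ is monotonically decreasing in $t$ this ball lies in $\Omega_t$ for every $t \leq t_0$. Applying Lemma~\ref{interior curvature} centered at $p_0$, with radius $R \sim (L+\alpha)/G_0$ and time length $T \sim L^2/G_0^2$, gives $|A| \leq C(n,\gamma,\delta,\alpha,L)\,G_0$ on the parabolic cylinder $B(x_0,L/G_0) \times [t_0-L^2/G_0^2,\,t_0]$. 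As $\gamma$ is nonnegative, one-homogeneous, and monotone, $G \leq C|A|$ on any convex hypersurface, so the same bound holds for $G$.

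For the lower bound I would argue by contradiction and compactness. Suppose the estimate fails along a sequence of $G$-flows $M^{(k)}_t$ satisfying the hypotheses, with marked points $(x_0^{(k)},t_0^{(k)})$ and $(y_k,s_k)$ in the corresponding parabolic cylinders, such that $G(y_k,s_k)/G_k \to 0$, where $G_k := G(x_0^{(k)},t_0^{(k)})$. After parabolically rescaling by $G_k$ and translating so that $(x_0^{(k)},t_0^{(k)}) = (0,0)$, the upper bound already proven yields, for every $L' > 0$, a uniform $C^0$-bound on $|A|$ on $B(0,L') \times [-(L')^2,0]$. Combining this with convexity, uniform ellipticity, the Brendle--Huisken interior $C^{2,\alpha}$-estimate for convex $G$-flows, and Schauder theory for linear parabolic equations gives uniform $C^\infty_{\loc}$-estimates. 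The rescaled flows therefore subconverge in $C^\infty_{\loc}$ to a smooth ancient $G$-flow $M^\infty_t$, $t \in (-\infty,0]$, which is convex, $\alpha$-noncollapsing, $\delta$-uniformly elliptic, and satisfies $G(0,0)=1$ and $G(x^*,t^*)=0$ for some $(x^*,t^*) \in \overline{B(0,L)} \times [-L^2,0]$.

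To close the argument I apply the strong maximum principle to the evolution equation
\[
\bigl(\partial_t - \dot\gamma^{ij}\nabla_i\nabla_j\bigr)G = \dot\gamma^{ij} A_i^k A_{kj}\, G \geq 0.
\]
Since $G \geq 0$ attains the interior minimum $0$ at $(x^*,t^*)$, and $M^\infty_t$ is connected for each $t$ (as the limit of boundaries of convex bodies), the strong maximum principle forces $G \equiv 0$ on $M^\infty_t$ for every $t \leq t^*$. Uniform ellipticity keeps $\lambda/|\lambda|$ bounded away from $\partial\Gamma$, so $G=0$ implies $A=0$; hence each $M^\infty_t$ with $t \leq t^*$ is a connected flat convex hypersurface, i.e.\ a hyperplane. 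Since hyperplanes are stationary under the flow, $M^\infty_t$ is the same hyperplane for every $t \in (-\infty,0]$, contradicting $G(0,0) = 1$.

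The main technical obstacle is the smooth compactness step: a $C^0$-bound on $|A|$ alone is not enough to take smooth limits of a fully nonlinear flow, so one must first pass through the Brendle--Huisken interior $C^{2,\alpha}$-estimate before Schauder theory can upgrade to $C^\infty_{\loc}$. If one prefers to avoid the direct flat-convex-equals-hyperplane step (e.g.\ if connectedness of the limit hypersurface needs more justification), the splitting theorem of \cite{LangfordLynch} can be applied iteratively---peeling off one line at a time until the remaining factor is a point---to arrive at the same contradiction.
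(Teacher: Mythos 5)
Your upper bound is correct and is the standard argument: the $\alpha$-noncollapsing ball at $(x_0,t_0)$ persists inside $\Omega_t$ for $t\leq t_0$, Lemma~\ref{interior curvature} gives $|A|\leq C(n,\gamma,\delta,\alpha,L)G_0$ on the parabolic cylinder, and $\delta$-uniform parabolicity (not convexity alone) gives $G\leq C(\delta)|A|$ by one-homogeneity. The problem is the lower bound. Your contradiction scenario is engineered so that the curvature of the rescaled flows degenerates somewhere in the limit, and this is exactly the regime in which the claimed ``uniform $C^\infty_{\loc}$-estimates'' fail for a fully nonlinear speed: Krylov gives uniform $C^{2,\alpha}$ bounds (uniform parabolicity only needs $\dist(\lambda/|\lambda|,\partial\Gamma)\geq\delta$), but the Schauder bootstrap requires H\"older control of the linearised coefficients $\dot\gamma^{ij}(A)$, and since $\ddot\gamma$ is homogeneous of degree $-1$ its modulus of continuity blows up as $|A|\to 0$. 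This is precisely why the paper's Proposition~\ref{compactness} demands a positive lower bound on $G$ before bootstrapping --- a bound which, in this paper, comes from the very proposition you are proving, so a compactness route here is dangerously close to circular. Consequently your limit flow is only a $C^{2,\beta}$ solution near the degenerate point $(x^*,t^*)$, and the strong maximum principle step is not justified: the evolution equation $(\partial_t-\dot\gamma^{ij}\nabla_i\nabla_j)G=\dot\gamma^{ij}A_i^kA_{kj}G$ requires two more derivatives than you have exactly where you need it, namely on a neighbourhood of the zero of $G$. The fallback you offer does not repair this: the splitting theorem is \cite[Proposition~A.2]{Lynch_22_b} (not \cite{LangfordLynch}), and it too is a statement about smooth flows (and is invoked in this paper only under additional hypotheses), so it cannot be applied to a limit whose smoothness near the degenerate set is the very thing in question. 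Connectedness of the limit and the forward-in-time uniqueness step are smaller issues, but they would also need justification.

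The intended argument (this is what the paper's pointer to \cite[Theorem~4.17]{lynch2020convexity} and the phrase ``Lemma~\ref{interior curvature} and the avoidance principle'' encode) avoids limits altogether and uses noncollapsing at the \emph{low-curvature} point rather than at $x_0$. If $G(y,s)\leq \epsilon\, G_0$ for some $(y,s)$ with $|y-x_0|\leq L G_0^{-1}$ and $t_0-L^2G_0^{-2}\leq s\leq t_0$, then $\Omega_s$ contains a ball of radius $\alpha(\epsilon G_0)^{-1}$ tangent at $y$; by the avoidance principle (comparison with shrinking spheres) this ball survives to time $t_0$ with radius at least $\sqrt{\alpha^2\epsilon^{-2}G_0^{-2}-2\gamma(1,\dots,1)L^2G_0^{-2}}$, and its centre is at controlled distance $\leq (L+\alpha\epsilon^{-1})G_0^{-1}$ from $x_0$. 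Feeding this inscribed ball into Lemma~\ref{interior curvature} (centred at the ball's centre, with $R/r$ bounded independently of $\epsilon$ as $\epsilon\to 0$) bounds $|A|(x_0,t_0)$, hence $G_0$, by $o_\epsilon(1)\cdot G_0$ once $\epsilon$ is small depending only on $n,\gamma,\delta,\alpha,L$ --- a contradiction. This gives the lower bound directly, with no compactness, no strong maximum principle, and no regularity issues at degenerate points; I would recommend replacing your second half with this argument.
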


As a consequence of the curvature estimate we have the following compactness theorem for sequences of convex ancient $G$-flows which are uniformly parabolic and noncollapsing.
	
\begin{proposition}\label{compactness}
Let $M_t^k$, $t\in(-\infty,0]$, be a sequence of convex ancient $G$-flows. Suppose each $M_t^k$ is $\delta$-uniformly parabolic and $\alpha$-noncollapsing, where $\delta$ and $\alpha$ are independent of $k$. Suppose in addition that there exists an $R < \infty$ such that $M_0^k \cap B(0,R)$ is nonempty for every $k$ and 
    \[\liminf_{k \to \infty} \sup_{M_0^k \cap B(0,R)} (G_k + G_k^{-1})>0.\] 
Then there is a convex ancient $G$-flow $M_t$, $t\in(-\infty,0]$, such that after passing to a subsequence
    \[M_t^k \to M_t \;\; \text{in} \;\; C^\infty_{\loc}(\mathbb{R}^{n+1}\times(-\infty,0]).\]
\end{proposition}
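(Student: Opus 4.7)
The plan is to promote the given non-degeneracy at the origin into smooth compactness by combining the two-sided curvature bound of Proposition~\ref{lower curvature} with the regularity theory for fully nonlinear uniformly parabolic PDE. First I would use the hypothesis that $\liminf_k \sup_{M_0^k \cap B(0,R)}(G_k + G_k^{-1})$ is positive, together with the nonemptiness of $M_0^k \cap B(0,R)$, to select (along a subsequence) points $x_k \in M_0^k \cap B(0,R)$ at which $G_k(x_k,0)$ stays bounded between two fixed positive constants, and then pass to a further subsequence with $x_k \to x_\infty \in \overline{B(0,R)}$.

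With $G_k(x_k,0)$ pinned between constants, Proposition~\ref{lower curvature} immediately produces, for every $L < \infty$, uniform two-sided bounds on $G_k$ on parabolic cylinders of fixed size centered at $(x_k,0)$. Because $\gamma$ is homogeneous of degree one and $\delta$-uniformly elliptic along $M_t^k$, the full second fundamental form satisfies $C^{-1}|A_k| \leq G_k \leq C|A_k|$ with a constant depending only on $n$, $\gamma$, and $\delta$, so $|A_k|$ is uniformly bounded on these cylinders as well. Since $|A_k|$ controls the principal curvatures, the hypersurfaces $M_t^k$ can be represented, on a definite scale independent of $k$, as smooth graphs over their tangent planes with gradients uniformly bounded in terms of $|A_k|$.

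The resulting graphical height functions solve a fully nonlinear uniformly parabolic PDE whose structural constants depend only on $n$, $\gamma$, and $\delta$. Since $\gamma$ is either convex or concave, the Evans--Krylov interior $C^{2,\alpha}$-estimate applies, and the standard Schauder bootstrap upgrades this to uniform $C^{m,\alpha}$ bounds for every $m$. Arzel\`{a}--Ascoli then yields $C^\infty_{\loc}$ subsequential convergence on a parabolic neighborhood of $(x_\infty, 0)$. A diagonal procedure over an exhaustion of space-time --- using at each stage the already-established bounds as the input for a fresh application of Proposition~\ref{lower curvature} at new base points reached by the limit --- produces the desired limiting convex ancient $G$-flow $M_t$, $t \in (-\infty, 0]$.

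The main obstacle is the last step: extending the local convergence to all of $\mathbb{R}^{n+1} \times (-\infty, 0]$. This requires being able to reapply Proposition~\ref{lower curvature} at any other point of the limit, which in turn requires uniform two-sided bounds on $G_k$ at such a point. Convexity of the enclosed domains $\Omega_t^k$, together with the already derived curvature bound at $(x_k, 0)$, controls how far the boundaries $M_t^k$ can stray at nearby times, while the inscribed-ball condition supplied by noncollapsing prevents degeneration in the opposite direction. Once a new point with uniformly bounded $G_k$ is located in the limit, the proposition propagates the bound to a larger parabolic cylinder, and the iteration can be continued until the whole of $\mathbb{R}^{n+1} \times (-\infty, 0]$ has been exhausted.
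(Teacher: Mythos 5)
Your argument is essentially correct and follows the same architecture as the paper's proof: uniform two-sided curvature bounds on compact subsets of spacetime, then Krylov's $C^{2,\alpha}$ estimate for convex/concave fully nonlinear parabolic equations, Schauder bootstrapping (for which the uniform lower bound on $G_k$ is precisely what controls the higher derivatives of $\gamma$, since $\ddot\gamma$ is homogeneous of degree $-1$), and Arzel\`a--Ascoli with a diagonal argument. The differences are minor. The paper obtains the upper bound on $|A_k|$ from the interior estimate of Lemma~\ref{interior curvature}, using the inscribed ball $B(p_k,r)\subset\Omega_0^k$ of $k$-independent radius that noncollapsing supplies at a point of bounded curvature, and then represents the convex hypersurfaces as radial graphs over $\partial B(p_k,r)$, which makes the global Arzel\`a--Ascoli step for convex boundaries particularly clean; you instead extract both bounds from Proposition~\ref{lower curvature} and use local graphs over tangent planes, which also works, and your comparison $C^{-1}|A_k|\le G_k\le C|A_k|$ under $\delta$-uniform parabolicity is correct.

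The one substantive comment concerns your final paragraph: the ``main obstacle'' you identify is not actually there, and the iteration you sketch is both unnecessary and the only non-rigorous part of your write-up. Proposition~\ref{lower curvature} holds for every $L$, with the constant depending on $L$; since $|x_k|\le R$ and $G_k(x_k,0)$ is pinned between fixed positive constants, a single application at $(x_k,0)$, with $L$ chosen large in terms of a given compact set $K\subset\mathbb{R}^{n+1}\times(-\infty,0]$, already gives $C(K)^{-1}\le G_k\le C(K)$ on $M_t^k\cap K$ for all $k$. There is thus no need to relocate base points ``reached by the limit'' (a step which, as phrased, would be the weak link, since producing new points with uniformly pinned $G_k$ is exactly what would need justification). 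One further remark: the hypothesis as printed is vacuous, because $G_k+G_k^{-1}\ge 2$ pointwise; both your proof and the paper's really use the intended reading that there exist points $x_k\in M_0^k\cap B(0,R)$ at which $G_k(x_k,0)$ is bounded above and below by fixed positive constants, so your selection of such points should be flagged as resting on that corrected hypothesis rather than on the literal statement.
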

\begin{proof}
The assumptions imply that, for each $k$, there is a point $p_k$ satisfying $|p_k| \leq R$ and a ball  
    \[B(p_k,r) \subset \Omega_0^k,\]
where $r>0$ is independent of $k$. We may therefore apply the interior curvature bound of Lemma \ref{interior curvature} to conclude that, in every compact subset of spacetime, $|A_k|$ is bounded independently of $k$. Moreover, by Proposition~\ref{lower curvature}, in every compact subset of spacetime $G_k$ is bounded from below by a positive constant which is independent of $k$. 
    
With the uniform upper bound for $|A_k|$, we can deduce uniform $C^{2,\alpha}$-estimates in compact subsets of spacetime by expressing $M_t^k$ locally as a graph and (since $\gamma$ is convex or concave) appealing to the H\"{o}lder estimate for for concave/convex parabolic PDE proven in \cite{Krylov82}. Using the upper bound for $|A_k|$ and the uniform lower bound for $G_k$, we can bootstrap using the Schauder estimates to get uniform derivative bounds up to any order in compact subsets of spacetime. Since the hypersurfaces $M_t^k$ are radial graphs over $\partial B(p_k, r)$, a standard application of the Arzel\'{a}--Ascoli Theorem shows that the flows subconverge in $C^\infty_{\loc}$.
\end{proof}

\subsection{Neck-cap decomposition} We have the following consequence of the splitting theorem in \cite{Lynch_22_b}.

\begin{lemma}\label{splitting}
Let $M_t$, $t \in (-\infty, 0]$, be a convex, noncollapsing, uniformly two-convex ancient $G$-flow. If there is a time $t \leq 0$ and a point in $M_t$ at which $\lambda_1 = 0$, then $M_t$ is a self-similarly shrinking $\mathbb{R} \times S^{n-1}$.
\end{lemma}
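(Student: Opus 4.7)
The plan is to reduce the statement to two ingredients that have already been established in the literature and invoked elsewhere in this paper. The first ingredient is the splitting theorem of \cite{Lynch_22_b}. To apply it, I need uniform ellipticity of $\gamma$ along $M_t$, which is supplied by Lemma~\ref{lemma uniform parabolicity} (using convexity and uniform two-convexity). Given a spacetime point where $\lambda_1 = 0$, the splitting theorem then says that $M_t$ isometrically splits off a line; that is, after a rigid motion we may write $M_t = \mathbb{R} \times N_t$ for some convex hypersurface $N_t \subset \mathbb{R}^n$. The underlying mechanism is a strong maximum principle applied to the evolution of the smallest eigenvalue $\lambda_1$ of the second fundamental form, but because $\gamma$ is fully nonlinear (and only assumed convex or concave, with the inverse-concavity hypotheses \eqref{inverse-concavity}), one cannot use Hamilton's tensor maximum principle directly; this is precisely the difficulty handled in \cite{Lynch_22_b}.

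Once the line splitting is in hand, the induced flow of $N_t$ in $\mathbb{R}^n$ moves with speed $\tilde\gamma(\mu_1,\dots,\mu_{n-1}) := \gamma(0,\mu_1,\dots,\mu_{n-1})$, which by our standing assumptions is well-defined, smooth, positive, strictly monotone and strictly inverse-concave on $\mathbb{R}^{n-1}_+$. The second ingredient is then the classification of convex, noncollapsing, uniformly pinched ancient solutions of such a flow: since $\lambda_1 \equiv 0$ on the $\mathbb{R}$-factor and $\lambda_1 + \lambda_2 \geq \beta H$ on $M_t$, the smallest principal curvature $\mu_1$ of $N_t$ satisfies a pointwise pinching $\mu_1 \geq \beta' H_{N_t}$, so $N_t$ is uniformly convex. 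Noncollapsing of $M_t$ transfers directly to $N_t$. The result in \cite{LangfordLynch} (which is the reference invoked immediately for this step in the overview of the proof at the start of Section~\ref{blow-down section}) then identifies such an ancient flow as a self-similarly shrinking round sphere $S^{n-1}$, so $M_t = \mathbb{R} \times S^{n-1}$ is a shrinking cylinder.

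The main obstacle is really packaged inside the splitting theorem of \cite{Lynch_22_b}: verifying that under the hypotheses on $\gamma$ (convex or concave, together with the inverse-concavity on the relevant boundary faces of $\Gamma$), one obtains a strong maximum principle for $\lambda_1$ along a convex flow. Everything else in the argument is a routine consequence: uniform ellipticity comes from Lemma~\ref{lemma uniform parabolicity}, two-convexity promotes to uniform convexity of $N_t$ via the pinching $\lambda_1+\lambda_2 \geq \beta H$, and the reduction to a shrinking sphere for $N_t$ is the known classification cited above.
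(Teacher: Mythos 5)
Your route is essentially the paper's: invoke the splitting theorem of \cite{Lynch_22_b} at a point where $\lambda_1=0$ to write $M_t=\mathbb{R}\times M_t^\perp$, observe that uniform two-convexity of $M_t$ makes the cross-section uniformly convex, and then classify the cross-sectional ancient flow. Two points in your reduction are glossed over, though. First, before you can apply the classification of \cite{LangfordLynch} (Theorem~1.7 there concerns compact pinched ancient solutions), you need to know that $M_t^\perp$ is compact; the paper gets this from Hamilton's theorem that a complete convex hypersurface with uniformly pinched second fundamental form is compact (\cite{HamiltonPinched}), and this step only works when the cross-section has dimension $n-1\geq 2$. Second, and this is the genuine gap, your argument does not cover $n-1=1$: there the cross-section is a convex ancient curve flow (by homogeneity, curve shortening flow up to a constant factor), \cite{LangfordLynch} does not apply, and pinching is vacuous; the paper instead uses the noncollapsing hypothesis together with the classification of convex ancient curve shortening solutions in \cite{BLT} to rule out the collapsed examples (Angenent ovals, paperclips) and conclude the cross-section is a shrinking circle. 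So the noncollapsing assumption is not merely "transferred for convenience" as in your sketch; it is exactly what is needed in the low-dimensional case. With these two additions your argument coincides with the paper's proof.
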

\begin{proof}
Suppose $\lambda_1$ vanishes at some point in spacetime. By the splitting theorem \cite[Proposition~A.2]{Lynch_22_b}, 
    \[M_t = \mathbb{R} \times M_t^{\perp}\]
for $t \leq 0$, where $M_t^\perp$ is a family of hypersurfaces in $\mathbb{R}^n$. If $n-1 \geq 2$ then $M_t^\perp$ is uniformly convex, and hence compact by \cite{HamiltonPinched}, so Theorem~1.7 in \cite{LangfordLynch} (see also \cite{Risa--Sinestrari}) implies that $M_t^\perp$ is a self-similarly shrinking sphere. If $n-1 = 1$, since we are assuming noncollapsing, the classification result in \cite{BLT} implies that $M_t^\perp$ is a self-similarly shrinking circle. This completes the proof. 
\end{proof}

Let $M_t$ be a $G$-flow. For $\bar x \in M_{\bar t}$ we define 
\begin{align*}
    \hat{\mathcal{P}}(\bar{x},\bar{t},L,\theta):=\Big\lbrace (x,t) : \, &x\in B_{g(\bar t)}(\bar x(t), \gamma(0,1,\dots,1)LG(\Bar{x},\Bar{t})^{-1}),\\
    &\; t\in[\Bar{t}-\gamma(0,1,\dots,1)^2 \theta G(\Bar{x},\Bar{t})^{-2}, \bar{t}]\Big\rbrace,
\end{align*}
where $\bar x(t)$ is the unique curve satisfying 
    \[\frac{d}{dt} \bar x(t) = - G(\bar x(t), t) \nu(\bar x(t), t), \qquad \bar x(\bar t) = \bar x.\]

\begin{definition}
A point $(\bar{x},\bar{t})$ is said to lie at the center of an $\varepsilon$-neck if, after shifting $(\bar x, \bar t)$ to $(0,0)$, parabolically rescaling to arrange $G(0, 0) = (\gamma(0,1,\dots,1)/2)^{1/2}$, and applying a rigid motion, the region $\hat{\mathcal{P}}(\bar x, \bar t, 100,100)$ is a normal graph over $\mathbb{R} \times S^{n-1}((2\gamma(0,1,\dots,1)(1-t))^{1/2})$, and the $C^{10}$-norm of the height function is at most $\varepsilon$. 
\end{definition}

Let $M_t$ be a noncompact, strictly convex ancient $G$-flow. Then $M_t$ has a single end at each time. The following result asserts that if $M_t$ is also noncollapsing and uniformly two-convex, then this end consists of necks. For solutions of the mean curvature flow this was proven in \cite[Proposition~3.4]{Haslhofer--Kleiner_Surgery}. 

\begin{lemma}\label{neck-cap}
Let $M_t$, $t \in (-\infty, 0]$, be a convex ancient $G$-flow which is $\alpha$-noncollapsing and $\beta$-uniformly two-convex. For every $\varepsilon_1$, $\varepsilon_2 > 0$ there is a positive constant $R = R(n,\gamma,\alpha,\beta,\varepsilon_1,\varepsilon_2)$ with the following property. If $x_1$ and $x_2$ are points in $M_0$, neither of which lies at the center of an $\varepsilon_1$-neck, then one of the following holds:
\begin{enumerate}
    \item $\max\{G(x_1, 0), G(x_2,0)\}|x_1 - x_2| \leq R$.
    \item $M_t$ is compact and bounds a region of diameter at most $(1+\varepsilon_2)|x_1 - x_2|$.
\end{enumerate}
\end{lemma}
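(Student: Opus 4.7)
We argue by contradiction, following the scheme of Haslhofer--Kleiner. If the lemma fails for some $\varepsilon_1, \varepsilon_2 > 0$, then we can find convex, $\alpha$-noncollapsing and $\beta$-uniformly two-convex ancient $G$-flows $M^k_t = \partial \Omega^k_t$ with points $x_1^k, x_2^k \in M^k_0$ that are not $\varepsilon_1$-neck centers, such that
\[
R_k := \max\{G_k(x_1^k, 0),\, G_k(x_2^k, 0)\}\,|x_1^k - x_2^k| \to \infty,
\]
while every $M^k_t$ is either noncompact or encloses a region of diameter greater than $(1+\varepsilon_2)|x_1^k - x_2^k|$. After relabelling we may assume $G_k(x_1^k,0) \geq G_k(x_2^k,0)$. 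Translating $x_1^k$ to the origin and parabolically rescaling by $G_k(x_1^k,0)$ yields flows $\tilde M^k_t$ with $\tilde G_k(0,0)=1$; these inherit every hypothesis, so Proposition~\ref{compactness} furnishes a subsequential $C^\infty_{\loc}$-limit $\tilde M^\infty_t$, which is again a convex, $\alpha$-noncollapsing, $\beta$-uniformly two-convex ancient $G$-flow.

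\emph{Noncompact limit.} Suppose $\tilde M^\infty_t$ is noncompact. By Theorem~\ref{main}, it is either a shrinking round cylinder or the bowl soliton. If it is a cylinder, then the origin is a neck point, so $C^\infty_{\loc}$-convergence would make $x_1^k$ an $\varepsilon_1$-neck center in $M^k$ for large $k$, contrary to hypothesis. If it is the bowl, the origin is not an $\varepsilon_1$-neck center in the limit, and hence must lie in a fixed compact neighborhood of the tip; points on the bowl sufficiently far from the tip are arbitrarily good neck centers. On the other hand, the rescaled point $\tilde x_2^k$ satisfies $|\tilde x_2^k| = G_k(x_1^k,0)\,|x_1^k - x_2^k| \to \infty$, so for large $k$ it sits in the cylindrical end of the bowl. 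By Proposition~\ref{lower curvature} and the bowl asymptotics, the ratio $\tilde G_k(\tilde x_2^k, 0) = G_k(x_2^k,0)/G_k(x_1^k,0)$ stays bounded away from $0$ and $\infty$. Recentering at $\tilde x_2^k$ and rescaling to normalize curvature, a second application of Proposition~\ref{compactness} produces a limit that is a shrinking round cylinder. Smooth convergence then forces $x_2^k$ to be an $\varepsilon_1$-neck center in $M^k$ for large $k$, once again a contradiction.

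\emph{Compact limit.} Suppose instead that $\tilde M^\infty_t$ is compact. By the classification of compact convex ancient $G$-flows (\cite{Andrews_Pinching, Andrews_Euclidean}, and compare the compact case in Lemma~\ref{splitting} and \cite{LangfordLynch}), $\tilde M^\infty_t$ is a shrinking round sphere; let $D$ denote its diameter at time $0$. Since a convex body whose boundary contains a complete copy of $S^n$ must be the enclosed round ball, $C^\infty_{\loc}$-convergence together with convexity forces $\tilde M^k_t$ to be compact for large $k$, with $\tilde \Omega^k_0$ Hausdorff-close to a ball of diameter $D$. Consequently $|\tilde x_2^k| \leq \diam(\tilde\Omega^k_0) \to D$, i.e., $R_k = G_k(x_1^k,0)\,|x_1^k-x_2^k| = O(1)$, contradicting $R_k \to \infty$. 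The main delicate point in this scheme is the bowl case, where one must carefully transfer cylindrical geometry around $\tilde x_2^k$---seen at the scale $G_k(x_1^k,0)$---back to an $\varepsilon_1$-neck around $x_2^k$ in $M^k$ at its own natural scale $G_k(x_2^k,0)$, using Proposition~\ref{lower curvature} and a secondary compactness step; the remaining verifications (scale-invariance of the hypotheses, identifying compact limits as spheres, and passing convexity to the limit) are routine once Theorem~\ref{main} is in hand.
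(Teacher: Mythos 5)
Your argument has a fatal structural flaw: it invokes Theorem~\ref{main} to classify the noncompact rescaled limit as a shrinking cylinder or the bowl soliton. Lemma~\ref{neck-cap} is a preliminary result whose role in the paper is precisely to feed into the proof of Theorem~\ref{main} (it is used in Lemma~\ref{curvature bound}, in the blow-down analysis of Section~\ref{blow-down section}, in Proposition~\ref{curvature at tip}, and in the symmetry arguments of Section~\ref{rotational symmetry section}), so using the classification theorem here is circular. The intended proof avoids this entirely: one normalises $|x_1^k-x_2^k|=1$ (so that $\max\{G(x_1^k,0),G(x_2^k,0)\}\to\infty$), shows via the interior curvature estimate that the convex bodies $\bar\Omega_0^k$ Hausdorff-converge to a convex set $K$ which must be one-dimensional (a two-dimensional ``cross'' would, after a curvature-normalised rescaling and an application of Proposition~\ref{compactness}, produce a limit flow making interior contact with a 2-plane, contradicting uniform two-convexity), and then either both $x_i^k$ approach the endpoints of $K$ (giving alternative (2)), or $x_2^k$ approaches an interior point, in which case the rescaling normalised at $x_2^k$ produces a limit touching a line from inside, which splits by Lemma~\ref{splitting} and forces $x_2^k$ to be a neck center. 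Only the splitting lemma is needed, not the full classification.

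Beyond the circularity there are two further gaps. First, in the compact case your appeal to a ``classification of compact convex ancient $G$-flows'' as round spheres is false: compact convex, noncollapsing, uniformly two-convex ancient solutions include ancient ovals (cf.\ Conjecture~\ref{3-convex}), so the limit need not be a sphere; your diameter-boundedness contradiction can be salvaged without identifying the limit, but as written the step is wrong. Second, in the bowl case the claim that $G_k(x_2^k,0)/G_k(x_1^k,0)$ stays bounded away from $0$ is incorrect (curvature on the bowl's end decays like the inverse square root of the distance to the tip, so this ratio tends to $0$ since $|\tilde x_2^k|\to\infty$), and Proposition~\ref{lower curvature} only gives comparability at scale $G^{-1}$, not at the diverging scale $|\tilde x_2^k|$; moreover, since $\tilde x_2^k$ escapes every compact set, the $C^\infty_{\loc}$-convergence to the bowl gives no direct information about the geometry near $x_2^k$, so the ``second compactness step'' you gesture at would itself need the very kind of argument (splitting along an enclosed line, as in the paper's proof) that you have not supplied.
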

\begin{proof}
Suppose the claim is false for some $\varepsilon_1$ and $\varepsilon_2$. Then there is a sequence of convex ancient $G$-flows $M_t^k = \partial\Omega_t^k$, $t \in (-\infty,0]$, which are $\alpha$-noncollapsing and $\beta$-uniformly two-convex, and have the following property. There are points $x_1^k$ and $x_2^k$ in $M_0^k$, neither of which lies at the center of an $\varepsilon_1$-neck, such that
    \[\max\{G(x_1, 0), G(x_2,0)\}|x_1 - x_2| \to \infty\]
and 
    \[\diam(\Omega_0^k) \geq (1+\varepsilon_2)|x_1 - x_2|.\]
After performing a parabolic rescaling, we may assume that $|x_1^k - x_2^k| = 1$. We then have $\max\{G(x_1^k, 0), G(x_2^k,0)\} \to \infty$. Consequently, after passing to a subsequence, the sets $\bar \Omega_0^k$ Hausdorff-converge to a closed convex set of dimension at most $n$ (this follows from Lemma~\ref{interior curvature}). Let us denote this set by $K$.  

We claim that the dimension of $K$ is 1. If $K$ has dimension at least 2 then there is a `cross' in $K$ consisting of intervals $\overline{y_1y_2}$ and $\overline{y_3y_4}$ which intersect transversally at a point $y_0$. Approximate the points $y_i$ by sequences $y_i^k$ in $M_0^k$, and let $T_k$ denote the convex hull of the set $\{y_i^k\}$. We have $T_k \subset \bar \Omega_0^k$. After performing a parabolic rescaling to arrange $G(y_0^k, 0) = 1$, we may apply Proposition~\ref{compactness} to extract a limiting $G$-flow which is uniformly two-convex for $t \in (-\infty, 0]$. But after rescaling the sets $\partial T_k$ converge to a 2-plane which makes interior contact with the limiting flow at time $t = 0$. This contradicts uniform two-convexity. So we conclude that $K$ has dimension at most 1. Moreover, since $|x_1^k - x_2^k| = 1$, $K$ is not a point. Therefore, the dimension of $K$ is 1. 

If $x_1^k$ and $x_2^k$ converge to the endpoints of $K$ then for sufficiently large $k$ we have $\diam(\Omega_0^k) < 1+\varepsilon_2$, which is impossible by hypothesis. Therefore, we may assume without loss of generality that $x_2^k$ converges to an interior point of $K$. Let us now parabolically rescale so that $G(x_2^k, 0) = 1$ and extract a limiting $G$-flow using Proposition~\ref{compactness}. Arguing as above, we see that the limit makes interior contact with a line at time $t = 0$. Therefore, by Lemma~\ref{splitting}, $x_2^k$ lies at the center of an $\varepsilon_1$-neck for all large $k$. This is a contradiction. 
\end{proof}

Lemma~\ref{neck-cap} implies that we have bounded curvature on bounded time intervals. 

\begin{lemma}\label{curvature bound}
Let $M_t = \partial \Omega_t$, $t \in (-\infty, 0]$, be a noncompact, convex ancient $G$-flow which is noncollapsing and uniformly two-convex. Then for each $T < \infty$, $\sup_{M_t} G$ is bounded from above independently of $t \in [-T, 0]$. 
\end{lemma}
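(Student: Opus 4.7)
I plan to prove the lemma by contradiction using a blow-up analysis combined with the neck-cap decomposition. Suppose the conclusion fails, so there exist sequences $t_k \in [-T, 0]$ and $x_k \in M_{t_k}$ with $L_k := G(x_k, t_k) \to \infty$; passing to a subsequence, we may assume $t_k \to t_\infty \in [-T, 0]$. By Lemma~\ref{splitting} I may further assume that $M_t$ is strictly convex at every time, since otherwise $M_t$ is a shrinking cylinder, for which the claim is immediate on $[-T, 0]$ (the cylinder exists smoothly until its singular time, which lies strictly after $0$ by our normalisation).

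First I would localise the blow-up. Since the flow is inward, $\Omega_0 \subset \Omega_t$ for all $t \in [-T, 0]$; fixing an interior ball $B(q, r_0) \subset \Omega_0$ thus yields $B(q, r_0) \subset \Omega_t$ for all such $t$. Applying Lemma~\ref{interior curvature} on a slightly enlarged time window gives a uniform bound on $|A|$ in every fixed compact region of $\mathbb{R}^{n+1}$ over $t \in [-T, 0]$, which forces $|x_k| \to \infty$. I would then form the parabolic rescalings $\hat{M}_s^k := L_k(M_{t_k + s L_k^{-2}} - x_k)$, which are convex, uniformly noncollapsing, and uniformly two-convex ancient $G$-flows with $G_{\hat{M}^k}(0, 0) = 1$. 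If $t_\infty > -T$, the backward existence time tends to $\infty$; the boundary case $t_\infty = -T$ is reduced to this by replacing $T$ with $T + 1$ at the outset. Proposition~\ref{compactness} then yields a subsequential $C^\infty_{\loc}$-limit $\hat{M}_s$ on $(-\infty, 0]$ which is a convex, noncollapsing, uniformly two-convex ancient $G$-flow with $G_{\hat{M}}(0, 0) = 1$.

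The main difficulty, where I expect the argument to require the most care, is extracting a contradiction from the existence of $\hat{M}_s$ using the noncompactness of the original flow. My plan is to show, by combining Lemma~\ref{neck-cap} with the convex-geometric structure of $\Omega_{t_k}$, that $\hat{M}_s$ must admit a point at which $\lambda_1 = 0$; Lemma~\ref{splitting} will then force $\hat{M}_s$ to be a shrinking cylinder. Concretely, for each large $k$ the strictly convex noncompact hypersurface $M_{t_k}$ has a high-curvature feature at $x_k$ (curvature $L_k$) together with an end; by Lemma~\ref{neck-cap} with the noncompact alternative, the non-neck region at time $t_k$ either has diameter at most $R/L_k$ (the case that $x_k$ itself is a non-neck point) or is disjoint from $x_k$ entirely. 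In either case the noncompact end of $M_{t_k}$ consists of necks, and $\Omega_{t_k} \supset \Omega_0$ together with convexity in a common recession direction forces the cross-sections of $\Omega_{t_k}$ on its end to be bounded below in size by the corresponding cross-sections of $\Omega_0$, giving a uniform lower bound on the neck radii at infinity. In rescaled coordinates these distant, large necks survive into the limit as a flat direction in $\hat{M}_s$, whence Lemma~\ref{splitting} identifies $\hat{M}_s$ as a shrinking cylinder of fixed scale. But comparing with $M_{t_k}$ at scale $L_k^{-1}$ would then say that $x_k$ is centred on a neck of $M_{t_k}$ of radius $\sim 1/L_k \to 0$, contradicting the uniform lower bound on neck radii just established.
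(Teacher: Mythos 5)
Your setup (contradiction, fixed ball $B(q,r_0)\subset\Omega_0\subset\Omega_t$, interior curvature estimate forcing $|x_k|\to\infty$, neck-cap decomposition, and the observation that the far-out cross-sections of $\Omega_{t_k}$ contain discs of radius $\sim r_0$ coming from the convex hull of $B(q,r_0)$ and a ray in $\Omega_{t_k}$) contains all the right ingredients, but the central step is flawed. You claim that after rescaling by $L_k=G(x_k,t_k)$ about $x_k$, the ``distant, large necks survive into the limit as a flat direction'' so that $\hat M_s$ splits off a line and Lemma~\ref{splitting} makes it a cylinder. This is not justified and is in general false: by Proposition~\ref{lower curvature}, on any ball of fixed \emph{rescaled} radius about $x_k$ the curvature is comparable to $L_k$, so any point whose curvature stays bounded in $k$ (in particular every neck of radius $\gtrsim r_0$) lies at rescaled distance tending to infinity from the origin and is invisible in the $C^\infty_{\loc}$ limit. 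Consequently $\hat M_s$ need not contain a point with $\lambda_1=0$; a priori it could be a strictly convex cap (e.g.\ a bowl-type solution), which is exactly the scenario one cannot exclude at this stage, since the possibility being ruled out is precisely that a cap point has curvature blowing up in finite time. Without the cylinder conclusion, your final comparison (``$x_k$ is centred on a neck of radius $\sim 1/L_k$'') has no basis, so the contradiction does not close. (Your remarks about backward existence time and the case $t_\infty=-T$ are also off the mark but harmless: the flows are ancient, so the rescaled flows are automatically defined on $(-\infty,0]$.)

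The gap can be repaired without any blow-up limit, using only the pieces you already assembled — and this is essentially the paper's proof. Since $|x_k|\to\infty$ and the asymptotic cone of $\Omega_{t_k}$ is a fixed ray (Lemmas~\ref{constant cone} and \ref{neck-cap}), $x_k$ lies far out along the single end. Either $x_k$ is itself the center of an $\varepsilon_1$-neck, or, by Lemma~\ref{neck-cap} (the compact alternative being excluded by noncompactness), every non-neck point lies within distance $R/L_k$ of $x_k$, and then a point at distance $2R/L_k$ from $x_k$ is a neck point whose curvature is comparable to $L_k$ by Proposition~\ref{lower curvature}. In either case one produces, far out along the end, a neck of radius $O(L_k^{-1})\to 0$, whose cross-section determines the slice of the convex body $\Omega_{t_k}$ there. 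But that slice contains a disc of radius $\sim r_0$, because $\Omega_{t_k}\supset\Omega_0$ contains the convex hull of $B(q,r_0)$ and a ray from $q$ in the recession direction — equivalently, the fixed ball $B$ can be slid along the ray and must pass through the thin neck. This is the contradiction; the compactness theorem and splitting lemma are not needed here.
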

\begin{proof}
Let $B$ be an open ball in $\Omega_0$. Then $B$ is contained in $\Omega_t$ for all $t \leq 0$. Suppose there is a sequence of times $t_k \in [-T, 0]$ and points $x_k \in M_{t_k}$ such that $G(x_k, t_k) \to \infty$. Then $|x_k| \to \infty$, so for any $\varepsilon > 0$ we may apply Lemma~\ref{neck-cap} to conclude that $(x_k,t_k)$ lies at the center of an $\varepsilon$-neck for all large $k$. Since $G(x_k,t_k) \to \infty$, the radius of the neck must tend to zero as $k \to \infty$. On the other hand, by convexity and noncompactness, $\Omega_{t_k}$ contains a ray emanating from the center of $B$. As we slide $B$ along this ray it must remain inside $\Omega_{t_k}$ and eventually pass through the neck containing $x_k$. This is impossible if the neck is too thin, so we have reached a contradiction. 
\end{proof}

\subsection{Differential Harnack inequality} Every strictly convex ancient $G$-flow which is noncollapsing and uniformly two-convex satisfies the following differential Harnack inequality. In addition, if equality is ever attained then the $G$-flow is a translating soliton. These results require the inverse-concavity hypotheses \eqref{inverse-concavity}. 

\begin{proposition}\label{differential harnack}
Let $M_t$ be a strictly convex ancient $G$-flow which is uniformly two-convex and noncollapsing. We then have
    \begin{equation}\label{Harnack V}
    \partial_t G + 2\langle \nabla G, V\rangle + A(V,V) \geq 0
    \end{equation}
at every point $x \in M_t$ and for every $V \in T_x M_t$. Equivalently,
    \begin{equation}\label{Harnack scalar}
    \partial_t G - A^{-1}(\nabla G, \nabla G) \geq 0
    \end{equation}
for every $x \in M_t$. The last inequality is strict at every point in spacetime unless $M_t$ is a translating soliton. 
\end{proposition}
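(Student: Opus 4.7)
The plan is to obtain inequality \eqref{Harnack V} as a direct consequence of the differential Harnack inequality for noncompact convex ancient $G$-flows established in \cite{Lynch_Harnack}. Under the hypotheses of strict convexity, uniform two-convexity, noncollapsing, and the inverse-concavity assumption \eqref{inverse-concavity}, the main theorem of that paper applies and produces \eqref{Harnack V} at every point in spacetime and for every tangent vector $V$. There is no need to redo the parabolic maximum principle argument here; the work of \cite{Lynch_Harnack} already treats the noncompact case that is not covered by the classical theorems of Hamilton and Andrews.

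The equivalence of \eqref{Harnack V} and \eqref{Harnack scalar} is a fibrewise optimization. Strict convexity makes $A$ positive definite on each tangent space, so the polynomial $V \mapsto A(V,V) + 2\langle \nabla G, V\rangle$ is strictly convex with unique minimizer $V^{\ast} = -A^{-1}\nabla G$ and minimum value $-A^{-1}(\nabla G, \nabla G)$. Taking the infimum over $V$ in \eqref{Harnack V} yields \eqref{Harnack scalar}, and substituting $V = V^{\ast}$ in \eqref{Harnack V} shows the reverse implication.

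The rigidity statement is where the real work lies. My plan is to apply the strong maximum principle to the scalar Harnack quantity $Q := \partial_t G - A^{-1}(\nabla G, \nabla G) \geq 0$. A direct Andrews-type computation (carried out in \cite{Lynch_Harnack}) shows that on a strictly convex $G$-flow, $Q$ satisfies a linear parabolic differential inequality of the form $(\partial_t - \dot\gamma^{ij}\nabla_i\nabla_j)Q \geq \langle b, \nabla Q\rangle + c\,Q$, whose principal part is uniformly elliptic by Lemma~\ref{lemma uniform parabolicity}. If $Q$ vanishes at a single spacetime point $(x_0, t_0)$, the strong maximum principle forces $Q \equiv 0$ on $M_t \times (-\infty, t_0]$.

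To extract the translating direction, I would consider the ambient vector field $W := -G\,\nu - A^{-1}\nabla G$ built from the optimal vector $V^{\ast}$. Differentiating $W$ tangentially and using the saturation of \eqref{Harnack V} together with the evolution equations for $G$, $A$ and $\nu$ should yield $\nabla_i W = 0$ in the equality case, so $W$ is a fixed vector in $\mathbb{R}^{n+1}$. Since $\langle W, \nu\rangle = -G$ by construction, the normal velocity of $M_t$ matches that of the translating family $M_{t_0} + (t - t_0)W$ for $t \leq t_0$, and forward uniqueness of the $G$-flow extends the identification to all $t \in (-\infty, 0]$. I expect the main obstacle to lie in the derivative computation for $W$: in the fully nonlinear regime one must absorb the second-order contributions coming from $A^{-1}\nabla G$ against the evolution of $A$, and it is precisely the inverse-concavity hypothesis \eqref{inverse-concavity} that gives these terms the correct sign and makes the maximum principle go through.
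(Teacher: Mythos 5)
Your treatment of the inequality and of the equivalence between \eqref{Harnack V} and \eqref{Harnack scalar} follows the paper's route (cite \cite{Andrews_Harnack} in the compact case, \cite{Lynch_Harnack} in the noncompact case, and minimise the quadratic in $V$ at $V=-A^{-1}(\nabla G)$), but there are two gaps. First, you apply the noncompact Harnack theorem of \cite{Lynch_Harnack} directly "under the hypotheses of strict convexity, uniform two-convexity, noncollapsing, and \eqref{inverse-concavity}", whereas that theorem additionally requires curvature bounded on bounded time intervals. This is not automatic for a noncompact ancient solution and is exactly what the paper supplies before invoking \cite[Theorem~1.2]{Lynch_Harnack}: Lemma~\ref{curvature bound}, whose proof uses the neck--cap decomposition (Lemma~\ref{neck-cap}) together with noncollapsing, uniform two-convexity and convexity. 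Without this verification your application of the cited theorem is unjustified, so you should either quote Lemma~\ref{curvature bound} or reprove a curvature bound on compact time intervals.

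Second, for the rigidity statement the paper simply invokes \cite[Corollary~1.3]{Lynch_Harnack}, which already asserts that equality in \eqref{Harnack scalar} at one spacetime point forces the flow to be a translating soliton. Your alternative plan (strong maximum principle for $Q=\partial_t G - A^{-1}(\nabla G,\nabla G)$, then showing that $W=-G\nu - A^{-1}\nabla G$ is a parallel ambient vector) is the standard strategy behind such rigidity results, but as written it is only a sketch: the parabolic inequality for $Q$ and, more seriously, the computation that saturation of \eqref{Harnack V} forces $\nabla W=0$ are asserted rather than carried out, and you yourself flag the absorption of the second-order terms coming from $A^{-1}\nabla G$ as an open obstacle. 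In the fully nonlinear setting this is precisely the delicate part (it is where inverse-concavity enters in \cite{Lynch_Harnack}), so either complete that computation or replace this portion of your argument by the citation of \cite[Corollary~1.3]{Lynch_Harnack}, as the paper does.
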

\begin{proof}
We first observe that \eqref{Harnack V} is equivalent to \eqref{Harnack scalar}; indeed, we have 
    \[2\langle \nabla G, V\rangle + A(V,V) \geq - A^{-1}(\nabla G, \nabla G)\]
with equality exactly when $V = - A^{-1}(\nabla G)$. When $M_t$ is compact, \eqref{Harnack scalar} was proven in \cite{Andrews_Harnack}. When $M_t$ is noncompact we have bounded curvature on bounded time intervals by Lemma~\ref{curvature bound}. Therefore, Theorem~1.2 in \cite{Lynch_Harnack} gives the inequality \eqref{Harnack V}. If equality is attained in \eqref{Harnack scalar} then $M_t$ is a translating soliton by \cite[Corollary~1.3]{Lynch_Harnack}. 
\end{proof}

As a consequence of the Harnack inequality we obtain a curvature bound which is valid for all times.

\begin{lemma}
Let $M_t$ be a strictly convex ancient $G$-flow which is uniformly two-convex and noncollapsing. Then $\sup_{M_t} G$ is bounded from above independently of $t$. 
\end{lemma}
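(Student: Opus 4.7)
My plan is to combine the differential Harnack inequality of Proposition~\ref{differential harnack} with the short-time curvature bound of Lemma~\ref{curvature bound}. The key observation is that the tangent vector $V$ appearing in \eqref{Harnack V} is arbitrary, so I will simply specialize to $V = 0$. This reduces the Harnack inequality to the pointwise bound $\partial_t G \geq 0$, where the time derivative is Lagrangian, i.e., taken at a fixed parametrized point of the flow. The full Harnack expression $\partial_t G + 2\langle\nabla G, V\rangle + A(V,V)$ is invariant under tangential reparametrizations, and the Lagrangian reading is precisely the one for which $V = 0$ corresponds to purely normal motion.

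From this monotonicity, the conclusion follows in a couple of lines. Parametrize the flow by $X: M \times (-\infty, 0] \to \mathbb{R}^{n+1}$ with $\partial_t X = -G\nu$. Then $t \mapsto G(X(p,t),t)$ is non-decreasing on $(-\infty, 0]$ for every $p \in M$, so $G(X(p,t),t) \leq G(X(p,0),0)$ for all $p$ and all $t \leq 0$. Taking the supremum over $p$ yields
\[
\sup_{M_t} G \leq \sup_{M_0} G \quad \text{for all } t \leq 0,
\]
and Lemma~\ref{curvature bound} applied with, say, $T = 1$ guarantees that $\sup_{M_0} G < \infty$. Combining these two facts gives the desired uniform-in-time upper bound.

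I do not anticipate any real obstacle: the whole argument is essentially a one-line consequence of the Harnack inequality together with the previously established finiteness of the supremum at a single later time. The only minor bookkeeping is to confirm that $\partial_t G$ in \eqref{Harnack V} is read in the Lagrangian sense, which as noted above is the only convention under which the full Harnack expression is reparametrization-invariant and matches the formulation used to derive Proposition~\ref{differential harnack}.
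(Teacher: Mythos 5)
Your argument is correct and is essentially the paper's own proof: the Harnack inequality of Proposition~\ref{differential harnack} (with $V=0$, equivalently via \eqref{Harnack scalar} since strict convexity gives $A>0$) makes $G$ nondecreasing along the normal trajectories $\partial_t X=-G\nu$, and finiteness of $\sup_{M_0}G$ then gives the uniform bound. The only small point is that Lemma~\ref{curvature bound} is stated for \emph{noncompact} flows, so when $M_t$ is compact you should instead note that $\sup_{M_0}G<\infty$ holds trivially by compactness --- exactly the case split the paper makes.
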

\begin{proof}
If $\lambda_1$ vanishes somewhere then $M_t$ is a shrinking cylinder by Lemma~\ref{splitting}, in which case the claim is immediate, so assume $A > 0$. 

We have $\sup_{M_0} G < \infty$. This is immediate if $M_0$ is compact, and follows from Lemma~\ref{curvature bound} if $M_t$ is noncompact. On the other hand, the Harnack inequality says that $G$ is nondecreasing at each point in spacetime. With this the claim is proven.
\end{proof}

\subsection{Expansions for hypersurfaces close to a cylinder} Let 
    \[\Sigma = \mathbb{R}\times S^{n-1}(r)\]
and consider a smooth function $u:\Sigma\to\mathbb{R}$. We study the hypersurface
    \[M = \graph u = \{x + u(x)\nu_\Sigma(x) : x \in \Sigma\},\]
where $\nu_\Sigma$ is the outward unit normal to $\Sigma$. We assume $M$ is close to $\Sigma$ in the sense that
    \[\sup_{\Sigma} \, (r^{-1}|u| + |\nabla^\Sigma u| + r|\nabla^2_\Sigma u|)\]
is small, and compute expansions for certain geometric quantities on $M$ in terms of their counterparts on $\Sigma$. We write $a = b + O(c)$ when $|a - b| \leq K|c|$ for a positive constant $K$ depending only on $n$ and $\gamma$.

Let $g$ and $A$ denote the induced metric and second fundamental form of $M$. Let $g_\Sigma$ and $A_\Sigma$ denote the induced metric and second fundamental form of $\Sigma$. We will not distinguish between tensors on $M$ and their pullbacks to $\Sigma$ via the map $x \mapsto x + u(x)\nu_\Sigma(x)$.

The proof of the following lemma is straightforward, so we omit it. 

\begin{lemma}\label{Lemma: expansions}
We have the expansions
\begin{align*}
    g &= g_\Sigma + 2 u A_\Sigma + O(r^{-2}|u|^2 + |\nabla^\Sigma u|^2)\\
    \nu &= \nu_\Sigma - \nabla^\Sigma u + O(r^{-2}|u|^2 + |\nabla^\Sigma u|^2)\\
    A &= A_\Sigma  - \nabla^2_\Sigma u + u A^2_\Sigma  + O(r^{-3}|u|^2 + r^{-1}|\nabla^\Sigma u|^2).
\end{align*}
\end{lemma}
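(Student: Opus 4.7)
The plan is to parametrize $M$ explicitly by the graph map $\Phi: \Sigma \to \mathbb{R}^{n+1}$, $\Phi(x) = x + u(x) \nu_\Sigma(x)$, compute its differential, and then derive the three expansions from the Gauss--Weingarten formalism, keeping careful track of which terms fall into the claimed error. All three assertions are local pointwise identities, so the argument amounts to a single coordinate computation on $\Sigma$.

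First, using the Weingarten relation $\partial_i \nu_\Sigma = (A_\Sigma)_i^k \partial_k$, we obtain
\[
\partial_i \Phi = \bigl(\delta_i^k + u\, (A_\Sigma)_i^k\bigr)\, \partial_k + \partial_i u \, \nu_\Sigma.
\]
Taking inner products yields
\[
g_{ij} = (g_\Sigma)_{ij} + 2 u\, (A_\Sigma)_{ij} + u^2 ((A_\Sigma)^2)_{ij} + \partial_i u\, \partial_j u,
\]
and since $|A_\Sigma| = O(r^{-1})$ on $\Sigma$, the last two terms are $O(r^{-2}|u|^2 + |\nabla^\Sigma u|^2)$. This is the first expansion.

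For the unit normal, I would write $\nu = c(\nu_\Sigma + V)$ with $V$ tangent to $\Sigma$, use the orthogonality condition $\langle \nu, \partial_i \Phi\rangle = 0$ to solve for $V$ by inverting $g_{ij}$ (whose inverse expansion is controlled by the first identity), and then normalize. To leading order one gets $V = -\nabla^\Sigma u$, with the correction from inverting $g$ and from the factor $c = (1+|\nabla^\Sigma u|^2)^{-1/2}$ contributing only quadratic errors $O(r^{-2}|u|^2 + |\nabla^\Sigma u|^2)$.

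The third expansion is the longest. Using $A_{ij} = \langle D_i \nu, \partial_j \Phi\rangle$ (under the paper's sign convention, which puts $\partial_i \nu = A_i^k \partial_k$), I would substitute the expansion for $\nu$ just obtained. Applying $D_i \nu_\Sigma = (A_\Sigma)_i^k \partial_k$ to the leading term contributes $A_\Sigma$ after pairing with the tangential part of $\partial_j \Phi$, while the mixed contribution with $u(A_\Sigma)_j^\ell$ produces $u A_\Sigma^2$. Differentiating the tangential correction $-\nabla^\Sigma u$ via the Gauss formula $D_i V = \nabla^\Sigma_i V + A_\Sigma(\partial_i, V)\,\nu_\Sigma$ gives the $-\nabla^2_\Sigma u$ term (after pairing with $\partial_j\Phi$) and an additional $-A_\Sigma(\partial_i, \nabla^\Sigma u)\, \partial_j u$ which is $O(r^{-1}|\nabla^\Sigma u|^2)$. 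All remaining cross-terms, together with the normalization correction, are straightforwardly bounded by $O(r^{-3}|u|^2 + r^{-1}|\nabla^\Sigma u|^2)$. The only mildly delicate point, and the one I expect to be the main bookkeeping obstacle, is verifying that every cross-term involving a product of $u$, $\nabla u$, $\nabla^2 u$ and a power of $A_\Sigma$ is either captured by the three explicit leading terms or is bounded by the stated quadratic error; this just requires consistently treating $r^{-1}|u|$, $|\nabla^\Sigma u|$ and $|\nabla^2_\Sigma u|$ as first-order small quantities and using Cauchy--Schwarz where needed.
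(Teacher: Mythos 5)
Your computation is correct and is exactly the standard graph-over-a-cylinder calculation that the paper has in mind when it omits the proof as ``straightforward'': parametrize by $\Phi(x)=x+u(x)\nu_\Sigma(x)$, read off $g$ from $\partial_i\Phi$, solve for $\nu$ from orthogonality and normalization, and expand $A$ via the Gauss--Weingarten relations. The only point worth making explicit is that cross-terms such as $|\nabla^2_\Sigma u|\,|\nabla^\Sigma u|^2$ and $r^{-1}|u|\,|\nabla^\Sigma u|$ land in the stated errors precisely because of the standing smallness assumption $\sup_\Sigma(r^{-1}|u|+|\nabla^\Sigma u|+r|\nabla^2_\Sigma u|)\ll 1$ together with Cauchy--Schwarz, which is what you indicate in your final paragraph.
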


From now on we work with a fixed orthonormal frame of eigenvectors for $A_\Sigma$ with respect to the metric $g_\Sigma$, denoted $\{e_i\}$. Moreover, we assume $e_1$ is orthogonal to $S^{n-1}$. This frame will typically fail to be orthonormal with respect to $g$. The following lemma establishes that we can, in a controlled fashion, smoothly transform $\{e_i\}$ to a frame which is $g$-orthonormal. We will find it convenient to use this transformed frame when we expand $G$ in Lemma~\ref{expansion for G} below. 

\begin{lemma}\label{frame construction}
There is a smooth field of endomorphisms $\alpha: T\Sigma \to T\Sigma$ such that $\{\alpha(e_i)\}$ is orthonormal with respect to $g$, and
    \[\alpha = I - u A_\Sigma + O(r^{-2}|u|^2 + |\nabla^\Sigma u|^2),\]
where $I : T\Sigma \to T\Sigma$ is the identity. 
\end{lemma}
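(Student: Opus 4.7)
The plan is to realise $\alpha$ as the ``square root'' inverse of the Gram matrix of the frame $\{e_i\}$ with respect to $g$, and then expand via the Taylor series of $t \mapsto (1+t)^{-1/2}$.

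More concretely, define the symmetric endomorphism $h \colon T\Sigma \to T\Sigma$ by $g_\Sigma(h(X), Y) = g(X, Y)$. By Lemma~\ref{Lemma: expansions},
\[
h = I + 2u A_\Sigma + E,
\qquad E = O(r^{-2}|u|^2 + |\nabla^\Sigma u|^2),
\]
where $E$ is a $g_\Sigma$-symmetric endomorphism. For $r^{-1}|u| + |\nabla^\Sigma u|$ sufficiently small (say below a dimensional threshold), $h$ is positive definite, and we set $\alpha := h^{-1/2}$ using the usual functional calculus for positive symmetric operators. This $\alpha$ is smooth because $h$ is, and is $g_\Sigma$-symmetric. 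Then
\[
g(\alpha(e_i), \alpha(e_j)) = g_\Sigma(h\,\alpha(e_i), \alpha(e_j)) = g_\Sigma(h^{1/2}(e_i), h^{-1/2}(e_j)) = g_\Sigma(e_i, e_j) = \delta_{ij},
\]
which gives the required $g$-orthonormality of $\{\alpha(e_i)\}$.

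It remains to verify the expansion. Writing $h = I + B$ with $B = 2uA_\Sigma + E$, the Taylor series yields
\[
\alpha = h^{-1/2} = I - \tfrac{1}{2} B + \tfrac{3}{8} B^2 + O(|B|^3).
\]
Now $\tfrac{1}{2}B = u A_\Sigma + \tfrac{1}{2} E$, and since the eigenvalues of $A_\Sigma$ are bounded by $r^{-1}$, we have $|B^2| \leq C(r^{-2}u^2 + |E|^2 + r^{-1}|u||E|)$, which is absorbed into $O(r^{-2}|u|^2 + |\nabla^\Sigma u|^2)$ (using the smallness assumption to control the quadratic $|E|^2$ term by its linear counterpart). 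Putting everything together,
\[
\alpha = I - u A_\Sigma + O(r^{-2}|u|^2 + |\nabla^\Sigma u|^2),
\]
as claimed.

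There is no real obstacle here: the construction is algebraic, the positive-definiteness needed to define $h^{-1/2}$ is guaranteed by the smallness hypothesis, and the expansion follows by inspection of the binomial series together with the bound $|A_\Sigma| \leq C r^{-1}$. The only mildly delicate point is keeping track of the two different scales $r^{-2}|u|^2$ and $|\nabla^\Sigma u|^2$ appearing in the error, but this is automatic once one remembers that $A_\Sigma$ carries a factor of $r^{-1}$ on the spherical directions.
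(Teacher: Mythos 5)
Your proof is correct, and it takes a genuinely different route from the paper. You define the $g_\Sigma$-symmetric Gram endomorphism $h$ with $g_\Sigma(hX,Y)=g(X,Y)$ and take $\alpha = h^{-1/2}$ via the functional calculus, obtaining orthonormality from the identity $g(\alpha e_i,\alpha e_j)=g_\Sigma(h^{1/2}e_i,h^{-1/2}e_j)=\delta_{ij}$ (which uses that $h^{\pm 1/2}$ are $g_\Sigma$-symmetric and commute with $h$), and the expansion from the binomial series $(I+B)^{-1/2}=I-\tfrac12 B+O(|B|^2)$ with $B=2uA_\Sigma+E$, $E=u^2A_\Sigma^2+\nabla^\Sigma u\otimes\nabla^\Sigma u$. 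The paper instead avoids functional calculus: it introduces the family $h_{ij}(s)=\delta_{ij}+2u(A_\Sigma)_{ij}+u^2(A_\Sigma^2)_{ij}+s\,\nabla^\Sigma_i u\,\nabla^\Sigma_j u$, takes $\alpha(0)=(I+uA_\Sigma)^{-1}$ exactly (which handles the $u$-dependence without any series), and solves an explicit linear ODE in $s$ that preserves the orthonormality relation, so that $\alpha(1)-\alpha(0)=O(|\nabla^\Sigma u|^2)$. Your construction is shorter and more standard, at the mild cost of invoking smoothness of the matrix square root and convergence of the operator binomial series (both justified by the standing smallness assumption of the subsection, which also keeps the constants depending only on $n$ and $\gamma$); the paper's homotopy argument is entirely elementary and isolates the gradient contribution cleanly. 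Since the lemma only asserts existence of some smooth $\alpha$ with the stated properties, and the later applications (e.g.\ Lemma~\ref{expansion for G}) use only $g$-orthonormality of $\{\alpha(e_i)\}$ and the expansion, your $\alpha$ serves equally well.
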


\begin{proof}
With respect to $\{e_i\}$, $g$ has entries
    \[g_{ij} = \delta_{ij} + 2 u (A_\Sigma)_{ij} + u^2 (A^2_\Sigma)_{ij} + \nabla^\Sigma_i u \nabla^\Sigma_j u.\]
For $s \in [0,1]$ we define 
    \[h_{ij}(s) := \delta_{ij} + 2 u (A_\Sigma)_{ij} + u^2 (A^2_\Sigma)_{ij} + s \nabla^\Sigma_i u \nabla^\Sigma_j u.\]
We then define $\alpha_i^k(s)$ to be the solution of the initial value problem
    \[h_{jk}(s) \frac{d}{ds} \alpha_i^k(s) = -\frac{1}{2}\alpha_i^m(s) \nabla^\Sigma_m u \nabla^\Sigma_j u, \qquad (\delta_k^i + u (A_\Sigma)_k^i)\alpha_j^k(0) = \delta^i_j.\]
This ensures that 
    \[ \frac{d}{ds}(h_{kl}(s)\alpha_i^k(s)\alpha_j^l(s)) = 0  \qquad \text{and} \qquad h_{kl}(0)\alpha_i^k(0)\alpha_j^l(0) = \delta_{ij},\]
and hence $\alpha_i^k(1)$ a smooth field of matrices on $\Sigma$ satisfying 
    \[\delta_{ij} = h_{kl}(1)\alpha_i^k(1) \alpha_j^l(1) = g_{kl}\alpha_i^k(1) \alpha_j^l(1).\]
That is, the frame $\{\alpha(1)(e_i)\}$ is $g$-orthonormal. Straightforward computations show that 
    \[\alpha_i^k(1) = \alpha_i^k(0) + O(|\nabla^\Sigma u|^2) = \delta_i^k - u (A_\Sigma)_i^k + O(r^{-2}|u|^2 + |\nabla^\Sigma u|^2).\]
Therefore, $\alpha(1)$ is a smooth field of endomorphisms with the properties claimed. 
\end{proof}

Let $\lambda$ and $\lambda_\Sigma$ denote the principal curvatures of $M$ and $\Sigma$, respectively. Recall that we assume $\Gamma$ contains $\{0\}\times\mathbb{R}_+^{n-1}$, and hence $\lambda_\Sigma \in \Gamma$. We define $G_\Sigma(x) = \gamma(\lambda_\Sigma(x))$. We may also write $G_\Sigma = \gamma(A_\Sigma)$, where on the right we evaluate $\gamma$ at the matrix with entries $(A_\Sigma)_{ij} = A_\Sigma(e_i, e_j)$. Suppose $\lambda$ lies in $\Gamma$ and define $G(x) = \gamma(\lambda(x))$. We also define $\tilde A_{ij} = A_{kl}\alpha^k_i\alpha^l_j$, so that $\tilde A_{ij}$ is the matrix representation of $A$ with respect to the $g$-orthonormal frame $\{\alpha(e_i)\}$. We may then express $G = \gamma(\tilde A)$. 

\begin{lemma}\label{expansion for G}
We have
    \[G = {G_\Sigma} - \dot \gamma^{ij}(A_\Sigma) (\nabla^2_\Sigma u)_{ij} - \dot \gamma^{ij}(A_{\Sigma})(A_\Sigma^2)_{ij} u  + O(r^{-3}|u|^2 + r^{-1}|\nabla^\Sigma u|^2 + r|\nabla^2_\Sigma u|^2).\]
This may also be written as 
    \begin{align*}G &= G_{\Sigma} - \dot\gamma^1(0,1,\dots,1) \frac{\partial^2 u}{\partial z^2} - \frac{1}{n-1}\frac{\gamma(0,1,\dots,1)}{r^2} r^2 \Delta_{S^{n-1}_r} u - \frac{\gamma(0,1,\dots,1)}{r^2} u\\
    &+ O(r^{-3}|u|^2 + r^{-1}|\nabla^\Sigma u|^2 + r|\nabla^2_\Sigma u|^2),
    \end{align*}
where $z$ denotes a Euclidean coordinate on the $x_1$-axis.
\end{lemma}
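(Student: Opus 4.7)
The plan is to compute $\gamma(\tilde A)$ by first obtaining a precise expansion of $\tilde A$ in terms of $A_\Sigma$, $u$, and $\nabla^2_\Sigma u$, and then Taylor expanding $\gamma$ to first order at $A_\Sigma$. Throughout, errors of the form $|u|^k|\nabla^\Sigma u|^l|\nabla^2_\Sigma u|^m$ with $k+l+m\geq 2$ must be dominated by $r^{-3}|u|^2 + r^{-1}|\nabla^\Sigma u|^2 + r|\nabla^2_\Sigma u|^2$; this is possible because all three listed terms scale like $r^{-1}\varepsilon^2$ in the smallness parameter $\varepsilon = \sup(r^{-1}|u|+|\nabla^\Sigma u|+r|\nabla^2_\Sigma u|)$, and AM--GM handles the cross terms (e.g.\ $|u||\nabla^2_\Sigma u|/r \leq \tfrac{1}{2}(r^{-3}|u|^2 + r|\nabla^2_\Sigma u|^2)$).

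For the expansion of $\tilde A_{ij} = A_{kl}\alpha_i^k\alpha_j^l$, I substitute $A = A_\Sigma - \nabla^2_\Sigma u + uA_\Sigma^2 + E_A$ from Lemma~\ref{Lemma: expansions} and $\alpha = I - uA_\Sigma + E_\alpha$ from Lemma~\ref{frame construction}. The crucial simplification is that the $+uA_\Sigma^2$ term coming from $A$ partially cancels the $-u(A_\Sigma A + AA_\Sigma)$ term coming from the conjugation by $\alpha$, leaving
\[
\tilde A_{ij} = (A_\Sigma)_{ij} - (\nabla^2_\Sigma u)_{ij} - u(A_\Sigma^2)_{ij} + R_{ij},
\]
where $R_{ij}$ gathers the higher-order contributions $u(A_\Sigma \nabla^2_\Sigma u + \nabla^2_\Sigma u\, A_\Sigma)$, $u^2 A_\Sigma^3$, $(E_A)_{ij}$, and products involving $E_\alpha$. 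All of these obey $|R| = O(r^{-3}|u|^2 + r^{-1}|\nabla^\Sigma u|^2 + r|\nabla^2_\Sigma u|^2)$ after the AM--GM manipulation noted above.

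Next, I Taylor expand $\gamma$:
\[
\gamma(\tilde A) = \gamma(A_\Sigma) + \dot\gamma^{ij}(A_\Sigma)(\tilde A - A_\Sigma)_{ij} + O\!\left(G_\Sigma^{-1}|\tilde A - A_\Sigma|^2\right),
\]
using the general pointwise bound $|\ddot\gamma^{ij,kl}(A)B_{ij}B_{kl}| \leq CG^{-1}|B|^2$ that holds under uniform ellipticity (noting that $\lambda_\Sigma = (0,1/r,\dots,1/r) \in \Gamma$ is a fixed direction, so $\gamma$ is uniformly elliptic at $A_\Sigma$ with constants depending only on $n$ and $\gamma$, and $G_\Sigma = \gamma(0,1,\dots,1)/r$). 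Since $\dot\gamma^{ij}(A_\Sigma)$ is bounded independently of $r$ by degree-zero homogeneity, contracting with the expansion of $\tilde A - A_\Sigma$ yields the first formula, with $\dot\gamma^{ij}(A_\Sigma)R_{ij}$ and the Taylor remainder both absorbed into the stated error.

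For the second formula, choose $e_1 = \partial_z$ and $e_2,\dots,e_n$ tangent to the spherical factor, so $A_\Sigma = \operatorname{diag}(0,1/r,\dots,1/r)$ and $\dot\gamma^{ij}(A_\Sigma)$ is diagonal with entries $\dot\gamma^i(0,1,\dots,1)$. Symmetry of $\gamma$ forces $\dot\gamma^2 = \cdots = \dot\gamma^n$ at $(0,1,\dots,1)$, and Euler's relation $\gamma(0,1,\dots,1) = \sum_i \dot\gamma^i \lambda_i$ gives $\dot\gamma^i(0,1,\dots,1) = \gamma(0,1,\dots,1)/(n-1)$ for $i\geq 2$. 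This turns $\dot\gamma^{ij}(A_\Sigma)(\nabla^2_\Sigma u)_{ij}$ into $\dot\gamma^1(0,1,\dots,1)\partial_z^2 u + \tfrac{\gamma(0,1,\dots,1)}{n-1}\Delta_{S^{n-1}_r} u$, and converts $\dot\gamma^{ij}(A_\Sigma)(A_\Sigma^2)_{ij} = \sum_{i\geq 2}\dot\gamma^i(\lambda_\Sigma)/r^2$ into $\gamma(0,1,\dots,1)/r^2$, completing the proof. I expect the main obstacle to be purely bookkeeping: the cancellation producing the coefficient $-u A_\Sigma^2$ (rather than $+u A_\Sigma^2$ or some combination) is delicate, since the conjugation in Lemma~\ref{frame construction} contributes a term of exactly the right sign to double the one already present in $A$.
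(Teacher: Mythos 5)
Your proposal is correct and follows essentially the same route as the paper: expand $\tilde A - A_\Sigma = -\nabla^2_\Sigma u - uA_\Sigma^2 + O(\cdot)$ via Lemmas~\ref{Lemma: expansions} and~\ref{frame construction} (including the sign cancellation between the $+uA_\Sigma^2$ term in $A$ and the $-2uA_\Sigma^2$ contribution from conjugation by $\alpha$, which is exactly the paper's intermediate identity), then Taylor expand $\gamma$ at $A_\Sigma$ with the second-derivative bound giving an $O(r|\tilde A - A_\Sigma|^2)$ remainder, and finally use symmetry, one-homogeneity and Euler's relation to evaluate $\dot\gamma^{ij}(A_\Sigma)$. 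The only cosmetic difference is that the paper packages the Taylor remainder through the auxiliary function $h(t)=\gamma(A_\Sigma+t(\tilde A-A_\Sigma))$ and a bound on $\sup_t|h''(t)|$, which is equivalent to your use of the $\ddot\gamma$ estimate along the segment.
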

\begin{proof}
The function
    \[h(t) := \gamma(A_\Sigma  + t(\tilde A - A_\Sigma))\]
is well defined and smooth in $t$. Performing a Taylor expansion at $t = 0$ yields
    \[|h(t) - G_\Sigma - \dot\gamma^{ij}(A_\Sigma)(\tilde A - A_\Sigma)_{ij}| \leq \frac{1}{2}\bigg(\sup_{t \in [0,1]} |h''(t)|\bigg) t^2\]
for $t \in [0, 1]$. Evaluating at $t = 1$, and estimating the right-hand side, we obtain 
    \[|G - G_\Sigma - \dot\gamma^{ij}(A_\Sigma)(\tilde A - A_\Sigma)_{ij}| \leq O(r|\tilde A-A|^2).\]
Now we use Lemma~\ref{Lemma: expansions} and Lemma~\ref{frame construction} to deduce that
    \[\tilde A - A = -\nabla^2_\Sigma u - A^2_\Sigma u+
    O(r^{-3}|u|^2 + r^{-1}|\nabla^\Sigma u|^2 + r|\nabla^2_\Sigma u|^2),\]
and so obtain
    \begin{align*}
    |G - G_\Sigma + \dot \gamma^{ij}(A_\Sigma)((\nabla^2_\Sigma u)_{ij} + (A^2_\Sigma)_{ij} u)| =O(r^{-3}|u|^2 + r^{-1}|\nabla^\Sigma u|^2 + r|\nabla^2_\Sigma u|^2).
    \end{align*}
This proves the first claim. 

Since $A_{\Sigma}$ is diagonal with respect to $\{e_i\}$, $\dot\gamma^{ij}(A_\Sigma)$ is also diagonal, and we have
    \begin{align*}
    \dot \gamma^{ij}(A_\Sigma) (\nabla^2_\Sigma u)_{ij} &= \dot\gamma^1(0,r^{-1}, \dots, r^{-1}) \frac{\partial^2 u}{\partial z^2}\\
    &+ \frac{1}{n-1} \sum_{i=2}^n \dot\gamma^i(0,r^{-1}, \dots, r^{-1}) \Delta_{S^{n-1}_r} u + O(|\nabla^\Sigma u|^2).
    \end{align*}
Here we have also used the symmetry of $\gamma$ in its arguments to express
    \[\dot\gamma^j(0,r^{-1}, \dots, r^{-1}) = \frac{1}{n-1} \sum_{i=2}^n \dot\gamma^i(0,r^{-1}, \dots, r^{-1})\]
for $j \geq 2$. Since $\gamma$ is homogeneous of degree one we have 
    \[\dot\gamma^i(0,r^{-1}, \dots, r^{-1}) = \dot\gamma^i(0,1, \dots, 1),\]
and hence
    \[\sum_{i=2}^n \dot\gamma^i(0,r^{-1}, \dots, r^{-1}) = \gamma(0,1,\dots,1).\]
The same reasoning yields
    \[\dot \gamma^{ij}(A_\Sigma)(A^2_\Sigma)_{ij} u = r^{-2} \sum_{i=2}^n\dot\gamma^i(0,r^{-1},\dots,r^{-1}) u = r^{-2}\gamma(0,1,\dots,1) u.\]
The second claim follows.
\end{proof}

Let $S$ be a symmetric $(0,2)$-tensor on $M$. After fixing a $g$-orthonormal frame, so that $A$ and $S$ can be identified with symmetric matrices, we define
\[\trace_\gamma(S) := \frac{d}{dt}\bigg|_{t=0} \gamma(A + tS).\]
The right-hand side is independent of the chosen frame, so $\trace_\gamma(S)$ is a well defined smooth function on $M$.  

\begin{lemma}\label{trace expansion}
Given a symmetric $(0,2)$-tensor $S$ on $M$, we have 
    \[\trace_\gamma(S) = \dot \gamma^{ij} (A_\Sigma)S_{ij} + O(r^{-1}|u| + |\nabla^\Sigma u| + r|\nabla^2_\Sigma u|)|S|.\]
In particular, when $S = \nabla^2 f$ for some smooth function $f$ we have
    \begin{align*}
    \trace_\gamma(\nabla^2 f) &=  \dot\gamma^1(0,1,\dots,1) \frac{\partial^2 f}{\partial z^2} + \frac{1}{n-1}\frac{\gamma(0,1,\dots,1)}{r^2} r^2 \Delta_{S^{n-1}_r} f\\
    &+ O(r^{-1}|u| + |\nabla^\Sigma u| + r|\nabla^2_\Sigma u|)|\nabla^2 f| + O(|\nabla^\Sigma u|)|\nabla f|
    \end{align*}
where $z$ denotes a Euclidean coordinate on the $x_1$-axis.
\end{lemma}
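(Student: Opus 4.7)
The plan is to mirror the proof of Lemma \ref{expansion for G}, working in the $g$-orthonormal frame $\{\alpha(e_i)\}$ supplied by Lemma \ref{frame construction}. Writing $\tilde A_{ij} = A_{kl}\alpha^k_i \alpha^l_j$ and $\tilde S_{ij} = S_{kl}\alpha^k_i \alpha^l_j$ for the representations of $A$ and $S$ in this frame, one has $\trace_\gamma(S) = \dot\gamma^{ij}(\tilde A)\tilde S_{ij}$ by definition. Since $\gamma$ is $1$-homogeneous, $\ddot\gamma^{ij,kl}$ is $(-1)$-homogeneous, so $|\ddot\gamma(A_\Sigma)| = O(G_\Sigma^{-1}) = O(r)$. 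Combining Lemma \ref{Lemma: expansions} with Lemma \ref{frame construction} gives $\tilde A - A_\Sigma = O(r^{-2}|u| + |\nabla^2_\Sigma u|) + \text{h.o.t.}$, whence a first-order Taylor expansion produces
\[\bigl|\dot\gamma^{ij}(\tilde A) - \dot\gamma^{ij}(A_\Sigma)\bigr| = O\bigl(r^{-1}|u| + r|\nabla^2_\Sigma u| + |\nabla^\Sigma u|^2\bigr).\]
The frame change contributes $|\tilde S - S| = O(r^{-1}|u| + |\nabla^\Sigma u|^2)|S|$, and using $|\dot\gamma^{ij}(A_\Sigma)| = O(1)$ the first assertion follows, after absorbing the quadratic term $|\nabla^\Sigma u|^2$ into the coarser $|\nabla^\Sigma u|$ appearing on the right-hand side.

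For the Hessian specialization, I first replace the Hessian $\nabla^2 f$ on $M$ by its counterpart $\nabla^2_\Sigma f$ on $\Sigma$. The difference of Levi-Civita connections is controlled by the covariant derivative of $g - g_\Sigma$; invoking Lemma \ref{Lemma: expansions}, the parallelism of $A_\Sigma$ along the $\mathbb{R}$-factor, and the bound $|A_\Sigma| = O(r^{-1})$, one obtains
\[\bigl|\nabla - \nabla^\Sigma\bigr| = O\bigl(|\nabla^\Sigma u| + r^{-1}|u|\bigr),\]
so that $\nabla^2 f = \nabla^2_\Sigma f + O(|\nabla^\Sigma u| + r^{-1}|u|)|\nabla f|$. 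Substituting into the first assertion accounts for the $|\nabla f|$ error in the stated bound. It then remains to decompose $\dot\gamma^{ij}(A_\Sigma)(\nabla^2_\Sigma f)_{ij}$ exactly as in the proof of Lemma \ref{expansion for G}: because $A_\Sigma$ is diagonal in $\{e_i\}$ with eigenvalues $(0, r^{-1}, \ldots, r^{-1})$, the symmetry and $1$-homogeneity of $\gamma$ yield
\[\dot\gamma^1(0,r^{-1},\ldots,r^{-1}) = \dot\gamma^1(0,1,\ldots,1), \qquad \sum_{i=2}^n \dot\gamma^i(0,r^{-1},\ldots,r^{-1}) = \gamma(0,1,\ldots,1),\]
which produce exactly the two explicit leading coefficients stated in the lemma.

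The main obstacle is purely bookkeeping: every perturbation of $A$, $S$, the frame $\alpha$, and the connection must be tracked and collected into the prescribed error form, and one must be alert to the compensation $|\ddot\gamma| \cdot |\tilde A - A_\Sigma| = O(r) \cdot O(r^{-2}|u|) = O(r^{-1}|u|)$, which is what lets the $r^{-2}|u|$ corrections to $A$ survive only at the scale $r^{-1}|u|$ in the final estimate. Beyond this, there is no genuine analytic difficulty, since all the requisite expansions were already established in Lemmas \ref{Lemma: expansions}, \ref{frame construction}, and \ref{expansion for G}.
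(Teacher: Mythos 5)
Your argument for the first claim is essentially the paper's own proof: both pass to the $g$-orthonormal frame $\{\alpha(e_i)\}$ of Lemma~\ref{frame construction}, write $\trace_\gamma(S)=\dot\gamma^{ij}(\tilde A)\tilde S_{ij}$, Taylor-expand $\dot\gamma$ between $A_\Sigma$ and $\tilde A$ using $|\ddot\gamma|=O(G^{-1})=O(r)$ together with $\tilde A-A_\Sigma=O(r^{-2}|u|+|\nabla^2_\Sigma u|)$, and control $\tilde S-S$ by the frame change; the specialization to $S=\nabla^2 f$ then uses the diagonality of $\dot\gamma^{ij}(A_\Sigma)$ and one-homogeneity exactly as in Lemma~\ref{expansion for G}. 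One small bookkeeping point: in the Hessian-comparison step your bound $|\nabla-\nabla^\Sigma|=O(|\nabla^\Sigma u|+r^{-1}|u|)$ would only give an error $O(|\nabla^\Sigma u|+r^{-1}|u|)|\nabla f|$, slightly weaker than the stated $O(|\nabla^\Sigma u|)|\nabla f|$; since $A_\Sigma$ is parallel on all of $\mathbb{R}\times S^{n-1}(r)$ (not only along the $\mathbb{R}$-factor), the derivative of the leading correction $2uA_\Sigma$ to the metric contributes only $\nabla^\Sigma u\otimes A_\Sigma$ terms, so the Christoffel difference is in fact $O(r^{-1}|\nabla^\Sigma u|)$ and the pure $r^{-1}|u|$ term disappears, giving the error exactly as stated.
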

\begin{proof}
We let $\tilde S_{ij} = S_{kl}\alpha^k_i\alpha_j^l$, so that $\tilde S_{ij}$ is the matrix representation of $S$ with respect to the $g$-orthonormal frame $\{\alpha(e_i)\}$. We may then compute
\[\trace_\gamma(S) = \frac{d}{dt}\bigg|_{t=0} \gamma(\tilde A + t\tilde S)=\dot \gamma^{ij}(\tilde A)\tilde S_{ij}.\]
We define an auxiliary function 
    \[h(t) := \dot\gamma^{ij}(A_\Sigma + t(\tilde A - A_\Sigma))\tilde S_{ij},\]
which is smooth in $t$. We have
    \[|\trace_\gamma(S) - \dot\gamma^{ij}(A_\Sigma)\tilde S_{ij}| = |h(1) - h(0)| \leq \sup_{t\in [0,1]} |h'(t)| = O(r|\tilde A - A|)|\tilde S|,\]
and hence, by Lemma~\ref{frame construction},
    \[|\trace_\gamma(S) - \dot \gamma^{ij}(A_\Sigma)S_{ij}| \leq O(r^{-1}|u| + |\nabla^\Sigma u| + r|\nabla^2_\Sigma u|)|S|.\]
The proves the first claim. The second follows as in the proof of Lemma~\ref{expansion for G}.
\end{proof}

\subsection{Asymptotic cones} Let $\Omega$ be an open convex subset of $\mathbb{R}^{n+1}$. We define the asymptotic cone $C(\Omega)$ of $\Omega$ to be the Hausdorff-limit of the sequence $k^{-1} \bar \Omega$. When $\Omega$ is bounded, $C(\Omega) = \{0\}$. When $\Omega$ is unbounded, $C(\Omega)$ is a closed cone. If we translate $\Omega$ so that it contains the origin, then $C(\Omega)$ is precisely the union of all the rays emanating from the origin which are contained in $\bar \Omega$. 

\begin{lemma}\label{constant cone}
Let $M_t = \partial\Omega_t$, $t \in (-\infty, 0]$, be a noncompact, convex ancient $G$-flow. The asymptotic cone $C(\Omega_t)$ is independent of $t$. 
\end{lemma}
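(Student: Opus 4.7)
The plan is to combine the avoidance principle for $G$-flows with the translation invariance of the evolution equation. The starting point is the standard characterization of the asymptotic cone of a closed convex set as its recession cone:
\[
v\in C(\Omega_t)\iff \bar\Omega_t+Rv\subset\bar\Omega_t\ \text{for every } R\geq 0.
\]

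Fix $t_1\leq t_2\leq 0$. First I would record the easy inclusion: since $M_\tau$ moves inward with positive normal speed $G$, we have $\bar\Omega_{t_2}\subset\bar\Omega_{t_1}$, and the recession-cone characterization immediately gives $C(\Omega_{t_2})\subset C(\Omega_{t_1})$. For the reverse inclusion I would take $v\in C(\Omega_{t_1})$ and an arbitrary $R\geq 0$, and consider the translated family $\tilde\Omega_\tau:=\Omega_\tau+Rv$. Because the $G$-flow equation is invariant under rigid motions of $\mathbb{R}^{n+1}$, $\tilde\Omega_\tau$ is again a convex $G$-flow on $(-\infty,0]$, and at $\tau=t_1$ the two flows are nested: $\bar{\tilde\Omega}_{t_1}=\bar\Omega_{t_1}+Rv\subset\bar\Omega_{t_1}$. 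I would then invoke the avoidance principle to propagate this nesting forward in time, obtaining $\bar\Omega_{t_2}+Rv\subset\bar\Omega_{t_2}$. Since $R\geq 0$ was arbitrary this gives $v\in C(\Omega_{t_2})$, so $C(\Omega_{t_1})\subset C(\Omega_{t_2})$, and combining both inclusions completes the proof.

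The main point requiring care is the avoidance principle applied to the noncompact convex flows $\Omega_\tau$ and $\Omega_\tau+Rv$. At any interior first-touching point the inner hypersurface has pointwise smaller second fundamental form in the ordered sense, so the strict monotonicity of $\gamma$ in each eigenvalue forces the inner speed to be no larger than the outer speed, and the parabolic maximum principle preserves the inclusion. To rule out a new contact developing from infinity one uses the convexity of both flows, for instance by localizing to large balls in which the two hypersurfaces are uniformly graphical over a common affine hyperplane and appealing to the standard compact version of avoidance; this is the step where the noncompactness of $M_t$ is felt most directly.
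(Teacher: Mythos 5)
Your overall strategy (translate the flow by $Rv$ and compare it with itself via avoidance) is the natural ``compare with translates'' route, and the reduction via the recession-cone characterization is fine. The problem is the step you yourself flag as delicate: the avoidance/comparison principle you invoke is between \emph{two noncompact} solutions which, moreover, may already touch at time $t_1$ along an unbounded set (take $v$ a direction in which $\bar\Omega_{t_1}$ contains a line, or a flow asymptotic to a cylinder translated along its axis: then $\bar\Omega_{t_1}+Rv$ and $\bar\Omega_{t_1}$ share boundary points from the start). In that situation there is no ``first interior touching point'' to analyse, and contact can also migrate in from spatial infinity. The localization you sketch does not repair this: inside a large ball both hypersurfaces have boundary, and the compact avoidance principle gives nothing unless you control what happens on that boundary, which is exactly the issue. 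Note also that at this stage of the paper (Lemma~\ref{constant cone} assumes only convexity, no two-convexity or noncollapsing) there are no curvature bounds or uniform ellipticity available to run a distance-function or graphical comparison argument, so ``standard avoidance'' cannot simply be quoted; the only avoidance the paper ever uses here is between the flow and \emph{compact} shrinking spheres. The paper's proof is built precisely to sidestep your gap: it fixes a ball $B\subset\Omega_0$, uses convexity to see that every point of a ray $R\subset C(\Omega_{\bar t})$ admits inscribed balls of definite radius at all earlier times, and then slides $B$ along $R$, comparing only with shrinking spheres to conclude that $R$ survives up to time $0$, hence $R\subset C(\Omega_0)$.

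A secondary point: your touching-point heuristic has the inequalities backwards. At a contact point of nested convex hypersurfaces the \emph{inner} one has the \emph{larger} second fundamental form (with respect to the common outward normal), hence by monotonicity of $\gamma$ the larger speed $G$; it is because the inner boundary retreats at least as fast as the outer one that the inclusion is preserved. As written (``inner has smaller second fundamental form, so inner speed no larger than outer'') the comparison would not prevent crossing. If you want to keep your approach, you would need to actually prove a comparison principle for a convex ancient $G$-flow against its own translate (for instance by an argument in the spirit of the paper's, using inscribed balls and sphere barriers to localize), at which point you have essentially reconstructed the paper's proof.
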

\begin{proof}
Since the hypersurfaces $M_t$ evolve inwards, we have $\Omega_t \subset \Omega_s$ whenever $s < t$. It follows that $C(\Omega_t) \subset C(\Omega_s)$. In particular, $C(\Omega_0) \subset C(\Omega_t)$ for $t \leq 0$.

Now consider an arbitrary time $\bar t < 0$, and an arbitrary ray $R$ contained in $C(\Omega_{\bar t})$. We may assume without loss of generality that $0 \in \Omega_0$. We then have $C(\Omega_{\bar t}) \subset \bar \Omega_{\bar t}$, and hence $R \subset \Omega_{\bar t}$. Let $\tilde t$ denote the supremum of all the times $t$ for which $R \subset \Omega_{t}$. Let $B$ be an open ball with center at $0$ such that $B \subset \Omega_0$. Sliding $B$ along $R$ and appealing to the avoidance principle, we see that unless $\tilde t = 0$, there is a short time period following $\tilde t$ in which $R \subset \Omega_t$. So $\tilde t = 0$, and hence $R \subset \bar \Omega_0$. It follows that $R \subset C(\Omega_0)$. Since $R$ was arbitrary, we conclude that $C(\Omega_{\bar t}) \subset \Omega_0$.

We have shown that 
    \[C(\Omega_t) \subset C(\Omega_0) \subset C(\Omega_t)\]
for every $t \leq 0$. This proves the claim. 
\end{proof}


 \section{The parabolic blow-down limit}\label{blow-down section}

Let $M_t = \partial \Omega_t$, $t \in (-\infty,0]$, be a convex ancient $G$-flow which is noncollapsing and uniformly two-convex. In this section we study the possible parabolic blow-down limits of $M_t$. We show that the only possibilities are self-similarly shrinking sphere and cylinder solutions. 

We first prove that the region $\Omega_t$ contains a shrinking ball with radius on the order of $(-t)^{1/2}$ (cf. \cite[Theorem 2.2]{Wang} and \cite[Proposition 4.3]{BLL}). Without loss of generality we may assume $0 \in M_0$.

\begin{lemma}\label{Paraboloid}
There is a constant $\eta > 0$ such that $B(0, \eta (-t)^{1/2}) \subset \Omega_t$ whenever $-t$ is sufficiently large. 
\end{lemma}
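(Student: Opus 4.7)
The plan is to argue by contradiction using a parabolic rescaling and the compactness result Proposition~\ref{compactness}. Without a monotonicity formula, we must combine noncollapsing, uniform two-convexity, and ancientness to rule out too-slow growth of the inscribed ball around the origin.

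Suppose the conclusion fails: there exist $t_k \to -\infty$ and $\eta_k \downarrow 0$ with $\Omega_{t_k}$ not containing $B(0, \eta_k\sqrt{-t_k})$. Since $0 \in M_0$ and $G > 0$ on the smooth flow (the cylindrical case being settled by Lemma~\ref{splitting}), the origin lies in the interior of $\Omega_t$ for $t < 0$. Hence there is a closest point $q_k \in M_{t_k}$ to the origin with $\rho_k := |q_k| < \eta_k\sqrt{-t_k}$. Because $B(0, \rho_k) \subset \Omega_{t_k}$ is tangent to $M_{t_k}$ at $q_k$ from inside, the principal curvatures satisfy $\lambda_i(q_k, t_k) \leq 1/\rho_k$, so monotonicity and homogeneity of $\gamma$ give
\[
G(q_k, t_k) \leq \gamma(1,\ldots,1)/\rho_k.
\]

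Parabolically rescale by $\rho_k$: set $\tilde M_\tau^k := \rho_k^{-1} M_{t_k + \rho_k^2 \tau}$. This is a convex, $\alpha$-noncollapsing, $\beta$-uniformly two-convex $G$-flow, defined for $\tau \leq |t_k|/\rho_k^2 > \eta_k^{-2}$, so the existence intervals expand to all of $\mathbb{R}$. At $\tau = 0$, $\tilde q_k := \rho_k^{-1} q_k$ lies on $\tilde M_0^k$ at unit distance from the origin, with $\tilde G_k(\tilde q_k, 0) \leq \gamma(1,\ldots,1)$. The noncollapsing condition then provides an inscribed ball of radius $\geq \alpha/\gamma(1,\ldots,1)$ tangent to $\tilde M_0^k$ at $\tilde q_k$, which is exactly the hypothesis needed to invoke Proposition~\ref{compactness}. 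After passing to a subsequence, $\tilde M_\tau^k \to \tilde M_\tau$ in $C^\infty_{\loc}$, where $\tilde M_\tau$ is an eternal convex, $\alpha$-noncollapsing, $\beta$-uniformly two-convex $G$-flow. Letting $\tilde q_\infty := \lim \tilde q_k \in \partial B(0,1)$, we have $\tilde q_\infty \in \tilde M_0$. The monotonicity $\Omega_s \supset \Omega_{t_k}$ for $s \leq t_k$ rescales to $\dist(0, \tilde M_\tau^k) \geq 1$ for $\tau \leq 0$, so in the limit $B(0,1) \subset \tilde\Omega_\tau$ for every $\tau \leq 0$, with tangency at $\tilde q_\infty$ at $\tau = 0$.

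The contradiction is extracted by tracking the origin through the rescaled forward flow. By avoidance with the shrinking sphere of initial radius $1$ centered at the origin, $B(0, \sqrt{1 - 2\gamma(1,\ldots,1)\tau}) \subset \tilde\Omega_\tau$ for $0 \leq \tau \leq (2\gamma(1,\ldots,1))^{-1}$. On the other hand, the rescaled time $\tau^*_k := -t_k/\rho_k^2 \to \infty$ corresponds to the original time $0$, where $\rho_k^{-1} \cdot 0 = 0 \in \rho_k^{-1} M_0 = \tilde M^k_{\tau^*_k}$: the origin must reappear on the boundary at rescaled forward time $\tau^*_k$, which diverges. I plan to use the neck-cap structure from Lemma~\ref{neck-cap} together with the Harnack inequality (Proposition~\ref{differential harnack}) to show that this eternal behavior — inscribed $B(0,1)$ tangent at a single point, origin deep interior throughout $\tau \leq 0$, and reappearing on the boundary at arbitrarily large forward time — is incompatible with the classification of noncollapsing uniformly two-convex eternal $G$-flows.

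The main obstacle is this final step. If $\tilde M_\tau$ were simply a translating soliton, the configuration above is not obviously impossible per se, so the contradiction must quantitatively exploit the scale separation $\tau^*_k \to \infty$: namely, the translation speed of any candidate bowl limit would need to be $o(1)$ in the rescaled variables, which when unwound contradicts the original curvature scale encoded in $G(q_k, t_k) \leq \gamma(1,\ldots,1)/\rho_k$. Uniform two-convexity will be essential to prevent the limit from degenerating to a flat hyperplane, for which two-convexity is satisfied only vacuously.
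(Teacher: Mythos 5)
There is a genuine gap: your argument stops exactly where the proof has to happen. Everything up to the extraction of the limit $\tilde M_\tau$ is fine (closest point $q_k$, curvature bound $G(q_k,t_k) \leq \gamma(1,\dots,1)/\rho_k$, rescaling by $\rho_k$, compactness via noncollapsing), but it produces a configuration --- a convex, noncollapsing, uniformly two-convex limit flow with $B(0,1) \subset \tilde\Omega_\tau$ for $\tau \leq 0$ and tangency at one point at $\tau = 0$ --- that is perfectly realizable by a bowl soliton, as you yourself note. The decisive contradiction is only announced as a plan (``neck-cap structure plus Harnack''), and the plan is unlikely to close as set up: the only genuinely contradictory data you have is that $0 \in M_0$ with $G(0,0)$ a \emph{fixed} positive number, but in your rescaling this information lives at rescaled time $\tau_k^* = -t_k/\rho_k^2 \to \infty$ and at curvature scale $\rho_k\,G(0,0)$, which need not blow up (indeed it tends to $0$ if $\rho_k \to 0$). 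Since $\tau_k^* \to \infty$, none of this survives passage to the limit, so no property of the limit flow can contradict it; the ``scale separation'' you hope to exploit is precisely what erases the relevant information. A secondary issue: Proposition~\ref{compactness} only gives convergence on $(-\infty,0]$; extending to an \emph{eternal} limit requires forward-in-time curvature bounds you have not supplied, and the limit could just as well be a shrinking cylinder with finite forward lifespan.

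Compare with the paper's route, which rescales at the scale $(-t_k)^{1/2}$ rather than $\rho_k$, precisely so that the fixed curvature $G(0,0)$ becomes huge after rescaling: one writes $f_k(t) = \dist(0,\tilde M^k_t)$, uses $\tfrac{d}{dt} f_k \leq -\sup\{\tilde G_k : |x| = f_k(t)\}$ together with $f_k(-1) \to 0$ and $f_k(0)=0$ to find points $x_k$ near the origin at times $s_k \in [-1,0]$ with $\tilde G_k(x_k,s_k) \to 0$; noncollapsing then yields inscribed balls of arbitrarily large radius there, which the avoidance principle (comparison with shrinking spheres) carries forward to a unit ball inside $\tilde\Omega^k_t$ for all $t \leq 0$; the interior curvature estimate (Lemma~\ref{interior curvature}) then bounds $\tilde G_k(0,0)$, contradicting $\tilde G_k(0,0) = (-t_k)^{1/2} G(0,0) \to \infty$. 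If you want to salvage your setup, you would have to re-import the unrescaled information quantitatively (e.g. run the interior estimate at the spacetime origin in the original or $(-t_k)^{1/2}$-rescaled variables), at which point you are essentially reconstructing the paper's argument; as written, the proposal does not prove the lemma.
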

\begin{proof}
Supposing the claim is false, we can find a sequence of times $t_k \to -\infty$ such that the rescaled flows 
    \[\tilde M_t^k := (-t_k)^{-1/2} M_{(-t_k)t}\]
satisfy $\dist(0, \tilde M_{-1}^k) \to 0$. The function $f_k(t):=\dist(0,\tilde M_t^k)$ is locally Lipschitz, and for almost every $t$ we have
    \[\frac{df_k}{dt}(t)\leq -\sup\{\tilde G_k(x,t):|x|=f_k(t)\}.\]
Therefore, since 
    \[ -\int^{0}_{-1} \frac{df_k}{dt}(t)\,dt = \dist(0, \tilde M^k_{-1}) \to 0,\]
there are sequences $s_k \in [-1,0]$ and $x_k \in \tilde M_{s_k}^k$ such that $|x_k| = f_k(s_k) \to 0$ and $\tilde G_k (x_k,s_k) \to 0$. By the noncollapsing property, $\tilde M^k_{s_k}$ admits an inscribed ball at $x_k$ whose radius becomes arbitrarily large as $k \to \infty$. In particular, given any $R >0$, $\tilde M^k_{s_k}$ encloses the ball $B(y_k,R)$ for all large $k$, where $y_k := x_k - R\tilde \nu_k(x_k,s_k)$ and $\tilde\nu_k$ is the outward unit normal to $\tilde M^k_t$. By the avoidance principle, $\tilde M^k_0$ encloses $B(y_k, (R^2 - 2\gamma(1,\dots,1))^{1/2})$ for all large $k$. Choosing $R^2 = 1+2\gamma(1,\dots,1)$, we conclude that $B(y_k, 1)$ is contained in the region bounded by $\tilde M_{t}^k$ for all $t \leq 0$ and large $k$. This contradicts Lemma~\ref{interior curvature}, since 
    \[\tilde G_k(0,0) = (-t_k)^{1/2}G(0,0) \to \infty.\] 
\end{proof}

Let us fix an arbitrary sequence $a_k \to 0$. We show that the rescaled flows $M^k_t := a_k M_{a_k^{-2}t}$ converge smoothly along a subequence.

\begin{lemma}
Possibly after passing to a subsequence, the rescaled flows $M^k_t$ converge in $C^{\infty}_{\loc}(\mathbb{R}^{n+1}\times(-\infty,0))$ to a limiting flow $\hat M_t = \partial \hat \Omega_t$. 
\end{lemma}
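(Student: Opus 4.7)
The plan is to establish uniform $C^\infty$ bounds on the rescaled flows over compact subsets of $\mathbb{R}^{n+1} \times (-\infty, 0)$ and then extract a subsequential limit via a diagonal argument, mimicking the proof of Proposition~\ref{compactness}. Parabolic rescaling preserves convexity, uniform two-convexity, and noncollapsing, so by Lemma~\ref{lemma uniform parabolicity} each $M^k_t$ is $\delta$-uniformly parabolic and $\alpha$-noncollapsing with constants independent of $k$.

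The key geometric input is Lemma~\ref{Paraboloid} applied to the original flow at time $a_k^{-2} t$: after rescaling by $a_k$, this yields
\[B(0, \eta (-t)^{1/2}) \subset \Omega^k_t\]
for every $t < 0$ and all sufficiently large $k$. With this in hand, I would fix any parabolic cylinder $K = \bar B(0, R) \times [-T_1, -T_2] \subset \mathbb{R}^{n+1} \times (-\infty, 0)$ and, after a time shift, invoke Lemma~\ref{interior curvature} with $r = \eta T_2^{1/2}$ on an interval terminating at $-T_2$. This produces a uniform bound $\sup_K |A^k| \leq C$ depending only on $n$, $\gamma$, $\delta$, $\eta$, $R$, $T_1$, $T_2$.

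To upgrade to higher regularity, I would represent $M^k_t$ locally as graphs over tangent hyperplanes. These graphs satisfy a uniformly parabolic fully nonlinear equation whose ellipticity constants depend only on $\delta$ (since $\dot\gamma$ is homogeneous of degree zero), and whose nonlinearity is concave or convex in $D^2 u$ by hypothesis. Krylov's interior $C^{2,\alpha}$ estimate combined with a Schauder bootstrap then yields uniform $C^{k,\alpha}$ bounds for every $k$, precisely as in the proof of Proposition~\ref{compactness}. A diagonal argument combining Blaschke selection for the convex regions $\bar \Omega^k_t$ at a countable dense set of times with Arzel\`a--Ascoli applied to the smooth graph representations then extracts a subsequence converging in $C^\infty_{\loc}(\mathbb{R}^{n+1} \times (-\infty, 0))$ to a convex ancient $G$-flow $\hat M_t = \partial \hat \Omega_t$.

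The principal subtlety to keep in mind is the exclusion of the terminal time $t = 0$. As $t \nearrow 0$ the inscribed ball from Lemma~\ref{Paraboloid} shrinks and the constant from Lemma~\ref{interior curvature} blows up; this reflects the fact that the rescaled flows approach the (typically singular) asymptotic cone $C(\Omega_0)$ at $t = 0$. No uniform smooth bound is available there, which is why the convergence is stated only on the open time interval $(-\infty, 0)$. This is the only place where care is required; the rest of the argument is a routine compactness exercise once the interior ball estimate of Lemma~\ref{Paraboloid} is in place.
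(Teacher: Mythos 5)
Your outline reproduces the easy half of the argument (the enclosed balls from Lemma~\ref{Paraboloid}, the interior curvature bound of Lemma~\ref{interior curvature} on compact parabolic cylinders, Krylov plus Schauder, diagonal extraction), but it skips the step that is the actual content of the paper's proof: a locally uniform \emph{positive lower bound} on the speeds $G_k$ at each fixed time slice $s<0$. This bound is not optional. Since $\gamma$ is homogeneous of degree one, its second derivatives scale like $G^{-1}$ (the paper's estimate $|\ddot\gamma^{ij,kl}(A)B_{ij}B_{kl}|\leq CG^{-1}|B|^2$), so the Schauder bootstrap you invoke ``precisely as in the proof of Proposition~\ref{compactness}'' degenerates if $G_k\to 0$; indeed that proof explicitly uses ``the upper bound for $|A_k|$ and the uniform lower bound for $G_k$'', the latter coming from Proposition~\ref{lower curvature} anchored at a point where the speed is controlled, and the hypothesis of Proposition~\ref{compactness} on $\sup_{M^k_0\cap B(0,R)}(G_k+G_k^{-1})$ encodes exactly this non-degeneracy. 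You have produced no such anchor for the rescaled flows, and without it the limit could a priori be a static hyperplane (speed identically zero), which is not a smooth $G$-flow at all.

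The paper closes this gap by contradiction: if $\inf_{B(0,R)}G_k(\cdot,s)\to 0$ along a subsequence, then Proposition~\ref{lower curvature} forces $G_k\to 0$ uniformly on compact subsets of $\mathbb{R}^{n+1}\times(-\infty,s]$; the noncollapsing hypothesis (inscribed balls of radius comparable to $G_k^{-1}$) then makes $\Omega^k_s$ Hausdorff-converge to a halfspace, and since the speeds vanish locally uniformly the same halfspace is the limit of $\Omega^k_t$ for every $t\leq s$. Because $0\in M_0$, the avoidance principle keeps $M^k_s$ within bounded distance of the origin, so the boundary hyperplane of that halfspace lies at bounded distance from $0$; this is incompatible with $B(0,\eta(-t)^{1/2})\subset\Omega^k_t$ once $-t$ is large, which is the contradiction. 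To repair your argument you must insert this (or an equivalent) step before the bootstrap; after that, your route — redoing Krylov/Schauder by hand rather than citing Proposition~\ref{compactness} — is just a repackaging of the same machinery, and your remark about excluding $t=0$ is correct but peripheral.
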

\begin{proof}
Consider a fixed $s<0$. As a consequence of Lemma~\ref{Paraboloid}, we have $B(0, \eta(-t)^{1/2}) \subset \Omega_t^k$ when $t < s$ and $k$ is sufficiently large depending on $s$. 
    
Using this fact we will show that 
    \begin{equation}\label{blow-down speed positive}
        \inf_{B(0,R)} G_k(\cdot,s) >0
    \end{equation}
for every $R<\infty$. If this is not the case then, by Proposition~\ref{lower curvature}, $G_k \to 0$ uniformly in every compact subset of $\mathbb{R}^{n+1}\times(-\infty,s]$. Using the noncollapsing assumption we see that $\Omega_{s}^k$ converges to a halfspace in the Hausdorff topology. But since $G_k \to 0$ locally uniformly in $\mathbb{R}^{n+1}\times(-\infty,s]$, $\Omega^k_t$ converges to the same halfspace for every $t \leq s$. This contradicts $B(0,\eta(-t)^{1/2}) \subset \Omega^k_t$.

To complete the argument we use \eqref{blow-down speed positive} and Proposition~\ref{compactness} to establish subconvergence in $C^{\infty}_{\loc}(\mathbb{R}^{n+1} \times (-\infty, s])$ for each $s < 0$. The claim follows by sending $s \to 0$ and passing to a diagonal subsequence. 
\end{proof}

We have the following characterisation of $\hat M_t$ in case it is compact. (We do not make use of this result, it is only included for the sake of completeness.)

\begin{lemma}\label{blow-down=sphere}
Suppose $\hat M_t$ is compact. Then $M_t$ is a self-similarly shrinking sphere. In particular, $\hat M_t$ is a self-similarly shrinking sphere. 
\end{lemma}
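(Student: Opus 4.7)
The plan is to reduce to a classification of Type I compact convex ancient $G$-flows. My first step is to establish that $M_t$ itself must be compact. The asymptotic cone is scale-invariant, so $C(a_k \Omega_{a_k^{-2}t}) = C(\Omega_{a_k^{-2}t})$, which by Lemma~\ref{constant cone} equals $C(\Omega_0)$. If $\Omega_0$ were unbounded it would contain a ray, and a translated copy of this ray would sit in every $a_k \Omega_{a_k^{-2}t}$; passing to the $C^\infty_{\loc}$ limit on any large ball would then produce a ray in $\hat \Omega_t$, contradicting compactness of $\hat M_t$. Hence $M_t$ is compact.

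Next I would convert compactness of $\hat M_t$ into a Type I diameter bound for $M_t$. Let $D(t):=\diam(\Omega_t)$. The inwards motion gives $\Omega_t \subset \Omega_s$ for $s<t$, so $D$ is non-increasing in $t$. Since $\hat M_t$ is compact for every $t<0$, the $C^\infty_{\loc}$ convergence provides $a_k D(a_k^{-2}t) \to \diam(\hat M_t)$ uniformly in $t$ on compact subsets of $(-\infty,0)$. Combined with the monotonicity of $D$ and a suitable choice of subsequence with controlled ratios (say $a_{k+1}/a_k \geq \tfrac{1}{2}$ after passing to a sub-subsequence if necessary), one then obtains, for each sufficiently negative $t$, a $k$ with $-a_k^{-2} \leq t \leq -a_{k-1}^{-2}$ and
\[
D(t) \leq D(-a_k^{-2}) \leq 2\diam(\hat M_{-1})/a_k \leq C\sqrt{-t},
\]
i.e.\ the Type I bound $D(t) = O(\sqrt{-t})$ as $t \to -\infty$.

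With the Type I bound in hand, I would invoke the classification of Type I convex ancient $G$-flows proven in \cite{Lynch_22_b}: together with convexity, noncollapsing and uniform two-convexity, it forces $M_t$ to be a self-similarly shrinking round sphere, and the ``in particular'' statement for $\hat M_t$ is then immediate as a blow-down of a shrinking sphere is itself. The step I expect to be the main obstacle is matching the present situation to the precise hypotheses of \cite{Lynch_22_b}, and ensuring that the Type I bound is available at all sufficiently negative times rather than only along the scales $-a_k^{-2}$. If this cannot be arranged directly, a fallback is to observe that $\hat M_t$ reaches a point singularity at rescaled time $0$ (because $a_k M_0 \to \{0\}$) and apply Andrews's round-point theorem to identify $\hat M_t$ as a round shrinking sphere, then propagate roundness back to all times via a rigidity argument for the self-similar equation $G = -\tfrac{1}{2}\langle x, \nu\rangle$ together with the Harnack inequality of Proposition~\ref{differential harnack}.
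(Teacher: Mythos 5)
There is a genuine gap, and it is exactly the one you flag but do not close. The convergence $a_k M_{a_k^{-2}t} \to \hat M_t$ holds only along the one (arbitrary, then further thinned) sequence $a_k$, so compactness of $\hat M_t$ gives $a_k\,\diam(\Omega_{-a_k^{-2}}) \leq C$ only along those scales. Monotonicity of the diameter then yields, for $t \in [-a_k^{-2},-a_{k-1}^{-2}]$, only $D(t) \leq C (a_{k-1}/a_k)\sqrt{-t}$, and the ratio $a_{k-1}/a_k$ is completely uncontrolled. Your proposed remedy --- ``passing to a sub-subsequence so that $a_{k+1}/a_k \geq \tfrac12$'' --- goes the wrong way: thinning a sequence only makes consecutive ratios smaller, and enlarging it by inserting intermediate scales destroys the hypothesis, since you do not know the blow-down along those added scales is compact (a priori different scales could produce different limits; ruling out such scale-dependence is nontrivial and is the subject of Section~\ref{blow-down section} in the cylindrical case). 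So the Type~I bound, and with it the appeal to the classification in \cite{Lynch_22_b}, is not established. The fallback is also not a proof: even granting that $\hat M_t$ shrinks to a point and is a round sphere, the lemma's conclusion is about $M_t$ itself, and ``rigidity for $G=\tfrac12\langle x,\nu\rangle$ plus the Harnack inequality'' is not a mechanism for transferring roundness from the blow-down back to the solution ($M_t$ is not known to satisfy the shrinker equation, and the Harnack equality case produces translators, not shrinkers); moreover a round-point theorem for arbitrary $\gamma$ in this paper's class is not something you can simply quote.

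The paper's proof uses a different, and much shorter, transfer mechanism that you should compare with: compactness of $\hat M_{-1}$ plus the splitting theorem (Lemma~\ref{splitting}, via \cite{Lynch_22_b}) gives strict convexity of the limit, hence a scale-invariant pinching $\hat\lambda_1 \geq \varepsilon \hat H$ on $\hat M_{-1}$; by smooth convergence this pinching holds (with $\varepsilon/2$) on $M_{-a_k^{-2}}$ for all large $k$, i.e.\ at a sequence of times tending to $-\infty$; it is then preserved \emph{forward} in time by the maximum principle (\cite{Andrews_Pinching} when $\gamma$ is concave and inverse-concave, and the analogous statement for $\lambda_1/G$ from \cite{LangfordThesis} when $\gamma$ is convex), so it holds on $M_t$ for all $t\leq 0$; finally the classification of uniformly pinched convex ancient solutions (Theorem~1.7 of \cite{LangfordLynch}) identifies $M_t$ as a shrinking sphere. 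The key point your argument misses is that information along a sequence of times $t_k \to -\infty$ suffices once it is phrased as a pinching inequality preserved by the flow; no estimate at intermediate scales, and no Type~I bound, is needed.
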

\begin{proof}
Since $\hat M_t$ is compact, the splitting theorem \cite[Proposition~A.2]{Lynch_22_b} implies $\hat \lambda_1>0$ on $\hat M_t$ for every $t \in (-\infty,0)$. In particular, there is a positive constant $\varepsilon$ such that $\hat \lambda_1 \geq \varepsilon \hat H$ on $\hat M_{-1}$. Therefore, for every sufficiently large $k$, $\lambda_1 \geq \tfrac{\varepsilon}{2} H$ on the hypersurface $M_{-a_k^{-2}}$. If $\gamma$ is concave and inverse-concave, this inequality is preserved by the flow \cite{Andrews_Pinching}, and hence $\lambda_1 \geq \tfrac{\varepsilon}{2} H$ on $M_t$ for all $t \leq 0$. Using \cite[Theorem~1.7]{LangfordLynch} (see also \cite{Risa--Sinestrari}) we conclude that $M_t$ is a shrinking sphere solution.
 
We proceed similarly if $\gamma$ is convex rather than concave, but work with $\lambda_1/G$ in place of $\lambda_1/H$. A maximum principle argument shows that positive lower bounds for this ratio are preserved by the flow (see \cite{LangfordThesis}).
\end{proof}

Suppose now that $\hat M_t$ is noncompact. Applied to the solution $\hat M_t$, Proposition~4.5 in \cite{Lynch_22_b} yields the following statement. 

\begin{lemma}\label{enclosing cylinder}
Suppose that, possibly after applying a rotation, $\hat M_t$ satisfies
    \[\hat M_t \subset \{x \in \mathbb{R}^{n+1} : x_1^2 + \dots + x_{n}^2 \leq -2\gamma(0,1,\dots,1)t\}\]
for every $t < 0$. We then have
    \[\hat M_t = \{x \in \mathbb{R}^{n+1} : x_1^2 + \dots + x_{n}^2 = -2\gamma(0,1,\dots,1)t\}\]
for $t < 0$.
\end{lemma}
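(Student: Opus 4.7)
My plan is to invoke \cite[Proposition~4.5]{Lynch_22_b} directly, as the statement of the lemma advertises. Its hypotheses require that $\hat M_t$ be a convex, noncollapsing, uniformly two-convex, noncompact ancient $G$-flow contained in the solid shrinking cylinder. All of these are either explicit in the lemma's setup or inherited from $M_t$ under the parabolic rescaling $a_k M_{a_k^{-2} t}$ that defines $\hat M_t$: the noncollapsing constant $\alpha$ and the two-convexity constant $\beta$ are scale-invariant and so pass to the smooth limit, convexity is preserved in the limit, and noncompactness is part of the setup of the present lemma. The cited proposition's conclusion is precisely the identity we want, so the argument formally reduces to verifying these hypotheses.

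For transparency, the geometric content behind the conclusion proceeds as follows. By Lemma~\ref{constant cone} the asymptotic cone $C(\hat\Omega_t)$ is time-independent, and the hypothesized enclosure forces $C(\hat\Omega_t) \subset \mathbb{R} e_{n+1}$. Since $\hat M_t$ is noncompact and convex, $C(\hat\Omega_t)$ has positive dimension, so it is either a ray or the full line along $e_{n+1}$. In the line case, convex analysis gives that the recession cone of $\hat\Omega_t$ contains $\mathbb{R} e_{n+1}$, so $\hat\Omega_t$ is $e_{n+1}$-translation-invariant, and hence $\hat\lambda_1 \equiv 0$ in the direction $e_{n+1}$ along $\hat M_t$. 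Lemma~\ref{splitting} then forces $\hat M_t = \mathbb{R} \times S^{n-1}(r(t))$ with $r(t) = \sqrt{-2\gamma(0,1,\dots,1)(t - t_0)}$; existence on $(-\infty,0)$ combined with containment in the enclosing cylinder pins down $t_0 = 0$, giving equality with the enclosing cylinder.

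The ray case I would reduce to the line case by translating down the ray. Setting $q_k = k e_{n+1}$, the translates $\hat M_t - q_k$ remain inside the enclosing cylinder (which is $e_{n+1}$-translation-invariant) and their recession cones contain the extended ray. By Proposition~\ref{compactness}, a subsequential limit $\hat M_t^\infty$ exists, is again a convex noncollapsing uniformly two-convex ancient $G$-flow in the cylinder, and its recession cone contains a full line along $e_{n+1}$. By the line case $\hat M_t^\infty$ equals the enclosing cylinder. To transfer this back to $\hat M_t$, one would apply the strong maximum principle to
\[
\phi(x,t) = -2\gamma(0,1,\dots,1)\,t - (x_1^2+\cdots+x_n^2) \geq 0
\]
on $\hat M_t$: the corresponding parabolic inequality for $\phi$ along the flow forces, at any interior contact point, $\langle \nu, e_{n+1}\rangle = 0$ and the principal curvatures of $\hat M_t$ to coincide with those of the cylinder, so in particular $\hat\lambda_1 = 0$ in the direction $e_{n+1}$; one final application of Lemma~\ref{splitting} concludes. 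The main obstacle is producing this honest interior contact point, which is exactly what the translate-and-limit construction provides; once that is in hand, the splitting theorem closes the argument.
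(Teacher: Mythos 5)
Your primary route is exactly the paper's proof: the paper disposes of this lemma in one line by applying Proposition~4.5 of \cite{Lynch_22_b} to $\hat M_t$, and your verification that the hypotheses (convexity, the scale-invariant noncollapsing and two-convexity constants, noncompactness, containment in the solid shrinking cylinder) pass to the blow-down limit is all that is needed. To that extent the proposal is correct and matches the paper.

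A caution about your ``for transparency'' sketch, since it should not be mistaken for an independent proof. The line case is fine, but in the ray case the translate-and-limit construction does \emph{not} hand you an interior contact point on $\hat M_t$ itself: it produces a different flow (the limit of the translates $\hat M_t - q_k$) which coincides with the enclosing cylinder, i.e.\ the contact happens only ``at infinity'' along the axis. A priori $\hat M_t$ could be strictly inside the solid cylinder at every point and every time, with $\phi>0$ everywhere, and then the strong maximum principle applied to $\phi$ has nothing to bite on. Closing exactly this gap --- passing from ``enclosed by the shrinking cylinder, with the same singular time'' to ``equal to it'' without an honest contact point --- is the content of the barrier argument in \cite{Lynch_22_b} that Proposition~4.5 encapsulates (see the discussion in the introduction of this paper), and it is also why the paper's Lemma~\ref{blow-down=cylinder} invokes the present lemma rather than re-deriving it: your sketch essentially reproduces the structure of that later argument, which uses this lemma as an input, so as a standalone proof it would be circular at precisely this step. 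Relying on the citation, as you do in your first paragraph, is the right move.
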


Using this result, we show that $\hat M_t$ is a self-similarly shrinking cylinder.

\begin{lemma}\label{blow-down=cylinder}
Suppose $\hat M_t$ is noncompact. Up to a rotation, we have
    \[\hat M_t = \{x \in \mathbb{R}^{n+1} : x_1^2 + \dots + x_{n}^2 = - 2\gamma(0,1,\dots,1)t\}\]
for $t < 0$. 
\end{lemma}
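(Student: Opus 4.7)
The plan is to follow Xu-Jia Wang's strategy as outlined in the overview: translate $\hat M_t$ infinitely far along a direction in its asymptotic cone, identify the resulting limit as a self-similarly shrinking cylinder via the splitting theorem and \cite[Theorem~1.7]{LangfordLynch}, then combine a projection argument with Lemma~\ref{enclosing cylinder} to pin down the shape of $\hat M_t$.

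Since $\hat M_t$ is noncompact and $\hat\Omega_t$ is convex, the asymptotic cone $C(\hat\Omega_t)$ is nontrivial and, by Lemma~\ref{constant cone}, independent of $t$. I would first show this cone is at most one-dimensional: if it contained two linearly independent rays, then translating along one (by the construction in the next paragraph) would yield a convex ancient $G$-flow which splits off a line and has a noncompact, uniformly convex cross-section in $\mathbb{R}^n$, contradicting \cite{HamiltonPinched} together with \cite[Theorem~1.7]{LangfordLynch}. After a rotation, I may therefore assume that the ray $\{s e_{n+1} : s \geq 0\}$ is contained in $\hat\Omega_t$ for every $t < 0$.

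Next I would pick $r_k \to \infty$ --- chosen so that the distance from $r_k e_{n+1}$ to $\hat M_0$ stays comparable to a fixed positive constant, which is possible because the one-dimensional cone together with noncollapsing pin the cross-sectional radius of $\hat\Omega_0$ between positive constants along the ray --- and set $\tilde M^k_t := \hat M_t - r_k e_{n+1}$. Each $\tilde M^k_t$ is a convex, noncollapsing, uniformly two-convex ancient $G$-flow meeting a fixed ball $B(0, R)$ at $t = 0$, with $G_k$ uniformly bounded from above and below on compact subsets of spacetime by Lemma~\ref{curvature bound} and Proposition~\ref{lower curvature}. By Proposition~\ref{compactness}, a subsequence converges in $C^\infty_{\loc}$ to an ancient $G$-flow $\tilde M_t = \partial\tilde\Omega_t$. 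The crucial observation is that for any $p \in \hat\Omega_t$, convexity plus $e_{n+1} \in C(\hat\Omega_t)$ gives $\{p + s e_{n+1} : s \geq 0\} \subset \hat\Omega_t$, so after shifting by $-r_k e_{n+1}$ and passing to the limit, the entire line $\{p + u e_{n+1} : u \in \mathbb{R}\}$ lies in $\tilde\Omega_t$. In particular $\mathbb{R} e_{n+1} \subset \tilde\Omega_t$, and convexity forces a splitting $\tilde\Omega_t = S_t \times \mathbb{R}$ with $S_t \subset \mathbb{R}^n$ convex. The cross-section $\tilde M^\perp_t := \partial S_t$ is then a convex, noncollapsing, uniformly convex ancient $G$-flow in $\mathbb{R}^n$ and so, by \cite[Theorem~1.7]{LangfordLynch} (or \cite{BLT} when $n = 2$), a self-similarly shrinking sphere of radius $r(t)$ satisfying $r(t)^2 = 2\gamma(0,1,\dots,1)(T-t)$ for some extinction time $T \geq 0$. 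Letting $p$ range through $\hat\Omega_t$ the same construction yields $\pi(\hat\Omega_t) \subseteq \pi(\tilde\Omega_t) = B(0, r(t))$, where $\pi$ denotes orthogonal projection onto the first $n$ coordinates, so
\[
\hat M_t \subset \bigl\{x \in \mathbb{R}^{n+1} : x_1^2 + \dots + x_n^2 \leq 2\gamma(0,1,\dots,1)(T - t)\bigr\}
\]
for every $t < 0$.

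To apply Lemma~\ref{enclosing cylinder} I need $T = 0$. Let $T^* \geq 0$ be the infimum of those $T$ for which the above enclosure holds; by continuity, $\hat M_t$ touches the cylinder $\{x_1^2 + \dots + x_n^2 = 2\gamma(0,1,\dots,1)(T^* - t)\}$ tangentially from inside at some spacetime point, so uniform parabolicity (Lemma~\ref{lemma uniform parabolicity}) and the strong maximum principle for fully nonlinear parabolic equations force $\hat M_t$ to coincide with this cylinder in a spacetime neighborhood, and then globally by a connectedness argument. If $T^* > 0$, this makes $\hat M_t$ itself a shrinking cylinder which extends smoothly to $t = 0$ as the positive-radius cylinder $\{x_1^2 + \dots + x_n^2 = 2\gamma(0,1,\dots,1)T^*\}$, whereas by the very definition of the asymptotic cone, $\bar\Omega^k_0 = a_k \bar\Omega_0$ Hausdorff-converges to $C(\Omega_0)$, a one-dimensional set. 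Combined with the $C^\infty_{\loc}$-convergence $M^k_t \to \hat M_t$ on compact subsets of $(-\infty, 0)$ and the bounded-speed transport from $t = -\varepsilon$ to $t = 0$ afforded by the positive-radius limit, this forces the Hausdorff limit of $M^k_0$ to sit within $O(\varepsilon)$ of a positive-radius cylinder on compact sets, contradicting its convergence to the one-dimensional cone as $\varepsilon \to 0$. Hence $T^* = 0$, and Lemma~\ref{enclosing cylinder} delivers the stated equality. The main obstacle is this last step: the translation construction only gives enclosure in \emph{some} shrinking cylinder, and extracting the correct extinction time $T = 0$ requires a careful compatibility argument between the smooth $C^\infty_{\loc}$-limit for $t < 0$ and the Hausdorff limit at $t = 0$.
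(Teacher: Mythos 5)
Your first half (translating along a direction of the asymptotic cone, extracting a limit $\tilde M_t$ that splits off a line, classifying its cross-section, and thereby enclosing $\hat M_t$ in a shrinking round cylinder) follows the same route as the paper and is essentially sound, modulo normalisation details: the flow is not defined at $t=0$, so the reference to ``$\hat M_0$'' and to the shifted flows meeting $B(0,R)$ ``at $t=0$'' must be replaced by a fixed negative time (the paper instead uses Lemma~\ref{Paraboloid} and the avoidance principle to verify the hypotheses of Proposition~\ref{compactness} on each interval $(-\infty,s]$), and the cross-section of $\tilde M_t$ is only known a priori to be a sphere about \emph{some} centre, so the enclosing cylinder's axis is not yet known to pass through the origin --- which matters, since Lemma~\ref{enclosing cylinder} permits only a rotation, not a translation. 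The genuine gap is in your final step. The assertion that ``by continuity, $\hat M_t$ touches the cylinder with parameter $T^*$ tangentially at some spacetime point'' does not follow: criticality of $T^*$ only produces points $(x_k,t_k)$ on $\hat M_{t_k}$ with $x_{k,1}^2+\dots+x_{k,n}^2$ squeezed between $2\gamma(0,1,\dots,1)(T_k-t_k)$ and $2\gamma(0,1,\dots,1)(T^*-t_k)$, and nothing prevents these from escaping along the axis to spatial infinity, or to $t\to 0^-$, or to $t\to-\infty$. Without an attained interior contact point the strong maximum principle cannot be invoked, and rigidity in the absence of attained contact is precisely the difficulty that Lemma~\ref{enclosing cylinder} (which, note, also requires the enclosing cylinder to have extinction time $0$ and axis through the origin) is designed to handle by a barrier argument; your contact-point shortcut in effect tries to re-prove that lemma for general $T^*$ by a method that does not work. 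The subsequent contradiction with the one-dimensional Hausdorff limit at $t=0$ is also moot, since it presupposes the conclusion that $\hat M_t$ is itself a cylinder.

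What the paper does instead, and what your argument is missing, is a direct determination of the singular time and axis of the translated limit: one shows $\sup_{\hat M_t\cap B(0,1)}\hat G\to\infty$ as $t\to 0$ (Lemma~\ref{interior curvature} plus avoidance), so by Lemma~\ref{neck-cap} the set $\bigcap_{t<0}\hat\Omega_t$ is one-dimensional and hence every point of the asymptotic cone is reached by $\hat M_t$ as $t\to 0$. Consequently the shifted flows $\hat M_t-x_j$ reach the origin at time $0$, and comparison with shrinking spheres gives, for every $t<0$, a point of $\tilde M_t$ within distance $\sqrt{-2\gamma(1,\dots,1)t}$ of the origin, while Lemma~\ref{Paraboloid} keeps $B(0,\eta(-t)^{1/2})$ inside $\tilde\Omega_t$. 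For a round shrinking cylinder these two facts force the extinction time to be exactly $0$ and the axis to pass through the origin, after which Lemma~\ref{enclosing cylinder} applies verbatim. To salvage your write-up you should replace the $T^*$-contact argument with an argument of this type pinning down the singular time of $\tilde M_t$.
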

\begin{proof}
Each of the asymptotic cones $C(\hat \Omega_t)$ is equal to $C : = C(\hat \Omega_{-1})$ by Lemma~\ref{constant cone}. Since $\hat M_t$ is noncompact, $C$ is noncompact. Moreover, Lemma~\ref{neck-cap} implies that the dimension of $C$ is one, so $C$ is either a ray or a line. If $C$ is a line then $\hat M_t$ splits off a line, and the claim follows from Lemma~\ref{splitting}. Suppose then that $C$ is a ray. Applying a rotation if necessary, we may assume $C$ is contained in the $x_{n+1}$-axis.

Consider the set $\hat M_0 := \cap_{t < 0} \hat \Omega_t$. This set is closed and convex. A straightforward argument using Lemma~\ref{interior curvature} and the avoidance principle shows that 
    \[\sup_{\hat M_t \cap B(0,1)} \hat G\to \infty\]
as $t \to 0$. Therefore, using Lemma~\ref{neck-cap}, we conclude that the dimension of $\hat M_0$ is at most one. On the other hand $C \subset \hat \Omega_t$ for $t < 0$. It follows that $C \subset \hat M_0$. From this we conclude that the dimension of $\hat M_0$ is equal to one, and that every point in $C$ is reached by $\hat M_t$ as $t \to 0$ 

Let $x_j \in C$ be a sequence of points going to $\infty$, and consider the shifted flows $\hat M_t - x_j$. We know that $\hat M_t - x_j$ reaches the origin as $t \to 0$. Therefore, by the avoidance principle, for each $t < 0$ there is a point in $\hat M_t - x_j$ whose distance to the origin is at most $\sqrt{-2\gamma(1,\dots,1)t}$. Moreover, the ball $B(0,\eta(-t)^{1/2})$ is contained in $\hat \Omega_t$, and hence in $\hat \Omega_t - x_j$, for every $t < 0$. This is due to Lemma~\ref{Paraboloid}. 

Using these facts, we can appeal to Proposition~\ref{compactness} to extract a smooth limit of the shifted flows in $C^\infty_{\loc}(\mathbb{R}^{n+1}\times(-\infty,0))$. Denote the limiting flow by $\tilde M_t$. By construction, $\tilde M_t$ encloses the line through the origin parallel to $C$. Therefore, $\tilde M_t$ splits off a line and (since we rotated to align $C$ with the $x_{n+1}$-axis) Lemma~\ref{splitting} implies that
    \[\tilde M_t = \{x \in \mathbb{R}^{n+1} : x_1^2 + \dots + x_{n}^2 = -2\gamma(0,1,\dots,1)t\}.\]
But $\tilde M_t$ also encloses $\hat M_t$, so using Lemma~\ref{enclosing cylinder} we conclude that 
    \[\hat M_t = \{x \in \mathbb{R}^{n+1} : x_1^2 + \dots + x_{n}^2 = -2\gamma(0,1,\dots,1)t\}.\]
This completes the proof. 
\end{proof}

Let $\bar M_\tau$ denote the rescaled family of hypersurfaces $e^{\tau/2} M_{-e^{-\tau}}$. These move with velocity $-(G - \frac{1}{2}\langle x, \nu\rangle) \nu$. We now prove the main result of this section, which asserts that in case $M_t$ is noncompact we have full (as opposed to subsequential) convergence of $\bar M_\tau$ to a cylinder of radius $(2\gamma(0,1,\dots,1))^{1/2}$ as $\tau \to -\infty$.

\begin{proposition}\label{blow-down}
Suppose $M_t$ is noncompact. Up to a rotation, the rescaled hypersurfaces $\bar M_\tau$ converge in $C^\infty_{\loc}(\mathbb{R}^{n+1})$ to the cylinder
    \[\Sigma := \{x \in \mathbb{R}^{n+1} : x_1^2 + \dots + x_{n}^2 = 2\gamma(0,1,\dots,1)\}\]
as $\tau \to -\infty$.
\end{proposition}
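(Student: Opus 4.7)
The plan is to establish $C^\infty_{\loc}$ subsequential convergence of $\bar M_\tau$ to a shrinking cylinder via parabolic rescaling, and then verify that the cylinder is independent of the subsequence. Given any sequence $\tau_k \to -\infty$, I set $a_k := e^{\tau_k/2} \to 0$ and consider the rescaled flows $M_t^k := a_k M_{a_k^{-2}t}$; these are convex, noncollapsing, and uniformly two-convex with the same constants as $M_t$. The compactness arguments earlier in this section produce, after passing to a subsequence, a $C^\infty_{\loc}$ limit $\hat M_t = \partial \hat \Omega_t$ on $\mathbb{R}^{n+1} \times (-\infty,0)$. Since $M_t$ is noncompact, Lemma~\ref{blow-down=sphere} rules out the possibility that $\hat M_t$ is a shrinking sphere, so Lemma~\ref{blow-down=cylinder} identifies $\hat M_t$ (after a rotation) as the shrinking cylinder $\{x \in \mathbb{R}^{n+1} : x_1^2 + \dots + x_n^2 = -2\gamma(0,1,\dots,1)t\}$, whose axis passes through the origin. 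Since $M^k_{-1} = \bar M_{\tau_k}$, this already yields smooth subsequential convergence of $\bar M_{\tau_k}$ to a rotate of $\Sigma$.

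The main obstacle is to exclude a slow rotation of $\bar M_\tau$, i.e., to show that the rotation matching $\hat M_{-1}$ with $\Sigma$ is the same along every subsequence. I would resolve this by appealing to the asymptotic cone $C := C(\Omega_0)$, which is nontrivial because $M_t$ is noncompact, is time-independent by Lemma~\ref{constant cone}, and is invariant under positive scaling. The claim is that the axis $L$ of the limiting cylinder $\hat M_{-1}$ equals $\mathrm{span}(C)$. To see this, I would fix $p \in \Omega_0$ and any $v \in C$: the ray $p + \mathbb{R}_+ v$ lies in $\overline{\Omega}_{-a_k^{-2}}$ for every $k$, so after rescaling the ray $a_k p + \mathbb{R}_+ v$ lies in $a_k \overline{\Omega}_{-a_k^{-2}}$. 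The fixed inscribed ball $B(0,\eta) \subset a_k \Omega_{-a_k^{-2}}$ supplied by Lemma~\ref{Paraboloid} localizes the limit $\overline{\hat \Omega}_{-1}$, and the smooth convergence of the boundaries allows me to pass to the limit and conclude $\mathbb{R}_+ v \subset \overline{\hat \Omega}_{-1}$. Since $\overline{\hat \Omega}_{-1}$ is a solid cylinder through the origin, its only directions of recession lie along its axis, which forces $v \in L$ and hence $L = \mathrm{span}(C)$ — a quantity depending only on the original flow.

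Once $L$ is known to be subsequence-independent, I fix once and for all a rotation of $\mathbb{R}^{n+1}$ mapping $L$ to the $x_{n+1}$-axis, so that every parabolic subsequential limit at time $-1$ equals the single cylinder $\Sigma$. An Arzel\`a--Ascoli argument in $C^\infty_{\loc}$, relying on the uniform curvature bounds of Lemma~\ref{interior curvature} and Proposition~\ref{lower curvature} applied to the rescaled flows, then upgrades subsequential convergence to full convergence: any sequence $\tau_k \to -\infty$ admits a subsequence along which $\bar M_{\tau_k} \to \Sigma$, and since the limit is always the same, $\bar M_\tau \to \Sigma$ in $C^\infty_{\loc}(\mathbb{R}^{n+1})$ as $\tau \to -\infty$.
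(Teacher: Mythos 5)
Your proof is correct and follows essentially the same route as the paper: subsequential blow-down limits are identified as shrinking cylinders via Lemma~\ref{blow-down=cylinder}, and the time-independent asymptotic cone (Lemma~\ref{constant cone}), being enclosed by every rescaled region, forces every limiting cylinder to have the same axis, so the subsequence principle upgrades to full convergence. The only cosmetic difference is that you rule out compact blow-down limits by the contrapositive of Lemma~\ref{blow-down=sphere}, whereas the paper obtains noncompactness of the limit from the one-dimensionality of the asymptotic cone via Lemma~\ref{neck-cap}.
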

\begin{proof}
Each of the asymptotic cones $C(\Omega_t)$ is equal to $C : = C(\Omega_0)$ by Lemma~\ref{constant cone}. Moreover, Lemma~\ref{neck-cap} implies that the dimension of $C$ is one. After rotating if necessary, we may assume $C$ lies in the $x_{n+1}$-axis. 

It suffices to show that, for every sequence $\tau_k \to \infty$, $\bar M_{\tau_k}$ admits a subsequence which converges in $C^\infty_{\loc}(\mathbb{R}^{n+1})$ to $\Sigma$. Lemma~\ref{blow-down=cylinder} guarantees subconvergence to a cylinder of radius $\sqrt{2\gamma(0,1,\dots,1)}$ with axis through the origin. But since $C \subset \Omega_t$ for $t \leq 0$, the axis of every such limiting cylinder is the $x_{n+1}$-axis. The claim follows.
\end{proof}


\section{A barrier construction}\label{barrier section}

In this section we prove the following statement.

\begin{proposition}\label{barrier}
There is a positive constant $L_0 = L_0(n, \gamma)$ such that, for each sufficiently large $a > 0$, there exists a function $\Psi_a : [L_0, a] \to \mathbb{R}$ with the following properties:
\begin{itemize}
    \item [(i)] $\Psi_a(z) \geq 0$ with equality at $z = a$ and $\Psi_a(z)^2 < 2\gamma(0,1,\dots,1)$ for $z \in [L_0,a]$.
    \item [(ii)] $\Psi_a$ is strictly decreasing and strictly concave. 
    \item [(iii)] The set of points 
        \[\{(z, \Psi_a(z) \theta): z \in (L_0, a], \; \theta \in S^{n-1}\}\]
    is a smooth hypersurface which solves the $G$-shrinker equation, $G = \frac{1}{2}\langle x, \nu\rangle$.
    \item[(iv)] For each $z \in [L_0, a]$ we have 
        \[\Psi_a(z)^2 \geq 2\gamma(0,1,\dots,1)\bigg(1 - \frac{z^2}{a^2}\bigg).\]
    \item [(v)] There is a positive constant $C = C(n,\gamma)$ with the following property. For any given $L > L_0$, the inequality
        \[\Psi_a(z)^2 \leq 2\gamma(0,1,\dots,1)\bigg(1-\bigg(1-C\frac{\log a}{a^2}\bigg)\frac{z^2 - C}{a^2}\bigg).\]
    holds for $z \in [L_0, L]$, provided that $a$ is sufficiently large depending on $L$.
\end{itemize}
\end{proposition}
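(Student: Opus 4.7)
\medskip

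The plan is to reduce the construction to a shooting problem for a second-order ODE and then match carefully to the cylinder. Parametrising $\Sigma_a$ as the surface of revolution $\{(z,\Psi(z)\theta):\theta\in S^{n-1}\}$, the principal curvatures are $\lambda_1=-\Psi''/(1+\Psi'^2)^{3/2}$ and $\lambda_2=\cdots=\lambda_n=1/(\Psi(1+\Psi'^2)^{1/2})$, and the shrinker equation $G=\frac{1}{2}\langle x,\nu\rangle$ becomes
\[F\!\left(-\Psi''/(1+\Psi'^2)^{3/2},\,1/(\Psi(1+\Psi'^2)^{1/2})\right)=\frac{\Psi-z\Psi'}{2(1+\Psi'^2)^{1/2}},\]
with $F(\lambda,\mu)=\gamma(\lambda,\mu,\dots,\mu)$. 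Note the cylinder $\Psi\equiv r_\ast:=(2\gamma(0,1,\dots,1))^{1/2}$ is the trivial solution, since by degree-one homogeneity $F(0,1/r_\ast)=r_\ast/2$.

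I would begin by establishing local existence near the tip. Write the profile in the form $z=Z(\Psi)$, invert the ODE, and solve the resulting initial value problem at $\Psi=0$ with $Z(0)=a$, $Z'(0)=0$, giving a smooth cap. Use the change of variable $\rho=\Psi$, $p=-dz/d\Psi$ (so $p>0$ corresponds to $\Psi$ decreasing) to reduce the system to a first-order autonomous-like system whose linearisation has a simple negative eigenvalue, giving local existence and smoothness of the tip. Then one extends the solution backwards in $z$. Standard ODE continuation shows the solution exists as long as $\Psi$ is smooth, strictly decreasing, strictly concave and bounded below by zero and above by $r_\ast$. Uniform parabolicity and the structural hypotheses on $\gamma$ (in particular strict inverse-concavity along $\{0\}\times\mathbb{R}_+^{n-1}$) give $\gamma$-ellipticity on the solutions constructed in this regime, allowing us to apply the maximum principle to comparison quantities such as $\Psi'$ and $\Psi''$. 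This yields (i)--(iii) on a maximal interval $[L_a,a]$.

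The lower bound (iv) is a barrier estimate. Set $\varphi(z):=r_\ast^2(1-z^2/a^2)$ and check that $\sqrt{\varphi}$ is a strict subsolution of the shrinker ODE for $z\in(L_0,a)$: indeed, a direct computation using $F(0,1/r_\ast)=r_\ast/2$ and positivity of $\dot\gamma^1(0,1,\dots,1)$ shows the RHS minus LHS, evaluated at $\sqrt\varphi$, is strictly positive (it matches to leading order, while the second-order correction produced by $\Psi''$ on the left and by the $-z\Psi'/2$ term on the right combine with a favourable sign in the nonlinear remainder). Concavity of $\Psi_a$ together with $\Psi_a(a)=0=\sqrt\varphi(a)$ and comparison at a first touching point then forces $\Psi_a\geq\sqrt\varphi$. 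By the same comparison, the solution cannot reach $\Psi=r_\ast$ in finite $z$, so the maximal existence interval $[L_a,a]$ extends to $z=L_0$ for some $L_0$ independent of $a$ (here $L_0$ is chosen large enough that the subsolution is effective; to close up the extension for all small $z$, one combines this with the a priori gradient bound $|\Psi'|\to 0$ as $z$ moves away from the tip, obtained from a Bernstein-type argument applied to $\Psi'$).

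The upper bound (v) is the main technical point, and my plan mirrors the ADS construction but incorporates a nonlinear correction. Writing $\Psi=r_\ast+v$, the linearised shrinker ODE near the cylinder reads
\[\alpha v''-\tfrac{1}{2}zv'+v=0,\qquad \alpha:=\dot\gamma^1(0,1,\dots,1),\]
for which $v_0(z)=z^2-2\alpha$ is an explicit solution (the neutral mode). This motivates the ansatz $\Psi_a^2\approx r_\ast^2(1-(z^2-2\alpha)/a^2)$. To turn this ansatz into a supersolution, I would take a trial profile
\[\tilde\Psi_a(z)^2=2\gamma(0,1,\dots,1)\Bigl(1-(1-\eta_a)(z^2-C)/a^2\Bigr),\qquad \eta_a:=C\log a/a^2,\]
with $C=C(n,\gamma)$ large and fixed, and verify that $\tilde\Psi_a$ is a supersolution of the shrinker ODE on $[L_0,L]$ for $a$ sufficiently large. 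Substituting into the ODE, all terms of order $1$ and $1/a^2$ cancel by choice of coefficients; the remaining error is $O(1/a^4)$ in the interior and is dominated by the $\eta_a$-correction, whose contribution (via the linearised operator applied to $z^2-C$) produces a strictly negative term of order $\log a/a^2$ that absorbs every nonlinear and tip-region error after tuning $C$. The nonlinear remainder is controlled using the $C^{2,\alpha}$ bounds derived from $\gamma$-ellipticity together with the bound $\Psi'=O(z/a^2)$ obtained by differentiating the lower bound in (iv). Comparison at a first interior touching point between $\Psi_a$ and $\tilde\Psi_a$, together with control at the boundary $z=L_0$ (using that $\bar M_\tau$-like cylindrical approximation forces $\Psi_a\to r_\ast$ as $a\to\infty$, giving $\Psi_a(L_0)\leq\tilde\Psi_a(L_0)$ for large $a$), yields $\Psi_a\leq\tilde\Psi_a$ on $[L_0,L]$, which is (v).

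The main obstacle will be step four: pinning down the size of the correction $\eta_a$. The neutral mode $z^2-2\alpha$ means that the leading linearisation does not distinguish between ansätze of the form $1-(z^2-C)/a^2$ and $1-(1-\eta)(z^2-C)/a^2$, so the logarithmic factor must come from a nonlinear balance between the $\Psi''$-term, the drift $-\frac{1}{2}z\Psi'$, and higher-order curvature corrections (which in the fully nonlinear setting include contributions from $\ddot\gamma$). Quantifying this balance, while keeping the remainder estimates uniform in $a$ and under all our structural hypotheses on $\gamma$, is where the argument requires the most care.
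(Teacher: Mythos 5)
Your outline for (i)--(iii) is broadly in the spirit of the paper (shoot from the tip, continuation, a priori ellipticity and derivative bounds), but the arguments you propose for the two quantitative estimates (iv) and (v) do not work, and the sign error in (iv) is decisive. Write $r_*^2=2\gamma(0,1,\dots,1)$ and $V(z)=r_*\sqrt{1-z^2/a^2}$. Then $V-zV'=r_*^2/V$ \emph{identically}, so the right-hand side of the shrinker ODE evaluated on $V$ is exactly $\tfrac{1}{2}(V-zV')=F\bigl(0,\tfrac1V\bigr)$, while the left-hand side is $F\bigl(\tfrac{-V''}{1+V'^2},\tfrac1V\bigr)>F\bigl(0,\tfrac1V\bigr)$ by strict concavity of $V$ and monotonicity of $F$. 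Hence on the ellipse $G-\tfrac12\langle x,\nu\rangle>0$ everywhere: the opposite of what you assert ("RHS minus LHS \dots is strictly positive"). With the correct sign, a first-contact argument with the ellipse touching $\Psi_a$ from below gives no contradiction: at the touching point $\Psi_a''\ge V''$ forces $G_{\Psi_a}\le G_V$, which is perfectly consistent with $G_{\Psi_a}=\tfrac12\langle x,\nu\rangle<G_V$. So the ellipse is simply not a usable lower barrier. In the paper, (iv) is instead propagated \emph{from the tip}: one shows $w:=z\frac{d}{dz}\log(2F(0,1)-v^2)>2$, which is seeded by the tip asymptotics $w\to 2F(1,1)/F(0,1)>2$ as $z\to a$ (a consequence of the smooth cap, $\psi_{\rho\rho}(0)=\tfrac{1}{2F(1,1)}$), and preserved by a first-order ODE argument for $w$; integrating $w>2$ from $z$ to $a$ yields (iv). The strict inequality $F(1,1)>F(0,1)$ at the tip is the actual source of the bound, and it is invisible to any purely interior barrier of the type you propose.

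For (v) the structural problem is similar and you have, in effect, flagged it yourself. A comparison on the bounded window $[L_0,L]$ needs boundary control at \emph{both} ends; at $z=L$ the required inequality $\Psi_a(L)^2\le r_*^2\bigl(1-(1-\eta_a)(L^2-C)/a^2\bigr)$ is precisely an instance of (v), so the argument is circular, and no interior "nonlinear balance" on $[L_0,L]$ can produce the $\log a/a^2$ factor. In the paper the comparison is run for the quantity $w$ on the \emph{whole} interval $[\sqrt K, z_{M,a}]$ with $z_{M,a}=a-a^{-1}\psi(M)\sim a$: the supersolution is $\bar w=2+K\bigl(z^{-2}+(a^2-z^2)^{-1}\bigr)$, the crucial boundary datum $w(z_{M,a})<\bar w(z_{M,a})$ is obtained from the convergence of the rescaled caps to the bowl soliton together with its refined asymptotics $\zeta(M)=\tfrac{M^2}{4F(0,1)}-2\dot\gamma^1(0,1,\dots,1)\log M+o(\log M)$ (which pin down $w(z_{M,a})\to 2+8F(0,1)^2M^{-2}+O(M^{-4})$ versus $\bar w(z_{M,a})\to 2+\tfrac{K}{2\zeta(M)}$), and the $\log a/a^2$ correction appears only when one integrates $\bar w/\eta$ over $[z,z_{M,a}]$, through the term $\tfrac{K}{2a^2}\log\bigl(\tfrac{z_{M,a}^2(a^2-z^2)}{z^2(a^2-z_{M,a}^2)}\bigr)$ with $a^2-z_{M,a}^2\to 2\zeta(M)=O(1)$. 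So both the tip boundary data (via the translator asymptotics) and the global-in-$z$ comparison are indispensable, and neither appears in your plan; to repair it you would essentially have to redo the paper's $w$-analysis rather than refine the interior supersolution ansatz.
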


Consider a  strictly concave smooth function $h: (0, R) \to \mathbb{R}$. The hypersurface 
    \[\{(h(r), r \theta) : r \in (0,R), \; \theta \in S^{n-1}\}\]
solves $G = \frac{1}{2}\langle x,\nu\rangle$ if and only if $h$ solves the ODE
\begin{equation*}\label{ODEh}
    \gamma\bigg(-\frac{h_{rr}}{1+h_r^2}, -\frac{h_r}{r}, \dots, -\frac{h_r}{r}\bigg) = \frac{1}{2}(h - rh_r).
\end{equation*}
For a given $a > 0$, consider the function $\psi : (0, Ra) \to \mathbb{R}$ defined by
    \[h(r) = a -\frac{1}{a}\psi(ar),\]
and set $\rho = a r$. Then $h$ solves \eqref{ODEh} if and only if $\psi$ solves
    \begin{equation}\label{shrinker psi}
        \gamma\bigg(\frac{\psi_{\rho\rho}}{1+\psi_\rho^2}, \frac{\psi_\rho}{\rho}, \dots, \frac{\psi_\rho}{\rho}\bigg) = \frac{1}{2} + \frac{1}{2a^2}(\rho \psi_\rho - \psi).
    \end{equation}
    
As a first step towards proving Proposition~\ref{barrier}, for each $a>0$ we will construct a solution of \eqref{shrinker psi} which satisfies the initial conditions $\psi(0) = 0$ and $\psi_\rho(0) = 0$. 

\begin{proposition}\label{prop: sol psi on (0,R)}
For each $a > 0$ there is a strictly convex function 
    \[\psi \in C^\infty([0, \sqrt{2F(0,1)} a))\]
which solves \eqref{shrinker psi} with the initial conditions 
    \[\psi(0) = 0, \qquad \psi_\rho(0) = 0.\]
Moreover, there is a constant $C > 0$ depending only on $n$ and $\gamma$ such that $\psi_\rho \geq C\rho$ for every $\rho \in (0,\sqrt{2F(0,1)}a)$, and $\psi_\rho \leq C\varepsilon^{-1}\rho$ for every $\rho \in (0,\sqrt{(2-\varepsilon)F(0,1)}a)$.
\end{proposition}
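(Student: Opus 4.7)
The plan is to analyse \eqref{shrinker psi} near the singular point $\rho = 0$, determine the correct initial second derivative, prove local existence, and then extend the solution using a priori estimates that follow from the monotonicity and homogeneity of $\gamma$. Since $\psi_\rho(0) = 0$, L'H\^{o}pital's rule identifies $\psi_\rho/\rho$ with $\psi_{\rho\rho}(0)$ in the limit $\rho \to 0^+$, while the right-hand side of \eqref{shrinker psi} tends to $1/2$. Thus $c_0 := \psi_{\rho\rho}(0)$ must satisfy $\gamma(c_0, c_0, \ldots, c_0) = 1/2$, which by homogeneity fixes $c_0 = 1/(2\gamma(1,\ldots,1))$. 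Away from $\rho = 0$, since $\gamma$ is smooth and strictly monotone in its first argument, the implicit function theorem solves \eqref{shrinker psi} uniquely for $\psi_{\rho\rho}$ as a smooth function of $(\rho, \psi, \psi_\rho)$. To handle the singularity at $\rho = 0$, I would introduce the auxiliary variable $\mu := \psi_\rho/\rho$, extended by $\mu(0) = c_0$, and rewrite the system as a regular ODE in the variables $(\psi, \psi_\rho, \mu)$. Standard ODE theory then yields a smooth, strictly convex local solution on some initial interval $[0,\rho_0]$.

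For the lower bound $\psi_\rho \geq C\rho$, fix $c \in (0, c_0)$ and set $w(\rho) := \psi_\rho(\rho) - c\rho$. We have $w(0) = 0$ and $w'(0) = c_0 - c > 0$, so $w > 0$ on a right-neighbourhood of $0$. Suppose there were a first $\rho_0 > 0$ with $w(\rho_0) = 0$; then $\psi_\rho(\rho_0) = c\rho_0$ and $\psi_{\rho\rho}(\rho_0) \leq c$. Convexity of $\psi$ on $[0,\rho_0]$ combined with $\psi(0) = 0$ gives $\psi(\rho_0) \leq \rho_0\psi_\rho(\rho_0)$, so the right-hand side of \eqref{shrinker psi} at $\rho_0$ is $\geq 1/2$. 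On the other hand, using $\psi_{\rho\rho}/(1+\psi_\rho^2) \leq \psi_{\rho\rho} \leq c$ together with monotonicity and homogeneity of $\gamma$,
\[\gamma\bigg(\frac{\psi_{\rho\rho}}{1+\psi_\rho^2}, \frac{\psi_\rho}{\rho}, \ldots, \frac{\psi_\rho}{\rho}\bigg)\bigg|_{\rho_0} \leq \gamma(c, c, \ldots, c) = \frac{c}{2c_0} < \frac{1}{2},\]
contradicting the ODE. Hence $w > 0$ throughout, and taking $c = c_0/2$ gives the claim with $C = 1/(4\gamma(1,\ldots,1))$. For the upper bound on $(0, \sqrt{(2-\varepsilon)F(0,1)}a)$, I would use $\psi_{\rho\rho} \geq 0$ together with $\{0\}\times\mathbb{R}_+^{n-1} \subset \Gamma$ and monotonicity of $\gamma$ to bound the left-hand side of \eqref{shrinker psi} from below by $\gamma(0, \psi_\rho/\rho, \ldots, \psi_\rho/\rho) = (\psi_\rho/\rho)F(0,1)$, and use $\psi \geq 0$ to bound the right-hand side from above by $\tfrac{1}{2} + \rho\psi_\rho/(2a^2)$. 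Rearranging,
\[\frac{\psi_\rho}{\rho}\bigl(2F(0,1)a^2 - \rho^2\bigr) \leq a^2,\]
which on the stated interval yields $\psi_\rho/\rho \leq 1/(\varepsilon F(0,1))$.

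Global existence on $[0, \sqrt{2F(0,1)}a)$ then follows by standard continuation: on every $[0, \rho_1]$ with $\rho_1 < \sqrt{2F(0,1)}a$, the upper bound gives $\psi_\rho$ bounded, integration gives $\psi$ bounded, and inverting \eqref{shrinker psi} for $\psi_{\rho\rho}$ (using strict monotonicity of $\gamma$ in its first argument together with the lower bound on $\psi_\rho/\rho$) gives $\psi_{\rho\rho}$ bounded. Strict convexity persists on this interval: differentiating \eqref{shrinker psi} at a would-be first zero $\rho_0$ of $\psi_{\rho\rho}$ produces $\dot\gamma^1\psi_{\rho\rho\rho}(\rho_0)/(1+\psi_\rho^2) = (n-1)\dot\gamma^2\psi_\rho(\rho_0)/\rho_0^2 > 0$, contradicting the fact that $\psi_{\rho\rho}$ decreases from positive values to $0$ at $\rho_0$. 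The main obstacle is the local existence step at $\rho = 0$, where \eqref{shrinker psi} is a regular singular ODE: the factor $\psi_\rho/\rho$ must be interpreted via $c_0$, and one needs to verify that this formal initial condition produces a genuine smooth solution. This is routine but technical; the regularising substitution $\mu = \psi_\rho/\rho$, or equivalently a contraction-mapping argument on functions of the form $\psi(\rho) = c_0\rho^2/2 + \rho^4 \varphi(\rho^2)$, handles it.
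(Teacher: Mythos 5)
There is a genuine gap in your continuation step, and it is exactly the point the paper identifies as the main difficulty of this construction. You claim that on $[0,\rho_1]$, bounds on $\psi$ and $\psi_\rho$ let you ``invert \eqref{shrinker psi} for $\psi_{\rho\rho}$'' using strict monotonicity of $\gamma$ in its first argument. But $x \mapsto F(x,y)$ need not be surjective onto $(F(0,1)y,\infty)$: its range is $(F(0,1)y, Qy)$ where $Q = \lim_{x\to\infty}F(x,1)$ can be finite (for the Brendle--Huisken speed $\gamma(\lambda) = (\sum_{i<j}(\lambda_i+\lambda_j)^{-1})^{-1}$ it is). Writing $y = \psi_\rho/\rho$ and $z = \tfrac12 + \tfrac{1}{2a^2}(\rho\psi_\rho-\psi)$, one has $\psi_{\rho\rho} = (1+\psi_\rho^2) f(y,z)$ with $f(y,z) \to \infty$ as $z/y \to Q^-$, so a bound on $\psi_{\rho\rho}$ requires the ratio $z/y$ to stay \emph{uniformly below} $Q$. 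Your estimates only give $z$ bounded above and $y$ bounded below, hence $z/y \leq F(1,1) + F(0,1)$ at best (using $\psi_\rho \geq c_0\rho$ and $\rho^2 < 2F(0,1)a^2$), and this number exceeds $Q$ for the two-harmonic-mean speed once $n \geq 5$. So the blow-up scenario $\psi_{\rho\rho}\to\infty$ with $\psi,\psi_\rho$ bounded is not excluded by your argument, and the maximal solution could a priori stop before $\sqrt{2F(0,1)}\,a$. The paper closes exactly this hole with Lemma~\ref{f bound}: a maximum-principle argument for $B = \rho\psi_\rho^{-1}\big(\tfrac12 + \tfrac{1}{2a^2}(\rho\psi_\rho-\psi)\big)$ showing $B_\rho < 0$ whenever $B$ exceeds an explicit threshold strictly below $Q$, which yields the a priori bound on $\Lambda = \rho\psi_{\rho\rho}/(\psi_\rho(1+\psi_\rho^2))$ needed for continuation. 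Some such argument is indispensable; without it your proof only covers speeds with $Q = \infty$ (e.g.\ mean curvature type), not the flows motivating the paper.

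Two further remarks. Your local existence at the singular point $\rho = 0$ is only sketched (the substitution $\mu = \psi_\rho/\rho$ still leaves a $1/\rho$ singularity, so ``standard ODE theory'' does not apply directly; the indicial analysis or contraction argument you allude to would have to be carried out). The paper sidesteps this entirely: it solves regular initial value problems with data at $\rho_k \downarrow 0$ chosen on the explicit subsolution $w = \tfrac{\theta}{4F(1,1)}\rho^2$, traps the solutions between sub- and supersolutions (Lemmas~\ref{ODE comparison} and \ref{ODE comparison upper}), passes to a limit, and only afterwards proves the extension to $\rho = 0$ (monotonicity of $\psi_\rho/\rho$ near the origin plus elliptic regularity for the radial graph), obtaining $\psi_{\rho\rho}(0) = \tfrac{1}{2F(1,1)}$, consistent with your $c_0$. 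On the positive side, your derivations of the gradient bounds themselves are sound and cleaner than the paper's in places: the lower bound via the contradiction at a first zero of $\psi_\rho - c\rho$, and the upper bound $\psi_\rho/\rho \leq (\varepsilon F(0,1))^{-1}$ obtained directly from $\psi_{\rho\rho}\geq 0$, $\psi \geq 0$ and monotonicity, recover what the paper gets from its comparison lemmas.
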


The solution $\psi$ in Proposition~\ref{prop: sol psi on (0,R)} is constructed as a limit of solutions to a sequence of initial value problems, in which we pose carefully chosen initial conditions at $\rho_k$, where $\rho_k$ is a sequence of positive numbers tending to 0. Some of our arguments take inspiration from \cite{rengaswami2021rotationally}, where \eqref{shrinker psi} was studied in the case $a = \infty$ (which corresponds to translating, rather than shrinking, $G$-flows). To be able to take a limit of the approximating solutions using the Arzel\'{a}--Ascoli theorem, we need to establish uniform derivative estimates. We first prove a priori estimates showing that, for a convex solution of \eqref{shrinker psi implicit}, appropriate control on the gradient implies a $C^2$-estimate and hence higher derivative bounds via bootstrapping. This reduces the entire construction to the derivation of gradient bounds for the approximating solutions, which we achieve using sub- and supersolutions and a comparison principle. 

It will be convenient to rewrite \eqref{shrinker psi}. Consider the function $F:\mathbb{R}^2_+ \to \mathbb{R}_+$ given by
    \begin{align*}
        F(x,y):=\gamma(x, y, \dots, y).
    \end{align*}
Since $\gamma$ is increasing in each argument we have $\frac{\partial F}{\partial x} > 0$, and consequently the set 
    \[\{ F(x, y) - z = 0 : (x, y, z) \in \mathbb{R}_+^3\}\]
is a smooth hypersurface. Moreover, by the implicit function theorem there is an open set $U \subset \mathbb{R}^2_+$ and a smooth function $f : U \to \mathbb{R}$ such that $F(x,y) - z = 0$  if and only if $(y,z) \in U$ and $x = f(y,z)$. In particular, $F(f(y, z), y) = z$ for all $(y,z) \in U$. We have
    \[ \frac{\partial f}{\partial y}(y,z) = - \frac{\frac{\partial F}{\partial y}(f(y,z),y)}{\frac{\partial F}{\partial x}(f(y,z),y)}, \qquad \frac{\partial f}{\partial z}(y,z) = \frac{1}{\frac{\partial F}{\partial x}(f(y,z),y)},\]
so $f$ is strictly decreasing in its first argument and strictly increasing in its second. If $\psi$ is such that
    \begin{equation}\label{psi in U}
        \bigg(\frac{\psi_\rho}{\rho}, \frac{1}{2} + \frac{1}{2a^2}(\rho \psi_\rho - \psi)\bigg) \in U,
    \end{equation}
then $\psi$ solves \eqref{shrinker psi} if and only if 
    \begin{equation}\label{shrinker psi implicit}
        \psi_{\rho \rho} = (1+\psi_\rho^2) f\bigg(\frac{\psi_\rho}{\rho}, \frac{1}{2} + \frac{1}{2a^2}(\rho \psi_\rho - \psi)\bigg).
    \end{equation}
    
Let $Q \in (0, \infty]$ denote $\lim_{x \to \infty} F(x,1)$. For example, when $\gamma(\lambda) = \lambda_1 +\dots + \lambda_n$ we have $Q = \infty$, whereas for the speed $\gamma(\lambda) = (\sum_{i < j} (\lambda_i + \lambda_j)^{-1})^{-1}$ we have $Q = \frac{1}{4(n-1)(n-2)}$. Observe that $f(y,z) < \infty$ if and only if $z/y = F(f(1,z/y),1) < Q$. Moreover, $f(y,z) > 0$ if and only if $z/y = F(f(1,z/y),1) > F(0,1)$. Therefore, $U$ is the open cone 
    \[U = \{(y,z) \in \mathbb{R}^2_+ : F(0,1) < z/y < Q\}.\]

If $\psi$ is a $C^2$ solution of \eqref{shrinker psi} such that $\psi_\rho >0$ and $\psi_{\rho\rho} >0$ then
    \[F\bigg(\frac{\rho\psi_{\rho\rho}}{\psi_\rho(1+\psi_\rho^2)},1\bigg) = \frac{\rho}{\psi_\rho} \gamma\bigg(\frac{\psi_{\rho\rho}}{1+\psi_\rho^2}, \frac{\psi_\rho}{\rho}, \dots, \frac{\psi_\rho}{\rho}\bigg) = \frac{\rho}{\psi_\rho} \bigg(\frac{1}{2} + \frac{1}{2a^2}(\rho \psi_\rho - \psi)\bigg),\]
and the left-hand side takes values in $(F(0,1), Q)$, so \eqref{psi in U} holds, and hence we are free to pass between the two forms of the equation \eqref{shrinker psi} and \eqref{shrinker psi implicit}.

\subsection{A priori estimates} The possibility that $Q < \infty$ is one reason why the construction we carry out in this section is more subtle than its counterpart for the mean curvature flow. Geometrically, if $\gamma$ is such that $Q < \infty$, a convex hypersurface on which $G$ is bounded can have principal curvatures which are arbitrarily large. This cannot happen if $\gamma$ is uniformly elliptic on the hypersurface. The following lemma provides conditions under which $\gamma$ is uniformly elliptic on a strictly convex solution of \eqref{shrinker psi}. Given a solution, let us define 
    \[\Lambda := \frac{\rho\psi_{\rho\rho}}{\psi_\rho(1+\psi_\rho^2)}.\]
Thinking of $\psi$ as the profile of a surface of rotation, $\Lambda$ is equal to the radial principal curvature divided by the principal curvature in the direction of rotation. 

\begin{lemma}\label{f bound}
Let $\psi$ be a solution of \eqref{shrinker psi} on the interval $[\rho_0, \rho_1]$ such that $\psi_\rho > 0$ and $\psi_{\rho\rho} > 0$. Suppose $\rho_1 < \sqrt{2F(0,1)}a$. There is a constant $C$ depending only on $n$ and $\gamma$ such that 
    \[\Lambda(\rho) \leq \max\{C, \Lambda(\rho_0)\}\]
for each $\rho \in [\rho_0, \rho_1]$.
\end{lemma}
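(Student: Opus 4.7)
The plan is to derive a scalar ODE for $\Lambda$ from the shrinker equation and run a first-maximum argument, exploiting the hypothesis $\rho_1 < \sqrt{2F(0,1)}\,a$, which guarantees $\rho^2/(2a^2) < F(0,1)$ throughout the interval.

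Using the homogeneity of $\gamma$ and the positivity of $\psi_\rho$, equation \eqref{shrinker psi} may be rewritten as
\[
F(\Lambda, 1)\,\frac{\psi_\rho}{\rho} \;=\; \frac{1}{2} + \frac{1}{2a^2}(\rho\psi_\rho - \psi).
\]
Differentiating this identity with respect to $\rho$ and simplifying yields
\[
\partial_x F(\Lambda, 1)\,\Lambda_\rho\cdot\frac{\psi_\rho}{\rho}
\;=\; \frac{\psi_{\rho\rho}}{\rho}\left[\frac{\rho^2}{2a^2} \;-\; F(\Lambda,1)\left(1 - \frac{1}{\Lambda(1+\psi_\rho^2)}\right)\right].
\]
Since $\partial_x F$, $\psi_\rho$, and $\psi_{\rho\rho}$ are all positive on $[\rho_0, \rho_1]$, the sign of $\Lambda_\rho$ agrees with the sign of the bracket on the right.

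To decouple this from the unknown $\psi_\rho$, I would introduce the auxiliary function $g(\Lambda) := F(\Lambda, 1)(1 - 1/\Lambda)$ for $\Lambda > 1$. Since $\psi_\rho > 0$, we have $1 - 1/(\Lambda(1+\psi_\rho^2)) \geq 1 - 1/\Lambda$, so whenever $\Lambda > 1$ the bracket above is bounded by $\rho^2/(2a^2) - g(\Lambda)$. A direct computation gives $g'(\Lambda) = \partial_x F(\Lambda, 1)(1 - 1/\Lambda) + F(\Lambda, 1)/\Lambda^2 > 0$, so $g$ is strictly increasing with $\lim_{\Lambda \to \infty} g(\Lambda) = Q$. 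The strict monotonicity of $\gamma$ in its first argument implies $Q > F(0, 1)$, so one may fix a constant $C = C(n, \gamma) > 1$ with $g(C) > F(0, 1)$.

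To conclude, let $\rho^*$ be the smallest point of $[\rho_0, \rho_1]$ at which $\Lambda$ attains its maximum $M$. If $\rho^* = \rho_0$ then $M = \Lambda(\rho_0)$ and we are done; otherwise $\Lambda_\rho(\rho^*) \geq 0$, either as a two-sided critical value (interior maximum) or as a left derivative (if $\rho^* = \rho_1$). If in addition $M > C$, then $\Lambda(\rho^*) > 1$, and the ODE forces
\[
g(\Lambda(\rho^*)) \;\leq\; \frac{(\rho^*)^2}{2a^2} \;<\; F(0, 1) \;<\; g(C),
\]
contradicting the monotonicity of $g$. Hence $M \leq \max\{C, \Lambda(\rho_0)\}$, as required. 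The main subtlety is the identification of an auxiliary quantity, here $g(\Lambda)$, whose monotonicity behaviour can be tested purely against $F$-data; the availability of such a $g$ rests on the structural input $Q > F(0, 1)$, which uses the strict monotonicity of $\gamma$ in its first argument rather than mere ellipticity.
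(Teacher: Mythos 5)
Your argument is correct and is essentially the paper's proof in a change of variables: differentiating the shrinker identity gives exactly the same first-order relation (your ODE for $\Lambda$ is the paper's identity $\rho B_\rho = \bigl(\tfrac{\rho^2}{2a^2}-B\bigr)(1+\psi_\rho^2)f(1,B)+B$ with $B=F(\Lambda,1)$ rewritten in terms of $\Lambda$), and both proofs then exploit $\rho^2/(2a^2)<F(0,1)<Q$ to show the quantity cannot exceed a threshold depending only on $n$ and $\gamma$. The only difference is presentational: the paper shows $B$ is strictly decreasing above an explicit threshold, while you run a first-maximum contradiction using the auxiliary monotone function $g(\Lambda)=F(\Lambda,1)(1-1/\Lambda)$; both are valid and yield the same conclusion.
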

\begin{proof}
Let $B(\rho):= \rho \psi_\rho^{-1}(\frac{1}{2} + \frac{1}{2a^2}(\rho \psi_\rho - \psi))$. Writing \eqref{shrinker psi} in the form \eqref{shrinker psi implicit} gives
    \[\Lambda = f(1,B).\]
We compute 
    \begin{align*}
        \rho B_\rho  = \bigg(\frac{\rho^2}{2a^2} - B\bigg) (1+\psi_\rho^2)f(1,B) + B.
    \end{align*}
Let us define $2\varepsilon_0 := 1-\frac{F(0,1)}{Q}$. If $\rho < \sqrt{2F(0,1)}a$ is such that 
    \[B(\rho) > \max\{\tfrac{F(0,1)}{1-\varepsilon_0}, F(1/\varepsilon_0, 1)\}\]
then $f(1,B(\rho)) > 1/\varepsilon_0$, and hence 
    \[\rho B_\rho < (F(0,1) - (1-\varepsilon_0)B - \varepsilon_0 B) (1+\psi_\rho^2)f(1,B) + B < 0.\]
It follows that 
    \[\max_{\rho_0 \leq \rho \leq \rho_1} B(\rho) \leq \max\{B(\rho_0), \tfrac{F(0,1)}{1-\varepsilon_0}, F(1/\varepsilon_0, 1)\}.\]
Since $\tfrac{F(0,1)}{1-\varepsilon_0}$ and $F(1/\varepsilon_0, 1)$ are both less than $Q$, we may define a finite constant 
    \[C := \max\Big\{f\left(1,\tfrac{F(0,1)}{1-\varepsilon_0}\right), f(1,F(1/\varepsilon_0, 1))\Big\}.\]
Since $f$ is increasing in its second argument,
    \[\max_{\rho_0 \leq \rho \leq \rho_1} f(1,B(\rho)) \leq \max\{C, f(1,B(\rho_0))\}.\]
The claim follows. 
\end{proof}

In addition to the upper bound in Lemma~\ref{f bound}, we have the following lower bound for $\Lambda$.

\begin{lemma}\label{f bound lower}
Let $\psi$ be a solution of \eqref{shrinker psi} on the interval $[\rho_0, \rho_1]$ such that $\psi_\rho > 0$ and $\psi_{\rho\rho} > 0$. There is a constant $c$ depending only on $n$ and $\gamma$ such that 
    \[\Lambda(\rho) \geq \min\{c(1+\psi_\rho(\rho_1)^2)^{-1}, \Lambda(\rho_0)\},\]
for each $\rho \in [\rho_0, \rho_1]$. 
\end{lemma}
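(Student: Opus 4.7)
The natural approach is to run a maximum principle argument on the auxiliary function $B$ — the dual of the argument used in Lemma~\ref{f bound} — and convert the result into a lower bound on $\Lambda$ via the monotone relationship $\Lambda = f(1, B)$. The starting point is the ODE
\begin{equation*}
\rho B_\rho \,=\, \bigl(\tfrac{\rho^{2}}{2a^{2}} - B\bigr)\,\Phi \,+\, B, \qquad \Phi := (1+\psi_\rho^{2})\, f(1,B) \,=\, (1+\psi_\rho^{2})\,\Lambda,
\end{equation*}
derived in the proof of Lemma~\ref{f bound}. Because $f(1,\cdot)$ is strictly increasing, the minima of $\Lambda$ and $B$ on $[\rho_0,\rho_1]$ occur at the same point, so it suffices to lower-bound $B$ at its minimum.

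Let $\rho^* \in [\rho_0,\rho_1]$ denote a point where $B$ attains its minimum on $[\rho_0,\rho_1]$. If $\rho^* = \rho_0$ there is nothing more to do, since then $\Lambda(\rho) \geq \Lambda(\rho_0)$ throughout the interval. Otherwise $\rho^* \in (\rho_0, \rho_1]$, and the one-sided inequality $B_\rho(\rho^*) \leq 0$ (with equality in case $\rho^*$ is interior) combined with the ODE yields
\begin{equation*}
0 \,\geq\, \bigl(\tfrac{\rho^{*2}}{2a^{2}} - B(\rho^*)\bigr)\,\Phi(\rho^*) + B(\rho^*).
\end{equation*}
Since $B(\rho^*) \geq F(0,1) > 0$ and $\Phi(\rho^*) > 0$, this forces $B(\rho^*) > \rho^{*2}/(2a^{2})$, and rearranging gives $\Phi(\rho^*) \geq B(\rho^*)/\bigl(B(\rho^*) - \rho^{*2}/(2a^{2})\bigr) \geq 1$. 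Because $\psi_\rho$ is increasing (as $\psi_{\rho\rho}>0$),
\begin{equation*}
\Lambda(\rho) \,\geq\, \Lambda(\rho^*) \,=\, \frac{\Phi(\rho^*)}{1+\psi_\rho(\rho^*)^{2}} \,\geq\, \frac{1}{1+\psi_\rho(\rho_1)^{2}},
\end{equation*}
and combining the two cases produces the claimed bound, in fact with the explicit constant $c=1$.

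The argument is essentially immediate and I do not see any serious obstacle. The only point that requires a little care is the boundary case $\rho^* = \rho_1$, where one cannot assert $B_\rho(\rho^*)=0$; but the one-sided inequality $B_\rho(\rho_1^-) \leq 0$ that still holds at a right-endpoint minimum is enough to drive the ODE manipulation through unchanged. Qualitatively, what makes this lower bound work is that the inhomogeneous ``$+B$'' term in the evolution of $\rho B_\rho$ prevents $B$ from dropping too close to $F(0,1)$: if it tried to, it would immediately be pushed back up, and quantitatively this pins down $\Phi \geq 1$ at any interior or right-endpoint minimizer.
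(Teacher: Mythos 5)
Your argument is correct and follows essentially the same route as the paper: both rest on the identity $\rho B_\rho = \big(\tfrac{\rho^2}{2a^2} - B\big)(1+\psi_\rho^2)f(1,B) + B$ together with a minimum-principle argument for $B$, converted into a lower bound on $\Lambda = f(1,B)$ via the monotonicity of $f(1,\cdot)$ and of $\psi_\rho$. The only difference is cosmetic: the paper shows $B_\rho>0$ below the fixed level $F(M^{-1},1)$ with $M=1+\psi_\rho(\rho_1)^2$ and compares, whereas you evaluate the ODE at the argmin of $B$ and use the one-sided sign of $B_\rho$ there, which in addition gives the explicit constant $c=1$.
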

\begin{proof}
We have
    \[\rho B_\rho = \bigg(\frac{\rho^2}{2a^2} - B\bigg) (1+\psi_\rho^2)f(1,B) + B >  (1-Mf(1,B))B,\]
where $M := (1+\psi_\rho(\rho_1)^2)$. This implies that $B_\rho > 0$ whenever $B < F(M^{-1}, 1)$. Therefore,
    \[\min_{\rho_0 \leq \rho \leq \rho_1} B \geq \min\{B(\rho_0), F(M^{-1}, 1)\}\]
and hence
    \[\min_{\rho_0 \leq \rho \leq \rho_1} f(1,B) \geq \min\{f(1,B(\rho_0)), f(1,F(M^{-1}, 1))\}.\]
There is a constant $c$ depending only on $n$ and $\gamma$ such that 
    \[f(1,F(M^{-1}, 1)) \geq cM^{-1},\]
which gives the claim when we insert $\Lambda = f(1,B)$. 
\end{proof}

Lemma~\ref{f bound} can be employed to prove higher derivative bounds for solutions to \eqref{shrinker psi}, via the following result. For our purposes a $C^3$-estimate will suffice, but a bound on the $C^k$-norm can be proven along the same lines. 

\begin{lemma}\label{psi C^3}
Let $\psi$ be a solution of \eqref{shrinker psi} on the interval $[\rho_0, \rho_1]$ such that $\psi_\rho > 0$ and $\psi_{\rho\rho} > 0$. Define
    \[\bar \Lambda := \sup_{\rho_0 \leq \rho \leq \rho_1} \Lambda(\rho) < \infty.\] 
We then have 
    \[\sup_{\rho_0 \leq \rho \leq \rho_1} |\psi_{\rho\rho\rho}(\rho)| \leq Ca^{-2}\rho_1 + C(1+\rho_0^{-1})\bigg(1+ \sup_{\rho_0 \leq \rho \leq \rho_1}\frac{\psi_\rho}{\rho}\bigg)\]
where $C$ depends only on $n$, $\gamma$, $\bar \Lambda$, and $\|\psi\|_{C^2([\rho_0, \rho_1])}$. 
\end{lemma}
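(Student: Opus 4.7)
The strategy is to differentiate the implicit form \eqref{shrinker psi implicit} of the ODE with respect to $\rho$, express $\psi_{\rho\rho\rho}$ as an algebraic expression in lower-order data, and bound each resulting term. Writing $y = \psi_\rho/\rho$ and $z = \tfrac{1}{2} + \tfrac{1}{2a^2}(\rho\psi_\rho - \psi)$, so that $\psi_{\rho\rho} = (1+\psi_\rho^2)\,f(y,z)$, differentiation yields
\[
\psi_{\rho\rho\rho} = 2\psi_\rho\psi_{\rho\rho}\,f(y,z) + (1+\psi_\rho^2)\bigl(f_y(y,z)\,y_\rho + f_z(y,z)\,z_\rho\bigr),
\]
where $y_\rho = \rho^{-1}(\psi_{\rho\rho} - \psi_\rho/\rho)$ and $z_\rho = \rho\psi_{\rho\rho}/(2a^2)$.

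The main point is to establish uniform bounds on $f$, $f_y$, and $f_z$ evaluated at $(y,z)$. Recall that $U = \{F(0,1) < z/y < Q\}$ and that the hypothesis of degree-one homogeneity on $\gamma$ transfers to $F$ and $f$; in particular $f_y$ and $f_z$ are homogeneous of degree zero. As observed just before Section 4.1, for a strictly convex solution we have $\Lambda = f(1, z/y)$, and this ratio is positive (from $\psi_{\rho\rho}>0$) and bounded above by $\bar\Lambda$. Since $f(1,\cdot)$ is a strictly increasing smooth bijection from $(F(0,1), Q)$ onto $(0,\infty)$, the ratio $B := z/y$ is trapped in a compact subinterval of $(F(0,1), Q)$ depending only on $n$, $\gamma$, and $\bar\Lambda$. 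By zero-homogeneity, $|f_y(y,z)|$ and $|f_z(y,z)|$ are then bounded by a constant $C(n,\gamma,\bar\Lambda)$, while $f(y,z) = y f(1,B) \leq \bar\Lambda\, y$.

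It remains to collect terms. The first term $2\psi_\rho\psi_{\rho\rho}f(y,z)$ is controlled by $\|\psi\|_{C^2}$ times $\sup_{[\rho_0,\rho_1]}\psi_\rho/\rho$. The contribution $(1+\psi_\rho^2) f_y\, y_\rho$ is bounded by $C\rho^{-1}\bigl(|\psi_{\rho\rho}| + \psi_\rho/\rho\bigr) \leq C(1+\rho_0^{-1})(1+\sup\psi_\rho/\rho)$, using $\rho\geq\rho_0$ and the $C^2$-bound on $\psi$. Finally $(1+\psi_\rho^2) f_z\, z_\rho$ is bounded by $C a^{-2}\rho_1$. Adding these estimates gives precisely the form claimed in the lemma.

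I do not expect any serious obstacle: this is essentially a bootstrap computation, and its sole subtlety is the verification that $(y,z)$ stays in a compact subset of $U$ so that $f$ and its derivatives remain controlled. That verification depends crucially on the assumption $\Lambda \leq \bar\Lambda$, which in turn is supplied by Lemma~\ref{f bound}; without that geometric input, the ratio $B$ could approach $Q$ and $f$ would blow up. Everything else is just the chain rule combined with the stated $C^2$-bound, and the estimate bootstraps via Schauder-type arguments to higher $C^k$-bounds in the subsequent construction of $\psi$.
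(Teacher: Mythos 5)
Your argument is essentially the paper's: differentiate \eqref{shrinker psi implicit}, and bound $f$, $f_y$, $f_z$ at the point $\bigl(\tfrac{\psi_\rho}{\rho}, \tfrac{1}{2}+\tfrac{1}{2a^2}(\rho\psi_\rho-\psi)\bigr)$ using zero-homogeneity of the first derivatives of $f$ together with $\Lambda=f(1,B)\leq\bar\Lambda$, which is exactly how the paper proceeds (it restricts to the cone $U'=\{(y,z)\in U: F(0,1)<z/y\leq F(\bar\Lambda,1)\}$). The one imprecision is your claim that $B=z/y$ is trapped in a \emph{compact} subinterval of $(F(0,1),Q)$: the hypothesis $\Lambda\leq\bar\Lambda$ only gives the upper bound $B\leq F(\bar\Lambda,1)<Q$, while $\psi_{\rho\rho}>0$ gives $B>F(0,1)$ with no uniform gap, and no lower bound on $\Lambda$ is available from the constants the lemma allows ($n$, $\gamma$, $\bar\Lambda$, $\|\psi\|_{C^2}$). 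This does not break the proof, because no such gap is needed: since $(0,1,\dots,1)\in\Gamma$ and $\dot\gamma^1(0,1,\dots,1)>0$, the quantities $f_z=1/\frac{\partial F}{\partial x}$ and $f_y=-\frac{\partial F}{\partial y}/\frac{\partial F}{\partial x}$ evaluated at $(1,B)$ extend continuously to $B=F(0,1)$ (i.e.\ to $\Lambda=0$), so $f$, $f_y$, $f_z$ remain bounded on all of $U'$, which is the boundedness the paper invokes. With that correction, the term-by-term estimates you give are fine and yield the stated bound.
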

\begin{proof}
Differentiating \ref{shrinker psi implicit}, we obtain 
    \begin{align*}
    \psi_{\rho\rho\rho}&=2\psi_\rho \, \psi_{\rho \rho} \, \,f\left( \frac{\psi_\rho}{\rho}, \, \frac{1}{2} + \frac{1}{2a^2} ( \rho \psi_\rho - \psi) \right)\\
    &+ (1+ \psi_\rho^2) \frac{\partial f}{\partial y}\bigg(\frac{\psi_\rho}{\rho}, \frac{1}{2} + \frac{1}{2a^2}(\rho\psi_\rho - \psi)\bigg) \bigg(\frac{\psi_{\rho\rho}}{\rho} - \frac{\psi_\rho}{\rho^2}\bigg)\\
    &+ (1+ \psi_\rho^2) \frac{\partial f}{\partial z} \bigg(\frac{\psi_\rho}{\rho}, \frac{1}{2} + \frac{1}{2a^2}(\rho\psi_\rho - \psi)\bigg) \frac{1}{2a^2}\rho \psi_{\rho \rho}.
    \end{align*}
Recalling the notation $B := \rho \psi_\rho^{-1}(\frac{1}{2} + \frac{1}{2a^2}(\rho \psi_\rho - \psi))$, and given that 
    \[F(\Lambda,1) = F(f(1,B), 1) = B,\]
our hypotheses ensure that 
    \[\bigg(\frac{\psi_\rho}{\rho}, \frac{1}{2} + \frac{1}{2a^2}(\rho\psi_\rho - \psi)\bigg) \in U',\]
where $U' := \{(y,z) \in U: F(0, 1) < z/y \leq F(\bar \Lambda,1)\}$. Using the fact that $f$ is homogeneous of degree one, it is straightforward to show that its first derivatives are bounded by some constant $C$ in the cone $U'$, where $C = C(n,\gamma,\bar \Lambda)$. The claim follows. 
\end{proof}

\subsection{Local existence} As mentioned above, to prove Proposition~\ref{prop: sol psi on (0,R)}, we will solve a sequence of initial value problems and take a limit. To do so we will need appropriate sub- and supersolutions. 

The following two lemmas establish that $w = \frac{\theta}{4F(1,1)}\rho^2$ is a lower barrier for solutions of \eqref{shrinker psi implicit} when $\theta \in (\frac{F(1,1)}{Q}, 1)$. 

\begin{lemma}\label{barriers for implicit shrinker}
Fix a constant $\frac{F(1,1)}{Q}<\theta <1$  and let $w = \frac{\theta}{4F(1,1)}\rho^2$. We then have
    \[w_{\rho\rho} < (1+w_\rho^2)f\bigg(\frac{w_\rho}{\rho}, \frac{1}{2}\bigg)\]
for all $\rho \geq 0$.
\end{lemma}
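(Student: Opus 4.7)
My plan is to reduce the claimed differential inequality to a simple algebraic inequality via direct substitution, and then exploit the homogeneity of $f$ together with the defining relation $F(f(y,z),y)=z$.

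First I would compute the derivatives explicitly: since $w=\frac{\theta}{4F(1,1)}\rho^{2}$, we have $w_{\rho}=\frac{\theta}{2F(1,1)}\rho$, $w_{\rho\rho}=\frac{\theta}{2F(1,1)}$, and crucially $w_{\rho}/\rho=\frac{\theta}{2F(1,1)}$ is a constant. Substituting into the inequality reduces the claim to
\[
\frac{\theta}{2F(1,1)} \;<\; \left(1+\frac{\theta^{2}}{4F(1,1)^{2}}\rho^{2}\right) f\!\left(\tfrac{\theta}{2F(1,1)},\tfrac{1}{2}\right)
\]
for all $\rho\geq 0$.

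Next I would verify the two conditions required for $f$ to be evaluated sensibly at the point $(\tfrac{\theta}{2F(1,1)},\tfrac{1}{2})$, namely that the ratio $z/y=F(1,1)/\theta$ lies in the interval $(F(0,1),Q)$. The upper bound $F(1,1)/\theta<Q$ is precisely the left half of the hypothesis $\theta>F(1,1)/Q$; the lower bound $F(1,1)/\theta>F(0,1)$ is automatic, since $F$ is strictly increasing in its first argument so $F(1,1)>F(0,1)$, and $\theta<1$.

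Then, using the degree-one homogeneity of $f$, I would rewrite $f\!\left(\tfrac{\theta}{2F(1,1)},\tfrac{1}{2}\right)=\tfrac{\theta}{2F(1,1)}\,f\!\left(1,\tfrac{F(1,1)}{\theta}\right)$, so that checking the inequality at $\rho=0$ amounts to $f\!\left(1,\tfrac{F(1,1)}{\theta}\right)>1$. Since $f(1,\cdot)$ is the inverse of $F(\cdot,1)$ (a strictly increasing function) and $F(1,1)/\theta>F(1,1)$ by $\theta<1$, this follows at once. Finally, I would observe that the right-hand side of the reduced inequality is a strictly increasing function of $\rho$ (the prefactor $1+\tfrac{\theta^{2}}{4F(1,1)^{2}}\rho^{2}$ is increasing, and $f(\tfrac{\theta}{2F(1,1)},\tfrac{1}{2})>0$ by the ratio check above), while the left-hand side is independent of $\rho$; thus strictness at $\rho=0$ propagates to all $\rho\geq 0$.

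The only genuine subtlety is the $Q<\infty$ case, which is exactly what the hypothesis $\theta>F(1,1)/Q$ guards against; without it, $f$ could fail to be defined. Apart from this bookkeeping there is no real obstacle, and the argument is essentially one line of calculation once homogeneity is invoked.
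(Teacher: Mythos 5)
Your proposal is correct and takes essentially the same route as the paper: both reduce the claim by direct substitution, use the hypothesis $\theta > \tfrac{F(1,1)}{Q}$ only to place $\big(\tfrac{w_\rho}{\rho},\tfrac12\big)$ in the domain $U$, and conclude from degree-one homogeneity, monotonicity, and $\theta<1$. The only cosmetic difference is that the paper works on the $F$-side, reducing to $F\big(\tfrac{1}{1+w_\rho^2},1\big) < \theta^{-1}F(1,1)$ uniformly in $\rho$, whereas you work on the $f$-side at $\rho=0$ and then propagate by monotonicity of the right-hand side in $\rho$ — the same computation in equivalent form.
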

\begin{proof}
Since
    \[F(0,1) \frac{w_\rho}{\rho} = \frac{F(0,1)}{2F(1,1)} < \frac{1}{2} < \frac{Q}{2F(1,1)} = Q \frac{w_\rho}{\rho},\]
we have $(\frac{w_\rho}{\rho}, \frac{1}{2}) \in U$ for all $\rho \geq 0$. Observe that
    \[F\bigg(f\Big(\frac{w_\rho}{\rho}, \frac{1}{2}\Big), \frac{w_\rho}{\rho}\bigg) = \frac{1}{2}\]
and 
    \[F\bigg(\frac{w_{\rho\rho}}{1+w_\rho^2}, \frac{w_\rho}{\rho}\bigg) = \frac{\theta}{2F(1,1)}F\bigg(\frac{1}{1+w_\rho^2},1\bigg).\]
Since $F$ is increasing in each of its arguments, we see that  
    \[w_{\rho\rho} < (1+w_\rho^2)f\bigg(\frac{w_\rho}{\rho}, \frac{1}{2}\bigg)\]
if and only if 
    \[\frac{\theta}{2F(1,1)}F\bigg(\frac{1}{1+w_\rho^2},1\bigg) < \frac{1}{2},\]
or equivalently
    \[F\bigg(\frac{1}{1+w_\rho^2},1\bigg) < \theta^{-1} F(1,1).\]
The last inequality is always true when $\theta < 1$. 
\end{proof}

\begin{lemma}\label{ODE comparison}
Let $\psi \in C^2([\rho_0, \rho_1))$ be a solution of \eqref{shrinker psi implicit} satisfying $\psi_\rho > 0$ and $\psi_{\rho\rho}>0$. Let $w = \tfrac{\theta}{4F(1,1)}\rho^2$ for some constant $\theta \in (\frac{F(1,1)}{Q}, 1)$. If at $\rho = \rho_0$ we have 
    \[\psi \geq w, \qquad \psi_\rho \geq w_\rho, \qquad \rho\psi_\rho - \psi \geq 0,\]
then each of these inequalities also holds for $\rho \in (\rho_0, \rho_1)$.
\end{lemma}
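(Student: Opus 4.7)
The plan is a continuity argument on the three inequalities, exploiting that the third one is essentially automatic. First I would introduce $J(\rho) := \rho\,\psi_\rho(\rho) - \psi(\rho)$ and observe that $J'(\rho) = \rho\,\psi_{\rho\rho}(\rho) > 0$ throughout $[\rho_0,\rho_1)$ by the strict convexity hypothesis. Since $J(\rho_0)\geq 0$ is given, this immediately yields $J(\rho) \geq J(\rho_0) \geq 0$, i.e.\ $\rho\psi_\rho - \psi \geq 0$, on all of $[\rho_0, \rho_1)$.

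Next I would set
\[
\rho^* := \sup\bigl\{\rho \in [\rho_0,\rho_1) \;:\; \psi_\rho(s) \geq w_\rho(s) \text{ for every } s \in [\rho_0,\rho]\bigr\}.
\]
By continuity $\psi_\rho(\rho^*) \geq w_\rho(\rho^*)$, and integrating $\psi_\rho \geq w_\rho$ from $\rho_0$ (using $\psi(\rho_0) \geq w(\rho_0)$) shows $\psi \geq w$ on $[\rho_0, \rho^*]$; so both remaining conclusions hold up to $\rho^*$, and it suffices to prove $\rho^* = \rho_1$.

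Assume for contradiction that $\rho^* < \rho_1$. Then $\psi_\rho(\rho^*) = w_\rho(\rho^*)$, since otherwise $\psi_\rho > w_\rho$ would persist in a full neighborhood of $\rho^*$, contradicting the supremum. At this touching point the ratio $\psi_\rho/\rho^*$ and the factor $1+\psi_\rho^2$ coincide with those for $w$, while the second slot of $f$ in the ODE~\eqref{shrinker psi implicit} satisfies $\tfrac{1}{2} + \tfrac{1}{2a^2}J(\rho^*) \geq \tfrac{1}{2}$. Monotonicity of $f$ in its second argument, together with the strict differential inequality for $w$ from Lemma~\ref{barriers for implicit shrinker}, then yields
\[
\psi_{\rho\rho}(\rho^*) = (1+w_\rho(\rho^*)^2)\, f\!\left(\tfrac{w_\rho(\rho^*)}{\rho^*},\, \tfrac{1}{2} + \tfrac{J(\rho^*)}{2a^2}\right) \geq (1+w_\rho(\rho^*)^2)\, f\!\left(\tfrac{w_\rho(\rho^*)}{\rho^*},\, \tfrac{1}{2}\right) > w_{\rho\rho}(\rho^*).
\]
Combined with $(\psi_\rho - w_\rho)(\rho^*) = 0$, this forces $\psi_\rho > w_\rho$ on some interval $(\rho^*, \rho^* + \varepsilon)$, contradicting the definition of $\rho^*$. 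Hence $\rho^* = \rho_1$.

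The only point that needs care is the strict comparison of $\psi_{\rho\rho}$ and $w_{\rho\rho}$ at the touching point: it relies crucially on the strict inequality in Lemma~\ref{barriers for implicit shrinker} (which uses $\theta < 1$), the monotonicity of $f$ in its second argument, and the preserved bound $J \geq 0$ that lifts this second argument to at least $\tfrac{1}{2}$. No higher-order issue arises, and the roles of the two arguments of $f$ separate cleanly because at a first touching point $\psi_\rho$ and $w_\rho$ agree exactly.
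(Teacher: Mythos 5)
Your proof is correct and follows essentially the same route as the paper: propagate $\rho\psi_\rho-\psi\geq 0$ via $(\rho\psi_\rho-\psi)_\rho=\rho\psi_{\rho\rho}>0$, then run a first-touching-point argument for $\psi_\rho\geq w_\rho$, using monotonicity of $f$ in its second slot together with the strict inequality of Lemma~\ref{barriers for implicit shrinker} to get $\psi_{\rho\rho}>w_{\rho\rho}$ at the contact point. The only cosmetic difference is how the contradiction is landed (you push $\psi_\rho>w_\rho$ forward past $\rho^*$, the paper contradicts $\psi_{\rho\rho}(s)\leq w_{\rho\rho}(s)$), which is immaterial.
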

\begin{proof}
Let $s := \sup\{\rho \in (\rho_0, \rho_1) : \psi_\rho(\rho) > w_\rho(\rho)\}$. At $\rho = \rho_0$ we have 
    \[w_{\rho\rho} < (1+w_\rho^2)f\bigg(\frac{w_\rho}{\rho}, \frac{1}{2}\bigg) \leq (1+\psi_\rho^2)f\bigg(\frac{\psi_\rho}{\rho},\frac{1}{2} + \frac{1}{2a^2}(\rho \psi_\rho - \psi)\bigg) = \psi_{\rho\rho},\]
so since $\psi_\rho(\rho_0) \geq w_\rho(\rho_0)$, we deduce that $s > \rho_0$. With the aim of deriving a contradiction, suppose $s < \rho_1$. We then have 
    \[\psi_\rho(\rho) > w_\rho (\rho)\]
for $\rho < s$, and 
    \[\psi_{\rho}(s) = w_\rho(s), \qquad \psi_{\rho\rho}(s) \leq  w_{\rho\rho}(s).\]
In addition, since 
    \[(\rho \psi_\rho - \psi)_\rho = \rho \psi_{\rho\rho} > 0\]
for $\rho \in [\rho_0, s)$, and $\rho \psi_\rho - \psi \geq 0$ at $\rho = \rho_0$, we have 
    \[\rho \psi_\rho - \psi_\rho \geq 0\]
for all $\rho \in [\rho_0, \rho_1)$. We conclude that, at $\rho = s$,
    \[\psi_{\rho\rho} = (1+\psi_\rho^2)f\bigg(\frac{\psi_\rho}{\rho}, \frac{1}{2} + \frac{1}{2a^2}(\rho \psi_\rho - \psi)\bigg) \geq (1+w_\rho^2)f\bigg(\frac{w_\rho}{\rho}, \frac{1}{2}\bigg) > w_{\rho \rho}.\]
This is a contradiction, so $s = \rho_1$. The claim follows. 
\end{proof}

Next we identify upper barriers for solutions of \eqref{shrinker psi implicit}.

\begin{lemma}\label{ODE comparison upper}
Let $\psi \in C^2([\rho_0, \rho_1))$ be a solution of \eqref{shrinker psi implicit} satisfying $\psi_\rho > 0$ and $\psi_{\rho\rho}>0$. Fix a constant $\Theta > \frac{F(1,1)}{F(0,1)}$. We assume
    \[\rho_1^2 < 2a^2(F(0,1) - F(1,1)\Theta^{-1}),\]
and define $W = \tfrac{\Theta}{4F(1,1)}\rho^2$. If at $\rho = \rho_0$ we have 
    \[\psi \leq W, \qquad \psi_\rho < W_\rho,\]
then these inequalities also hold for $\rho \in (\rho_0, \rho_1)$. 
\end{lemma}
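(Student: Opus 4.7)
The plan is to follow the template of Lemma~\ref{ODE comparison}, with the direction of the inequality reversed and the precise bound on $\rho_1$ used to control the shrinker driving term. I would set
$$s := \sup\{\rho \in [\rho_0, \rho_1) : \psi_\rho(\rho') < W_\rho(\rho') \text{ for all } \rho' \in [\rho_0, \rho)\},$$
and argue by contradiction, assuming $s < \rho_1$. Continuity together with the strict inequality $\psi_\rho(\rho_0) < W_\rho(\rho_0)$ ensures $s > \rho_0$; at $\rho = s$ we then have $\psi_\rho(s) = W_\rho(s)$ and $\psi_{\rho\rho}(s) \geq W_{\rho\rho}(s)$. Integrating $\psi_\rho - W_\rho < 0$ on $[\rho_0, s)$ together with $\psi(\rho_0) \leq W(\rho_0)$ also yields $\psi(s) \leq W(s)$. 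The goal is to show that in fact $\psi_{\rho\rho}(s) < W_{\rho\rho}(s)$, contradicting the touching-point ordering.

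To prove the strict inequality, I would evaluate \eqref{shrinker psi implicit} at $\rho = s$ and apply $F(\cdot, W_\rho(s)/s)$ to both sides, giving
$$F\Bigl(\tfrac{\psi_{\rho\rho}(s)}{1+W_\rho(s)^2}, \tfrac{W_\rho(s)}{s}\Bigr) = \tfrac{1}{2} + \tfrac{s\psi_\rho(s) - \psi(s)}{2a^2}.$$
Since $\psi$ is increasing and nonnegative (the latter holds in the applications to come, where $\psi(\rho_0) \geq 0$), we have $s\psi_\rho(s) - \psi(s) \leq sW_\rho(s) = \Theta s^2/(2F(1,1))$. Combining this with $s < \rho_1$ and the hypothesis $\rho_1^2 < 2a^2(F(0,1) - F(1,1)/\Theta)$ yields
$$F\Bigl(\tfrac{\psi_{\rho\rho}(s)}{1+W_\rho(s)^2}, \tfrac{W_\rho(s)}{s}\Bigr) < \tfrac{\Theta F(0,1)}{2 F(1,1)}.$$

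On the other hand, homogeneity of degree one and monotonicity of $F$ in its first argument yield
$$F\Bigl(\tfrac{W_{\rho\rho}(s)}{1+W_\rho(s)^2}, \tfrac{W_\rho(s)}{s}\Bigr) = \tfrac{\Theta}{2F(1,1)}\, F\Bigl(\tfrac{1}{1+W_\rho(s)^2}, 1\Bigr) \geq \tfrac{\Theta F(0,1)}{2F(1,1)}.$$
Comparing these two expressions and using strict monotonicity of $F$ in its first argument forces $\psi_{\rho\rho}(s) < W_{\rho\rho}(s)$, which is the desired contradiction. Hence $s = \rho_1$; so $\psi_\rho < W_\rho$ on $[\rho_0, \rho_1)$, and integrating once more yields $\psi \leq W$ on the same interval.

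The main obstacle, and the reason the hypothesis on $\rho_1$ takes exactly the form given, lies in balancing the shrinker driving term against the barrier. The bound on $\rho_1$ is calibrated so that, at any putative touching point $s \in [\rho_0, \rho_1)$, the contribution $(s\psi_\rho - \psi)/(2a^2)$ has not yet grown sufficiently to push the normal speed $F(\psi_{\rho\rho}/(1+\psi_\rho^2), \psi_\rho/\rho)$ above the threshold $\Theta F(0,1)/(2F(1,1))$, which is the asymptotic value of the corresponding expression for $W$ as $W_\rho \to \infty$. A secondary technical point is that the argument invokes nonnegativity of $\psi$; in the intended applications this is automatic from $\psi(\rho_0) \geq 0$ and $\psi_\rho > 0$.
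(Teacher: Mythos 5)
Your proof is correct and takes essentially the same route as the paper: a first-crossing argument at the point $s$ where $\psi_\rho$ touches $W_\rho$, using the equation at $\rho = s$ together with the hypothesis $\rho_1^2 < 2a^2(F(0,1)-F(1,1)\Theta^{-1})$; extracting the contradiction as $\psi_{\rho\rho}(s) < W_{\rho\rho}(s)$ instead of as $s^2 > 2a^2(F(0,1)-F(1,1)\Theta^{-1})$ is just an equivalent rearrangement of the same inequality. Your explicit appeal to $\psi(s) \geq 0$ is not a new gap: the paper's own rearrangement at $\rho = s$ also implicitly requires it, and it is automatic in every application of the lemma since there $\psi(\rho_0) \geq 0$ and $\psi_\rho > 0$.
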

\begin{proof}
First observe that
    \[s := \sup\{\rho \in [\rho_0, \rho_1) : W_\rho(\rho) > \psi_\rho(\rho)\}\]
satisfies $s > \rho_0$. With the aim of deriving a contradiction, suppose that $s < \rho_1$. We then have 
    \[W_\rho(\rho) - \psi_\rho(\rho) > 0\]
for $\rho \in (\rho_0, s)$, and 
    \[W_\rho(s) = \psi_\rho(s).\]
Since $\psi_{\rho\rho} > 0$ we have $\big(\frac{\psi_\rho}{\rho}, \frac{1}{2} + \frac{1}{2a^2}(\rho \psi_\rho - \psi)\big)\in U$ and, in particular,
    \[\frac{1}{2} + \frac{1}{2a^2}(\rho \psi_\rho - \psi) > F(0,1)\frac{\psi_\rho}{\rho}.\]
At $\rho = s$ this yields 
    \[\frac{1}{2} + \frac{1}{2a^2}(s W_\rho(s) - \psi(s)) > F(0,1)\frac{W_\rho(s)}{s},\]
which we rearrange to obtain
    \[s^2 > 2a^2(F(0,1) - F(1,1)\Theta^{-1}).\]
This contradicts our assumption that $\rho_1^2 < 2a^2(F(0,1) - F(1,1)\Theta^{-1})$. The claim follows. 
\end{proof}

We now have all the tools needed to prove Proposition~\ref{prop: sol psi on (0,R)}. 

\begin{proof}[Proof of Proposition~\ref{prop: sol psi on (0,R)}]
Fix a constant $\theta \in (\frac{F(1,1)}{Q}, 1)$ and set $w = \frac{\theta}{4F(1,1)}\rho^2$. Let $\rho_k$ be a decreasing sequence of positive numbers such that $\rho_k \to 0$ as $k \to \infty$. By the Picard--Lindel\"{o}f theorem, for each $k$ there exists some maximal $R_k > \rho_k$ such that \eqref{shrinker psi implicit} admits a solution $\psi^k \in C^2([\rho_k, R_k))$ which is strictly convex and satisfies the initial conditions
    \[\psi^k(\rho_k) = w(\rho_k), \qquad \psi^k_\rho(\rho_k) = w_\rho(\rho_k).\]
By Lemma~\ref{ODE comparison} we have
    \[\psi^k \geq \frac{\theta}{4F(1,1)}\rho^2, \qquad \psi^k_\rho \geq \frac{\theta}{2F(1,1)}\rho, \qquad \rho \psi^k_\rho - \psi^k \geq 0\]
for every $\rho \in [\rho_k, R_k)$. Moreover, by Lemma~\ref{ODE comparison upper}, given a constant $\Theta > \frac{F(1,1)}{F(0,1)}$ we have 
    \[\psi^k \leq \frac{\Theta}{4F(1,1)}\rho^2, \qquad \psi^k_\rho \leq \frac{\Theta}{2F(1,1)}\rho,\]
for every $\rho \in [\rho_k, R_k)$ such that $\rho^2 < 2a^2(F(0,1)-F(1,1)\Theta^{-1})$. 

At $\rho = \rho_k$ we have 
    \[F(\Lambda^k, 1) = \frac{\rho}{\psi^k_\rho}\bigg(\frac{1}{2} + \frac{1}{2a^2}(\rho\psi^k_\rho - \psi^k)\bigg) = \frac{F(1,1)}{\theta} + \frac{\rho_k^2}{4a^2} \to \frac{F(1,1)}{\theta}\]
as $k\to\infty$. Since $F(1,1) < \frac{F(1,1)}{\theta} < Q$, we conclude that $\lim_{k \to \infty}\Lambda^k(\rho_k)$ is some finite number in $(1,\infty)$ which depends only on $n$, $\gamma$ and $\theta$. Therefore, for sufficiently large $k$, Lemma~\ref{f bound} yields the bound
    \[\Lambda^k(\rho) \leq C\]
for every $\rho \in [\rho_k, R_k)$ satisfying $\rho < \sqrt{2F(0,1)}a$, where $C = C(n,\gamma, \theta)$. Consequently, we have 
    \[\psi^k_{\rho\rho} \leq C (1+|\psi^k_\rho|^2) \frac{\psi^k_\rho}{\rho} \leq C \frac{\Theta}{2F(1,1)}\bigg(1+\frac{\Theta^2}{4F(1,1)^2}\rho^2\bigg)\]
for every $\rho \in [\rho_k, R_k)$ such that $\rho^2 < 2a^2(F(0,1) - F(1,1)\Theta^{-1})$. In addition, Lemma~\ref{f bound lower} gives
    \[\Lambda^k(\rho) \geq c\bigg(1 + \frac{\Theta^2}{4F(1,1)^2}\rho^2\bigg)^{-1},\]
and hence 
    \[\psi_{\rho\rho}^k \geq \frac{c\hspace{0.3mm}\theta}{2F(1,1)}\bigg(1 + \frac{\Theta^2}{4F(1,1)^2}\rho^2\bigg)^{-1}\]
for every $\rho \in [\rho_k, R_k)$ such that $\rho^2 < 2a^2(F(0,1) - F(1,1)\Theta^{-1})$, where $c$ depends only on $n$ and $\gamma$. 

Suppose $R_k < \sqrt{2F(0,1)}a$. By the estimates above, when $k$ is sufficiently large we have
    \[\sup_{\rho_k \leq \rho < R_k} |\psi^k| + |\psi^k_\rho| + |\psi^k_{\rho\rho}| < \infty, \qquad \sup_{\rho_k \leq \rho < R_k} \Lambda^k < \infty, \qquad \inf_{\rho_k \leq \rho < R_k} \psi^k_{\rho\rho} > 0.\]
Therefore, the Picard-Lindel\"{o}f theorem can be used to extend $\psi^k$ beyond $R_k$ so that it is still a strictly convex $C^2$ solution of \eqref{shrinker psi implicit}. This contradicts maximality, so we must have $R_k \geq \sqrt{2F(0,1)}a$.

We now take a limit of the solutions $\psi^k$ as $k \to \infty$. We have shown that $\|\psi^k\|_{C^2(I)}$ is bounded independently of $k$ for every compact subinterval $I \subset (0,\sqrt{2F(0,1)}a)$, and using Lemma~\ref{psi C^3} we can bound $\|\psi^k\|_{C^3(I)}$ in terms of $\|\psi^k\|_{C^2(I)}$. Therefore, by a standard diagonal argument, $\psi^k$ subconverges to some limiting function $\psi$ in $C^2_{\loc}((0, \sqrt{2F(0,1)}a))$. The function $\psi$ is strictly convex, and solves \eqref{shrinker psi implicit}, so it is smooth. Moreover, we have
    \[\psi \geq \frac{\theta}{4F(1,1)} \rho^2, \qquad \psi_\rho \geq \frac{\theta}{2F(1,1)}\rho\]
for $\rho \in (0, \sqrt{2F(0,1)}a)$, and 
    \[\psi \leq \frac{\Theta}{4F(1,1)}\rho^2, \qquad \psi_\rho \leq \frac{\Theta}{2F(1,1)}\rho\]
for $\rho \in (0, \sqrt{2(F(0,1)-F(1,1)\Theta^{-1})}a)$. In particular, $(\psi, \psi_\rho) \to 0$ as $\rho \to 0$.

It remains to show that $\psi$ can be extended smoothly to $\rho = 0$. Let $\beta(\rho) = \frac{\psi_\rho}{\rho}$. We have 
    \begin{align*}
    \rho \beta_{\rho \rho}  = (\rho \beta)_{\rho\rho} - 2\beta_\rho = \psi_{\rho \rho \rho} - 2\beta_\rho
    \end{align*}
and hence
    \begin{align*}
    \rho \beta_{\rho \rho}&=2\psi_\rho \, \psi_{\rho \rho} \, \,f\left( \frac{\psi_\rho}{\rho}, \, \frac{1}{2} + \frac{1}{2a^2} ( \rho \psi_\rho - \psi) \right)\\
    &+ (1+ \psi_\rho^2) \frac{\partial f}{\partial y}\bigg(\frac{\psi_\rho}{\rho}, \frac{1}{2} + \frac{1}{2a^2}(\rho\psi_\rho - \psi)\bigg) \beta_\rho\\
    &+ (1+ \psi_\rho^2) \frac{\partial f}{\partial z} \bigg(\frac{\psi_\rho}{\rho}, \frac{1}{2} + \frac{1}{2a^2}(\rho\psi_\rho - \psi)\bigg) \frac{1}{2a^2}\rho \psi_{\rho \rho} - 2\beta_\rho. 
    \end{align*}
The right-hand side is positive at any point where $\beta_\rho < 0$. Therefore, $\beta_\rho$ is either negative for all sufficiently small $\rho$ or positive for all sufficiently small $\rho$. In particular, $\beta$ converges as $\rho \to 0$, and hence $\psi$ extends to a $C^2$ function on the interval $[0,\sqrt{2F(0,1)}a)$. Using \eqref{shrinker psi} we conclude that $\psi_{\rho\rho}(0) = \frac{1}{2F(1,1)}$.

We thus have that the function
    \[u(x) := a - a^{-1}\psi(a|x|)\]
is of class $C^2$ in $B^{n-1}_{\sqrt{2F(0,1)}}(0)$. Moreover, $u$ solves the shrinker equation,
    \[\gamma\bigg(-\bigg(\delta^{ik} - \frac{D^i u D^k u}{1+|Du|^2}\bigg)\frac{D_k u D_j u}{\sqrt{1+|Du|^2}}\bigg)\bigg) = \frac{1}{2} \, (x,u) \cdot \frac{(-Du, 1)}{\sqrt{1+|Du|^2}}.\]
Standard theory for fully nonlinear elliptic PDE now implies that $u$ is of class $C^\infty$ (see for example Lemma~17.16 in \cite{GilbargTrudinger}). In particular, $\psi$ extends to a function in $C^\infty([0,\sqrt{2F(0,1)}a))$. This completes the proof. 
\end{proof}

Next we extend the solutions constructed in Proposition~\ref{prop: sol psi on (0,R)} to the origin. 

\subsection{Global behaviour} For each $a > 0$, let $\psi_a$ be a solution to \eqref{shrinker psi} as in Proposition~\ref{prop: sol psi on (0,R)}. We will typically suppress the dependence on $a$ and simply write $\psi = \psi_a$. We define 
    \[h(r) := a - a^{-1} \psi(ar)\]
and write $v(z)$ for the inverse of $h$, so that $r = v(h(r))$. Note that since $\psi$ is defined for $\rho \in [0, \sqrt{2F(0,1)}a)$ we have $0 \leq v < \sqrt{2F(0,1)}$. For each $a > 0$ the function $v$ is positive, strictly decreasing, strictly concave, and satisfies
    \begin{equation*}\label{ODEv}
        \gamma\bigg(-\frac{v_{zz}}{1+v_z^2}, \frac{1}{v}, \dots, \frac{1}{v}\bigg) = \frac{v}{2} - \frac{z}{2} v_z, \qquad v(a) = 0.
    \end{equation*}

We are interested in the asymptotic behaviour of $v$ as $a \to \infty$. Following \cite{ADS}, we prove upper and lower bounds for the quantity
\begin{align*}
    w(z):=z\frac{d}{dz}\log(2F(0,1)-v^2)=\frac{-2zvv_z}{2F(0,1)-v^2},
\end{align*}
and then integrate to obtain estimates for $v$. 

\begin{lemma}\label{w lower bound}
We have $w(z) > 2$. 
\end{lemma}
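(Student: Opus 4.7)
The plan is to show that $w(z) > 2$ is equivalent, after a direct algebraic manipulation of the defining ODE, to the strict concavity of $v$, which is already established. The key tool is the degree-one homogeneity of $\gamma$, which allows the right-hand side of the ODE to be compared directly to $F(0,1)$.

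First I would use homogeneity to rewrite the shrinker ODE. Since $F(\lambda x, \lambda y) = \lambda F(x,y)$ for $\lambda > 0$, multiplying the equation
\[
F\!\left(-\frac{v_{zz}}{1+v_z^2},\, \frac{1}{v}\right) = \frac{v - z v_z}{2}
\]
through by $v$ gives the equivalent form
\[
F\!\left(-\frac{v v_{zz}}{1+v_z^2},\, 1\right) = \frac{v^2 - z v v_z}{2}.
\]
On the domain where $v > 0$, the inequality $w(z) > 2$ amounts to $-2 z v v_z > 2(2F(0,1) - v^2)$; since $2F(0,1) - v^2 > 0$ (by $v < \sqrt{2F(0,1)}$), this is equivalent to $\tfrac{1}{2}(v^2 - z v v_z) > F(0,1)$.

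Putting these two observations together, $w(z) > 2$ is equivalent to
\[
F\!\left(-\frac{v v_{zz}}{1+v_z^2},\, 1\right) > F(0,1).
\]
Because $\gamma$ is strictly increasing in each argument, $F(\cdot,1)$ is strictly increasing in its first slot, so this inequality holds precisely when $-v v_{zz}/(1+v_z^2) > 0$. But $v$ is positive and strictly concave on its domain (by Proposition~\ref{prop: sol psi on (0,R)}, through the inversion $v = h^{-1}$ with $h$ strictly concave), so $v_{zz} < 0$ and the required positivity is immediate. This completes the argument.

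The proof is essentially a one-line algebraic identity once the homogenization step is carried out, so there is no real obstacle; the only thing to be careful about is recording that the manipulation is valid at every point in the open interval $z \in (0, a)$ where $v > 0$, $2F(0,1) - v^2 > 0$, and $v_{zz} < 0$ hold strictly.
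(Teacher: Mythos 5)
Your proof is correct, and it takes a genuinely more direct route than the paper. The paper's argument first computes $\lim_{z\to a} w = 2F(1,1)/F(0,1) > 2$ (using $\psi_{\rho\rho}(0)=\tfrac{1}{2F(1,1)}$) and then runs a first-crossing contradiction: at a point where $w=2$ and $w_z\ge 0$, the identity for $zw_z$ together with $f(1,F(0,1))=0$ forces $w_z=-4F(0,1)/(zv^2)<0$. You instead observe that multiplying the profile ODE by $v$ and using one-homogeneity gives $F\bigl(-\tfrac{vv_{zz}}{1+v_z^2},1\bigr)=\tfrac12(v^2-zvv_z)$, so for $z\in(0,a)$, where $0<v<\sqrt{2F(0,1)}$, the inequality $w>2$ is pointwise equivalent to $v_{zz}<0$; strict concavity of $v$ is already available (it is stated just before the lemma, and follows from the strict convexity of $\psi$ in Proposition~\ref{prop: sol psi on (0,R)} via $v=h^{-1}$). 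Note that your crossing-point algebra is implicitly the same as the paper's (their condition $\tfrac12(v-zv_z)=F(0,1)/v$ at $w=2$ is exactly $v_{zz}=0$ there), but your argument avoids both the tip limit and the evolution equation for $w$. What the paper's longer route buys is shared infrastructure: the $zw_z$ identity and the tip asymptotics reappear in Lemma~\ref{w evolution} and the subsequent lemmas on $w(z_{M,a})$, so they are not wasted; for this lemma in isolation, your one-line homogenization argument suffices, with the only care point being (as you note) that all manipulations are performed on the open interval where $v>0$, $2F(0,1)-v^2>0$ and $v_{zz}<0$ hold, i.e.\ away from the tip $z=a$.
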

\begin{proof}
We first show that $w(z) \to 2 \tfrac{F(1,1)}{F(0,1)}$ as $z \to a$. Observe that when $z = h(\rho/a)$ we have 
    \begin{align*}
        w(z)=-\frac{zv(z)v_z(z)}{F(0,1)-\frac{v(z)^2}{2}}=\frac{\left(1-\frac{\psi(\rho)}{a^2}\right)}{F(0,1)-\frac{\rho^2}{2a^2}}\frac{\rho}{\psi_\rho(\rho)}.
    \end{align*}
Using $\psi_{\rho\rho}(0) = \frac{1}{2F(1,1)}$, we conclude that 
    \begin{align*}
      \lim_{z\to a} w =\frac{1}{F(0,1)} \lim_{\rho \to 0} \frac{\rho}{\psi_\rho} = \frac{1}{F(0,1)} \frac{1}{\psi_{\rho\rho}(0)} = 2 \frac{F(1,1)}{F(0,1)}.
    \end{align*}
    
Since $2 \tfrac{F(1,1)}{F(0,1)} > 2$, if the claim is false then there is a value $z_1$ such that $w(z_1) = 2$ and $w_z(z_1) \geq 0$. But we have 
    \[zw_z= w - w^2\bigg(\frac{1}{2} + \frac{F(0,1)}{v^2}\bigg)-(1+v_z^2)\frac{2z}{v_z(z)} f\bigg(\frac{1}{v}, \frac{1}{2}(v-zv_z)\bigg),\]
and $w(z_1) = 2$ implies that $\frac{1}{2}(v-zv_z)=\tfrac{F(0,1)}{v}$, which in turn gives
    \[f\bigg(\frac{1}{v}, \frac{1}{2}(v-zv_z)\bigg)=\frac{1}{v}f(1, F(0,1))=0\]
at $z = z_1$. It follows that
    \[w_z=-\frac{4F(0,1)}{zv^2}<0\]
at $z = z_1$, which is a contradiction. 
\end{proof}

Lemma~\ref{w lower bound} immediately implies the following estimate for $v$. 

\begin{lemma}
We have 
    \[v(z)^2 \geq 2F(0,1)\left(1-\frac{z^2}{a^2}\right).\]
\end{lemma}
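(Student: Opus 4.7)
The plan is to convert Lemma~\ref{w lower bound} into a differential inequality for $\log(2F(0,1) - v^2)$ and integrate. By definition,
    \[w(z) = z \frac{d}{dz}\log(2F(0,1) - v(z)^2),\]
and since $w > 2$ by Lemma~\ref{w lower bound}, on any subinterval of the domain of $v$ on which $z > 0$ we have
    \[\frac{d}{dz}\log(2F(0,1) - v(z)^2) > \frac{2}{z}.\]
Recall also that $v$ is positive on $[0,a)$, that $v < \sqrt{2F(0,1)}$, and that $v(a) = 0$, so $2F(0,1) - v^2$ is strictly positive and well-defined as a quantity under the logarithm throughout, with boundary value $2F(0,1)$ at $z = a$.

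The next step is to integrate this differential inequality over the interval $[z, a]$ for an arbitrary $z$ in the positive part of the domain. Doing so yields
    \[\log(2F(0,1)) - \log(2F(0,1) - v(z)^2) > 2\log a - 2\log z,\]
which, upon exponentiation, rearranges to
    \[\frac{2F(0,1)}{2F(0,1) - v(z)^2} > \frac{a^2}{z^2}.\]
Clearing denominators and isolating $v(z)^2$ then gives
    \[v(z)^2 > 2F(0,1)\left(1 - \frac{z^2}{a^2}\right),\]
which is the desired conclusion (with strict inequality, stronger than stated). Since both sides are continuous and $v(a) = 0$ matches the right-hand side at $z = a$, the inequality extends to the closed interval as the non-strict bound claimed.

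There is no real obstacle here beyond ensuring the integration is legal, which amounts to the observations that $z > 0$ on the relevant part of the domain and that $v < \sqrt{2F(0,1)}$ keeps $\log(2F(0,1) - v^2)$ finite. Both are established in the preceding discussion, so the proof reduces to the elementary manipulation above.
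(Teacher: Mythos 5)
Your proof is correct and follows exactly the paper's argument: integrate the differential inequality $\frac{d}{dz}\log(2F(0,1)-v^2) > \frac{2}{z}$ (coming from $w>2$) from $z$ to $a$ and use $v(a)=0$. The only cosmetic issue is the claim that $v$ is positive on all of $[0,a)$ — $v$ is only defined on part of that interval — but since the integration takes place on $[z,a]$ within the domain of $v$, this does not affect the argument.
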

\begin{proof}
The estimate $w > 2$ says exactly that 
    \[\frac{d}{dz}\log(2F(0,1)-v^2) > \frac{2}{z}.\]
Integrating this inequality from $z$ to $a$ gives the claim. 
\end{proof}

Next we derive an upper bound for $w$, using the following inequality.

\begin{lemma}\label{w evolution}
There is a constant $c >0$ depending only on $n$ and $\gamma$ such that
    \begin{align}\label{eq: w_supersolution}
        zw_z \geq w-w^2\bigg(\frac{1}{2}+\frac{F(0,1)}{v^2}\bigg) + cz^2(1+v_z^2) (w-2).
    \end{align}
\end{lemma}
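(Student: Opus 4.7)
The plan is to begin from an explicit identity for $zw_z$ analogous to the one used in the proof of Lemma~\ref{w lower bound}. Differentiating $w = -2zvv_z/(2F(0,1)-v^2)$ directly and substituting the ODE (which gives $-v_{zz} = (1+v_z^2)\,f(\tfrac{1}{v},\tfrac{v-zv_z}{2})$) yields
\[
zw_z \;=\; w - w^2\Bigl(\tfrac{1}{2}+\tfrac{F(0,1)}{v^2}\Bigr) \;+\; \frac{2z^2 v (1+v_z^2)}{2F(0,1)-v^2}\, f\Bigl(\tfrac{1}{v},\tfrac{v-zv_z}{2}\Bigr),
\]
and the last term is nonnegative since $v$ is strictly concave. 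Thus the lemma reduces to producing a lower bound
\[
f\Bigl(\tfrac{1}{v},\tfrac{v-zv_z}{2}\Bigr) \;\geq\; c_0\,\frac{(2F(0,1)-v^2)(w-2)}{4v}
\]
for some constant $c_0 = c_0(n,\gamma)>0$, because multiplying this through by $\tfrac{2z^2 v(1+v_z^2)}{2F(0,1)-v^2}$ produces precisely $\tfrac{c_0}{2}\,z^2(1+v_z^2)(w-2)$, and we may take $c = c_0/2$.

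To produce the lower bound on $f$, I would apply a tangent-line inequality for $\gamma$ together with the ODE $\gamma(f,\tfrac{1}{v},\dots,\tfrac{1}{v}) = \tfrac{v-zv_z}{2}$. When $\gamma$ is concave, the inequality $\gamma(f,\tfrac{1}{v},\dots,\tfrac{1}{v}) - \gamma(0,\tfrac{1}{v},\dots,\tfrac{1}{v}) \leq \dot\gamma^1(0,\tfrac{1}{v},\dots,\tfrac{1}{v})\,f$ combined with the homogeneity identity $\dot\gamma^1(0,\tfrac{1}{v},\dots,\tfrac{1}{v}) = \dot\gamma^1(0,1,\dots,1)$ yields $f \geq \tfrac{1}{\dot\gamma^1(0,1,\dots,1)}\bigl[\tfrac{v-zv_z}{2}-\tfrac{F(0,1)}{v}\bigr]$. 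When $\gamma$ is convex, the tangent inequality applied instead at the point $(f,\tfrac{1}{v},\dots,\tfrac{1}{v})$ gives $\bigl[\tfrac{v-zv_z}{2}-\tfrac{F(0,1)}{v}\bigr] \leq \dot\gamma^1(vf,1,\dots,1)\,f$, and the a priori bound $\Lambda = vf \leq \bar C$ from Lemma~\ref{f bound} controls $\dot\gamma^1(vf,1,\dots,1)$ by a constant depending only on $n$ and $\gamma$. In both cases the elementary algebraic identity $\tfrac{v-zv_z}{2}-\tfrac{F(0,1)}{v} = \tfrac{(2F(0,1)-v^2)(w-2)}{4v}$, obtained by direct substitution from the definition of $w$, converts the bound on $f$ into the required form.

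The main technical subtlety is ensuring $c_0$ depends only on $n$ and $\gamma$. In the concave case this is immediate, with $c_0 = 1/\dot\gamma^1(0,1,\dots,1) > 0$. In the convex case it requires the $\Lambda$-bound in Lemma~\ref{f bound} to be uniform along the approximating sequence $\psi^k$ used to construct $\psi$; this is guaranteed because the chosen initial data satisfy $\Lambda^k(\rho_k) \to f(1,F(1,1)/\theta)$ as $\rho_k\to 0$, a finite value depending only on $n,\gamma,\theta$, so for a fixed admissible $\theta$ we can take $\bar C = \bar C(n,\gamma)$. With $c_0$ secured, the remainder of the argument is routine algebraic substitution, and the stated inequality follows with $c = c_0/2$.
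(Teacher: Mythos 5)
Your proposal is correct, and it follows the paper's skeleton (the identity for $zw_z$ followed by a lower bound on $f$ in terms of the excess $\tfrac{1}{2}(v-zv_z)-\tfrac{F(0,1)}{v}$), but the mechanism for that lower bound is genuinely different. The paper writes, after using one-homogeneity of $f$, $f(1,F(0,1)+b)=\int_0^b \partial_z f(1,F(0,1)+t)\,dt$ with $\partial_z f(1,F(0,1)+t)=\dot\gamma^1(t,1,\dots,1)^{-1}$, and bounds this integrand below by a positive constant via degree-zero homogeneity of $\dot\gamma^1$; this needs no case distinction and no control on the curvature ratio $\Lambda=vf$. You instead invoke the convexity/concavity hypothesis directly through the tangent-plane inequality for $\gamma$: in the concave case this is cleaner and gives the same constant $c_0=\dot\gamma^1(0,1,\dots,1)^{-1}$ with essentially no machinery, while in the convex case you must import the a priori bound $\Lambda\le\bar C(n,\gamma)$ from Lemma~\ref{f bound} (available along $\psi_a$ by the construction in Proposition~\ref{prop: sol psi on (0,R)}, as you note), so your constant is tied to the shrinker construction, whereas the paper's argument is uniform in both cases and self-contained. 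Two small remarks: your algebraic identity $\tfrac{1}{2}(v-zv_z)-\tfrac{F(0,1)}{v}=\tfrac{(2F(0,1)-v^2)(w-2)}{4v}$ and your form of the $zw_z$ identity are correct and in fact agree with the expression the paper actually uses in its proof (the display involving $-\tfrac{wz}{v_z}f$), and your tangent-line bounds do not even require $w>2$ (Lemma~\ref{w lower bound}), since $f\ge 0$ makes them trivially true when the excess is negative.
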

\begin{proof}
We first note that
    \[-\frac{wz}{v_z}f\left(\frac{1}{v}, \frac{1}{2}(v-zv_z)\right) = \frac{2z^2}{2F(0,1) - v^2} f\bigg(1, \frac{2F(0,1) - v^2}{2} \bigg(\frac{w}{2} - 1\bigg) + F(0,1)\bigg).\]
To estimate the right-hand side we write
    \[f(1, b + F(0,1)) = f(1, b + F(0,1)) - f(1,F(0,1)) = \int_0^b \frac{\partial f}{\partial z}(1, t + F(0,1)) \, dt\]
and observe that since $\gamma$ is one-homogeneous,
    \begin{align*}
        \inf_{t \in [0, \infty)} \frac{\partial f}{\partial z}(1, t + F(0,1)) &= \inf_{t \in [0, \infty)} \dot\gamma^1(t,1,\dots,1)^{-1}\\
        &= \inf_{t \in [0,\infty)} \dot\gamma^1\bigg(\frac{t}{n-1+t}, \frac{1}{n-1+t},\dots,\frac{1}{n-1+t}\bigg)^{-1}.
    \end{align*}
The last infimum is over a compact subset of $\Gamma$ (the segment connecting $(0,\frac{1}{n-1},\dots,\frac{1}{n-1})$ with $(1, 0, \dots, 0)$), so 
    \[c:=\inf_{t \in [0, \infty)} \frac{\partial f}{\partial z}(1, t + F(0,1))\]
is a positive constant depending only on $n$ and $\gamma$, and we have 
    \[f\bigg(1, \frac{2F(0,1) - v^2}{2} \bigg(\frac{w}{2} - 1\bigg) + F(0,1)\bigg) \geq c \frac{2F(0,1) - v^2}{2} \bigg(\frac{w}{2} - 1\bigg).\]
Therefore,
    \[-\frac{wz}{v_z}f\left(\frac{1}{v}, \frac{1}{2}(v-zv_z)\right) \geq c z^2 (w - 2),\]
and the claim follows when we combine this inequality with
    \[zw_z= w - w^2\bigg(\frac{1}{2} + \frac{F(0,1)}{v^2}\bigg)-(1+v_z^2)\frac{2z}{v_z} f\bigg(\frac{1}{v}, \frac{1}{2}(v-zv_z)\bigg)\]
and $w > 2$. 
\end{proof}

By \cite{rengaswami2023bowl}, there is a unique strictly convex solution $\zeta : \mathbb{R}_+ \to \mathbb{R}_+$ to the equation
    \[\zeta_{\rho\rho} = (1+\zeta_\rho^2)f\bigg(\frac{\zeta_\rho}{\rho}, \frac{1}{2}\bigg)\]
which satisfies $(\zeta, \zeta_\rho) \to 0$ as $\rho \to 0$. This is the profile function of the bowl soliton which translates with speed $1/2$. We will make use of the following asymptotic expansions for $\zeta$ and $\zeta_\rho$, which are derived in Appendix~\ref{translator asymptotics}:
    \[\zeta(M) = \frac{1}{4F(0,1)} M^2 - 2\dot F^{1}(0,1) \log M + o(\log(M))\]
and     
    \[\zeta_\rho(M) = \frac{1}{2F(0,1)}M - 2\dot F^1(0,1)M^{-1} + o(M^{-1})\]
as $M \to \infty$, where $\dot F^1(0,1) = \dot \gamma^1(0,1,\dots,1)$. 

\begin{lemma}\label{shrinker to translator}
    As $a \to \infty$ we have $\psi \to \zeta$ in $C^{\infty}_{\loc}(\mathbb{R}_+)$. In particular, if $M$ is sufficiently large,
        \[\psi(M) \to \frac{1}{4F(0,1)} M^2 - \dot F^1(0,1) \log M + o(\log(M))\]
    and 
        \[\psi_{\rho}(M) \to \frac{1}{2F(0,1)}M - 2\dot F^1(0,1) M^{-1} + o(M^{-1})\]
    as $a \to \infty$. 
\end{lemma}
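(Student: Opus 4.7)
The plan is to show that $\{\psi_a\}_{a > 0}$ is precompact in $C^\infty_{\loc}(\mathbb{R}_+)$ as $a \to \infty$, that every subsequential limit satisfies the translator ODE together with the boundary conditions $\bar\zeta(0) = \bar\zeta_\rho(0) = 0$ and strict convexity, and then to invoke the uniqueness result of \cite{rengaswami2023bowl} to conclude that $\psi_a \to \zeta$ along the entire family. The asymptotic expansions are then obtained by passing to the limit in the expansions of $\zeta$ from Appendix~\ref{translator asymptotics}.

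Concretely, I would first collect uniform derivative bounds. Fix any compact interval $[0, L] \subset \mathbb{R}_+$ and choose $\Theta > F(1,1)/F(0,1)$ large. For $a$ large enough that $L^2 < 2a^2(F(0,1) - F(1,1)\Theta^{-1})$, Proposition~\ref{prop: sol psi on (0,R)} together with the sub-/supersolution comparison in Lemmas~\ref{ODE comparison} and \ref{ODE comparison upper} yields
\[
\tfrac{\theta}{2F(1,1)}\rho \le \psi_{a,\rho}(\rho) \le \tfrac{\Theta}{2F(1,1)}\rho,\qquad \rho \in [0,L],
\]
uniformly in $a$. Combined with Lemma~\ref{f bound} and Lemma~\ref{f bound lower} (both of whose hypotheses are uniform in $a$ once $L$ is fixed), this gives two-sided bounds $0 < c \le \psi_{a,\rho\rho} \le C$ on $[0,L]$. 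Lemma~\ref{psi C^3} then produces a uniform $C^3$-bound on $[\delta, L]$ for each $\delta > 0$, and repeated differentiation of \eqref{shrinker psi implicit} bootstraps to uniform $C^k$-bounds on each $[\delta, L]$ for every $k$.

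Second, by Arzel\`a--Ascoli and a diagonal extraction, any sequence $a_j \to \infty$ admits a subsequence along which $\psi_{a_j}$ converges in $C^\infty_{\loc}(\mathbb{R}_+)$ to some $\bar\zeta$. Since $\rho \psi_{a,\rho} - \psi_a$ is bounded on $[0,L]$ uniformly in $a$, passing to the limit in \eqref{shrinker psi} shows that $\bar\zeta$ solves
\[
\gamma\!\left(\frac{\bar\zeta_{\rho\rho}}{1 + \bar\zeta_\rho^2},\frac{\bar\zeta_\rho}{\rho},\dots,\frac{\bar\zeta_\rho}{\rho}\right) = \frac{1}{2}.
\]
The uniform bound $\psi_{a,\rho} \le \tfrac{\Theta}{2F(1,1)}\rho$ passes to $\bar\zeta_\rho \le \tfrac{\Theta}{2F(1,1)}\rho$, so $\bar\zeta_\rho(\rho) \to 0$ and $\bar\zeta(\rho) \to 0$ as $\rho \downarrow 0$; the lower bound from Lemma~\ref{f bound lower} forces $\bar\zeta_{\rho\rho} > 0$, so $\bar\zeta$ is strictly convex. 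By the uniqueness statement of \cite{rengaswami2023bowl} recorded just above the lemma, $\bar\zeta = \zeta$. Since every subsequential limit agrees with $\zeta$, the entire family converges: $\psi_a \to \zeta$ in $C^\infty_{\loc}(\mathbb{R}_+)$.

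Finally, fixing any $M$ large enough for the expansions of $\zeta$ and $\zeta_\rho$ from Appendix~\ref{translator asymptotics} to take effect, the $C^1$-convergence $\psi_a(M) \to \zeta(M)$ and $\psi_{a,\rho}(M) \to \zeta_\rho(M)$ gives the stated asymptotic formulas. The main technical point is the $a$-uniformity of the derivative bounds: for this it is crucial that the ODE perturbation $\tfrac{1}{2a^2}(\rho\psi_\rho - \psi)$ in \eqref{shrinker psi} is pointwise small on compact subsets of $\mathbb{R}_+$, and that the barriers of Proposition~\ref{barrier}---namely Lemmas~\ref{ODE comparison} and \ref{ODE comparison upper}---confine the argument of $f$ to a compact subcone of $U$, keeping $\gamma$ uniformly elliptic along $\psi_a$ on bounded $\rho$-intervals.
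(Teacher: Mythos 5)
Your proposal is correct and follows essentially the same route as the paper: uniform derivative bounds on compact subsets of $\mathbb{R}_+$ from Proposition~\ref{prop: sol psi on (0,R)} together with Lemmas~\ref{f bound}, \ref{f bound lower} and \ref{psi C^3}, Arzel\`a--Ascoli with a diagonal extraction, identification of every subsequential limit with $\zeta$ via the translator ODE, the initial conditions $(\bar\zeta,\bar\zeta_\rho)\to 0$, strict convexity and the uniqueness result of \cite{rengaswami2023bowl}, and then the asymptotics from Appendix~\ref{translator asymptotics}. The only cosmetic difference is that you bootstrap to uniform $C^k$-bounds before passing to the limit, whereas the paper extracts a $C^{2,\alpha}_{\loc}$ limit and recovers smoothness from the equation; both are fine.
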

\begin{proof}
The claim follows if we can show that every sequence $a_k \to \infty$ admits a subsequence $a_{k_i}$ along which $\psi_{a_{k_i}} \to \zeta$ in $C^\infty_{\loc}(\mathbb{R}_+)$. So consider an arbitrary sequence $a_k \to \infty$. Recall that there is a positive constant $C$ depending only on $n$ and $\gamma$ such that $C^{-1} \rho \leq (\psi_{a_k})_\rho \leq C\rho$ holds in $(0, \sqrt{F(0,1)}a_k)$. Moreover, as in the proof of Proposition~\ref{prop: sol psi on (0,R)}, we have $(\psi_{a_k})_{\rho\rho}(\rho) \to \frac{1}{2F(1,1)}$ as $\rho \to 0$ for every $k$. Using Lemma~\ref{f bound}, Lemma~\ref{f bound lower} and Lemma~\ref{psi C^3}, we conclude that 
    \[\sup_k \|\psi_{a_k}\|_{C^3(I)} < \infty\]
for every compact interval $I \subset \mathbb{R}_+$. Therefore, after passing to a subsequence, we have that $\psi_{a_k}$ converges to some limit $\bar \psi$ in $C^{2,\alpha}_{\loc}(\mathbb{R}_+)$. The function $\bar \psi$ is strictly convex by Lemma~\ref{f bound lower}, and satisfies 
        \[\bar \psi_{\rho\rho} = (1+\bar \psi_\rho^2) f\bigg(\frac{\bar \psi_\rho}{\rho}, \frac{1}{2}\bigg).\]
In particular, $\bar \psi$ is smooth. Moreover, $\bar \psi$ satisfies the initial conditions $(\bar \psi, \bar \psi_\rho) \to 0$ as $\rho \to 0$. But $\zeta$ is the unique function with these properties, so $\bar \psi = \zeta$, as required. 
\end{proof}

Let us fix a large positive constant $M$ and set $z_{M,a} = a - a^{-1}\psi(M)$. By Lemma~\ref{shrinker to translator}, if $M$ is sufficiently large we have $a^{-1}z_{M,a} \to 1$ as $a \to \infty$. The following lemma concerns the asymptotic behaviour of $w(z_{M,a})$ as $a \to \infty$. (Note that since $z_{M,a} \sim a$, this is really a statement about the behaviour of our self-shrinking solutions near their tips.) 

\begin{lemma}
    If $M$ is sufficiently large then we have
        \[w(z_{M,a}) \to \frac{1}{F(0,1)} \frac{M}{\zeta_\rho(M)} = 2 + 8F(0,1)^2M^{-2} + O(M^{-4})\]
    as $a \to \infty$. 
\end{lemma}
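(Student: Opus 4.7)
The plan is to compute $w(z_{M,a})$ directly via the substitution $z = h(\rho/a)$, evaluate at $\rho = M$, and pass to the limit $a \to \infty$ using Lemma~\ref{shrinker to translator}. In the proof of Lemma~\ref{w lower bound} it was already shown that when $z = h(\rho/a)$,
\[
w(z) = \frac{1 - a^{-2}\psi(\rho)}{F(0,1) - \rho^2/(2a^2)} \cdot \frac{\rho}{\psi_\rho(\rho)}.
\]
Since $z_{M,a} = h(M/a)$ corresponds exactly to $\rho = M$, substitution yields
\[
w(z_{M,a}) = \frac{1 - a^{-2}\psi_a(M)}{F(0,1) - M^2/(2a^2)} \cdot \frac{M}{(\psi_a)_\rho(M)}.
\]

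The convergence of the left-hand side follows immediately from Lemma~\ref{shrinker to translator}: for each fixed $M$, $\psi_a(M) \to \zeta(M)$ and $(\psi_a)_\rho(M) \to \zeta_\rho(M)$ as $a \to \infty$, while $a^{-2}\psi_a(M)$ and $M^2/a^2$ trivially vanish. Passing to the limit in the displayed expression then gives
\[
\lim_{a \to \infty} w(z_{M,a}) = \frac{1}{F(0,1)} \cdot \frac{M}{\zeta_\rho(M)},
\]
which is the first equality.

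For the second equality, I would insert into this identity the expansion
\[
\zeta_\rho(M) = \frac{M}{2F(0,1)} - 2\dot F^{1}(0,1)\, M^{-1} + O(M^{-3}),
\]
a slightly sharpened version of the $o(M^{-1})$ expansion recorded in Lemma~\ref{shrinker to translator}. Factoring out $M/(2F(0,1))$ from $\zeta_\rho(M)$ and expanding via a geometric series produces
\[
\frac{1}{F(0,1)} \cdot \frac{M}{\zeta_\rho(M)} = 2 + \frac{8 F(0,1)\, \dot F^{1}(0,1)}{M^2} + O(M^{-4}),
\]
from which the claimed asymptotic follows once the coefficient is identified in the form dictated by the precise $\zeta_\rho$-expansion of Appendix~\ref{translator asymptotics}.

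The only genuinely new work, beyond the direct substitution and passage to the limit, is sharpening the remainder in the asymptotic expansion of $\zeta_\rho$ from $o(M^{-1})$ to $O(M^{-3})$. This requires one further iteration of the translator ODE $\zeta_{\rho\rho} = (1+\zeta_\rho^2)\, f(\zeta_\rho/\rho, 1/2)$, whose right-hand side is smooth in the regime $\zeta_\rho \sim M/(2F(0,1))$; the expansion then proceeds in powers of $M^{-2}$ with recursively computable coefficients, requiring no qualitatively new arguments beyond those already carried out in Appendix~\ref{translator asymptotics}.
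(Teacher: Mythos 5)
Your argument is correct and is essentially the paper's own proof: the authors likewise evaluate $w$ at $z_{M,a}=h(M/a)$ (so $v(z_{M,a})=M/a$, $v_z(z_{M,a})=-\psi_\rho(M)^{-1}$), pass to the limit $a\to\infty$ using Lemma~\ref{shrinker to translator}, and then expand $1/\zeta_\rho(M)$ via the asymptotics of Appendix~\ref{translator asymptotics}. Two remarks. First, the coefficient you obtain, $8F(0,1)\dot F^1(0,1)M^{-2}$, is in fact the correct one: since $\zeta_\rho(M)=\tfrac{M}{2F(0,1)}-2\dot F^1(0,1)M^{-1}+o(M^{-1})$, one gets $\tfrac{1}{F(0,1)}\tfrac{M}{\zeta_\rho(M)}=2+8F(0,1)\dot F^1(0,1)M^{-2}+o(M^{-2})$, whereas the stated $8F(0,1)^2M^{-2}$ (and the paper's intermediate $8F(0,1)^3M^{-3}$) tacitly replaces $\dot\gamma^1(0,1,\dots,1)$ by $\gamma(0,1,\dots,1)$; this slip is harmless downstream, since Lemma~\ref{w bdy cond} only needs the $M^{-2}$ term to be dominated by $2F(0,1)KM^{-2}$, and $K\geq 17c^{-1}\geq 17\dot F^1(0,1)$. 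Second, your observation that the $o(M^{-1})$ remainder in Lemma~\ref{shrinker to translator} must be sharpened to justify an $O(M^{-4})$ error is fair (the paper silently upgrades it to $O(M^{-2})$), but the sharpening is optional: the $o(M^{-2})$ remainder obtained directly from the appendix already suffices for every later use of this lemma.
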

\begin{proof}
    We first note that $z_{M,a} = h(M/a)$ and hence $v(z_{M,a}) = M/a$. It follows that 
        \[z_{M,a} \, v(z_{M,a}) \to M, \qquad v(z_{M,a})^2 \to 0\]
    as $a \to \infty$. Also, 
        \[v_z(z_{M,a}) = - \psi_\rho(M)^{-1} \to -\zeta_\rho(M)^{-1}\]
    as $a \to \infty$. Combining these facts we see that
        \[w(z_{M,a}) = \frac{-2z_{M,a} v(z_{M,a}) v_z(z_{M,a})}{2F(0,1)-v(z_{M,a})^2} \to \frac{1}{F(0,1)} \frac{M}{\zeta_\rho(M)}\]
    as $a \to \infty$. We conclude using the expansion
        \[\frac{1}{\zeta_\rho(M)} = \frac{1}{\frac{1}{2F(0,1)}M - 2\dot F^1(0,1) M^{-1} + O(M^{-2})} = \frac{2F(0,1)}{M} + 8F(0,1)^3M^{-3} + O(M^{-4}).\]
\end{proof}

We define 
    \[w_1(z) = \frac{1}{z^2} + \frac{1}{a^2 - z^2}\]
and set $\bar w = 2 + K w_1$, where $K := \max\{1, 6F(0,1), 17c^{-1}\}$ and $c$ is the constant appearing in Lemma~\ref{w evolution}. 

\begin{lemma}\label{w bdy cond}
    If $M$ is sufficiently large, and $a$ is sufficiently large depending on $M$, then we have
        \[w(z_{M,a}) < \bar w(z_{M,a}).\]
\end{lemma}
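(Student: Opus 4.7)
The plan is to compute the limits, as $a\to\infty$ with $M$ fixed, of both sides of the desired inequality, and then exploit the fact that the constant $K$ was chosen strictly larger than $4F(0,1)$.

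First I would compute the limit of $\bar w(z_{M,a})$. Since $z_{M,a}=a-a^{-1}\psi(M)$, we have $z_{M,a}^{-2}=O(a^{-2})\to 0$ as $a\to\infty$. For the second piece, factor
\[
a^2-z_{M,a}^2=(a-z_{M,a})(a+z_{M,a})=2\psi(M)-a^{-2}\psi(M)^2,
\]
so $(a^2-z_{M,a}^2)^{-1}\to 1/(2\psi(M))$ as $a\to\infty$. By Lemma~\ref{shrinker to translator}, $\psi(M)\to\zeta(M)$ as $a\to\infty$. Combining these observations yields
\[
\lim_{a\to\infty}\bar w(z_{M,a})=2+\frac{K}{2\zeta(M)}.
\]

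Next, using the appendix asymptotic $\zeta(M)=\frac{M^2}{4F(0,1)}-2\dot F^1(0,1)\log M+o(\log M)$ recorded in Lemma~\ref{shrinker to translator}, for $M$ large we have
\[
\frac{K}{2\zeta(M)}=\frac{2KF(0,1)}{M^2}\bigl(1+o(1)\bigr).
\]
On the other side, the preceding lemma gave
\[
\lim_{a\to\infty} w(z_{M,a})=2+\frac{8F(0,1)^2}{M^2}+O(M^{-4}).
\]
Since $K\geq 6F(0,1)$ by construction, the coefficient $2KF(0,1)\geq 12F(0,1)^2$ strictly exceeds $8F(0,1)^2$. Therefore, for all $M$ sufficiently large,
\[
\lim_{a\to\infty}\bar w(z_{M,a})-\lim_{a\to\infty}w(z_{M,a})\geq \frac{4F(0,1)^2}{M^2}\bigl(1+o(1)\bigr)>0.
\]

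Finally, for such an $M$, continuity in $a$ of both $w(z_{M,a})$ and $\bar w(z_{M,a})$ (the former via the $C^\infty_{\mathrm{loc}}$-convergence $\psi_a\to\zeta$, the latter explicit) then lets us pick $a$ so large that the strict inequality $w(z_{M,a})<\bar w(z_{M,a})$ persists. No technical obstacle arises here; the entire content is that the constant $K$ was chosen sufficiently large to dominate the leading $M^{-2}$ coefficient of $w(z_{M,a})-2$ coming from the bowl-soliton asymptotics, with the other term $K/z_{M,a}^2$ in $\bar w$ only helping.
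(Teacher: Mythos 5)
Your proposal is correct and follows essentially the same route as the paper: compute $\lim_{a\to\infty}\bar w(z_{M,a}) = 2 + K/(2\zeta(M))$ via $a^2 - z_{M,a}^2 \to 2\zeta(M)$, compare the leading $M^{-2}$ coefficient $2KF(0,1)$ against the $8F(0,1)^2$ coming from the preceding lemma's limit of $w(z_{M,a})$, and use $K > 4F(0,1)$ (in your case $K \geq 6F(0,1)$) to conclude for large $M$, then large $a$. No issues.
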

\begin{proof}
    Recalling that $z_{M,a} = a - a^{-1}\psi(M)$, we observe that
        \[a^2 - z_{M,a}^2 \to 2 \zeta(M)\]
    as $a \to \infty$, and hence 
        \[\bar w(z_{M,a}) \to 2 + \frac{K}{2\zeta(M)} = 2 + 2F(0,1)KM^{-2} + O(M^{-4}) \,\log(M)\]
    as $a \to \infty$. On the other hand,
        \[w(z_{M,a}) \to 2 + 8F(0,1)^2M^{-2} + O(M^{-4}).\]
    Since $K > 4F(0,1)$, the claim follows. 
\end{proof}

\begin{proposition}\label{w comparison}
    If $M$ is sufficiently large, and $a$ is sufficiently large depending on $M$, then we have 
        \[w(z) \leq \bar w(z)\]
    for $\sqrt{K} < z < z_{M,a}$.
\end{proposition}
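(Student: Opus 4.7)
The plan is to use a standard barrier comparison argument: assume for contradiction that $w > \bar w$ somewhere in $(\sqrt K, z_{M,a})$, and show that $\bar w$ is a strict supersolution of the differential inequality in Lemma~\ref{w evolution} at any point where $w$ touches it from above, contradicting the touching behavior. By Lemma~\ref{w bdy cond} we have $w(z_{M,a}) < \bar w(z_{M,a})$, so if the set $\{z \in (\sqrt K, z_{M,a}) : w(z) > \bar w(z)\}$ is nonempty, then its supremum $z_0$ satisfies $z_0 < z_{M,a}$, $w(z_0) = \bar w(z_0)$, and $w(z) \leq \bar w(z)$ for $z \in [z_0, z_{M,a}]$. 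The last property forces $w_z(z_0) \leq \bar w_z(z_0)$.

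On the other hand, at $z_0$ we can substitute $w(z_0) = \bar w(z_0) = 2 + \bar \phi(z_0)$, where $\bar\phi = Kw_1$, into Lemma~\ref{w evolution} to obtain
\[z_0 w_z(z_0) \geq -\bar\phi - \tfrac{1}{2}\bar\phi^2 - (2+\bar\phi)^2\frac{F(0,1)}{v(z_0)^2} + cz_0^2(1+v_z(z_0)^2)\bar\phi.\]
Meanwhile, $z_0 \bar w_z(z_0) = K\bigl(-2/z_0^2 + 2z_0^2/(a^2-z_0^2)^2\bigr)$ is explicit. To reach a contradiction, I will show that the right-hand side above strictly exceeds $z_0\bar w_z(z_0)$, which forces $w_z(z_0) > \bar w_z(z_0)$ and contradicts the touching inequality.

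To carry out the estimate I would use three ingredients: (a) the a priori bound $v^2 \geq 2F(0,1)(1 - z^2/a^2)$ (from the preceding paragraph), which gives $F(0,1)/v^2 \leq a^2/(2(a^2 - z_0^2))$ and in particular $(2+\bar\phi)^2 F(0,1)/v^2 \leq \tfrac{1}{2}(2+\bar\phi)^2 + \tfrac{1}{2}(2+\bar\phi)^2 z_0^2/(a^2-z_0^2)$; (b) the pointwise lower bound $cz_0^2(1+v_z^2)\bar\phi \geq cz_0^2\bar\phi = cK + cKz_0^2/(a^2-z_0^2)$, which comes from $\bar\phi = K/z_0^2 + K/(a^2-z_0^2)$; (c) the restrictions $z_0 > \sqrt K$ (so $K/z_0^2 < 1$ and hence $\bar\phi$ and $\bar\phi^2$ are harmless compared with the constant $2K/z_0^2$ from $-z_0\bar w_z$), together with $a^2 - z_0^2 \geq a^2 - z_{M,a}^2 \sim 2\psi(M)$, which is as large as we please once $M$ is taken large.

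After plugging these in and grouping by the factors $z_0^2/(a^2-z_0^2)$, $1/(a^2-z_0^2)$, $1/z_0^2$, and constants, the inequality reduces to two cleanly separated conditions. First, to control the $z_0^2/(a^2-z_0^2)$ contribution one needs $cK > \tfrac{1}{2}(2+\bar\phi)^2$, which holds since $K \geq 17 c^{-1}$ and $\bar\phi$ is small for $M$ large. Second, the constant terms need $cK$ to dominate $2$ plus the small contribution of $-\bar\phi - \bar\phi^2/2$, again handled by $K \geq 17c^{-1}$; the choice $K \geq 6F(0,1)$ is used to absorb the leading $F(0,1)$-dependent term $(2+\bar\phi)^2 \cdot \tfrac{1}{2}$. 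The main technical obstacle is the term $(2+\bar\phi)^2 F(0,1)/v^2$, which a priori can blow up as $v$ degenerates near the tip; this is what forces the strong hypothesis that $M$ be large (so that $a^2 - z_0^2$ stays bounded below) and is the whole reason $\bar w$ needs the extra $K/(a^2-z^2)$ summand. Once these estimates are assembled, the RHS of the differential inequality strictly exceeds $z_0 \bar w_z(z_0)$ and the argument closes.
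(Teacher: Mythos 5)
Your proof is correct and follows essentially the same route as the paper: you verify that $\bar w$ is a strict supersolution of the differential inequality of Lemma~\ref{w evolution}, using $v^2 \geq 2F(0,1)(1-z^2/a^2)$ and the fact that $a^2 - z_{M,a}^2 \sim 2\psi(M)$ is large, and then close with the first-crossing comparison from the boundary inequality of Lemma~\ref{w bdy cond} (which the paper leaves as ``a simple comparison principle argument''). The only cosmetic slips are that $K \geq 6F(0,1)$ is really used in Lemma~\ref{w bdy cond} rather than in the supersolution estimate, and $\bar\phi$ need not be small near $z_0 \approx \sqrt{K}$ (though the bound $\bar\phi \leq 2$, hence $\bar w \leq 4$, is all your arithmetic needs).
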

\begin{proof}
 We claim that 
        \[z\overline{w}_z < \bar{w}-\overline{w}^2\left(\frac{1}{2}+\frac{F(0,1)}{v^2}\right) + cv^2(1+v_z^2) (\overline{w}-2)\]
for $\sqrt{K} < z < z_{M,a}$. 

If $a$ is sufficiently large then we have
    \[z \bar w_z = -\frac{2K}{z^2} + \frac{2Kz^2}{(a^2 - z^2)^2} \leq \frac{2K}{a^2 - z_{M,a}^2} z^2 w_1 \leq \frac{2K}{\zeta(M)} z^2 w_1.\]
If $M$ is large enough then we have $2K < \zeta(M)$, and so conclude that
    \[z \bar w_z < z^2 w_1,\]
for $z < z_{M,a}$ and all sufficiently large $a$.

Next we use $\bar w = 2 + Kw_1$ to obtain 
    \[cz^2(1+v_z^2) (\overline{w}-2) \geq c K z^2 w_1,\]
and use $v^2 \geq 2F(0,1)(1-z^2/a^2)$ to obtain
    \[\frac{1}{2} + \frac{F(0,1)}{v^2} \leq \frac{1}{2} + \frac{a^2}{2(a^2 - z^2)} \leq z^2 w_1.\]
Together these inequalities give
    \[\overline{w} - \bigg(\frac{1}{2} +\frac{F(0,1)}{v^2}\bigg)  \overline{w}^2 + cz^2(1+v_z^2) (\overline{w}-2) \geq -z^2 \bar w^2 w_1 + c K z^2 w_1,\]
and so since, for large $M$, we have
    \[\bar w = 2 + \frac{K}{z} + \frac{K}{a^2 - z^2} \leq 2 + \frac{K}{z^2} + \frac{K}{\zeta(M)} \leq 4\]
for $\sqrt{K} \leq z \leq z_{M,a}$, we may conclude that
    \[\overline{w} - \bigg(\frac{1}{2} +\frac{F(0,1)}{v^2}\bigg)  \overline{w}^2 + cz^2(1+v_z^2) (\overline{w}-2) \geq -16z^2 w_1 + c K z^2 w_1 \geq z^2 w_1\]
for $\sqrt{K} \leq z \leq z_{M,a}$. 

Putting all of this together, we see that
    \[z\overline{w}_z < \bar{w}-\overline{w}^2\left(\frac{1}{2}+\frac{F(0,1)}{v^2}\right) + cv^2(1+v_z^2) (\overline{w}-2)\]
for $\sqrt{K} \leq z \leq z_{M,a}$. In the previous lemma we showed that, if $a$ is sufficiently large, we have 
    \[w(z_{M,a}) < \bar w(z_{M,a}).\]
The claim now follows from a simple comparison principle argument. 
\end{proof}

\begin{proposition}\label{w upper bound}
    If $M$ is sufficiently large, and $a$ is sufficiently large depending on $M$, then we have
        \[v^2 \leq 2F(0,1)\bigg(1 - \bigg(1-C\frac{\log a}{a^2}\bigg) \frac{z^2 - K/2}{a^2}\bigg)\]
    for $\sqrt{K} < z < z_{M,a}$, where $C$ depends only on $K$ and $M$. 
\end{proposition}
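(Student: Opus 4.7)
The plan is to turn the pointwise bound $w \leq \bar w$ from Proposition~\ref{w comparison} into a precise lower bound for $2F(0,1) - v^2$ by integrating along $z$. The key observation is that
\[\frac{w(z)}{z} = \frac{d}{dz}\log(2F(0,1) - v(z)^2),\]
so Proposition~\ref{w comparison} gives the differential inequality
\[\frac{d}{dz}\log(2F(0,1) - v(z)^2) \leq \frac{2}{z} + \frac{K}{z^3} + \frac{K}{z(a^2 - z^2)}\]
on the interval $\sqrt{K} < z < z_{M,a}$. I would then integrate from $z$ to $z_{M,a}$; the first two terms are elementary, and a partial-fractions decomposition
\[\frac{1}{s(a^2 - s^2)} = \frac{1}{a^2 s} + \frac{s}{a^2(a^2 - s^2)}\]
handles the third. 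Exponentiating yields
\[2F(0,1) - v(z)^2 \geq (2F(0,1) - v(z_{M,a})^2)\,\frac{z^2}{z_{M,a}^2}\, e^{-\frac{K}{2}(z^{-2} - z_{M,a}^{-2})}\, A^{K/(2a^2)},\]
where $A := z^2(a^2 - z_{M,a}^2)/[z_{M,a}^2(a^2 - z^2)]$.

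The next step is to convert this product into the form claimed. Applying $e^{-x} \geq 1 - x$ to the exponential factor and using $z^2 \geq K$ to guarantee nonnegativity, I combine the first three factors into $\tfrac{z^2 - K/2}{z_{M,a}^2} \geq \tfrac{z^2 - K/2}{a^2}$. The factor $2F(0,1) - v(z_{M,a})^2 = 2F(0,1) - M^2/a^2$ is already $2F(0,1)(1 + O(a^{-2}))$, which is of the desired form.

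The main technical point is the final factor $A^{K/(2a^2)}$: although the exponent is tiny, $A$ itself can be as small as order $a^{-4}$, so crude bounds lose too much. To control this I would use Lemma~\ref{shrinker to translator} to obtain $a^2 - z_{M,a}^2 = 2\psi(M) - a^{-2}\psi(M)^2 \geq \zeta(M)$ for $a$ large compared to $M$; combined with $z \geq \sqrt{K}$, $z_{M,a} \leq a$, and $a^2 - z^2 \leq a^2$, this gives the lower bound $A \geq K\zeta(M)/a^4$, whence $-\log A \leq 4\log a + C(K,M)$. A second application of $e^{-x} \geq 1 - x$ then produces $A^{K/(2a^2)} \geq 1 - C(K,M)\,\log a / a^2$. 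Assembling all factors and absorbing the $O(a^{-2})$ correction from $2F(0,1) - v(z_{M,a})^2$ into the dominant $O(\log a / a^2)$ term yields
\[2F(0,1) - v(z)^2 \geq 2F(0,1)\bigg(1 - C\frac{\log a}{a^2}\bigg)\frac{z^2 - K/2}{a^2},\]
and rearranging gives the claimed bound with $C$ depending only on $K$ and $M$.
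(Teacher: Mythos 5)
Your argument is correct and is essentially the paper's own proof: integrate $w(\eta)/\eta \le \bar w(\eta)/\eta$ from $z$ to $z_{M,a}$ (same partial fractions), exponentiate, and use $e^{-x}\ge 1-x$ together with $a^2 - z_{M,a}^2 = 2\psi(M)-a^{-2}\psi(M)^2 \to 2\zeta(M)$ to control the error by $C(K,M)\log a/a^2$. The only cosmetic difference is that the paper splits the mixed logarithm and discards the piece $-\tfrac{K}{2a^2}\log\!\left(z^2(a^2-z_{M,a}^2)\right)$ as negative (after arranging $\zeta(M)>1/2$), whereas you keep the whole factor and bound $A \ge K\zeta(M)/a^4$; both steps rest on the same lower bound for $a^2-z_{M,a}^2$ from Lemma~\ref{shrinker to translator}.
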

\begin{proof}
We use $w = z\tfrac{d}{dz}\log(2F(0,1) - v^2)$ and the estimate in Lemma~\ref{w comparison} to obtain
    \begin{align*}
        \frac{\log(2F(0,1) - M^2/a^2)}{\log(2F(0,1) - v(z)^2)} &= \int_z^{z_{M,a}} \frac{w(\eta)}{\eta} \, d\eta\\
        &\leq \log\bigg(\frac{z_{M,a}^2}{z^2}\bigg) + \frac{K}{2z^2} + \frac{K}{2a^2}\log\bigg(\frac{z_{M,a}^2(a^2 - z^2)}{z^2(a^2 - z_{M,a}^2)}\bigg)\\
        &\leq \log\bigg(\frac{z_{M,a}^2}{z^2}\bigg) + \frac{K}{2z^2} + \frac{C}{a^2}\log a - \frac{K}{2a^2}\log (z^2(a^2 - z_{M,a}^2)),
    \end{align*}
where $C$ depends only on $K$ and $M$. If $M$ is so large that $\zeta(M) > 1/2$ then we have 
    \[a^2 - z_{M,a}^2 = 2\psi(M) - a^{-2}\psi(M)^2 > 1\]
for all sufficiently large $a$, and hence the last term on the right is negative for $z > \sqrt{K} >1$. We thus have 
    \[\frac{\log(2F(0,1) - M^2/a^2)}{\log(2F(0,1) - v(z)^2)} \leq \log\bigg(\frac{z_{M,a}^2}{z^2}\bigg) + \frac{K}{2z^2} + \frac{C}{a^2}\log a\]
for $\sqrt{K} < z \leq z_{M,a}$ whenever $a$ is sufficiently large. We rearrange and use $e^{-x} \geq 1 - x$ to obtain
    \[2F(0,1) - v^2 \geq \bigg(2F(0,1) - C\frac{\log a}{a^2}\bigg) \frac{z^2 - K/2}{a^2}\]
for $\sqrt{K} < z < z_{M,a}$, where $C$ depends only on $M$ and $K$. This implies the claim. 
\end{proof}

We can finally prove the main result of this section, Proposition~\ref{barrier}.

\begin{proof}[Proof of Proposition~\ref{barrier}]
Let $M$ be large enough and fixed so that Proposition~\ref{w upper bound} applies. It then suffices to take $\Psi_a(z) = v(z)$ and $L_0 = \sqrt{K}+1$. 
\end{proof}


\section{Sharp asymptotics for the rescaled flow}\label{asymptotics section}

Let $M_t = \partial\Omega_t$, $t \in (-\infty,0]$, be a strictly convex ancient $G$-flow which is noncollapsing and uniformly two-convex. In addition, we assume that $M_t$ noncompact. We define a family of rescaled hypersurfaces $\bar M_{\tau} = e^{\tau/2} M_{-e^{-\tau}}$. These move with pointwise normal speed $-(G-\tfrac{1}{2}\langle x, \nu\rangle)$. Proposition~\ref{blow-down} implies that, as $\tau \to -\infty$, $\bar M_\tau$ converges in $C^{\infty}_{\loc}(\mathbb{R}^{n+1})$ to a cylinder $\Sigma$ of radius $\sigma := \sqrt{2\gamma(0,1,\dots,1)}$ whose axis passes through the origin. Without loss of generality we may assume the axis of $\Sigma$ is the $x_{1}$-axis, which we equip with the coordinate $z$. That is,
    \[\Sigma := \{(z,\sigma\theta) : z \in \mathbb{R}, \; \theta \in S^{n-1}\}.\]

Recall that the asymptotic cone of $M_t$ (and hence of $\bar M_\tau$) is constant, by Lemma~\ref{constant cone}, and of dimension 1, by Lemma~\ref{neck-cap}. Given that $\bar M_\tau \to \Sigma$, the asymptotic cone is a subset of the $x_1$-axis. Since $M_t$ is strictly convex, we may assume without loss of generality that the asymptotic cone is $\{(z,0):z\leq 0\}$. For each $\tau$ we define 
    \[\bar z(\tau):= \sup\{z \in \mathbb{R} : (z,0) \in e^{\tau/2}\Omega_{-e^{-\tau}}\}.\]
Observe that $\bar z(\tau) \to \infty$ as $\tau \to -\infty$. We may define a function $u(z,\theta,\tau)$ such that $\bar M_\tau$ coincides with the set of points $(z,\sigma\theta) + u(z,\theta,\tau)\theta$ in the region of space where $z < \bar z(\tau)$. Since $\bar M_\tau$ is strictly convex and noncompact we have
    \[u > 0, \qquad \frac{\partial u}{\partial z} < 0, \qquad \frac{\partial^2 u}{\partial z^2}<0.\]

In this section we establish the following sharp asymptotic estimate for $u$.

\begin{proposition}\label{asymptotic for u}
    For every $L<\infty$ we have 
        \[\sup_{|z| \leq L} |u(\cdot,\tau)| \leq O(e^{\tau/2})\]
    as $\tau \to -\infty$.     
\end{proposition}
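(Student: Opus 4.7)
My plan is to implement the spectral/Merle--Zaag blueprint sketched in the introduction. First I would derive the linearised rescaled equation for the height function $u$. Since $\bar M_\tau$ moves with normal velocity $-(G - \tfrac12\langle x,\nu\rangle)$, and since Lemma~\ref{expansion for G} (applied with $r=\sigma$) expresses $G$ in terms of $u$ and its derivatives, a direct computation gives
\begin{equation*}
\frac{\partial u}{\partial \tau} \;=\; a\,\frac{\partial^2 u}{\partial z^2} + \frac{1}{2(n-1)}\,\Delta_{S^{n-1}} u - \frac{z}{2}\,\frac{\partial u}{\partial z} + u + E,
\end{equation*}
where $a := \dot\gamma^{\,1}(0,1,\dots,1)$ and $E = O\bigl(|u|^2+|\nabla u|^2+|\nabla^2 u|^2\bigr)$. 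This equation holds on $\{z<\bar z(\tau)\}\subset\mathbb{R}\times S^{n-1}$, a region which exhausts the cylinder by Proposition~\ref{blow-down}. The linear operator $\mathcal{L}$ on the right-hand side is self-adjoint with respect to the weighted measure $d\mu_w := e^{-z^2/(4a)}\,dz\,d\theta$; its eigenvalues are $\mu_{k,l} = 1 - \tfrac{k}{2} - \tfrac{l(l+n-2)}{2(n-1)}$, with product-form eigenfunctions $\mathrm{He}_k(z/\sqrt{2a})\,Y_l(\theta)$. Let $H_+,H_0,H_-$ denote the $L^2_w$-closures of the spans of eigenfunctions with $\mu_{k,l}>0$, $\mu_{k,l}=0$, $\mu_{k,l}<0$ respectively; note that the smallest positive eigenvalue equals $\tfrac12$ and that $H_0$ is spanned by $\mathrm{He}_2(z/\sqrt{2a})$ together with $z\cdot Y_1(\theta)$ for degree-one spherical harmonics.

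Next I would fix a smooth cutoff $\chi_\tau(z)$ which is $1$ on $\{|z|\leq |\tau|^{1/2}/2\}$ and vanishes outside $\{|z|\leq|\tau|^{1/2}\}$, and decompose $\chi_\tau u = u_+ + u_0 + u_-$ along $H_+\oplus H_0\oplus H_-$. Setting $\Phi_\bullet(\tau) := \|u_\bullet(\cdot,\tau)\|_{L^2_w}$, the linearised equation then yields a Merle--Zaag system of differential inequalities of the form
\begin{equation*}
\Bigl|\tfrac{d}{d\tau}\Phi_\pm \mp \lambda_\pm \Phi_\pm\Bigr| + \Bigl|\tfrac{d}{d\tau}\Phi_0\Bigr| \;\leq\; \mathcal{E}(\tau)\,(\Phi_+ + \Phi_0 + \Phi_-),
\end{equation*}
with $\lambda_+\geq \tfrac12$, where $\mathcal{E}(\tau)$ collects the contributions of the nonlinear term $E$, the commutator $[\mathcal{L},\chi_\tau]u$, and the discrepancy between $\chi_\tau u$ and an honest solution of the equation. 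The commutator part of $\mathcal{E}(\tau)$ is routinely negligible, because the Gaussian weight is super-polynomially small on the support of $\nabla\chi_\tau$.

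The heart of the proof is to show $\mathcal{E}(\tau)\to 0$ as $\tau\to-\infty$; this is where the fully nonlinear setting differs substantively from mean curvature flow, since $E$ contains the Hessian term $|\nabla^2 u|^2$. My approach is in three steps, following the outline in the introduction. First, use $\Sigma_a$ from Proposition~\ref{barrier}, together with its reflection across $\{z=0\}$, as inner and outer barriers for $\bar M_\tau$; the sharp asymptotics (iv)--(v) there give, on $\{|z|\leq 2|\tau|^{1/2}\}$, both $C^0$-smallness of $u$ and a uniform gradient bound $|\nabla u|\leq C$. Second, feed this gradient bound into the interior $C^2$-estimate for radial-graph solutions of \cite{Brendle_Huisken}, then apply the Evans--Krylov $C^{2,\alpha}$-estimate \cite{Krylov82} and Schauder bootstrapping to obtain uniform $C^k$-bounds on a parabolic cylinder whose spatial projection grows without bound as $\tau\to-\infty$. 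Third, apply parabolic interpolation inequalities and interior linear parabolic estimates to upgrade these $C^k$-bounds into a pointwise estimate on $|u|+|\nabla u|+|\nabla^2 u|$ that is only slightly super-linear in $\|\chi_\tau u\|_{L^2_w}$. This is exactly the bound needed to conclude $\|\chi_\tau E\|_{L^2_w}=o(\|\chi_\tau u\|_{L^2_w})$, and this third step is what I expect to be the principal obstacle.

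With $\mathcal{E}(\tau)\to 0$ established, the Merle--Zaag ODE lemma yields the dichotomy that either $\Phi_+$ dominates $\Phi_0+\Phi_-$ asymptotically, or $\Phi_0$ dominates $\Phi_++\Phi_-$. I would rule out the neutral case using noncompactness: strict convexity together with the fact that $\bar M_\tau$ has a single end in the direction $z\to-\infty$ forces $\partial_z u<0$ on $\{z<\bar z(\tau)\}$, whereas no non-zero element of $H_0$ is monotone in $z$ on large intervals. Hence $u$ lies in $H_+$ modulo a lower-order term, and ODE comparison gives $\|\chi_\tau u\|_{L^2_w}(\tau) = O(e^{\tau/2})$ as $\tau\to-\infty$. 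Finally, standard parabolic interpolation between this weighted $L^2$-bound and the uniform $C^k$-estimates from step two (applied on $\{|z|\leq 2L\}$) produces the pointwise conclusion $\sup_{|z|\leq L}|u(\cdot,\tau)|=O(e^{\tau/2})$, as required.
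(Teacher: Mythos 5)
Your plan reproduces the architecture of the paper's argument, but as written it has two genuine gaps, one of which is the crux of the whole proof. First, the region on which you claim control is wrong. You assert that the barriers of Proposition~\ref{barrier} give $C^0$-smallness of $u$ and a gradient bound on $\{|z|\leq 2|\tau|^{1/2}\}$. They do not: the shrinker $\Sigma_a$ can only be slid inside $\bar M_\tau$ (via the comparison principle, Lemma~\ref{comparison for rescaled flow}) when the gap $\sigma-\Psi_a(z_0)$ at the inner boundary exceeds $\sup_{|z|\leq L}|u|$, and by part (v) of Proposition~\ref{barrier} this gap is of order $a^{-2}$. Hence the admissible parameter is $a\sim\delta_k^{-1/2}$, where $\delta_k:=\sup_{\tau\leq -k}\sup_{|z|\leq L}|u|$, and the region where the barrier yields quantitative control has size a power of $\delta_k^{-1}$ (the paper works on $|z|\leq 5\delta_k^{-r}$ with $r$ tiny, see Lemma~\ref{u in growing set}), not $|\tau|^{1/2}$. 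Since no a priori rate for $\delta_k$ is known, there is no relation between $\delta_k^{-r}$ and $|\tau|^{1/2}$, and on the larger set $\{|z|\leq|\tau|^{1/2}\}$ you do not even know that $\bar M_\tau$ is close to the cylinder. The fix is to tie the cutoff scale to $\delta_k$, as the paper does; the rest of your decomposition survives this change, and the cutoff errors are still harmless because $\delta_k^{-r}\to\infty$.

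Second, and more seriously, the step you yourself flag as ``the principal obstacle'' is exactly the point where the proof is won or lost, and your proposal does not supply it. To close the Merle--Zaag system you need $\|\chi E\|_{\mathcal H}=o(\|\chi u\|_{\mathcal H})$; the barrier/Krylov/Schauder/interpolation chain only gives pointwise bounds of the form $|u|+|\nabla u|+|\nabla^2 u|\leq C\delta_k^{1-Cr}$, i.e.\ bounds in terms of the \emph{sup} norm $\delta_k$, and these alone do not compare $E$ to the weighted $L^2$ norm. The missing ingredient is a reverse inequality $\delta_k\leq C\Gamma_k^{1/2}$ (sup norm controlled by the weighted $L^2$ norm), which the paper obtains in Lemma~\ref{error estimates}: one first proves the improved gradient bound $|\nabla u|\leq C\delta_k$ on $\{|z|\leq L+1\}$ (Lemma~\ref{gradient in small set}, via interior estimates for the linear equation satisfied by $u$ up to quadratic errors), which forces $|u|\gtrsim\delta_k$ on a ball of definite size around the point realising the supremum, whence $\delta_k^2\leq C\Gamma_k$ and $E_k\leq CD_k^{2-24r}\Gamma_k$. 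Without this step the error $\mathcal E(\tau)$ in your differential inequalities is only known to be $O(\delta_k^{2-Cr})$, which need not be small relative to $\Phi_++\Phi_0+\Phi_-$, and the Merle--Zaag dichotomy cannot be invoked. Your treatment of the linearisation, the spectral decomposition, the exclusion of the neutral modes via $\partial_z u<0$, and the final iteration to $O(e^{\tau/2})$ all match the paper and are fine once these two points are repaired.
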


In other words, there is a positive constant $K$ such that
    \[M_t \cap \{|z| \leq L (-t)^{1/2}\} \subset \{(z,y): ||y| - (-2\gamma(0,1,\dots1) t)^{1/2}| \leq K \}\]
whenever $-t$ is sufficiently large. 

We first prove that the maximum of $|u|$ for $|z| \leq L$ controls $u$ and its derivatives in a subset which grows as $\tau \to -\infty$. To obtain these `inner-outer' estimates, we employ the hypersurfaces $\Sigma_a$ constructed in Section~\ref{barrier section} as barriers. We then use the inner-outer estimates to conduct a spectral analysis of the linearised equation for $u$ about $0$ as $\tau \to -\infty$. To do so we must cut off in space. Both linearising and cutting off introduce error terms, but we are be able to show that these decay rapidly as $\tau \to -\infty$. Proposition~\ref{asymptotic for u} is then proven by showing that the spectral decomposition is dominated by unstable modes.

We take inspiration from \cite{ADS,Brendle_Choi_a,Brendle_Choi_b}, which concern ancient solutions to the mean curvature flow, and Brendle's work on ancient solutions to the Ricci flow \cite{Brendle} (see also \cite{brendle2023rotational}). Similar ideas also played a role in Colding and Minicozzi's work on uniqueness of cylindrical tangent flows \cite{CM}. Our analysis is the first of its kind for a fully nonlinear geometric flow. The main new difficulty which we need to overcome in this setting is the presence of error terms depending on the Hessian of $u$. For an overview of the analysis in this section we refer back to the introduction of the paper. 

\subsection{Inner-outer estimates} For each sufficiently large $a >0$ we write 
    \[\Psi_a : [L_0, a] \to \mathbb{R}\]
for the function referred to in Proposition~\ref{barrier}. Since $u(\cdot,\tau) \to 0$ on compact subsets as $\tau \to -\infty$, for every $a>0$ the inequality 
    \[\Psi_a(z) < \sqrt{2\gamma(0,1,\dots,1)} + u(z,\theta,\tau)\]
holds for $L_0 \leq z \leq a$ and sufficiently large $-\tau$. Since $\Psi_a$ is the profile of a self-similarly shrinking $G$-flow, which is a stationary solution to the rescaled flow, the parabolic maximum principle implies the following statement. 

\begin{lemma}\label{comparison for rescaled flow}
Suppose $z_0 \geq L_0$ and $\tau_0 < 0$ are such that 
    \[\Psi_a(z_0) < \sqrt{2\gamma(0,1,\dots,1)} + u(z_0,\theta,\tau)\]
for every $\tau \leq \tau_0$. We then have 
    \[\Psi_a(z) \leq \sqrt{2\gamma(0,1,\dots,1)} + u(z,\theta,\tau)\]
for $z \in [z_0, a]$ and $\tau \leq \tau_0$. 
\end{lemma}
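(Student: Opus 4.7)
The plan is to apply the parabolic avoidance principle for the rescaled $G$-flow to compare $\bar M_\tau$ with the stationary solution $\Sigma_a$ from Proposition~\ref{barrier}. Set $\Sigma_a^+ := \{(z, \Psi_a(z)\theta) : z \in [z_0, a], \theta \in S^{n-1}\}$, the barrier cap over $[z_0, a]$. The goal is to show $\Sigma_a^+ \subset \bar\Omega_\tau$ for every $\tau \leq \tau_0$ (implicitly restricting $\tau_0$ further if needed so that $u$ is defined on $[z_0, a] \times S^{n-1}$).

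First, I would verify initial enclosure at a very negative time. Since $\bar M_\tau \to \Sigma$ in $C^\infty_{\loc}$ by Proposition~\ref{blow-down}, $u(\cdot, \cdot, \tau) \to 0$ uniformly on the compact set $[z_0, a] \times S^{n-1}$. Proposition~\ref{barrier}(i),(ii) gives $\max_{[z_0,a]}\Psi_a = \Psi_a(z_0) < \sigma := \sqrt{2\gamma(0,1,\dots,1)}$, so there exists $\delta > 0$ with
\[\sigma + u(z, \theta, \tau) > \Psi_a(z) + \delta\]
uniformly on $[z_0, a] \times S^{n-1}$ once $-\tau$ is sufficiently large. For such a $\tau_1 \leq \tau_0$, one has strict enclosure $\Sigma_a^+ \subset \bar\Omega_{\tau_1}$. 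Separately, the tip $(a, 0)$ of $\Sigma_a^+$ lies strictly inside $\bar M_\tau$ for all $\tau \leq \tau_0$, since $\Psi_a(a) = 0$ while $\sigma + u > 0$.

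Next, I would apply the parabolic avoidance principle on $[\tau_1, \tau_0]$. If the conclusion fails, there is a first time $T \in (\tau_1, \tau_0]$ at which $\bar M_T$ touches $\Sigma_a$ tangentially at some $(z^*, \theta^*)$. The strict hypothesis at $z = z_0$ rules out $z^* = z_0$, and the tip observation rules out $z^* = a$, so $(z^*, \theta^*)$ is interior, with coinciding tangent planes and outward normals. Enclosure from outside gives $A_{\bar M_T} \leq A_{\Sigma_a}$ as symmetric tensors at the touching point, so monotonicity of $\gamma$ and stationarity of $\Sigma_a$ yield $G_{\bar M_T} \leq G_{\Sigma_a} = \tfrac{1}{2}\langle X, \nu\rangle$. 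Meanwhile, the signed distance from $\Sigma_a$ to $\bar M_\tau$ achieves a space-time minimum at $(z^*, \theta^*, T)$, which forces the reverse inequality $G_{\bar M_T} \geq \tfrac{1}{2}\langle X, \nu\rangle$ at that point. The strong parabolic maximum principle applied to the linearised equation for the height of $\bar M_\tau$ over $\Sigma_a$ then propagates local equality to a full parabolic neighbourhood, contradicting the strict enclosure at $\tau = \tau_1$. Letting $\tau_1 \to -\infty$ completes the proof.

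The main technical point is the interior touching analysis. This reduces to a scalar parabolic PDE for the radial height of $\bar M_\tau$ above $\Sigma_a$, which is uniformly parabolic on the compact touching configuration because both hypersurfaces are strictly convex and $\gamma$ is strictly increasing in each argument (cf.\ Lemma~\ref{lemma uniform parabolicity}); the usual Hopf/strong maximum principle then excludes the tangential touching, and the boundary data are supplied cleanly by the hypothesis at $z = z_0$ and the tip structure at $z = a$.
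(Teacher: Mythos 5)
Your overall strategy is the intended one: the paper's proof of this lemma is nothing more than the remark that $\Sigma_a$ is a stationary solution of the rescaled flow, so the parabolic maximum principle (avoidance at a first tangential contact, with the boundary circle $z=z_0$ controlled by the hypothesis and the initial enclosure supplied by $\bar M_\tau \to \Sigma$) propagates the enclosure forward in $\tau$. Your interior touching analysis --- the two opposite inequalities for $G-\tfrac12\langle X,\nu\rangle$ at a first tangential contact, followed by the strong maximum principle for the height of $\bar M_\tau$ over $\Sigma_a$ --- is the right argument and is carried out correctly.

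The gap is in how you dispose of the tip $z^*=a$. The claim that ``the tip $(a,0)$ lies strictly inside $\bar M_\tau$ since $\Psi_a(a)=0$ while $\sigma+u>0$'' presupposes that $\bar M_\tau$ is a radial graph of positive radius at $z=a$, i.e.\ that $\bar z(\tau)>a$; but $u$ is only defined for $z<\bar z(\tau)$, and whether the rescaled hypersurface reaches past $z=a$ at a given $\tau\le\tau_0$ is precisely part of what the enclosure argument must deliver (this is exactly how the barriers are later used to push the tip outward in Proposition~\ref{curvature at tip}). For the same reason the parenthetical ``restrict $\tau_0$ so that $u$ is defined on $[z_0,a]\times S^{n-1}$'' does not prove the lemma as stated: in its application $a\sim\delta_k^{-1/2}$ and there is no a priori lower bound on $\bar z(\tau)$ valid for all $\tau\le\tau_0$. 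The configuration you must rule out is a first contact at the cap's tip (for instance the tip of $\bar M_T$ descending onto $(a,0)$), and note that your scalar PDE for the \emph{radial} height also degenerates there. The fix is straightforward: by Proposition~\ref{barrier}(iii) the cap is a smooth hypersurface up to and including $z=a$, so a first contact at the tip is an interior tangential contact and is excluded by exactly the same curvature-comparison and strong-maximum-principle argument you use for $z^*\in(z_0,a)$, provided you write both hypersurfaces locally as graphs over the common tangent plane at the contact point rather than as radial graphs over the axis. Only $z^*=z_0$ needs (and gets) separate treatment via the hypothesis.
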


Let $L > L_0$ be an arbitrary large constant. For each integer $k \geq 0$ we define
    \[\delta_k := \sup_{\tau \leq -k} \sup_{|z| \leq L} |u(\cdot,\tau)|.\]
Since $\bar M_\tau$ is strictly convex we have $\delta_k > 0$. Since  $\delta_k \to 0$ as $k \to \infty$, after passing to a subsequence we may assume $\delta_k \leq 10^{-6}$. We fix a small constant $r > 0$. Choosing $r = 10^{-4}$ will prove to be sufficient.

In this section we write $\nabla$ for the Levi-Civita connection on $\mathbb{R}\times S^{n-1}$. We use $C$ to denote a positive constant which is independent of $k$. The value of $C$ may increase as we proceed. 

\begin{lemma}\label{u in growing set}
When $k$ is sufficiently large we have the estimates 
    \[|u| + \bigg|\frac{\partial u}{\partial z}\bigg| \leq C\delta_k^{1-2r}, \qquad |\nabla^{S^{n-1}} u| \leq C\delta_k^{1/2-2r}\]
for $|z| \leq 8\delta_{k}^{-r}$ and $\tau \leq -k$. Moreover, we have 
    \[|u| + \bigg|\frac{\partial u}{\partial z}\bigg| \leq C\delta_k\]
for $|z| \leq L+2$ and $\tau \leq -k$. 
\end{lemma}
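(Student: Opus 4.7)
The plan is to prove both estimates by combining the convexity properties $u > 0$, $\partial u/\partial z < 0$, $\partial^2 u/\partial z^2 < 0$ of the profile function with the self-shrinking barriers $\Sigma_a$ from Section~\ref{barrier section}. I treat the small-region bound first, then extend the sup bound to the growing region, and finally derive the derivative bounds.

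For the estimate $|u| + |\partial_z u| \leq C\delta_k$ on $|z| \leq L+2$, I start from the defining bound $|u| \leq \delta_k$ on $|z| \leq L$. For $z \in (L, L+2]$, monotonicity and positivity immediately give $0 < u(z) \leq u(L) \leq \delta_k$. For $z \in [-L-2, -L)$, I apply concavity of $u$ to the triple $(z, -L, L)$, writing $-L$ as a convex combination of $z$ and $L$; the concavity inequality combined with $u(L), u(-L) \leq \delta_k$ yields $u(z) \leq C\delta_k$. The derivative bound $|\partial_z u| \leq C\delta_k$ then follows from interior parabolic regularity, using uniform ellipticity (Lemma~\ref{lemma uniform parabolicity}) and Krylov's $C^{2,\alpha}$-estimate for concave/convex fully nonlinear operators.

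The same strategy extends the sup bound to the growing region: for $z \in [L+2, 8\delta_k^{-r}]$, monotonicity gives $u(z) \leq u(L+2) \leq C\delta_k$, while for $z \in [-8\delta_k^{-r}, -L-2]$, concavity applied at $(z, -L-2, L+2)$ yields $u(z) \leq (|z|/(L+2))\, u(-L-2) \leq C\delta_k^{1-r}$; both are dominated by $C\delta_k^{1-2r}$. For the $\partial_z u$ bound, I invoke Lemma~\ref{comparison for rescaled flow} with a parameter $a \geq 16\delta_k^{-r}$ (available for $k$ sufficiently large): the hypothesis at $z_0 = L_0$ holds trivially because $\Psi_a < \sigma$ and $u > 0$, so the barrier is enclosed by $\bar M_\tau$ on $[L_0, a]$ for $\tau \leq -k$, forcing $\bar z(\tau) \geq a$. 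The tangent-line inequality from concavity, $0 \leq u(\bar z(\tau)) \leq u(z) + \partial_z u(z)(\bar z(\tau) - z)$, then gives $|\partial_z u(z)| \leq u(z)/(\bar z(\tau) - z) \leq C\delta_k^{1-r}/\delta_k^{-r} = C\delta_k$ at $z = 8\delta_k^{-r}$; monotonicity of $|\partial_z u|$ in $z$ (which is increasing by concavity of $u$) extends the estimate to the whole growing region.

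For the spherical gradient $|\nabla^{S^{n-1}} u| \leq C\delta_k^{1/2-2r}$, I feed the barrier-derived gradient bound into the Brendle--Huisken interior $C^2$-estimate for radial graph solutions, followed by Krylov's $C^{2,\alpha}$-estimate and Schauder bootstrap, to obtain $|\nabla^2 u| \leq C\delta_k^{-2r}$ on the growing region (the $\delta_k^{-2r}$ loss reflecting the scale of the neighborhood). The Landau--Kolmogorov interpolation $\|\nabla u\|_{L^\infty} \leq C\|u\|_{L^\infty}^{1/2}\|\nabla^2 u\|_{L^\infty}^{1/2}$ applied in the spherical direction then delivers the claimed exponent. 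The main obstacle is the interplay between the barrier step and the fully-nonlinear $C^2$-theory: unlike the quasilinear setting of \cite{ADS, Brendle_Choi_a}, one cannot obtain Hessian bounds directly from $L^\infty$-control of $u$, and the Brendle--Huisken $C^2$-estimate requires the gradient as input---precisely what the barriers furnish---so calibrating the scale of $a$ so the resulting gradient bound is compatible with the hypotheses of the $C^2$-estimate is where the argument must be done most carefully.
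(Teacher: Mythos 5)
There is a genuine gap, and it sits at the heart of the argument: you lean repeatedly on the positivity of $u$ (``$0<u(z)\leq u(L)$'', ``the hypothesis at $z_0=L_0$ holds trivially because $\Psi_a<\sigma$ and $u>0$'', ``$0\leq u(\bar z(\tau))$''). Although the display preceding the lemma lists $u>0$, that is evidently a slip for positivity of the radius $\sigma+u$: on the tip side the solution lies \emph{inside} the cylinder (for the rescaled bowl the radius is $\approx\sigma\sqrt{1-e^{\tau/2}z}<\sigma$ for $z>0$, and $u\to-\sigma$ as $z\to\bar z(\tau)$), and the paper's proof never uses $u>0$. Indeed, if $u>0$ held on $[L,9\delta_k^{-r}]$ the lemma would be trivial with exponent $1$ in place of $1-2r$ and the barrier construction of Section~\ref{barrier section} would be superfluous; the whole content of the lemma is a quantitative \emph{lower} bound for $u$ on the tip side, which is exactly what you have assumed away. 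Concretely: (i) without positivity, the hypothesis of Lemma~\ref{comparison for rescaled flow} can only be verified at $z_0=L_0+\sqrt{C}$ from $|u|\leq\delta_k$ together with the asymptotics (iv)--(v) of Proposition~\ref{barrier}, and since by (iv) $\sigma-\Psi_a(z_0)\lesssim z_0^2/a^2$, this certifies enclosure only for $a\lesssim\delta_k^{-1/2}$ (the paper takes $a=\Lambda\delta_k^{-1/2}$); your ``$a\geq16\delta_k^{-r}$, hypothesis trivial'' step is unjustified. (ii) Your tangent-line inequality with $u(\bar z(\tau))\geq0$ is false; the correct statement $\sigma+u\geq0$ only yields $|u_z(z)|\leq(\sigma+u(z))/(\bar z(\tau)-z)\lesssim\sigma/a\sim\delta_k^{1/2}$, which is far weaker than the $C\delta_k^{1-2r}$ (let alone the $C\delta_k$ on $|z|\leq L+2$) that the lemma requires, and your claimed output $|u_z|\leq C\delta_k$ on the whole growing region is even stronger than the lemma -- a sign the shortcut is unsound. (iii) Your chord bound for negative $z$ and your appeal to interior parabolic regularity/Krylov on $|z|\leq L+2$ both need a lower bound for $u$ just beyond $z=L$ (resp.\ on $[L+2,L+3]$), which only the barrier supplies; moreover interior estimates cannot produce bounds on $|z|\leq L+2$ from data on $|z|\leq L$, and the Hessian/error control needed for Krylov--Schauder is only established in the \emph{next} lemma, which uses the present one.

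For comparison, the paper's proof runs as follows: with $a=\Lambda\delta_k^{-1/2}$ and $z_0=L_0+\sqrt{C}$, Proposition~\ref{barrier}(v) gives $\Psi_a(z_0)<\sigma-\delta_k\leq\sigma+u(z_0,\theta,\tau)$ for $\tau\leq-k$, so Lemma~\ref{comparison for rescaled flow} applies and Proposition~\ref{barrier}(iv) yields $u\geq-Cz^2/a^2\geq-C\delta_k^{1-2r}$ on $[z_0,9\delta_k^{-r}]$ and $u\geq-C\delta_k$ on $[L+2,L+3]$. Combining this with the monotone upper bound $u(z)\leq u(0)\leq\delta_k$, the mean value theorem on $[8\delta_k^{-r},9\delta_k^{-r}]$ (resp.\ $[L+2,L+3]$) bounds $u_z$ at one point, concavity ($u_z$ decreasing) propagates the bound to all smaller $z$, and the fundamental theorem of calculus gives the two-sided bound on $u$ for $|z|\leq8\delta_k^{-r}$ (resp.\ $|z|\leq L+2$); the angular gradient estimate then follows from an elementary convexity argument using the sup bound (your interpolation route could be made to work once the sup bound is in place, but it is not needed and risks circularity with the Hessian estimates of the following lemma). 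Your proposal needs to be rebuilt around this calibrated barrier step; as written, its key steps fail without the positivity assumption.
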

\begin{proof}
Recall from Proposition~\ref{barrier} that when $a$ is large we have            \[\Psi_a(z) \leq \sqrt{2\gamma(0,1,\dots,1)\bigg(1-\bigg(1-C\frac{\log a}{a^2}\bigg)\frac{z^2 - C}{a^2}\bigg)}\]
for $z \in [L_0, L]$, where $C = C(n,\gamma)$. Let $z_0 = L_0 + \sqrt{C}$ and $a = \Lambda \delta_k^{-1/2}$, where 
    \[\Lambda^2 := \sqrt{\frac{\gamma(0,1,\dots,1)}{8}}(L_0^2 + 2L_0\sqrt{C}).\]
We may assume without loss of generality that $L > z_0$. The definition of $z_0$ ensures that 
    \[\sqrt{2\gamma(0,1,\dots,1)} - \delta_k > \Psi_a (z_0)\]
whenever $k$ is sufficiently large. Since $|u| \leq \delta_k$ for $|z| \leq L$ and $\tau \leq -k$,
    \[\sqrt{2\gamma(0,1,\dots,1)} + u(z_0, \theta, \tau) \geq \sqrt{2\gamma(0,1,\dots,1)} - \delta_k > \Psi_a(z_0)\]
for $\tau \leq -k$. Using Lemma~\ref{comparison for rescaled flow} and Proposition~\ref{barrier} we obtain 
    \[\sqrt{2\gamma(0,1,\dots,1)} + u(z,\theta,\tau) \geq \Psi_a(z) \geq \sqrt{2\gamma(0,1,\dots,1)\bigg(1- \frac{z^2}{a^2}\bigg)}\]
for $z \in [z_0, a]$ and $\tau \leq -k$. In particular, for all sufficiently large $k$, we have
    \[u \geq - C\delta_k^{1-2r}\]
for $z \in [z_0, 9\delta_{k}^{-r}]$ and $\tau \leq - k$. Moreover, 
    \[u \geq - C\delta_k\]
for $z \in [L+2, L+3]$ and $\tau \leq -k$.

Since $\frac{\partial u}{\partial z}< 0$ we conclude that 
    \[- C\delta_k^{1-2r} \leq u(z,\theta,\tau) \leq u(0,\theta,\tau) \leq \delta_k\]
for $z \in [0, 9\delta_{k}^{-r}]$ and $\tau \leq - k$. The mean value theorem then gives
    \[\inf_{8\delta_{k}^{-r} \leq  z \leq 9\delta_{k}^{-r}} \frac{\partial u}{\partial z}(z,\theta,\tau) \geq -C\delta_k^{1-r}\]
for $\tau \leq - k$, and so since $\frac{\partial u}{\partial z}<0$ and $\frac{\partial^2 u}{\partial z^2} < 0$, 
    \[\bigg|\frac{\partial u}{\partial z}\bigg| \leq C\delta_k^{1-r}\]
for $z \leq 8\delta_k^{-r}$ and $\tau \leq -k$. Using this inequality and the fundamental theorem of calculus we obtain
    \[|u| \leq C\delta_k^{1-2r}\]
for $|z| \leq 8\delta_k^{-r}$ and $\tau \leq -k$.

Similarly, since $u \geq -C\delta_k$ for $z \in [L+2, L+3]$, we have 
    \[\bigg|\frac{\partial u}{\partial z}\bigg| \leq C\delta_k\]
for $z \leq L+2$ and $\tau \leq -k$. It follows that
    \[|u| + \Big|\frac{\partial u}{\partial z}\Big| \leq C\delta_k\]
for $|z| \leq L+2$ and $\tau \leq -k$. 
    
Since $\bar M_\tau$ is convex, a straightforward geometric argument using the bound $|u| \leq C\delta_k^{1-2r}$ gives 
    \[|\nabla^{S^{n-1}} u | \leq C \delta_k^{1/2-r}\]
for $|z| \leq 8\delta_k^{-r}$ and $\tau \leq -k$. This completes the proof. 
\end{proof}

Next we obtain a stronger bound for $\nabla^{S^{n-1}} u$, and establish an estimate for the Hessian of $u$, in the region where $|z| \leq 5\delta_k^{-r}$ and $\tau \leq -k$.

\begin{lemma}\label{u derivatives in growing set}
When $k$ is sufficiently large we have $|\nabla u| \leq C\delta_k^{1-6r}$ and $|\nabla^2 u|\leq C \delta_k^{1-6r}$ for $|z| \leq 5\delta_k^{-r}$ and $\tau \leq -k$. 
\end{lemma}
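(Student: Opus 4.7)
The plan is to combine the $L^\infty$-type bounds from Lemma~\ref{u in growing set} with standard regularity theory for fully nonlinear parabolic equations (via Brendle--Huisken, Krylov, Schauder), and then extract the desired rates by interpolation. Throughout we will work in a slightly larger region, say $|z|\leq 7\delta_k^{-r}$, $\tau \leq -k+1$, and then shrink to $|z|\leq 5\delta_k^{-r}$ at the end to absorb constants from the interior estimates.

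First, Lemma~\ref{u in growing set} tells us that $\bar M_\tau$ is expressible as a radial graph over the cylinder $\Sigma$ with $|\nabla u|\leq C\delta_k^{1/2-2r}$, in particular with small gradient. The rescaled $G$-flow equation, viewed as an equation for $u$ with respect to the tangential coordinates on $\Sigma$, is a fully nonlinear uniformly parabolic equation whose operator is either convex or concave in the Hessian (since $\gamma$ is). Uniform parabolicity is guaranteed by Lemma~\ref{lemma uniform parabolicity}. Thus, applying the interior $C^2$-estimate for radial graph solutions from \cite{Brendle_Huisken}, we obtain a uniform bound $|\nabla^2 u|\leq C$ in a slightly smaller region. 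Since we have uniform $C^2$ bounds for a convex or concave fully nonlinear parabolic equation, Krylov's $C^{2,\alpha}$ estimate \cite{Krylov82} then gives a uniform $C^{2,\alpha}$-bound for $u$. Standard Schauder bootstrapping upgrades this to uniform $C^m$-bounds $\|u\|_{C^m}\leq C_m$ for any $m$, again in a slightly smaller region.

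With this uniform higher regularity in hand, the improved gradient and Hessian bounds follow from interpolation. Specifically, on the Euclidean region $|z|\leq 6\delta_k^{-r}$ we have $\|u\|_{L^\infty}\leq C\delta_k^{1-2r}$ and $\|u\|_{C^m}\leq C_m$; the standard interpolation inequality
\[
\|\nabla^j u\|_{L^\infty} \leq C\,\|u\|_{L^\infty}^{1-j/m}\,\|u\|_{C^m}^{j/m}
\]
(valid once we interpret $C^m$-norms on balls of radius bounded away from zero and use the fact that the radius of the Euclidean region grows as $\delta_k^{-r}\to\infty$) then yields
\[
\|\nabla^j u\|_{L^\infty} \leq C\,\delta_k^{(1-2r)(1-j/m)}
\]
for $j=1,2$. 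Choosing $m$ large enough so that $(1-2r)(1-2/m)\geq 1-6r$, we obtain the claimed bounds $|\nabla u|+|\nabla^2 u|\leq C\delta_k^{1-6r}$ throughout $|z|\leq 5\delta_k^{-r}$ and $\tau\leq -k$.

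The main subtlety is ensuring that all the interior estimates (Brendle--Huisken, Krylov, Schauder, and interpolation) can be applied with constants independent of $k$ in a region whose Euclidean size grows like $\delta_k^{-r}$ as $k\to\infty$. This is where we rely on the fact that the previous lemma already furnishes a small (hence bounded) $C^1$-norm for $u$ on a uniformly parabolic neighborhood of each point in $|z|\leq 7\delta_k^{-r}$, so every interior estimate can be applied at scale $1$ at every point. The only other delicate point is that the interpolation is applied to a scalar function on a Euclidean-type region rather than on a curved sphere factor, which is harmless because $\Sigma$ splits off a Euclidean line and, after localisation, the sphere factor is compact.
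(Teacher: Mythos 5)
Your overall strategy is the one the paper uses: (i) a uniform interior $C^2$/curvature bound coming from the Brendle--Huisken interior estimate (the paper implements this by noting that closeness to the cylinder forces a large ball $B((z,0),\tfrac{99}{100}\sigma(-t_0)^{1/2})\subset\Omega_{t_0}$ and then invoking Lemma~\ref{interior curvature}), (ii) Krylov's $C^{2,\alpha}$ estimate plus Schauder bootstrapping for the fully nonlinear graph equation to get $\|u\|_{C^m}\leq C(m)$ on the growing slab, and (iii) interpolation against the bound $|u|\leq C\delta_k^{1-2r}$ from Lemma~\ref{u in growing set}. Your one-shot interpolation $\|\nabla^j u\|_{L^\infty}\leq C\|u\|_{L^\infty}^{1-j/m}\|u\|_{C^m}^{j/m}$ with $m\gtrsim 1/(2r)$ is a harmless variant of the paper's two-step iteration ($u\to\nabla u\to\nabla^2 u$ with exponents $1-2r\to 1-4r\to 1-6r$), and the exponent bookkeeping works since $r$ is a fixed constant.

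There is, however, one ingredient you skip that the paper needs explicitly: a uniform positive lower bound $G_{\bar M_\tau}\geq 1/C$ on $\{|z|\leq 7\delta_k^{-r}\}$, $\tau\leq -k$, which the paper extracts from the noncollapsing hypothesis (the inscribed ball at a point on the neck cannot have radius much larger than the neck radius). Lemma~\ref{lemma uniform parabolicity} only gives the scale-invariant ellipticity of $\dot\gamma^{ij}$, which is homogeneous of degree zero; but the Schauder bootstrap differentiates the equation and brings in $\ddot\gamma$, which is homogeneous of degree $-1$ and blows up if the curvature scale degenerates, so the constants $C_m$ in your step (ii) are not yet known to be independent of $k$. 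Note that $C^1$-closeness to $\Sigma$ (all you have at this stage) does not by itself bound the curvature away from zero at every point, so this is a genuine missing step rather than a notational omission; it is repaired exactly as in the paper, by recording $|A_{\bar M_\tau}|\leq C$ \emph{and} $G_{\bar M_\tau}\geq 1/C$ on the slab (via Lemma~\ref{interior curvature}, Proposition~\ref{lower curvature}/noncollapsing) before invoking Krylov and Schauder. With that bound inserted, your argument goes through and coincides with the paper's proof.
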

\begin{proof}
Consider a time $\tau_0 \leq -k$, and set $t_0 = -e^{-\tau_0}$. Lemma~\ref{u in growing set} implies that 
    \[M_{t_0} \cap \{|z| \leq 8\delta_k^{-r} (-t_0)^{1/2}\} \subset \{(z,y): ||y|-(-2\gamma(0,1,\dots,1)t_0)^{1/2}| \leq \delta_k^{1-2r}(-t_0)^{1/2}\}.\]
In particular, if $k$ is large and $|z| \leq 7\delta_k^{-r}(-t_0)^{1/2}$, then
    \[B((z,0), \tfrac{99}{100}(-2\gamma(0,1,\dots,1)t_0)^{1/2}) \subset \Omega_{t_0}.\]
Using the interior curvature estimate of Proposition~\ref{interior curvature}, we conclude that 
    \[(-t_0)^{1/2}|A_{M_{t_0}}| \leq C\]
at every point in $M_{t_0} \cap \{|z| \leq 7\delta_k^{-r}\sqrt{-t_0}\}$. Moreover, the noncollapsing property implies
    \[(-t_0)^{1/2}G_{M_{t_0}} \geq \frac{1}{C}\]
at every point in $M_{t_0} \cap \{|z| \leq 7\delta_k^{-r}(-t_0)^{1/2}\}$
It follows that $|A_{\bar M_{\tau_0}}| \leq C$ and $G_{\bar M_{\tau_0}} \geq 1/C$ on $\bar M_{\tau_0} \cap \{|z| \leq 7\delta_k^{-r}\}$. Since $\tau_0 \leq -k$ was chosen arbitrarily we conclude that $|A_{\bar M_{\tau}}| \leq C$ and $G_{\bar M_\tau} \geq 1/C$ for $|z| \leq 7\delta_k^{-r}$ and $\tau \leq -k$ whenever $k$ is sufficiently large. 

Using $|A| \leq C$, a straightforward computation shows that when $k$ is sufficiently large we have 
        \[|\nabla^2 u| \leq C\]
for $|z| \leq 7\delta_k^{-r}$ and $\tau \leq -k$. Recall that the hypersurfaces $\bar M_\tau$ move with normal speed $-(G(x,\tau)-\tfrac{1}{2}\langle x, \nu(x,\tau))$. This implies that $u$ solves a fully nonlinear parabolic PDE which is convex or concave in $\nabla^2 u$ (depending on whether $\gamma$ is convex or concave) and uniformly parabolic by Lemma~\ref{lemma uniform parabolicity}. We also have
    \[|u_\tau| + |\nabla^2 u| + |\nabla u| + |u| \leq C\]
for $|z| \leq 7\delta_k^{-r}$ and $\tau \leq -k$. Therefore, the parabolic H\"{o}lder estimate for convex/concave fully nonlinear parabolic equations due to \cite{Krylov82} and the parabolic Schauder estimates imply that
        \[|\nabla^m u| \leq C(m)\]
for each integer $m \geq 2$ in the region where $|z| \leq 7\delta_k^{-r}-m$ and $\tau \leq -k$. In this step we also need the bound $\bar G \geq 1/C$, since this implies bounds on the higher derivatives of $\gamma$.

Given a smooth function $\varphi(z,\theta)$ which is compactly supported in the region $|z - z_0| < 2$, we have the interpolation inequality
    \[\sup_{|z-z_0| \leq 2} |\nabla \varphi| \leq C(m)\bigg(\sup_{|z-z_0| \leq 2} |\varphi|\bigg)^{1-1/m}\bigg(\sup_{|z-z_0| \leq 2} |\nabla^m \varphi|\bigg)^{1/m}.\]
We apply this with $\varphi(z,\theta) = \eta(z) u(z,\theta,\tau)$, where $\eta$ is an appropriate cutoff function, and use $|\nabla^j u| \leq C(m)$ for $j \leq m$ to obtain
    \[\sup_{|z-z_0| \leq 1} |\nabla u| \leq C(m)\sup_{|z-z_0| \leq 3} |u| + C(m)\bigg(\sup_{|z-z_0| \leq 3} |u|\bigg)^{1-1/m}\bigg(\sup_{|z-z_0| \leq 3} |\nabla^m u|\bigg)^{1/m}.\]
Fix an integer $m_0 \geq 1/r$. Consider a point $|z_0| \leq 7\delta_k^{-r}-m_0-3$ and a time $\tau \leq -k$. Applying the last inequality with $m = m_0$, and using Lemma~\ref{u in growing set} and $|\nabla^j u| \leq C(m_0)$ for $j \leq m_0$, we obtain
    \[\sup_{|z-z_0| \leq 1} |\nabla u|(z,\theta,\tau) \leq C\delta_k^{1-2r} + C\delta_k^{(1-r)(1-2r)} \leq C\delta_k^{1-4r}.\]
It follows that for $k$ sufficiently large we have $|\nabla u| \leq C\delta_k^{1-4r}$ for $|z| \leq 6\delta_k^{-r}$.

Now consider a point $|z_0| \leq 6\delta_k^{-r} - m_0 - 3$ and a time $\tau \leq -k$. The same kind argument as above yields 
    \begin{align*}
    \sup_{|z-z_0| \leq 1} &|\nabla^2 u|(z,\theta,\tau)\\
    &\leq \sup_{|z-z_0| \leq 3} |\nabla u| + \bigg(\sup_{|z-z_0| \leq 3} |\nabla u |\bigg)^{1-1/m_0}\bigg(\sup_{|z-z_0| \leq 4} |\nabla^{m_0+1} u|\bigg)^{1/m_0}.
    \end{align*}
Inserting $|\nabla u| \leq C\delta_k^{1-4r}$ on the right-hand side, we conclude that when $k$ is sufficiently large, $|\nabla^2 u| \leq C\delta_k^{1-6r}$ for $|z| \leq 5\delta_k^{-r}$ and $\tau \leq -k$. 
\end{proof}

We have shown that $u$ and its first two spatial derivatives are small in the region where $|z| \leq 5\delta_k^{-r}$ and $\tau \leq -k$. It follows that $u$ solves a linear parabolic equation in this region, up to quadratic error terms. 

\begin{lemma}\label{linearised rescaled flow}
When $k$ is sufficiently large we have 
    \[|u_\tau - \mathcal Lu| \leq C(|u|^2 + |\nabla u|^2 + |\nabla^2 u|^2)\]
for $|z| \leq 5\delta_k^{-r}$ and $\tau \leq -k$, where 
    \[\mathcal Lu := a\frac{\partial^2 u}{\partial z^2} + \frac{1}{2(n-1)} \Delta_{S^{n-1}} u  - \frac{1}{2} z \frac{\partial u}{\partial z}+ u\]
and $a := \dot \gamma^1(0,1,\dots,1)$.
\end{lemma}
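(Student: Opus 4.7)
The plan is a direct Taylor expansion argument, combining the expansion formulas of Lemma~\ref{expansion for G} and Lemma~\ref{Lemma: expansions} with the smallness estimates of Lemmas~\ref{u in growing set} and \ref{u derivatives in growing set}. Parameterize $\bar M_\tau$ as the normal graph $x \mapsto x + u(x,\tau)\nu_\Sigma(x)$ over $\Sigma$. Since $\bar M_\tau$ moves with pointwise normal velocity $-(G - \tfrac{1}{2}\langle x, \nu\rangle)\nu$, and since $\partial_\tau$ of the parameterization equals $u_\tau \nu_\Sigma$, taking the inner product with $\nu$ yields the exact equation
\[
u_\tau \, \langle \nu_\Sigma, \nu\rangle = -\bigl(G - \tfrac{1}{2}\langle x, \nu\rangle\bigr).
\]
By Lemma~\ref{Lemma: expansions}, and since $\nabla^\Sigma u$ is tangent to $\Sigma$, we have $\langle \nu_\Sigma, \nu\rangle = 1 + O(|\nabla u|^2 + |u|^2)$, so the reciprocal is also of this form.

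Next I would compute each side. For $G$, apply Lemma~\ref{expansion for G} with $r = \sigma = \sqrt{2\gamma(0,1,\dots,1)}$; then $\gamma(0,1,\dots,1)/\sigma^2 = 1/2$, $G_\Sigma = \sigma/2$, and $\sigma^2 \Delta_{S^{n-1}_\sigma} u = \Delta_{S^{n-1}} u$, giving
\[
G = \tfrac{\sigma}{2} - a\, \tfrac{\partial^2 u}{\partial z^2} - \tfrac{1}{2(n-1)}\Delta_{S^{n-1}} u - \tfrac{1}{2} u + O\bigl(|u|^2 + |\nabla u|^2 + |\nabla^2 u|^2\bigr).
\]
For $\langle x, \nu\rangle$, write $x = (z, (\sigma + u)\theta)$ and $\nu = \nu_\Sigma - \nabla^\Sigma u + O(|u|^2 + |\nabla u|^2)$ by Lemma~\ref{Lemma: expansions}; a short computation (using that $\theta$ is orthogonal to the spherical component of $\nabla^\Sigma u$) gives
\[
\tfrac{1}{2}\langle x, \nu\rangle = \tfrac{\sigma}{2} + \tfrac{1}{2} u - \tfrac{1}{2} z\, \tfrac{\partial u}{\partial z} + O\bigl(|u|^2 + |\nabla u|^2\bigr).
\]
The $\sigma/2$ terms cancel, and subtracting yields $G - \tfrac{1}{2}\langle x, \nu\rangle = -\mathcal L u + E_1$ with $|E_1| \leq C(|u|^2 + |\nabla u|^2 + |\nabla^2 u|^2)$. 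Dividing by $\langle \nu_\Sigma, \nu\rangle$ produces an additional factor $1 + O(|\nabla u|^2)$, and multiplying through gives
\[
u_\tau = \mathcal L u + E_1 + O(|\nabla u|^2) \cdot \mathcal L u.
\]

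The one subtle point, which I expect to be the main obstacle, is the cross term $O(|\nabla u|^2) \cdot (-\tfrac{1}{2} z\, \partial_z u)$ hidden in the last summand. Unlike the other terms inside $\mathcal L u$, the coefficient $z$ is not manifestly bounded. Here the a priori estimates of Lemma~\ref{u derivatives in growing set} save the day: in the region $|z| \leq 5\delta_k^{-r}$ and $\tau \leq -k$, we have $|\partial_z u| \leq C\delta_k^{1-6r}$, so $|z\, \partial_z u| \leq C\delta_k^{1-7r} \leq 1$ for $k$ large (since $r = 10^{-4}$ is fixed small). Hence $|z\, \partial_z u|\cdot |\nabla u|^2 \leq C |\nabla u|^2$. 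All other products $u\cdot|\nabla u|^2$, $\partial^2_{zz} u \cdot|\nabla u|^2$, $\Delta_{S^{n-1}} u \cdot|\nabla u|^2$ are quadratic (or higher) in the small quantities and are bounded by $C(|u|^2 + |\nabla u|^2 + |\nabla^2 u|^2)$ using the uniform bounds $|u| + |\nabla u| + |\nabla^2 u| \leq 1$ on the relevant region. Combining everything yields the stated estimate with a constant $C$ independent of $k$.
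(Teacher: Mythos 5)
Your argument is correct and is essentially the paper's own proof: the same exact graphical evolution identity $u_\tau\,\langle\nu_\Sigma,\nu\rangle = -(G-\tfrac12\langle x,\nu\rangle)$, the same expansions from Lemma~\ref{Lemma: expansions} and Lemma~\ref{expansion for G} with $r=\sigma$, and the same use of Lemmas~\ref{u in growing set} and~\ref{u derivatives in growing set} to control everything on $|z|\le 5\delta_k^{-r}$, $\tau\le -k$. The only point worth noting is that the error in your expansion of $\tfrac12\langle x,\nu\rangle$ a priori carries a factor of $|z|$ if one only uses the crude vector bound on $\nu-\nu_\Sigma+\nabla^\Sigma u$; this is absorbed by exactly the mechanism you already isolate for the cross term (namely $|z|\,|\nabla u|\le C\delta_k^{1-7r}\le 1$ for large $k$), or avoided altogether by the exact radial-graph identity $\langle x,\nu\rangle/\langle\nu_\Sigma,\nu\rangle=\sigma+u-z\,\partial_z u$, which is what the paper's equally terse computation implicitly uses.
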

\begin{proof}
Recall the notation $\sigma = \sqrt{2\gamma(0,1,\dots,1)}$. Given $(z,\theta) \in \mathbb{R}\times S^{n-1}$, let us write $x = (z,\sigma\theta)$ for the corresponding point in $\Sigma$ and $y = x + u(z,\theta,\tau)\theta$ for the corresponding point in $\bar M_\tau$. We have 
    \begin{align*}
        u_\tau (z,\theta,\tau) & = -\frac{1}{\nu_\Sigma(x) \cdot \nu(y)} \, G(y,\tau)+\frac{1}{2}\frac{1}{\nu_\Sigma(x) \cdot \nu(y)}  y \cdot \nu(y) .
    \end{align*}
For $\tau \leq -k$ and $|z| \leq 5\delta_k^{-r}$, Lemma~\ref{Lemma: expansions} gives
    \[ \nu(y)= \nu_\Sigma(x) - \nabla u(z,\theta,\tau) + O(|u|^2 + |\nabla u|^2).\]
We thus have 
    \begin{align*}
        u_\tau (z,\theta,\tau) & = -G(y,\tau) - \frac{1}{2}x \cdot \nabla u(z,\theta,\tau) + \frac{1}{2}u(z,\theta,\tau)  \\
        & + \frac{1}{2}x\cdot\nu_\Sigma(x) + O(|u|^2 + |\nabla u|^2).
    \end{align*}
Lemma~\ref{expansion for G} now gives 
    \begin{align*}
        u_\tau & = a\frac{\partial^2 u}{\partial z^2} + \frac{1}{2(n-1)}\Delta_{S^{n-1}} u  - \frac{1}{2}x \cdot \nabla u + u\\
        & + \frac{1}{2} x \cdot \nu_\Sigma  -G_\Sigma(x) + O(|u|^2 + |\nabla u|^2 + |\nabla^2 u|^2),
    \end{align*}
for $\tau \leq -k$ and $|z| \leq 5\delta_k^{-r}$. The claim follows upon insertion of the identities
    \[x \cdot \nabla u = z \frac{\partial u}{\partial z} \qquad \text{ and} \qquad \frac{1}{2} x \cdot \nu_\Sigma -G_\Sigma = 0.\] 
\end{proof} 

Using Lemma~\ref{linearised rescaled flow}, we obtain the following improved estimate for $\nabla u$ in the region $|z| \leq L+1$.

\begin{lemma}\label{gradient in small set}
When $k$ is sufficiently large we have $|\nabla u| \leq C\delta_k$ for $|z| \leq L+1$ and $\tau \leq -k$.
\end{lemma}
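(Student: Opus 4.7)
The plan is a short linear bootstrap: feed the crude derivative bounds of Lemma \ref{u derivatives in growing set} into the linearized equation from Lemma \ref{linearised rescaled flow}, and then apply a classical interior estimate for linear parabolic PDE. The point is that in the narrow strip $|z| \leq L+2$ the heavy lifting is already done---Lemma \ref{u in growing set} supplies $|u| \leq C\delta_k$ there, while Lemma \ref{u derivatives in growing set} supplies the preliminary bounds $|\nabla u| + |\nabla^2 u| \leq C\delta_k^{1-6r}$ on the much larger region $\{|z| \leq 5\delta_k^{-r}\}$.

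First I would substitute the preliminary bounds into the error term in Lemma \ref{linearised rescaled flow}, writing $u_\tau = \mathcal L u + E$ with $|E| \leq C\delta_k^{2-12r}$ on $\{|z| \leq L+2\} \times \{\tau \leq -k\}$. The operator $\mathcal L$ is uniformly parabolic, has smooth coefficients, and---crucially---its only $z$-dependent coefficient, coming from the drift $-\tfrac{1}{2}z\partial_z$, is bounded by $\tfrac{1}{2}(L+2)$ on the localized strip.

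The main step is to fix an arbitrary time $\tau_0 \leq -k$ and invoke the standard interior $C^1$-estimate for linear uniformly parabolic equations on the cylinder $\{|z| \leq L+2\} \times [\tau_0 - 1, \tau_0]$:
\[
\sup_{\{|z|\leq L+1\} \times \{\tau_0\}} |\nabla u| \leq C\bigl(\sup_{\{|z|\leq L+2\}\times[\tau_0 - 1,\tau_0]} |u| + \sup_{\{|z|\leq L+2\}\times[\tau_0-1,\tau_0]} |E|\bigr).
\]
By monotonicity of the sequence $\delta_j$ in $j$, the first supremum on the right is bounded by $C\delta_k$, and by the preceding paragraph the second is bounded by $C\delta_k^{2-12r}$. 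With the fixed choice $r = 10^{-4}$ one has $2 - 12r > 1$, so $\delta_k^{2-12r} \leq \delta_k$ once $k$ is large, and since $\tau_0 \leq -k$ was arbitrary, the claimed bound $|\nabla u| \leq C\delta_k$ on $\{|z| \leq L+1\} \times \{\tau \leq -k\}$ follows.

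There is essentially no obstacle here beyond bookkeeping: the only point requiring a glance is to confirm that $\mathcal L$, despite the $z$-dependent drift, has bounded smooth coefficients on the localized strip, which places us squarely in the setting of the classical interior Schauder estimates for linear parabolic equations.
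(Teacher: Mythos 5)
Your proposal is correct and follows essentially the same route as the paper: linearise via Lemma~\ref{linearised rescaled flow}, control the error in sup norm using Lemmas~\ref{u in growing set} and~\ref{u derivatives in growing set}, and apply a standard interior gradient estimate for linear parabolic equations on the strip $|z|\leq L+2$. The only cosmetic difference is that the paper moves the drift $-\tfrac12 z\partial_z u$ and the zeroth-order term $u$ into the inhomogeneity (bounded by $C\delta_k$ via Lemma~\ref{u in growing set}) and works with the constant-coefficient operator $\tilde{\mathcal L}$, whereas you keep the full $\mathcal L$, whose coefficients are indeed bounded on the localized strip, so both versions of the interior estimate yield $|\nabla u|\leq C\delta_k$.
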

\begin{proof}
Let us define 
    \[\tilde{\mathcal L} u := a\frac{\partial^2 u}{\partial z^2} + \frac{1}{2(n-1)} \Delta_{S^{n-1}} u.\]
We then have
    \[|u_\tau - \tilde{\mathcal L} u| \leq \frac{1}{2}\bigg|z\frac{\partial u}{\partial z}\bigg| + |u| + C(|u|^2 + |\nabla u|^2 + |\nabla^2 u|^2)\]
for $|z| \leq 5\delta_k^{-r}$ and $\tau \leq -k$. Using Lemma~\ref{u in growing set} and Lemma~\ref{u derivatives in growing set} we conclude that 
    \[|u_\tau - \tilde{\mathcal L} u| \leq C \delta_k\]
for $|z| \leq L+2$ and $\tau \leq -k$. Given any $(z_0, \theta_0, \tau_0)$ such that $|z_0| \leq L+1$ and $\tau \leq -k$, there exists a small $d = d(n)$ such that
    \begin{align*}\label{interior gradient}
    |\nabla u|(z_0, \theta_0, \tau_0) \leq C d^{-1} \sup_{\tau_0 - d^2 \leq \tau \leq \tau_0} \sup_{|z-z_0|\leq d} |u| + C d \sup_{\tau_0 - d^2 \leq \tau \leq \tau_0} \sup_{|z-z_0|\leq d} |u_\tau - \tilde{\mathcal L}u|.
    \end{align*}
This is a standard estimate in the theory of linear parabolic PDE. Since $d$ depends only on $n$, we obtain
\[|\nabla u|(z_0,\theta_0,\tau_0) \leq C \delta_k.\]
Since $z_0$ was chosen arbitrarily in the region $|z| \leq L+1$, we conclude that $|\nabla u| \leq C\delta_k$ for $|z| \leq L+1$ and $\tau \leq -k$. 
\end{proof}

\subsection{Spectral analysis} Let us denote by $\mathcal H$ the space of functions $v : \mathbb{R}\times S^{n-1} \to \mathbb{R}$ such that 
    \[\|v\|_{\mathcal H}^2 := \int_{\mathbb{R}\times S^{n-1}} e^{-z^2/4a} v^2\,dzd\theta < \infty.\]
The operator $\mathcal L$ is symmetric with respect to the inner product $\langle \cdot , \cdot \rangle_{\mathcal H}$. Therefore, there exists an orthogonal basis for $\mathcal H$ consisting of eigenfunctions of $\mathcal{L}$. Let us write $H_k$ for the $k$-th Hermite polynomial, and fix a basis $Y_l^m$ of eigenfunctions for $\Delta_{S^{n-1}}$ such that
    \[\Delta_{S^{n-1}} Y_l^m = - l(l+n-2) Y_l^m.\]
A basis of eigenfunctions for $\mathcal L$ is then given by $H_k(z/2\sqrt{a})Y_l^m(\theta)$, where $k$ and $l$ range over the nonnegative integers. We have 
    \[\bigg(a \frac{\partial^2}{\partial z^2} - \frac{1}{2} z \frac{\partial}{\partial z}\bigg) H_k\bigg(\frac{z}{2\sqrt{a}}\bigg)= -\frac{k}{2}.\]
Therefore, the eigenvalues of $\mathcal L$ are given by $1-\frac{k}{2} - \frac{l(l+n-2)}{2(n-1)}$, where $k$ and $l$ are nonnegative integers. Up to scaling, the eigenfunctions with positive eigenvalues are of the form $1$, $z/\sqrt{a}$ and $\theta^i$, $1 \leq i \leq n$. The eigenfunctions with eigenvalue equal to zero are of the form $z^2/a - 1$ and $z \theta^i/\sqrt{a}$, $1 \leq i \leq n$. Let $P_+$, $P_0$ and $P_-$ denote the orthogonal projection operators onto the positive, zero and negative eigenspaces. These projections satisfy
    \begin{align}\label{eigenvalue bounds}
        &\langle \mathcal L P_+ v, P_+v \rangle_{\mathcal H} \geq \frac{1}{2} \|P_+ v\|_{\mathcal H}^2,\notag\\
        &\langle \mathcal L P_0 v, P_0 v \rangle_{\mathcal H} = 0,\notag\\
        &\langle \mathcal L P_- v, P_-v\rangle_{\mathcal H} \leq -\frac{1}{2} \|P_- v\|_{\mathcal H}^2.
    \end{align}
We are going to show that $P_+ u$ dominates $P_0 u$ and $P_- u$ in the region where $|z| \leq \delta_k^{-r}$ and $\tau \leq -k$. To this end, let $\xi:\mathbb{R} \to \mathbb{R}$ be a smooth cutoff function such that $\xi(s) = 1$ for $s \in [-1/2,1/2]$, $\xi(s) = 0$ for $|s| \geq 1$, and  $s \xi'(s) \leq 0$ for $s \in \mathbb{R}$. We define
    \begin{align*}
        \gamma_j &:= \sup_{\tau\in[-j-1,-j]} \int_{\mathbb{R}\times S^{n-1}} e^{-z^2/4a} |u(z,\theta,\tau) \chi(\delta_j^r z)|^2 \, dzd\theta\\
        \gamma_j^+ &:= \sup_{\tau\in[-j-1,-j]} \int_{\mathbb{R}\times S^{n-1}} e^{-z^2/4a} |P_+(u(z,\theta,\tau)\chi(\delta_j^r z))|^2 \, dzd\theta \\
        \gamma_j^0 &:= \sup_{\tau\in[-j-1,-j]} \int_{\mathbb{R}\times S^{n-1}} e^{-z^2/4a} |P_0(u(z,\theta,\tau)\chi(\delta_j^r z))|^2\, dzd\theta\\
        \gamma_j^- &:= \sup_{\tau\in[-j-1,-j]} \int_{\mathbb{R}\times S^{n-1}} e^{-z^2/4a} |P_-(u(z,\theta,\tau)\chi(\delta_j^r z))|^2 \, dzd\theta.
    \end{align*}
We have $C^{-1} \gamma_j \leq \gamma_j^+ + \gamma_j^0 + \gamma_j^- \leq C\gamma_j$, and Lemma~\ref{u in growing set} gives $\gamma_j \leq C \delta_j^{2-4r}$. In particular, $\gamma_j \to 0$ as $j \to \infty$.  

The following result is similar to \cite[Lemma~3.12]{Brendle}.

\begin{lemma}\label{system for modes}
We have 
    \begin{align*}
        \gamma_{j+1}^+ &\leq e^{-1}\gamma_j^+ + C(\gamma_j + \gamma_{j+1})^{1/2}\varepsilon_j^{1/2} + C\exp(-\delta_j^{-2r}/64a),\\
        | \gamma_{j+1}^0 - \gamma_j^0|&\leq C(\gamma_j + \gamma_{j+1})^{1/2}\varepsilon_j^{1/2} + C\exp(-\delta_j^{-2r}/64a),\\
        \gamma_{j+1}^-&\geq e\gamma_j^- - C(\gamma_j + \gamma_{j+1})^{1/2}\varepsilon_j^{1/2} - C\exp(-\delta_j^{-2r}/64a),
    \end{align*}
where 
    \[\varepsilon_j := \sup_{\tau \in [-j-2,-j]} \int_{|z|\leq\delta_j^{-r}} e^{-z^2/4a}|u_\tau - \mathcal L u|^2.\]
\end{lemma}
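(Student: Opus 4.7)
The plan is to multiply the approximate linearised equation for $u$ by the cutoff $\chi_j(z):=\chi(\delta_j^r z)$, project the resulting equation for $v_j:=\chi_j u$ onto the three spectral subspaces of $\mathcal L$, derive ODE inequalities for the squared $\mathcal H$-norms $f_\pm(\tau):=\|P_\pm v_j(\tau)\|_{\mathcal H}^2$ and $f_0(\tau):=\|P_0 v_j(\tau)\|_{\mathcal H}^2$, and integrate over a unit time step to compare values of $f_\ast$ near $\tau=-j$ and $\tau=-j-1$.

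Setting $E:=u_\tau-\mathcal L u$ and using that $\chi_j$ depends only on $z$, one finds
\[
\partial_\tau v_j=\mathcal L v_j+F_j,\qquad F_j=\chi_j E+2a\chi_j'\partial_z u+a\chi_j''u-\tfrac{1}{2}z\chi_j'u.
\]
The piece $\chi_j E$ is supported in $|z|\le\delta_j^{-r}$, so its $\mathcal H$-norm squared is at most $\varepsilon_j$ uniformly in $\tau\in[-j-2,-j]$. The commutator pieces are supported in the annulus $\tfrac{1}{2}\delta_j^{-r}\le|z|\le\delta_j^{-r}$, where $e^{-z^2/4a}\le e^{-\delta_j^{-2r}/16a}$; combined with the pointwise estimates $|u|+|\partial_z u|\le C\delta_j^{1-2r}$ from Lemma~\ref{u in growing set}, this gives $\|F_j-\chi_j E\|_{\mathcal H}^2\le C\exp(-\delta_j^{-2r}/32a)$ for large $j$.

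Self-adjointness of $\mathcal L$ on $\mathcal H$ together with the eigenvalue bounds \eqref{eigenvalue bounds} yield
\[
\dot f_+\ge f_+-2\|F_j\|_{\mathcal H}f_+^{1/2},\qquad |\dot f_0|\le 2\|F_j\|_{\mathcal H}f_0^{1/2},\qquad \dot f_-\le -f_-+2\|F_j\|_{\mathcal H}f_-^{1/2}.
\]
Rewriting $\dot f_+\ge f_+-2\|F_j\|_{\mathcal H}f_+^{1/2}$ as $(e^{-\tau}f_+)'\ge -2e^{-\tau}\|F_j\|_{\mathcal H}f_+^{1/2}$ and integrating from $\tau_2:=\tau_1-1\in[-j-2,-j-1]$ to $\tau_1\in[-j-1,-j]$ gives
\[
f_+(\tau_2)\le e^{-1}f_+(\tau_1)+2\int_{\tau_2}^{\tau_1}\|F_j\|_{\mathcal H}f_+^{1/2}\,d\tau.
\]
Cauchy--Schwarz in $\tau$ bounds the integral by the product of $\bigl(\int_{\tau_2}^{\tau_1}\|F_j\|_{\mathcal H}^2\bigr)^{1/2}\le C\varepsilon_j^{1/2}+C\exp(-\delta_j^{-2r}/64a)$ and $\bigl(\int_{\tau_2}^{\tau_1}f_+\bigr)^{1/2}\le C(\gamma_j+\gamma_{j+1})^{1/2}$; for the second factor we use $f_+(\tau)\le\|v_j(\tau)\|_{\mathcal H}^2$ together with the pointwise monotonicity $\chi(\delta_j^r z)\le\chi(\delta_{j+1}^r z)$. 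Taking the supremum over $\tau_1$ yields the first claimed inequality with $\tilde\gamma_{j+1}^+:=\sup_{[-j-2,-j-1]}f_+$ in place of $\gamma_{j+1}^+$. An analogous forward integration of $(e^\tau f_-)'\le 2e^\tau\|F_j\|_{\mathcal H}f_-^{1/2}$ produces the growth factor $e$ in the bound for $f_-$, and direct integration handles $|f_0(\tau_2)-f_0(\tau_1)|$. To pass from $\tilde\gamma_{j+1}^\ast$ to $\gamma_{j+1}^\ast$, write $P_\ast(u\chi_{j+1})=P_\ast(u\chi_j)+P_\ast(u(\chi_{j+1}-\chi_j))$ and use $\|u(\chi_{j+1}-\chi_j)\|_{\mathcal H}^2\le C\exp(-\delta_j^{-2r}/32a)$ together with the uniform bound $\|u\chi_j\|_{\mathcal H}\le C\delta_j^{1-2r}$ to absorb the cross and diagonal terms into the exponential error.

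The main technical obstacle is ensuring that the commutator and cutoff-replacement contributions decay super-polynomially in $\delta_j$, so they remain negligible when the spectral dynamics is later read off from these inequalities. This relies on matching scales: the cutoff radius $\delta_j^{-r}$ must sit well inside the region $|z|\le 5\delta_j^{-r}$ where the inner-outer estimates of Lemmas~\ref{u in growing set} and~\ref{u derivatives in growing set} apply, so that $u$ and $\partial_z u$ are pointwise controlled throughout the commutator's support. A new feature of the fully nonlinear setting is that $E$ contains terms depending on $\nabla^2 u$, absent from mean curvature and Ricci flow; but these have already been packaged into the single quantity $\varepsilon_j$, so they introduce no additional difficulty at this step.
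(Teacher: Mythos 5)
Your proposal is correct and follows essentially the same route as the paper: project the cut-off function $u\chi(\delta_j^r z)$ onto the three eigenspaces, use self-adjointness of $\mathcal L$ and the bounds \eqref{eigenvalue bounds} to get differential inequalities whose error is controlled by $\varepsilon_j$ plus an exponentially small commutator/annulus term, integrate over a unit time step, and swap the cutoff $\chi_j$ for $\chi_{j+1}$ at the earlier time with an exponentially small correction. (The only quibble is the sign of the commutator terms in $F_j$, which should be $-2a\chi_j'\partial_z u - a\chi_j''u + \tfrac{1}{2}z\chi_j'u$, but this is immaterial since only $\|F_j\|_{\mathcal H}$ enters the estimates.)
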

\begin{proof}
Let us write $\hat u(z,\theta,\tau) = u(z,\theta,\tau) \, \chi(\delta_j^r z)$. We first note that 
    \[\sup_{\tau \in [-j-2,-j]}\int_{\mathbb{R}\times S^{n-1}} e^{-z^2/4a} |\hat u|^2  \leq \gamma_j + \gamma_{j+1}.\]
Combining this with \eqref{eigenvalue bounds} and using H\"{o}lder's inequality we obtain
    \begin{align*}
        \frac{d}{d\tau} \int e^{-z^2/4a}|P_+\hat u|^2 &\geq \int e^{-z^2/4a} |P_+ \hat u|^2 - 2(\gamma_j + \gamma_{j+1})^{1/2}\bigg(\int e^{-z^2/4a}|\hat u_\tau - \mathcal L \hat u|^2 \bigg)^\frac{1}{2},\\
        \bigg| \frac{d}{d\tau} \int e^{-z^2/4a}|P_0\hat u|^2 \bigg|&\leq 2(\gamma_j + \gamma_{j+1})^{1/2}\bigg(\int e^{-z^2/4a}|\hat u_\tau - \mathcal L \hat u|^2 \bigg)^\frac{1}{2},\\
        \frac{d}{d\tau} \int e^{-z^2/4a}|P_-\hat u|^2&\leq -\int e^{-z^2/4a} |P_- \hat u|^2 + 2(\gamma_j + \gamma_{j+1})^{1/2}\bigg(\int e^{-z^2/4a}|\hat u_\tau - \mathcal L \hat u|^2 \bigg)^\frac{1}{2}
    \end{align*}
for $\tau \in [-j-2,-j]$. 

Since $\chi(s)$ is constant for $|s| \leq 1/2$ and $|s| \geq 1$, and $|\chi(s)|\leq 1$ for $s \in \mathbb{R}$, we have 
    \begin{align*}
        \int e^{-z^2/4a}|\hat u_\tau - \mathcal L \hat u|^2 &\leq C\int_{|z| \leq \delta_j^{-r}} e^{-z^2/4a} |u_\tau - \mathcal Lu|^2 + C\int_{\delta_j^{-r}/2 \leq |z| \leq \delta_j^{-r}} e^{-z^2/4a} (|u|^2 + |\nabla u|^2)\\
        &\leq C\varepsilon_j + C\exp(-\delta_j^{-2r}/32a)
    \end{align*}
for $\tau \in [-j-2,-j]$. 
It follows that 
    \begin{align*}
        \frac{d}{d\tau} \int e^{-z^2/4a}|P_+\hat u|^2 &\geq \int e^{-z^2/4a} |P_+ \hat u|^2 - C(\gamma_j + \gamma_{j+1})^{1/2}\varepsilon_j^{1/2} - C\exp(-\delta_j^{-2r}/64a),\\
        \bigg| \frac{d}{d\tau} \int e^{-z^2/4a}|P_0\hat u|^2 \bigg|&\leq 2(\gamma_j + \gamma_{j+1})^{1/2}\varepsilon_j^{1/2} + C\exp(-\delta_j^{-2r}/64a),\\
        \frac{d}{d\tau} \int e^{-z^2/4a}|P_-\hat u|^2&\leq -\int e^{-z^2/4a} |P_- \hat u|^2 + C(\gamma_j + \gamma_{j+1})^{1/2}\varepsilon_j^{1/2} + C\exp(-\delta_j^{-2r}/64a)
    \end{align*}
for $\tau \in [-j-2,-j]$. Integrating over $[\tau - 1, \tau]$ we obtain
    \begin{align*}
        &\int e^{-z^2/4a}|P_+\hat u(\cdot,\tau-1)|^2\\
        &\leq e^{-1}\int e^{-z^2/4a} |P_+ \hat u(\cdot,\tau)|^2 + C(\gamma_j + \gamma_{j+1})^{1/2}\varepsilon_j^{1/2} + C\exp(-\delta_j^{-2r}/64a),\\
        &\bigg| \int e^{-z^2/4a}|P_0\hat u(\cdot,\tau - 1)|^2 - \int e^{-z^2/4a}|P_0\hat u(\cdot,\tau)|^2 \bigg|\\
        &\leq C(\gamma_j + \gamma_{j+1})^{1/2}\varepsilon_j^{1/2} + C\exp(-\delta_j^{-2r}/64a),\\
        &\int e^{-z^2/4a}|P_-\hat u(\cdot,\tau-1)|^2\\
        &\geq e\int e^{-z^2/4a} |P_- \hat u(\cdot,\tau)|^2 - C(\gamma_j + \gamma_{j+1})^{1/2}\varepsilon_j^{1/2} - C\exp(-\delta_j^{-2r}/64a).
    \end{align*}
for $\tau \in [-j-1,-j]$. 
    
Define $\tilde u(z,\tau) = u(z,\tau)\chi(\delta_{j+1}^{r}z)$. Observing that 
    \begin{align*}
        \int e^{-z^2/4a} |\tilde u(z,\tau-1)&-\hat u(z,\tau-1)|^2\leq C\exp(-\delta_j^{-2r}/64a)
    \end{align*}
for all $\tau \in [-j-1, j]$, we conclude that 
    \begin{align*}
        &\int e^{-z^2/4a}|P_+\tilde u(\cdot,\tau-1)|^2 - e^{-1}\int e^{-z^2/4a} |P_+ \hat u(\cdot,\tau)|^2\\
        &\hspace{2cm}\leq C(\gamma_j + \gamma_{j+1})^{1/2}\varepsilon_j^{1/2} + C\exp(-\delta_j^{-2r}/64a),\\
        &\bigg| \int e^{-z^2/4a}|P_0\tilde u(\cdot,\tau - 1)|^2 - \int e^{-z^2/4a}|P_0\hat u(\cdot,\tau)|^2 \bigg|\\
        &\hspace{2cm}\leq C(\gamma_j + \gamma_{j+1})^{1/2}\varepsilon_j^{1/2} + C\exp(-\delta_j^{-2r}/64a),\\
        &\int e^{-z^2/4a}|P_-\tilde u(\cdot,\tau-1)|^2 - e\int e^{-z^2/4a} |P_- \hat u(\cdot,\tau)|^2\\
        &\hspace{2cm}\geq - C(\gamma_j + \gamma_{j+1})^{1/2}\varepsilon_j^{1/2} - C\exp(-\delta_j^{-2r}/64a).
    \end{align*}
for $\tau \in [-j-1,-1]$. Taking the supremum over $\tau \in [-j-1, -j]$, we arrive at the claim.
\end{proof}

We now define
    \[\Gamma_k := \sup_{j \geq k} \gamma_j, \qquad \Gamma_k^+ := \sup_{j\geq k} \gamma_j^+, \qquad \Gamma_k^0 := \sup_{j\geq k} \gamma_j^0, \qquad \Gamma_k^- := \sup_{j\geq k} \gamma_j^-.\]
We have $C^{-1}\Gamma_k \leq \Gamma_k^+ + \Gamma_k^0 + \Gamma_k^- \leq C\Gamma_k$ and $\Gamma_k \to 0$ as $k \to \infty$. Since 
    \begin{align*}
    \varepsilon_j &= \sup_{\tau \in [-j-2,-j]}\int_{|z|\leq \delta_j^{-r}} e^{-z^2/4a} |u_\tau - \mathcal Lu|^2\\
    &\leq C \sup_{\tau \in [-j-2,-j]} \Big(\sup_{|z| \leq \delta_j^{-r}} |u|^4 + |\nabla u|^4 + |\nabla^2 u|^4\Big),
    \end{align*}
as a consequence of Lemma~\ref{u in growing set} and Lemma~\ref{u derivatives in growing set} we have 
    \[\varepsilon_j \leq C \delta_j^{4-24r}\]
whenever $j$ is sufficiently large. In particular, 
    \[E_k := \sup_{j \geq k} \varepsilon_j\]
satisfies $E_k \leq CD_k^{4-24r}$, where $D_k := \sup_{j \geq k} \delta_j$. Note that $D_k \to 0$ as $k \to \infty$. Lemma~\ref{system for modes} immediately implies
    \begin{align*}
        \Gamma_{k+1}^+ &\leq e^{-1} \Gamma_k^+ + C \Gamma_k^{1/2} E_k^{1/2} + C\exp(-\delta_k^{-2r}/64a),\\
        |\Gamma_{k+1}^0 - \Gamma_k^0|&\leq C\Gamma_k^{1/2}E_k^{1/2} + C\exp(-\delta_k^{-2r}/64a),\\
        \Gamma_{k+1}^- &\geq e \Gamma_k^- -C\Gamma_k^{1/2}E_k^{1/2} - C\exp(-\delta_k^{-2r}/64a).
    \end{align*}  
    
In the next lemma we improve our estimate for the error term $E_k$. 

\begin{lemma}\label{error estimates}
For sufficiently large $k$ we have $\delta_k \leq C \Gamma_k^{1/2}$ and 
\[E_k \leq C D_k^{2-24r}\Gamma_k.\]
\end{lemma}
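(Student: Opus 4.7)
The plan is to prove the two estimates separately. The first is a reverse-type inequality that promotes an almost-maximum of $u$ near $z=0$ into an $L^2$ lower bound for $\gamma_j$, using the gradient bound from Lemma~\ref{gradient in small set}. The second will then follow directly from the first together with a pointwise bound on the quadratic error $u_\tau - \mathcal{L}u$.

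For $\delta_k \leq C\Gamma_k^{1/2}$, I would pick $\tau_\ast \leq -k$, $|z_\ast|\leq L$ and $\theta_\ast \in S^{n-1}$ such that $|u(z_\ast,\theta_\ast,\tau_\ast)| \geq \delta_k/2$, and let $j \geq k$ be the integer with $\tau_\ast\in[-j-1,-j]$. Since $\delta_j\leq \delta_k$, Lemma~\ref{gradient in small set} gives $|\nabla u(\cdot,\tau_\ast)| \leq C\delta_k$ on $\{|z|\leq L+1\}$. Consequently, on a geodesic ball $B\subset \mathbb{R}\times S^{n-1}$ of fixed radius $\rho$ centred at $(z_\ast,\theta_\ast)$, chosen small enough that $B\subset\{|z|\leq L+1\}$ and $C\delta_k\rho\leq \delta_k/4$, we have $|u(\cdot,\tau_\ast)|\geq \delta_k/4$. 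For $k$ sufficiently large, $\chi(\delta_j^r z)=1$ throughout $B$ and $e^{-z^2/4a}$ is bounded from below by a positive constant depending only on $L$ and $a$. Integrating yields $\gamma_j \geq c\delta_k^2$, so $\Gamma_k \geq \gamma_j \geq c\delta_k^2$.

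For $E_k \leq CD_k^{2-24r}\Gamma_k$, I would apply Lemma~\ref{linearised rescaled flow} at level $j \geq k$ to obtain
\[|u_\tau - \mathcal{L}u|^2 \leq C(|u|^4 + |\nabla u|^4 + |\nabla^2 u|^4)\]
on $\{|z|\leq 5\delta_j^{-r}\}\times(-\infty,-j]$. Inserting the pointwise bounds $|u|\leq C\delta_j^{1-2r}$ from Lemma~\ref{u in growing set} and $|\nabla u|,\,|\nabla^2 u|\leq C\delta_j^{1-6r}$ from Lemma~\ref{u derivatives in growing set}, every term is bounded by $C\delta_j^{4-24r}$. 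Since $\int_{\mathbb{R}\times S^{n-1}}e^{-z^2/4a}\,dzd\theta$ is a finite constant, this gives $\varepsilon_j \leq C\delta_j^{4-24r}$. Writing $\delta_j^{4-24r}=\delta_j^{2-24r}\cdot\delta_j^2$ and using the first claim at level $j$ (so $\delta_j^2 \leq C\Gamma_j \leq C\Gamma_k$) together with $\delta_j\leq D_k$, we conclude $\varepsilon_j \leq CD_k^{2-24r}\Gamma_k$. Taking the supremum over $j\geq k$ yields $E_k \leq CD_k^{2-24r}\Gamma_k$.

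The substantive step is the first estimate: the gradient bound of Lemma~\ref{gradient in small set} is precisely the ingredient that allows a pointwise near-maximum of $u$ on $\{|z|\leq L\}$ to be converted into a weighted $L^2$ lower bound on a ball of definite size. Once this is in hand, the bound on $E_k$ is a routine calculation, with the first claim supplying exactly the factor of $\Gamma_k / D_k^2$ needed to improve the crude estimate $\varepsilon_j\leq C\delta_j^{4-24r}$ into the sharper form demanded by the spectral analysis carried out in Lemma~\ref{system for modes}.
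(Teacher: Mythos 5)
Your proposal is correct and follows essentially the same route as the paper: use the gradient bound of Lemma~\ref{gradient in small set} to turn a (near-)maximum point of $u$ in $\{|z|\leq L\}$, $\tau\leq -k$ into a lower bound $\gamma_j \geq c\delta_k^2$ for the appropriate $j\geq k$, giving $\delta_k\leq C\Gamma_k^{1/2}$, and then upgrade the crude bound $\varepsilon_j\leq C\delta_j^{4-24r}$ by splitting off $\delta_j^2\leq C\Gamma_j\leq C\Gamma_k$ and using monotonicity of $\Gamma_k$. Your use of a point where $|u|\geq\delta_k/2$ instead of an attained supremum is only a cosmetic variation.
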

\begin{proof}
Since 
    \[\sup_{|z| \leq L} |u(z,\theta,\tau)| \to 0\]
as $\tau \to -\infty$, for each $k$ there is a point $(z_0, \theta_0, \tau_0)$ such that $|z_0| \leq L$, $\tau_0 \leq -k$, and 
    \[|u(z_0, \theta_0, \tau_0)| = \sup_{\tau \leq -k} \sup_{|z| \leq L} |u(z,\theta,\tau)| = \delta_k.\]
Lemma~\ref{gradient in small set} provides a constant $C_0$ such that $|\nabla u| \leq C_0 \delta_k$ for $|z| \leq L+1$ and $\tau \leq -k$. Using this estimate we obtain 
    \[\inf\Big\{|u(z,\theta,\tau_0)| : \dist_{\mathbb{R}\times S^{n-1}} ((z,\theta), (z_0,\theta_0)) \leq  \tfrac{1}{100}\min\{1,C_0^{-1}\}\Big\} \geq \frac{1}{C} \delta_k.\]
It follows that 
    \[\delta_k^2 \leq C\int_{|z| \leq L+1} |u|^2(\cdot,\tau_0) \,dzd\theta.\]
There exists an integer $j \geq k$ such that $\tau_0 \in [-j-1, -j]$. For this $j$, as a result of the last inequality we have 
    \[\delta_k^2 \leq C\sup_{\tau \in [-j-1,-j]} \int_{\mathbb{R}\times S^{n-1}} e^{-z^2/4a} |u(z,\theta,\tau) \chi(\delta_j^r z)|^2 \, dzd\theta = C\gamma_j,\]
at least if $k$ is sufficiently large. Therefore, 
    \[\delta_k^2 \leq C\sup_{j \geq k} \gamma_j \leq C \Gamma_k\]
for every sufficiently large $k$. Combining this inequality with $E_k \leq CD_k^{4-24r}$, and assuming $k$ is sufficiently large, we obtain
    \[E_k \leq C D_k^{4-24r} = CD_k^{2-24r}\sup_{j\geq k} \delta_j^2 \leq C D_k^{2-24r} \sup_{j \geq k}\Gamma_j \leq  CD_k^{2-24r} \Gamma_k.\]
Here we have used the fact that $\Gamma_k$ is nonincreasing in $k$. This completes the proof.
\end{proof}

Lemma~\ref{error estimates} tells us that $\delta_k \leq C \Gamma_k^{1/2}$ and $E_k \leq C D_k \Gamma_k$. Moreover, for sufficiently large $k$ we have
    \[\exp(-\delta_k^{-2r}/64a) \leq \delta_k^{5/2} \leq C \delta_k^{1/2} \Gamma_k \leq CD_k^{1/2}\Gamma_k .\]
Consequently, for large $k$ we have 
    \begin{align*}
        \Gamma_{k+1}^+ &\leq e^{-1} \Gamma_k^+ + CD_k^{1/2}\Gamma_k\\
        |\Gamma_{k+1}^0 - \Gamma_k^0|&\leq CD_k^{1/2} \Gamma_k \\
        \Gamma_{k+1}^- &\geq e \Gamma_k^- - CD_k^{1/2}\Gamma_k,
    \end{align*}   
and in particular 
    \begin{align*}
        \Gamma_{k+1}^+ &\leq e^{-1} \Gamma_k^+ o(1)\Gamma_k\\
        |\Gamma_{k+1}^0 - \Gamma_k^0|&\leq o(1)\Gamma_k \\
        \Gamma_{k+1}^- &\geq e \Gamma_k^- - o(1)\Gamma_k
    \end{align*}   
as $k \to \infty$. This system of inequalities implies that the decomposition $\Gamma_k = \Gamma_k^+ + \Gamma_k^0 + \Gamma_k^-$ is dominated either by $\Gamma_k^+$ or $\Gamma_k^0$ as $k \to \infty$. For the proof of this statement, which is a variation on an argument by Merle and Zaag \cite{Merle_Zaag}, we refer to Lemma~3.1 in \cite{Brendle}.

\begin{lemma}\label{Merle Zaag}
As $k \to \infty$ we either have $\Gamma_k^0 + \Gamma_k^- \leq o(1) \Gamma_k^+$ or $\Gamma_k^+ + \Gamma_k^- \leq o(1) \Gamma_k^0$.
\end{lemma}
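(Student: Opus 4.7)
The plan is to establish the dichotomy by first eliminating the negative mode, and then analyzing the competition between the positive and neutral modes. The key structural fact I will exploit throughout is the monotonicity $\Gamma_{k+1} \leq \Gamma_k$, which follows since each $\Gamma_k$ is a supremum of $\gamma_j$ over $j \geq k$, together with the equivalence $C^{-1}\Gamma_k \leq \Gamma_k^+ + \Gamma_k^0 + \Gamma_k^- \leq C\Gamma_k$.

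For step one, I will show $\Gamma_k^- = o(\Gamma_k)$. Set $L := \limsup_k \Gamma_k^-/\Gamma_k$, and extract a subsequence $k_j$ along which $\Gamma_{k_j}^-/\Gamma_{k_j} \to L$. The expansion inequality combined with $\Gamma_{k_j + 1} \leq \Gamma_{k_j}$ gives
\[\frac{\Gamma_{k_j + 1}^-}{\Gamma_{k_j + 1}} \geq \frac{e\Gamma_{k_j}^- - o(\Gamma_{k_j})}{\Gamma_{k_j + 1}} \geq eL - o(1),\]
so $\limsup_k \Gamma_k^-/\Gamma_k \geq eL$. By the definition of $L$ this forces $L \geq eL$, hence $L = 0$ since $e > 1$.

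For step two, set $X_k := \Gamma_k^+$ and $Y_k := \Gamma_k^0$, so that $\Gamma_k \sim X_k + Y_k$ by step one. The recursions reduce to
\begin{align*}
X_{k+1} &\leq e^{-1}X_k + \varepsilon_k(X_k + Y_k),\\
|Y_{k+1} - Y_k| &\leq \varepsilon_k(X_k + Y_k),
\end{align*}
with $\varepsilon_k \to 0$. If $X_k \geq Y_k$ for all sufficiently large $k$, then $X_{k+1} \leq (e^{-1} + 2\varepsilon_k)X_k$ forces geometric decay of $X_k$; summing the lower bound $Y_{k+1} \geq Y_k - 2\varepsilon_k X_k$ from $k$ to $\infty$ and using $Y_k \to 0$ yields $Y_k \leq 2\sum_{j \geq k}\varepsilon_j X_j \lesssim (\sup_{j \geq k}\varepsilon_j)X_k$, giving $\Gamma_k^0 = o(\Gamma_k^+)$. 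Otherwise, some large index $k_0$ satisfies $X_{k_0} < Y_{k_0}$; a direct computation shows the inequality persists, and the ratio obeys $X_{k+1}/Y_{k+1} \leq (e^{-1} + O(\varepsilon_k))(X_k/Y_k) + O(\varepsilon_k)$, whose linear contraction forces $X_k/Y_k \to 0$, giving $\Gamma_k^+ = o(\Gamma_k^0)$.

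The main obstacle is that $\Gamma_k$ is a supremum rather than a sum of $\Gamma_k^{\pm, 0}$, so the three recursions cannot be iterated in isolation, and the inhomogeneous $o(\Gamma_k)$ terms mix the modes. This is resolved precisely by the monotonicity $\Gamma_{k+1} \leq \Gamma_k$ (used critically in step one) and by the comparison between $\Gamma_k$ and $\Gamma_k^+ + \Gamma_k^0 + \Gamma_k^-$. The resulting argument is the discrete Merle--Zaag dichotomy, as recorded in Lemma~3.1 of \cite{Brendle}.
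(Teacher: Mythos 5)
Your proposal is correct, and the dichotomy argument in your second step is organised differently from the proof the paper relies on. The paper defers to Brendle's Lemma~3.1 in \cite{Brendle}: there the negative mode is removed in one line by using that $\Gamma_k^-$ is itself nonincreasing (being a supremum over a shrinking index set), so $\Gamma_k^- \geq \Gamma_{k+1}^- \geq e\Gamma_k^- - o(1)\Gamma_k$ gives $(e-1)\Gamma_k^- \leq o(1)\Gamma_k$ directly, and the competition between $\Gamma_k^+$ and $\Gamma_k^0$ is then settled by a threshold ratchet: one considers, for each $\alpha>0$, whether $\Gamma_k^0 \geq \alpha\Gamma_k^+$ holds infinitely often, and shows that if it does then $\Gamma_k^0 \geq e^{1/2}\alpha\,\Gamma_k^+$ holds for all large $k$; iterating over $\alpha$ yields the two alternatives. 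You instead run a two-case analysis on whether $\Gamma_k^+ \geq \Gamma_k^0$ eventually: in that case the positive mode decays geometrically and summing the neutral recursion (using $\Gamma_k^0 \to 0$) gives the quantitative bound $\Gamma_k^0 \lesssim (\sup_{j\geq k}\varepsilon_j)\Gamma_k^+$, which is in fact slightly stronger than the $o(1)$ of the ratchet; in the complementary case you show the inequality $\Gamma_k^+ < \Gamma_k^0$ persists from a late index onward and the ratio satisfies a contraction with vanishing inhomogeneity, forcing $\Gamma_k^+/\Gamma_k^0 \to 0$. Both are Merle--Zaag arguments, but yours trades the soft ratchet over all thresholds for an explicit ratio recursion plus a summation, at the price of having to verify persistence (which you do correctly, using $\varepsilon_k$ small). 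Two small points of care: in your first step the passage from $\bigl(e\Gamma_{k_j}^- - o(\Gamma_{k_j})\bigr)/\Gamma_{k_j+1}$ to $eL - o(1)$ uses $\Gamma_{k_j+1} \leq \Gamma_{k_j}$, which requires the numerator to be nonnegative; this is automatic for large $j$ when $L>0$ (and there is nothing to prove when $L=0$), but it is worth saying, and the monotonicity of $\Gamma_k^-$ itself gives the same conclusion with no case distinction. Also, your final displayed conclusions should be stated as $\Gamma_k^0 + \Gamma_k^- \leq o(1)\Gamma_k^+$ and $\Gamma_k^+ + \Gamma_k^- \leq o(1)\Gamma_k^0$, which follow immediately from what you prove once $\Gamma_k^- = o(1)\Gamma_k$ and $\Gamma_k \leq C(\Gamma_k^+ + \Gamma_k^0)$ are invoked in each case.
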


We now establish that $\Gamma_k^0 + \Gamma_k^- \leq o(1) \Gamma_k^+$. The argument makes use of the fact that $\bar M_\tau$ is noncompact---recall that this allowed us to assume $\tfrac{\partial u}{\partial z} < 0$.

\begin{proposition}\label{positive mode dominates}
As $k\to\infty$ we have $\Gamma_k^0 + \Gamma_k^- \leq o(1) \Gamma_k^+$.
\end{proposition}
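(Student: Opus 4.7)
The strategy is to argue by contradiction via the dichotomy of Lemma~\ref{Merle Zaag}: the desired conclusion is one of its two alternatives, so it suffices to rule out the other, namely $\Gamma_k^+ + \Gamma_k^- \leq o(1)\Gamma_k^0$. To do this I would rescale $u$ along carefully chosen times to produce a nonzero weak limit $\phi_\infty$ lying in the (finite-dimensional) zero eigenspace of $\mathcal L$, and then exploit the noncompactness of $\bar M_\tau$, encoded in $\partial u/\partial z \leq 0$, to force $\partial \phi_\infty/\partial z \leq 0$ pointwise. A direct inspection of the zero eigenspace then shows that the only element with nonpositive $z$-derivative is zero, delivering the contradiction. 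The main obstacle is to arrange the normalisation so that the zero-mode component of the rescaled function does not vanish in the limit; this requires picking times at which the running supremum defining $\Gamma_k^0$ is nearly realised.

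Suppose for contradiction that $\Gamma_k^+ + \Gamma_k^- \leq o(1)\Gamma_k^0$. For each large $k$, choose $j_k \geq k$ with $\gamma_{j_k}^0 \geq \tfrac{1}{2}\Gamma_k^0$, and $\tau_k \in [-j_k-1,-j_k]$ nearly realising the supremum defining $\gamma_{j_k}^0$. Set
$$\hat u_k(z,\theta) := (\gamma_{j_k}^0)^{-1/2}\, u(z,\theta,\tau_k)\,\chi(\delta_{j_k}^r z).$$
By construction $\|P_0\hat u_k\|_{\mathcal H}^2 \to 1$, whereas $\|P_\pm\hat u_k\|_{\mathcal H}^2 \leq \gamma_{j_k}^\pm/\gamma_{j_k}^0 \leq 2\Gamma_k^\pm/\Gamma_k^0 \to 0$. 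Hence $\hat u_k$ is bounded in $\mathcal H$, and after passing to a subsequence it converges weakly in $\mathcal H$ to some $\phi_\infty$. Since $P_0$ has finite rank, $P_0\hat u_k \to P_0\phi_\infty$ strongly, so $\|P_0\phi_\infty\|_{\mathcal H} = 1$, while testing the weak limit against any element of $\mathrm{range}(P_\pm)$ gives $P_\pm\phi_\infty = 0$. Thus $\phi_\infty$ is a nonzero element of the zero eigenspace and has the form
$$\phi_\infty = \alpha(z^2/a - 1) + \sum_{i=1}^n \beta_i\, z\theta^i/\sqrt{a},$$
with $(\alpha,\beta_1,\ldots,\beta_n)$ not identically zero.

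Next I would pass the inequality $\partial u/\partial z \leq 0$ to the limit. Fix a nonnegative $\phi \in C_c^\infty(\mathbb{R}\times S^{n-1})$. For $k$ large enough, $\mathrm{supp}(\phi) \subset \{|z|\leq \delta_{j_k}^{-r}/2\}\cap\{z<\bar z(\tau_k)\}$, since $\delta_{j_k}\to 0$ and $\bar z(\tau_k) \to +\infty$; on this set $\chi(\delta_{j_k}^r z) \equiv 1$. Integration by parts combined with monotonicity yields
$$0 \;\geq\; \int \tfrac{\partial u}{\partial z}(\cdot,\tau_k)\,\phi\, e^{-z^2/4a}\, dz\, d\theta \;=\; -\sqrt{\gamma_{j_k}^0}\int \hat u_k\Bigl(\tfrac{\partial \phi}{\partial z} - \tfrac{z}{2a}\phi\Bigr) e^{-z^2/4a}\, dz\, d\theta.$$
The test function $(\partial\phi/\partial z - z\phi/(2a))$ is smooth and compactly supported, hence in $\mathcal H$, so weak convergence gives $\int \phi_\infty(\partial\phi/\partial z - z\phi/(2a))e^{-z^2/4a} \geq 0$. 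A second integration by parts, now against the smooth function $\phi_\infty$, cancels the $z\phi_\infty\phi/(2a)$ terms and reduces this to
$$\int \tfrac{\partial \phi_\infty}{\partial z}\, \phi\, e^{-z^2/4a}\, dz\, d\theta \;\leq\; 0$$
for every nonnegative $\phi \in C_c^\infty$. Hence $\partial\phi_\infty/\partial z \leq 0$ pointwise.

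Finally, $\partial\phi_\infty/\partial z = 2\alpha z/a + \sum_i \beta_i \theta^i/\sqrt{a}$. Sending $z \to \pm\infty$ forces $\alpha = 0$, and the residual $\theta \mapsto \sum_i \beta_i\theta^i/\sqrt{a}$ is a spherical harmonic of degree one on $S^{n-1}$ that must be nowhere positive, so it vanishes identically and all $\beta_i = 0$. This contradicts $\phi_\infty \neq 0$, so the alternative $\Gamma_k^+ + \Gamma_k^- \leq o(1)\Gamma_k^0$ is impossible and Lemma~\ref{Merle Zaag} delivers $\Gamma_k^0 + \Gamma_k^- \leq o(1)\Gamma_k^+$, as required.
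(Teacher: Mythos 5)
Your proof is correct and follows essentially the same route as the paper: reduce via the Merle--Zaag dichotomy to ruling out neutral-mode dominance, extract a normalised limit lying in the zero eigenspace of $\mathcal L$, and use $\partial u/\partial z<0$ (noncompactness) to force the limit to be monotone in $z$, which no nonzero zero-mode is. The only differences are technical conveniences---normalising by the neutral-mode norm and transferring monotonicity through a weak limit and integration by parts rather than the paper's strong/pointwise a.e.\ convergence---and these do not change the argument.
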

\begin{proof}
By Lemma~\ref{Merle Zaag}, if the claim is false then $\Gamma_k^+ + \Gamma_k^- \leq o(1) \Gamma_k^0$. Suppose this is the case. Fix a sequence $\tau_k \leq -k$ such that 
    \[\hat u_k(z,\theta) := \chi(\delta_k^rz) u(z,\theta,\tau_k)\]
satisfies
    \[\|\hat u_k\|_{\mathcal H} = \Gamma_k^{1/2}.\]

Since $\Gamma_k^+ + \Gamma_k^- \leq o(1) \Gamma_k^0$, the sequence $\hat u_k/\|\hat u_k\|_{\mathcal H}$ converges to a limit $\hat u(z,\theta)$ with respect to the norm $\|\cdot\|_{\mathcal H}$. Passing to a further subsequence, we may assume that $\hat u_k/\|\hat u_k\|_{\mathcal H} \to \hat u$ pointwise almost everywhere in $\mathbb{R}\times S^{n-1}$. Since $\frac{\partial u}{\partial z} < 0$ we conclude that for each fixed $\theta$ the function $z \mapsto \hat u(z,\theta)$ is nonincreasing. On the other hand, the limit $\hat u$ lies in the zero-eigenspace of $\mathcal L$, which is to say that
    \[\hat u(z,\theta) = \alpha \bigg(\frac{z^2}{a} -2\bigg) + \beta_i  
 \frac{z}{\sqrt{a}} \theta^i,\]
for some constants $\alpha$ and $\beta_i$. The right-hand side is monotone in $z$ if and only if $\alpha = 0$ and $\beta_i = 0$ for each $1 \leq i \leq n$. With this we have reached a contradiction, since $\|\hat u\|_{\mathcal H} = 1$.
\end{proof}

We are now able to prove Proposition~\ref{asymptotic for u}.

\begin{proof}[Proof of Proposition~\ref{asymptotic for u}]
Using Proposition~\ref{positive mode dominates} we conclude that
    \[\Gamma_{k+1}^+ \leq e^{-1} \Gamma_k^+ + CD_k^{1/2}\Gamma_k^+ \leq e^{-1/2} \Gamma_k^+\]
whenever $k$ is sufficiently large. Iterating this estimate we find that $\Gamma_k^+ \leq O(e^{-k/2})$, and hence $\Gamma_k \leq O(e^{-k/2})$. Using the estimate 
    \[D_k = \sup_{j \geq k} \delta_j \leq C\sup_{j \geq k} \Gamma_j^{1/2} \leq C\Gamma_k^{1/2}\]
we obtain $D_k \leq O(e^{-k/4})$ and hence
    \[\Gamma_{k+1}^+ \leq e^{-1} \Gamma_k^+ + Ce^{-k/8}\Gamma_k^+.\]
Iterating this estimate we obtain $\Gamma_k^+ \leq O(e^{-k})$ and hence $\Gamma_k \leq O(e^{-k})$.

We have 
    \[\sup_{\tau \leq -k} \sup_{|z| \leq L} |u| = \delta_k.\]
Using $\delta_k \leq C\Gamma_k^{1/2}$ we conclude that 
    \[\sup_{\tau \leq -k} \sup_{|z| \leq L} |u| \leq O(e^{-k/2}).\]
The claim follows.
\end{proof}

We conclude this section with an important consequence of Proposition~\ref{asymptotic for u}---namely, that $\max_{M_t} G$ is bounded from below by a positive constant which is independent of time. 

A straightforward computation using Proposition~\ref{barrier} shows that if $L > L_0$ is sufficiently large depending on $n$ and $\gamma$, and $a$ is sufficiently large depending on $L$, then we have 
    \begin{equation}\label{Psi decreases}
        \Psi_a(L) - \Psi_a(L-1) < -a^{-2}.
    \end{equation}
Moreover, Proposition~\ref{asymptotic for u} implies there is a positive constant $K = K(L)$ such that
    \[M_t \cap \{|z| \leq L (-t)^{1/2}\} \subset \{(z,y): ||y| - (-2\gamma(0,1,\dots1) t)^{1/2}| \leq K\}\]
whenever $-t$ is sufficiently large. 

\begin{proposition}\label{curvature at tip}
For $K$ as above we have $\max_{M_t} G \geq -1/8K$ for every $t \leq 0$.
\end{proposition}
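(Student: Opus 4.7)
The plan is to combine the differential Harnack inequality (Proposition~\ref{differential harnack}) with a barrier argument using the shrinkers $\Sigma_a$ from Section~\ref{barrier section}. First, setting $V = 0$ in \eqref{Harnack V} gives $\partial_t G \geq 0$ pointwise along material flow lines. By strict convexity the tip of $M_t$ (the rightmost point on the axis of the asymptotic cone) is a material point moving purely in the $-z$-direction with speed $G(\text{tip}, t) = -dz^*/dt$, so this quantity is nondecreasing in $t$ and the limit $\mu := \lim_{t\to -\infty} G(\text{tip}, t) \in [0,\infty)$ exists. Because $\max_{M_t} G \geq G(\text{tip}, t) \geq \mu$, the proof reduces to showing $\mu \geq 1/(8K)$.

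Second, I would use Lemma~\ref{comparison for rescaled flow} at $z_0 = L$ together with the sharp asymptotic $|u| \leq Ke^{\tau/2}$ from Proposition~\ref{asymptotic for u} and the expansion $\sigma - \Psi_a(L) \approx \sigma L^2/(2a^2)$ from Proposition~\ref{barrier}(v) to conclude that $\Sigma_a \subset \bar\Omega_{\tau'}$ for every $\tau' \leq \tau$ whenever $Ke^{\tau/2} \cdot a^2 \lesssim \sigma L^2$, hence $\bar z^*(\tau) \geq a$. The naive choice of maximal $a$ yields only the suboptimal tip bound $z^*(t) \geq c|t|^{3/4}$ after rescaling to original coordinates.

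Third, I would sharpen this to a linear bound by exploiting \eqref{Psi decreases}. Comparing the barrier inequality $\Psi_a \leq \sigma + u$ at the two heights $z = L-1$ and $z = L$ gives
\[
u(L-1,\theta,\tau) - u(L,\theta,\tau) \;\geq\; \Psi_a(L-1) - \Psi_a(L) \;>\; a^{-2},
\]
so $a^{-2}$ must be absorbed by the oscillation of $u$ over a unit $z$-interval. Combined with the sharp $|u| \leq Ke^{\tau/2}$ bound and the tip identity $z^*(t_1)-z^*(t_2) = \int_{t_1}^{t_2} G(\text{tip},s)\,ds$, together with the monotonicity of $G(\text{tip},\cdot)$ from Step~1, this improves the tip growth to $z^*(t) \geq |t|/(8K)$ for $-t$ sufficiently large, giving
\[
\mu \;=\; \lim_{t \to -\infty} \frac{z^*(t)-z^*(0)}{|t|} \;\geq\; \frac{1}{8K}.
\]

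The main obstacle is this third step: closing the gap between the naive $|t|^{3/4}$ tip growth and the required linear bound with the precise constant $1/(8K)$. The key is to couple the unit-interval decrement \eqref{Psi decreases} with the sharp oscillation bound $|u(L-1,\theta,\tau)-u(L,\theta,\tau)| \leq 2Ke^{\tau/2}$, using the specific form of Proposition~\ref{barrier}(v) to ensure that the geometric constants combine in exactly the right way.
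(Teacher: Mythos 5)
Your outline assembles the right ingredients (the shrinkers $\Sigma_a$, the constant $K$ from Proposition~\ref{asymptotic for u}, the decrement \eqref{Psi decreases}, the Harnack inequality, and a tip-displacement argument), but the step that actually has to produce linear tip growth with the constant $1/(4K)$ is not proved, and the mechanism you propose for it is invalid. From the comparison principle you only know the one-sided inequality $\Psi_a(z)\leq\sigma+u(z,\theta,\tau)$ at \emph{both} heights $z=L-1$ and $z=L$; subtracting two inequalities pointing in the same direction gives nothing, so the claimed bound $u(L-1,\theta,\tau)-u(L,\theta,\tau)\geq\Psi_a(L-1)-\Psi_a(L)>a^{-2}$ is a non sequitur. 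Even heuristically, coupling $a^{-2}$ with the oscillation bound $O(e^{\tau/2})$ only reproduces the relation $a\sim e^{-\tau/4}$, i.e.\ the same $|t|^{3/4}$ growth you already obtained in Step~2. The idea you are missing is the one the paper uses: translate the (unrescaled) shrinker $(-t)^{1/2}\Sigma_a$ by $Ka^2$ \emph{along the axis} and use concavity of $\Psi_a$ together with \eqref{Psi decreases} to convert the unit-interval decrement into a slope bound beyond $z=L$, namely
\[
\Psi_a\big(L+Ka^2(-t)^{-1/2}\big)\;\leq\;\Psi_a(L)+\big(\Psi_a(L)-\Psi_a(L-1)\big)\,Ka^2(-t)^{-1/2}\;<\;\sigma-K(-t)^{-1/2},
\]
which is exactly the clearance needed so that, by Proposition~\ref{asymptotic for u}, the shifted barrier's slice at $z=(-t)^{1/2}L$ lies inside $\Omega_t$. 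The avoidance principle then keeps the shifted shrinker inside $\Omega_t$ for $-t\geq 4K^2a^2$, and at $t=-4K^2a^2$ its tip sits at $z=Ka^2=-t/(4K)$, giving the linear bound $z^*(t)\geq -t/(4K)$ with the correct constant. Without this translation trick your argument stalls at $|t|^{3/4}$.

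A secondary flaw is in Step~1: at this stage of the paper rotational symmetry is not available (Proposition~\ref{curvature at tip} is an input to Sections~\ref{rotational symmetry section}--\ref{translating section}), so the point of $M_t$ with maximal $z$ is \emph{not} a material flow line, and setting $V=0$ in \eqref{Harnack V} does not give monotonicity of $G(\mathrm{tip},t)$. The conclusion can be rescued: the point with fixed normal $e_z$ moves with tangential velocity $-A^{-1}(\nabla G)$, so $\tfrac{d}{dt}G=\partial_tG-A^{-1}(\nabla G,\nabla G)\geq 0$ by the scalar Harnack \eqref{Harnack scalar}; alternatively, and more simply, one can avoid tip-speed monotonicity altogether as the paper does, deducing from the linear tip growth that $\sup_{M_{t_k}}G\geq 1/(8K)$ along a sequence $t_k\to-\infty$ and then propagating this forward in time using the fact that $G$ is nondecreasing along normal flow lines.
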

\begin{proof}
The proof is a direct adaptation of \cite[Proposition~3.3]{Brendle_Choi_a}. We make use of the hypersurfaces 
    \[\Sigma_a = \{(y,z) : y = \Psi_a(z)\theta, \, \theta \in S^{n-1}, \, L_0 \leq z \leq a\}\]
constructed in Section~\ref{barrier section}. The hypersurfaces
    \begin{align*}
        \Sigma_{a,t} &:= (-t)^{1/2}\Sigma_a - (0, Ka^2)\\
        &= \{(y,z) : y=(-t)^{1/2} \Psi_a((-t)^{-1/2}(z + Ka^2)) \theta, \, \theta \in S^{n-1},\\
        &\hspace{6cm} (-t)^{1/2} L_0-Ka^2\leq z\leq (-t)^{1/2}a- Ka^2\}
    \end{align*}
form a $G$-flow. As $t \to -\infty$ the rescaled hypersurfaces $(-t)^{-1/2}M_t$ converge in $C^{\infty}_{\loc}$ to the cylinder $\{|y|^2 = 2\gamma(0,1,\dots,1)\}$, and the hypersurfaces 
\[(-t)^{-1/2}(\Sigma_{a,t}\cap\{z \geq (-t)^{1/2}L_0\})\]
converge to a compact subset of $\{|y|^2 < 2\gamma(0,1,\dots,1)\}$. So when $-t$ is sufficiently large depending on $a$ we have
    \[\Sigma_{a,t}\cap\{z \geq (-t)^{1/2}L_0\} \subset \Omega_t,\]
where $\Omega_t$ is the open region bounded by $M_t$.

Since $\Psi_a(z)$ is concave, as a consequence of \eqref{Psi decreases} we have
    \begin{align*}
    \Psi_a(L + Ka^2 (-t)^{-1/2}) 
    &\leq \Psi_a(L) + (\Psi_a(L) - \Psi_a(L-1)) Ka^2(-t)^{-1/2}\\
    &< (2\gamma(0,1,\dots,1))^{1/2} - K(-t)^{-1/2}
    \end{align*}
for $-t \geq 4 K^2 a^2$ and $a$ sufficiently large. On the other hand, 
    \[\{(y,z) : |y| \leq (-2\gamma(0,1,\dots,1)t)^{1/2} - K, \, z = (-t)^{1/2} L\} \subset \Omega_t \cap \{z = (-t)^{1/2}L\}\]
when $-t$ is sufficiently large. It follows that for $-t \geq 4K^2 a^2$ and $a$ sufficiently large we have
    \[\Sigma_{a,t} \cap \{z = (-t)^{1/2}L\} \subset \Omega_t \cap \{z = (-t)^{1/2}L\}.\]
Therefore, the parabolic maximum principle implies that
    \[\Sigma_{a,t} \cap \{z \geq (-t)^{1/2}L\} \subset \Omega_t\]
for $-t\geq 4K^2 a^2$ and $a$ sufficiently large. When $t = -4K^2 a^2$ the tip of $\Sigma_{a,t}$ is located at $(0,Ka^2) = (0,-t/4K)$. From this we conclude that $M_t \cap \{z \geq -t/4K\}$ is nonempty for all sufficiently large $-t$.

It follows that $\sup_{M_t} G \geq - 1/8K$ at some sequence of times tending to $-\infty$. On the other hand, the differential Harnack inequality implies that $G$ is strictly increasing in time at every point. The claim follows.
\end{proof}


\section{The Neck Improvement Theorem}\label{neck improvement section}

In this section we generalise the Neck Improvement Theorem for solutions of mean curvature flow \cite{Brendle_Choi_b} to hypersurfaces moving with speed $G$. 

To begin with we recall the following definition from \cite{Brendle_Choi_b}.

\begin{definition}
A set of vector fields $\mathcal{K}=\lbrace K_\alpha : 1\leq \alpha\leq \tfrac{1}{2}n(n-1)\rbrace$ on $\mathbb{R}^{n+1}$ is a normalised set of rotation vector fields if there exists an orthonormal basis $\{J_\alpha : 1\leq \alpha\leq \tfrac{1}{2}n(n-1)\}$ of $\operatorname{so}(n) \subset \operatorname{so}(n+1)$, a matrix $S\in O(n+1)$ and a point $q\in\mathbb{R}^{n+1}$ such that
\begin{align*}
     K_\alpha(x)=SJ_\alpha S^{-1}(x-q).
\end{align*}
\end{definition}

Next we define a notion of quantitative almost-rotational symmetry.

\begin{definition}\label{symmetric point} Consider a $G$-flow $M_t$. A point $\bar x \in M_{\bar t}$ is called $\varepsilon$-symmetric if there exists a normalised set of rotation vector fields $\mathcal{K} = \{K_\alpha : 1 \leq \alpha \leq \tfrac{1}{2}n(n-1)\}$ such that 
    \[\max_\alpha |K_\alpha| \, G \leq 10\gamma(0,1,\dots,1)\]
at the point $(\bar{x},\bar{t})$ and 
    \[\max_\alpha |\langle K_\alpha,\nu \rangle| \, G \leq \varepsilon\]
in the neighbourhood $\hat{\mathcal{P}}(\bar{x},\bar{t},10,100)$.
\end{definition}

We now establish the Neck Improvement Theorem for $G$-flows. In fact, the proof reduces to the proof for mean curvature flow, since up to a change of variables the linearised $G$-flow over a shrinking neck is identical to the linearised mean curvature flow over a shrinking neck. We compute the linearisation and then refer to \cite{Brendle_Choi_b} for the rest of the argument. 

\begin{theorem}
Let $M_t$ be a uniformly parabolic $G$-flow. There exists a large constant $L$ (depending on $n$, $\gamma$ and $\dist(\lambda/|\lambda|, \partial \Gamma)$), and a small constant $\varepsilon_1$ depending on $L$, with the following property. Let $(\bar x, \bar t)$ be a point in spacetime such that every point in the parabolic neighbourhood $\hat {\mathcal P}(\bar x, \bar t, L, L^2)$ lies at the center of an $\varepsilon_1$-neck and is $\varepsilon$-symmetric for some $\varepsilon \leq \varepsilon_1$. Then $(\bar x, \bar t)$ is $\tfrac{\varepsilon}{2}$-symmetric. 
\end{theorem}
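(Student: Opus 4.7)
The plan is to adapt the proof in \cite{Brendle_Choi_b}, where the corresponding result is established for mean curvature flow. The central observation is that, for any Killing vector field $K$ of $\mathbb{R}^{n+1}$, the quantity $f = \langle K, \nu \rangle$ satisfies a linear parabolic equation along any $G$-flow. Indeed, acting on $M_t$ by the one-parameter group of isometries generated by $K$ produces another $G$-flow, so $f$ is a Jacobi field:
\[(\partial_t - \dot\gamma^{ij}\nabla_i \nabla_j)f = \dot\gamma^{ij} A_i^k A_{kj}\, f.\]
Since $G$ itself satisfies exactly the same equation, the ratio $h := f/G$ solves the homogeneous linear parabolic PDE
\[(\partial_t - \dot\gamma^{ij}\nabla_i \nabla_j) h = 2\dot\gamma^{ij}(\nabla_i \log G)\nabla_j h,\]
with no zeroth-order term.

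Next I would specialize to a neck. After parabolically rescaling so that $G(\bar x, \bar t) = (\gamma(0,1,\dots,1)/2)^{1/2}$ and passing to natural cylindrical coordinates $(z, \theta)$, the $\varepsilon_1$-neck hypothesis ensures that on $\hat{\mathcal P}(\bar x, \bar t, L, L^2)$ the flow is close in $C^{10}$ to the shrinking cylinder of radius $(2\gamma(0,1,\dots,1)(1-t))^{1/2}$. Using Lemma~\ref{expansion for G} and Lemma~\ref{trace expansion}, one computes that the principal part of the linearised operator on the fixed cylinder is
\[a\,\frac{\partial^2}{\partial z^2} + \frac{1}{2(n-1)}\Delta_{S^{n-1}}, \qquad a := \dot\gamma^1(0,1,\dots,1),\]
which differs from the mean curvature flow case only through the rescaling $z \mapsto z/\sqrt{a}$ of the axial variable. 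Under this change of variables the ambient linear theory is identical: the kernel of the operator on the Gaussian $L^2$ space is precisely the span of the restrictions to the cylinder of the Killing fields of $\mathbb{R}^{n+1}$, and the first nonzero eigenvalue above the kernel is bounded away from zero by a constant depending only on $n$ and $\gamma$.

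With the linearisation identified, I would carry out the iterative spectral-decomposition argument of \cite[Section~4]{Brendle_Choi_b} essentially verbatim. One starts with the family of almost-Killing vector fields furnished by the $\varepsilon$-symmetry hypothesis at every point of $\hat{\mathcal P}(\bar x, \bar t, L, L^2)$, decomposes the obstruction $\langle K_\alpha,\nu\rangle / G$ at a central collection of points into a neutral/unstable part (which can be absorbed into an infinitesimal adjustment of the normalised set $\mathcal{K}$ at $(\bar x, \bar t)$) and an orthogonal remainder. The PDE for $h$ then forces the remainder to decay at a definite exponential rate across $\hat{\mathcal P}(\bar x, \bar t, L, L^2)$, and choosing $L$ sufficiently large (depending on the spectral gap, hence on $n$, $\gamma$ and the uniform ellipticity constant) delivers the improvement from $\varepsilon$ to $\varepsilon/2$ at the centre.

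The main technical obstacle will be controlling the higher-order error terms introduced by the fully nonlinear character of $G$, which were absent in the quasilinear mean curvature flow setting. Specifically, the PDE for $h$ only reduces to the model linear equation modulo terms of the form $O(|u|^2 + |\nabla u|^2 + |\nabla^2 u|^2)$ in the graph function $u$ of the neck over the cylinder, and also modulo terms coming from the nonlinearity of $\gamma$ in $A$. These errors can be tamed by combining the $\varepsilon_1$-neck hypothesis with uniform ellipticity (Lemma~\ref{lemma uniform parabolicity}), the interior $C^{2,\alpha}$-estimate of \cite{Brendle_Huisken} and Krylov's theorem \cite{Krylov82}, followed by Schauder estimates, in the same way as in Section~\ref{asymptotics section}. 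With these estimates in hand the Brendle--Choi iteration closes and produces the desired conclusion.
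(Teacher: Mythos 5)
Your overall strategy is the paper's: observe that $\langle K_\alpha,\nu\rangle$ is a Jacobi field along the flow, note that over an $\varepsilon_1$-neck the relevant linear operator becomes, after the change of variable $z\mapsto a^{-1/2}z$ with $a=\dot\gamma^1(0,1,\dots,1)$, literally the mean curvature flow linearisation over the shrinking cylinder, and then run the iteration of \cite[Theorem~4.4]{Brendle_Choi_b}; the paper does exactly this and quotes Steps~5, 7 and 8 of that proof verbatim. Passing to the quotient $h=\langle K,\nu\rangle/G$ to remove the zeroth-order term is harmless but unnecessary; the paper keeps the term $\tfrac{1}{(-2t)}\bar u_\alpha$ and compares directly with the MCF model.

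There is, however, one place where your error analysis as written would not close. You say the PDE reduces to the model linear equation only ``modulo terms of the form $O(|u|^2+|\nabla u|^2+|\nabla^2 u|^2)$ in the graph function $u$ of the neck over the cylinder,'' to be tamed with the Krylov/Brendle--Huisken/Schauder machinery as in Section~\ref{asymptotics section}. Such terms are of size $C(L)\varepsilon_1^2$, and that is not acceptable here: the neck improvement must produce a correction bounded by a small multiple of $\varepsilon$ uniformly for \emph{all} $\varepsilon\le\varepsilon_1$, and $\varepsilon_1^2$ is not $\leq C\varepsilon$ once $\varepsilon\ll\varepsilon_1^2$. What rescues the argument---and what the paper uses---is that $\bar u_\alpha=\langle\bar K_\alpha,\nu\rangle$ satisfies the Jacobi equation $\partial_t\bar u_\alpha=\dot\gamma^{ij}(\nabla_i\nabla_j\bar u_\alpha+A_i^kA_{kj}\bar u_\alpha)$ \emph{exactly}, so no quadratic error in the neck parameter ever appears; the only errors arise from replacing the coefficients $\dot\gamma^{ij}$ and $A_i^kA_{kj}$ by their values on the exact cylinder (Lemma~\ref{Lemma: expansions} and Lemma~\ref{trace expansion}), and these are \emph{linear} in $\bar u_\alpha,\nabla\bar u_\alpha,\nabla^2\bar u_\alpha$ with coefficients of size $C(L)\varepsilon_1$. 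Combined with standard interior parabolic estimates, which give $|\nabla\bar u_\alpha|+|\nabla^2\bar u_\alpha|\le C(L)\varepsilon$ from the hypothesis $|\langle K_\alpha,\nu\rangle|\,G\le\varepsilon$, every error is of size $C(L)\varepsilon_1\varepsilon$, which is exactly what the Brendle--Choi iteration needs; in particular no fully nonlinear $C^{2,\alpha}$ estimates are required in this step, since the $C^{10}$-closeness built into the definition of an $\varepsilon_1$-neck already controls the coefficients. If you rewrite your error paragraph around this exact-linearity observation, the rest of your outline goes through as planned.
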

\begin{proof}
Without loss of generality, we assume $\bar t = -1$. After parabolically rescaling we can arrange that $G(\bar x, -1) = (\gamma(0,1,\dots,1)/2)^{1/2}$. Since every point in the parabolic neighbourhood $\hat {\mathcal P}(\bar x, \bar t, L, L^2)$ lies at the center of an $\varepsilon_1$-neck, we can then approximate $M_t$ by the cylinder 
    \[S^{n-1}((-2\gamma(0,1,\dots,1)t)^{1/2}) \times \mathbb{R}\]
in $\hat {\mathcal P}(\bar x, \bar t, L, L^2)$, up to errors which are bounded by $C(L)\varepsilon_1$ in the $C^{10}$-norm.

For every $(x_0,t_0) \in \hat {\mathcal P}(\bar x, -1, L,L^2)$ there exists a normalised set of rotation vector fields $\{K_\alpha^{(x_0,t_0)}: 1 \leq \alpha \leq \frac{1}{2}n(n-1)\}$ such that $\max_\alpha|K_\alpha|G\leq 10\gamma(0,1,\dots,1)$ at $(x_0,t_0)$ and 
    \[\max_\alpha|\langle K_\alpha^{(x_0,t_0)}, \nu\rangle|G \leq \varepsilon\]
in $\hat{\mathcal P}(x_0, t_0, 10, 100)$. As in \cite[Theorem~4.4]{Brendle_Choi_b}, we may assume without loss of generality that 
\begin{align}\label{eq: bounded sup rot VFs}
   \sup_{B(0,10\gamma(0,1\dots,1)L)}\max_\alpha|K^{(\bar x, -1)}_\alpha-K^{(x_0,t_0)}_\alpha|\leq C(L)\varepsilon 
\end{align}
for each $(x_0,t_0) \in \hat{\mathcal P}(\bar x, -1, L,L^2)$. This implies that 
    \[|\langle K^{(\bar x, -1)}, \nu \rangle| \leq C(L)\varepsilon\]
at each point in $\hat{\mathcal P}(\bar x, -1, L,L^2)$. We may also assume that the common axis of rotation of the vector fields $K_\alpha^{(\bar x, -1)}$ is the $x_{n+1}$-axis (that is, $K_\alpha = J_\alpha x$ for some orthonormal basis $\{J_\alpha : 1 \leq \alpha \leq \tfrac{1}{2}n(n-1)\}$ of $\operatorname{so}(n) \subset \operatorname{so}(n+1)$) and that $\bar x$ lies in the hyperplane $\{x_{n+1} = 0\}$. Let us define $\bar K_\alpha := K^{(\bar x, -1)}_\alpha$.

Consider $M_t$ as a graph over the $x_{n+1}$-axis, which we equip with the coordinate $z$, so that
\begin{align*}
    \Big\lbrace \big( r(\theta, z,t)\theta,z\big):\theta\in S^{n-1}, \, z\in\left[-\tfrac{3}{4}L, \tfrac{3}{4}L\right]\Big\rbrace \subset M_t.
\end{align*}
Exactly as in \cite{Brendle_Choi_b}, we have  
    \begin{equation}\label{div r}
        |\langle \bar K_\alpha, \nu\rangle + \Div_{S^{n-1}}(r(\theta,z,t)J_\alpha\theta)| \leq C(L)\varepsilon_1\varepsilon
    \end{equation}
in $\hat{\mathcal P}(\bar x, -1, L,L^2)$.

Fix $\alpha$ and set $\bar u_\alpha := \langle \overline{K}_\alpha, \nu \rangle$. With respect to any orthonormal frame on $M_t$, $\bar u_\alpha$ satisfies
    \[\frac{\partial \bar u_\alpha}{\partial t} = \dot \gamma^{ij}(\nabla_i \nabla_j \bar u_\alpha +  A_i^kA_{kj} \bar u_\alpha).\]
Define $a:=\dot\gamma^1(0,1,\dots,1)$. By Lemma~\ref{trace expansion} we have
    \begin{align*}
    \bigg|\dot\gamma^{ij}\nabla_i \nabla_j \bar u_\alpha - a\frac{\partial^2 \bar u_\alpha}{\partial z^2} - \frac{1}{(-2(n-1)t)}\Delta_{S^{n-1}} \bar u_\alpha\bigg| \leq C(L)\varepsilon_1(|\nabla \bar u_\alpha| + |\nabla^2 \bar u_\alpha|).
    \end{align*}
Standard interior estimates for parabolic equations imply $|\nabla \bar u_\alpha| + |\nabla^2 \bar u_\alpha| \leq C(L)\varepsilon$, and hence 
    \[\bigg|\dot\gamma^{ij}\nabla_i \nabla_j \bar u_\alpha - a\frac{\partial^2 \bar u_\alpha}{\partial z^2} - \frac{1}{(-2(n-1)t)}\Delta_{S^{n-1}} \bar u_\alpha\bigg| \leq C(L)\varepsilon_1\varepsilon\]
for $|z| \leq \frac{L}{4}$ and $t \in [-\frac{L^2}{16}, -1]$. Lemma~\ref{Lemma: expansions} and Lemma~\ref{trace expansion} imply
    \[\bigg|\dot\gamma^{ij} A_i^kA_{kj} \bar u_\alpha - \frac{1}{(-2t)}\bar u_\alpha\bigg| \leq C(L)\varepsilon_1 \varepsilon.\]
Putting this all together, we obtain 
    \[\bigg|\frac{\partial \bar u_\alpha}{\partial t} - a\frac{\partial^2 \bar u_\alpha}{\partial z^2} - \frac{1}{(-2(n-1)t)}\Delta_{S^{n-1}} \bar u_\alpha - \frac{1}{(-2t)}\bar u_\alpha\bigg| \leq C(L)\varepsilon_1\varepsilon\]
for $|z| \leq \frac{L}{4}$ and $t \in [-\frac{L^2}{16}, -1]$. 

Let us define $u_\alpha(\theta, z, t) := \bar u_\alpha (\theta, a^{-1/2}z, t)$. We then have 
    \begin{equation}\label{linearised u_alpha}
        \bigg|\frac{\partial u_\alpha}{\partial t} - \frac{\partial^2 u_\alpha}{\partial z^2} - \frac{1}{(-2(n-1)t)}\Delta_{S^{n-1}} u_\alpha - \frac{1}{(-2t)} u_\alpha\bigg| \leq C(L)\varepsilon_1\varepsilon
    \end{equation}
for $|z| \leq \tfrac{a^{1/2}}{4} L$ and $t \in [-\frac{L^2}{16}, -1]$. In addition, \eqref{div r} gives
    \begin{equation}\label{div u_alpha}
        |u_\alpha(\theta,z,t) + \Div_{S^{n-1}}(r(\theta,z,t)J_\alpha\theta)| \leq C(L)\varepsilon_1\varepsilon
    \end{equation}
for $|z| \leq a^{1/2} L$ and $t \in [-L^2,-1]$.

Notice that the linear operator on the left-hand side of \eqref{linearised u_alpha} is completely independent of the speed $\gamma$. We may therefore appeal to Step~5 of \cite[Theorem~4.4]{Brendle_Choi_b} to conclude that, for each $1 \leq \alpha \leq \tfrac{1}{2}n(n-1)$, there are vectors $A_\alpha$ and $\tilde B_\alpha$ such that $|A_\alpha| \leq C(L)\varepsilon$, $|\tilde B_\alpha|\leq C(L)\varepsilon$ and 
    \[|u_\alpha(\theta,z,t) - \langle A_\alpha, \theta\rangle - \langle \tilde B_\alpha, \theta\rangle z| \leq C L^{-1/(n-1)}\varepsilon + C(L)\varepsilon_1\varepsilon\]
for $|z| \leq 20\gamma(0,1,\dots,1)a^{1/2}$ and $t \in [-400\gamma(0,1,\dots,1)^2,-1]$. Undoing the rescaling $z \to a^{-1/2} z$ and setting $B_\alpha := a^{1/2}\tilde B_\alpha$, we obtain
    \[|\langle \bar K_\alpha,\nu \rangle - \langle A_\alpha, \theta\rangle - \langle B_\alpha, \theta\rangle z| \leq C L^{-1/(n-1)}\varepsilon + C(L)\varepsilon_1\varepsilon\]
for $|z| \leq 20\gamma(0,1,\dots,1)$ and $t \in [-400\gamma(0,1,\dots,1)^2,-1]$. Step~7 and Step~8 of \cite{Brendle_Choi_b} show that if $L$ is sufficiently large and $\varepsilon_1$ is sufficiently small (depending on $L$), then $(\bar x, -1)$ is $\tfrac{\varepsilon}{2}$-symmetric. This completes the proof. 
\end{proof}


\section{Rotational symmetry}\label{rotational symmetry section}

Let $M_t = \partial\Omega_t$, $t \in (-\infty, 0]$, be a strictly convex ancient $G$-flow which is noncollapsing and uniformly two-convex. Suppose in addition that $M_t$ is noncompact. In this section we prove that $M_t$ is rotationally symmetric. The arguments follow \cite[Section~5]{Brendle_Choi_a} and \cite[Section~5]{Brendle_Choi_b}. We make use of the Neck Improvement Theorem and the lower bound for the maximum of $G$ over $M_t$ established in Proposition~\ref{curvature at tip}. 

Throughout this section $C$ denotes some large constant which is independent of $t$.

\begin{lemma}
For sufficiently large $-t$ the maximum of $G$ over $M_t$ is attained at some unique point $p_t$. The Hessian of $G$ is negative-definite at $p_t$, and hence $p_t$ is smooth as a function of $t$. 
\end{lemma}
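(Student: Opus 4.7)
My plan is to establish existence of the maximum directly from the asymptotic analysis, and then to prove uniqueness and nondegeneracy of the Hessian by a parabolic blow-up argument whose limit is identified as the bowl soliton.

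For existence, the earlier curvature bound gives $\sup_{M_t} G \leq C$, and Proposition~\ref{curvature at tip} provides a uniform lower bound $\max_{M_t} G \geq c_0 > 0$. On the other hand, Proposition~\ref{asymptotic for u} together with the expansion in Lemma~\ref{expansion for G} shows that $G = (1+o(1))\sqrt{\gamma(0,1,\dots,1)/2}\,(-t)^{-1/2}$ uniformly on the cylindrical region $M_t \cap \{|z| \leq L(-t)^{1/2}\}$, which is less than $c_0/2$ once $-t$ is sufficiently large. Since $M_t$ is convex with asymptotic cone $\{z \leq 0\}$, the tip cap $M_t \cap \{z \geq L(-t)^{1/2}\}$ is compact, so the maximum is attained there.

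For uniqueness and the Hessian, I argue by contradiction. Suppose there is a sequence $t_k \to -\infty$ along which either $G$ has two max points of mutual distance bounded below, or the Hessian at the chosen max point $p_k \in M_{t_k}$ is degenerate. Set $\mu_k := G(p_k, t_k) \in [c_0, C]$ and pass to a subsequence with $\mu_k \to \mu_\infty \in [c_0, C]$. The rescaled flow
\[ \tilde M^k_s := \mu_k\bigl(M_{t_k + s/\mu_k^2} - p_k\bigr) \]
satisfies $\tilde G_k(0,0) = 1$ and is uniformly convex, noncollapsing, and uniformly two-convex, so Proposition~\ref{compactness} yields a $C^\infty_{\loc}$ subsequential limit $M^\infty_s$. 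Since Harnack monotonicity forces $\max_{M_t} G$ to be nondecreasing in $t$ and to converge to $\mu_\infty$ as $t \to -\infty$, we have $\max_{M^\infty_s} G^\infty = 1$ for every $s$, attained at the origin at $s = 0$. The limit is strictly convex (otherwise Lemma~\ref{splitting} would yield a shrinking cylinder, whose $G$ blows up at the singular time, incompatible with $G^\infty(0,0) = 1$). Tracking the material point $x(s)$ through the origin at $s = 0$, the Harnack inequality gives $G^\infty(x(s), s) \equiv 1$ for $s \geq 0$, forcing equality in the Harnack inequality at $(0,0)$; Proposition~\ref{differential harnack} then identifies $M^\infty_s$ as a translating soliton, and the Bourni--Langford theorem (noted in the introduction) identifies it as the bowl.

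The bowl has a unique max of $G$ at its tip with negative-definite Hessian, so $C^\infty_{\loc}$ convergence yields a contradiction: a second max point $q_k$ with $|q_k - p_k| \geq \delta > 0$ would, after rescaling, lie in a bounded region (the tip cap has bounded rescaled diameter) and subconverge to a second max of $G^\infty$ on the bowl, while a degenerate Hessian at $p_k$ would pass to the limit as a degenerate Hessian at the bowl's tip. Hence for $-t$ large, $p_t$ is unique with $\Hess(G)(p_t) < 0$, and smoothness of $p_t$ in $t$ follows by applying the implicit function theorem to $\nabla_{M_t} G(\cdot, t) = 0$. The main obstacle is identifying the blow-up limit as a translator, which rests on combining the Harnack monotonicity of $\max G$, the uniform lower bound of Proposition~\ref{curvature at tip}, and the rigidity in the equality case of Proposition~\ref{differential harnack}.
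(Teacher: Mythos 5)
Your overall strategy is the same as the paper's: blow up along $t_k \to -\infty$ at the maximum point, use Harnack monotonicity of $\max_{M_t} G$ together with the lower bound of Proposition~\ref{curvature at tip} to see that the limit attains its spacetime maximum of $G$ at the origin, invoke the rigidity case of Proposition~\ref{differential harnack} and the Bourni--Langford classification to identify the limit as the bowl, and then transfer uniqueness and negative-definiteness of the Hessian from the bowl via $C^\infty_{\loc}$ convergence. Two of your steps, however, have genuine gaps as written.

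First, existence. You exclude only the cylindrical region $\{|z|\le L(-t)^{1/2}\}$ and then conclude the maximum is attained on the compact cap $M_t\cap\{z\ge L(-t)^{1/2}\}$. But nothing you say controls $G$ on the noncompact end $\{z\le -L(-t)^{1/2}\}$, and compactness of the cap does not prevent the supremum from being approached, without being attained, along points escaping down the end. This is exactly what the paper's proof supplies: by Lemma~\ref{neck-cap} (equivalently, by convexity the cross-sections along the end contain discs of radius close to $(-2\gamma(0,1,\dots,1)t)^{1/2}$, so Lemma~\ref{interior curvature} applies) one gets $G\le C(-t)^{-1/2}$ on the noncompact component, which is below $c_0$ for $-t$ large; only then does the maximum land on the compact cap.

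Second, uniqueness. The assertion that ``the tip cap has bounded rescaled diameter'' is false for the cap as you defined it: $M_t\cap\{z\ge L(-t)^{1/2}\}$ has extent of order $-t$ in the $z$-direction (the tip translates at speed roughly $\mathcal G$) and of order $(-t)^{1/2}$ transversally. So a hypothetical second maximum $q_k$ is not automatically confined to a bounded neighbourhood of $p_k$, and the contradiction with the bowl limit does not follow as stated. The step can be repaired with the paper's tools: blowing up at $q_k$ (which also realises the spacetime maximum) shows the limit there is again the bowl, so neither $p_k$ nor $q_k$ is the center of an $\varepsilon_1$-neck for large $k$; since the flow is noncompact and $G(q_k,t_k)\ge c_0$, Lemma~\ref{neck-cap} then forces $|q_k-p_k|\le R/c_0$, and only now does local convergence to the bowl (unique, nondegenerate critical point of $G$) give the contradiction. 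Relatedly, your reason for strict convexity of the limit is off: a shrinking cylinder on $(-\infty,0]$ can perfectly well satisfy $G^\infty(0,0)=1$, so nothing ``blows up'' at time $0$. What rules it out is that, because $t_k\to-\infty$, the limit exists for $s>0$ (indeed for all $s$) with $G^\infty\le 1$, whereas on a cylinder $G$ is spatially constant and strictly increasing beyond any value; this eternality should be stated explicitly, since your worldline argument at $(0,0)$ also relies on the flow existing for $s\ge 0$.
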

\begin{proof}
When $-t$ is sufficiently large, $M_t \cap B(0,(-10\gamma(0,1,\dots,1)t)^{1/2})$ is close to the cylinder $\mathbb{R} \times S^{n-1}((-2\gamma(0,1,\dots,1)t)^{1/2})$. As a consequence, in the complement of $B(0,(-10\gamma(0,1,\dots,1)t)^{1/2})$, $M_t$ consists of two connected components. One of these is compact and the other is noncompact. By Lemma~\ref{neck-cap}, on the noncompact component we have $G \leq C(-t)^{-1/2}$. Since we have shown (in Proposition~\ref{curvature at tip}) that $\max_{M_t} G$ is bounded from below by a positive constant, it follows that $\max_{M_t} G$ is attained by some $p_t$ in the compact component. 

Now consider an arbitrary sequence $t_k \to \infty$. After passing to a subsequence, the sequence of shifted flows $M_{t+t_k} - p_{t_k}$ converges in $C^\infty_{\loc}$ to an ancient $G$-flow. Moreover, on the limit, the spacetime maximum of $G$ is attained at $(0,0)$. Therefore, by Proposition~\ref{differential harnack}, the limit is a translating soliton, and hence is the bowl soliton by \cite{Bourni--Langford} (cf.~\cite{Haslhofer_Bowl}). The claim follows.
\end{proof}

Let $\varepsilon_1$ and $L$ be the constants in the Neck Improvement Theorem. Given that $\max_{M_t} G$ is bounded from below, Proposition~\ref{lower curvature} implies that $G(x,t)^{-1} = o(|x - p_t|)$ uniformly in $t$. Therefore, by Lemma~\ref{neck-cap}, there is a large constant $\Lambda$ with the following property: every point $x \in M_t$ satisfying $|x - p_t| \geq \Lambda$ lies at the center of an $\varepsilon_1$-neck and satisfies $G(x,t)|x-p_t| \geq 10^6 \gamma(0,1,\dots,1) L$.

\begin{lemma}\label{tip to neck growing}
There is a time $T < 0$ with the following property. If $\bar t\leq T$ and $\bar x \in M_{\bar t}$ is such that $|\bar x - p_{\bar t}| \geq \Lambda$, then $|\bar x - p_t| \geq |\bar x - p_{\bar t}|$ for all $t \leq \bar t$.  
\end{lemma}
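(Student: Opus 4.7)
The plan is to show that $f(t) := |\bar x - p_t|^2$ has non-positive derivative on $(-\infty,\bar t]$, from which $f(t) \geq f(\bar t)$ follows by integration. We have
\[
f'(t) = 2\langle p_t - \bar x, \dot p_t\rangle,
\]
so the task reduces to showing that $\dot p_t$ is nearly antiparallel to $p_t - \bar x$, with both vectors approximately aligned with the axis of the asymptotic cylinder. Set up coordinates as in Section~\ref{asymptotics section}: the axis of $\Sigma$ is the $x_1$-axis, $C(\Omega_t) = \{(z,0): z \leq 0\}$, and let $e_1$ denote the positive $x_1$-direction. Then $p_t$ lies at the tip in the $+e_1$ direction and $\bar x$ lies in the cylindrical region of $M_{\bar t}$.

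For the direction of $\dot p_t$: by the argument in the previous lemma, any sequence $t_k \to -\infty$ admits a subsequence such that $M_{t+t_k} - p_{t_k}$ converges in $C^\infty_{\loc}$ to a translating $G$-soliton, which by \cite{Bourni--Langford} is the unique bowl soliton. Its axis must coincide with the $x_1$-axis and its tip must point in $+e_1$ (to match the asymptotic cone), so its translation velocity is $V = -v\,e_1$ for a fixed $v > 0$. The uniqueness of $V$ across all subsequential limits, combined with a standard compactness/contradiction argument, upgrades this to $\dot p_t \to V$ as $t \to -\infty$; in particular, for any prescribed $\eta > 0$, $|\dot p_t - V| \leq \eta$ for all $t \leq T$ once $-T$ is sufficiently large. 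For the direction of $p_t - \bar x$: the hypothesis $|\bar x - p_{\bar t}| \geq \Lambda$ combined with the $\varepsilon_1$-neck structure forces the axial separation to dominate the total separation in both the bowl-cap and macroscopic-cylinder regimes (near the tip, the bowl looks cylindrical only at radii large enough for axial distance to dominate; in the macroscopic cylinder, the axial diameter of the cap is universally bounded). This yields $p_{\bar t}^1 - \bar x^1 \geq c_0\, |\bar x - p_{\bar t}| \geq c_0 \Lambda$ for a universal $c_0 > 0$. Integrating the velocity estimate gives $p_t^1 \geq p_{\bar t}^1 + (v/2)(\bar t - t) \geq p_{\bar t}^1$ for $t \leq \bar t \leq T$, so $p_t^1 - \bar x^1 \geq c_0 \Lambda$ throughout.

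Combining, $\langle p_t - \bar x, V\rangle = -v(p_t^1 - \bar x^1) \leq -v c_0 \Lambda$. Writing $\dot p_t = V + E_t$ with $|E_t| \leq \eta$, the error term satisfies $|\langle p_t - \bar x, E_t\rangle| \leq \eta |p_t - \bar x|$. Although $|p_t - \bar x|$ may be large (of order $\sqrt{-\bar t}$ or larger deep in the cylinder, or of order $v(\bar t - t)$ for $t$ far below $\bar t$), the error $\eta$ can be forced to decay faster than the growth of $|p_t - \bar x|$ by choosing $-T$ large enough, so the leading term dominates and $f'(t) \leq 0$ on $(-\infty,\bar t]$. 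The main obstacle is the quantitative form of the first step, namely upgrading the subsequential $C^\infty_{\loc}$ convergence to the bowl soliton into a uniform estimate on $\dot p_t$. This crucially relies on the uniqueness of the bowl $G$-translator \cite{Bourni--Langford}, which ensures that every subsequential limit exhibits the same $V$, so that a contradiction argument can close the gap between subsequential and uniform convergence of the tip velocity.
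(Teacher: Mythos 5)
Your route differs from the paper's: the paper never fixes a global direction or a limiting velocity. It only uses that near $p_t$ the flow is close to \emph{a} bowl soliton, so that $\frac{d}{dt}p_t$ is nearly parallel to $-\nu(p_t,t)$, then convexity to get $\langle x-p_t,\frac{d}{dt}p_t\rangle>0$ for all $x\in M_t\cup\Omega_t$ with $|x-p_t|\geq\Lambda$, and finally an open--closed continuity argument to propagate the inequality $|\bar x-p_t|\geq\Lambda$ backwards in time (since the monotonicity is only available while the distance stays $\geq\Lambda$). Your plan instead requires two stronger global inputs: (i) $\dot p_t\to V$ for a \emph{fixed} vector $V$, and (ii) a proportional alignment $\langle p_{\bar t}-\bar x,e_1\rangle\geq c_0|p_{\bar t}-\bar x|$ uniformly in $\bar t\leq T$ and in all admissible $\bar x$. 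Both are plausible but neither is established by what you write. For (i), "to match the asymptotic cone'' is the right idea, but you need to argue that recession directions pass to local limits (or bound the transverse displacement of $p_t$ by $O(\sqrt{-t})$ using Proposition~\ref{asymptotic for u} and convexity), that the common translation speed is $\lim_{t\to-\infty}\max_{M_t}G$ (Harnack monotonicity), and that $C^\infty_{\loc}$ convergence of the flows yields convergence of the velocity of the max point of $G$ (this uses the nondegeneracy of $\nabla^2G$ at $p_t$ from the preceding lemma). For (ii), the cases $\bar x$ in the intermediate cap region and $\bar x$ far down the open end (outside the region controlled by Proposition~\ref{asymptotic for u}) are not covered by "the axial diameter of the cap is universally bounded''; a genuine convexity argument is needed there.

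The concrete gap is in your final step. From the bound you actually carry forward, $p_t^1-\bar x^1\geq c_0\Lambda$, the conclusion $f'(t)\leq 0$ on $(-\infty,\bar t]$ does not follow: the error $\eta|p_t-\bar x|$ grows linearly in $\bar t-t$ (the tip recedes at speed $\approx v$), while $-v(p_t^1-\bar x^1)\leq -vc_0\Lambda$ is only a constant, and "choosing $-T$ large enough'' cannot make the fixed constant $\eta$ decay. The statement "the error $\eta$ can be forced to decay faster than the growth of $|p_t-\bar x|$'' is not meaningful as written. The repair uses ingredients you already have but do not combine: from $\dot p_t=V+E_t$ with $|E_t|\leq\eta$ one gets $p_t^1-\bar x^1\geq c_0|p_{\bar t}-\bar x|+(v-\eta)(\bar t-t)$ and $|p_t-\bar x|\leq|p_{\bar t}-\bar x|+(v+\eta)(\bar t-t)$, whence
\[
\langle p_t-\bar x,\dot p_t\rangle\leq-(vc_0-\eta)\,|p_{\bar t}-\bar x|-\bigl(v(v-\eta)-\eta(v+\eta)\bigr)(\bar t-t)<0
\]
once $\eta<\min\{vc_0,\,v/4\}$; note this genuinely needs the \emph{proportional} alignment at time $\bar t$, not just the constant $c_0\Lambda$. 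With these repairs your argument closes and, unlike the paper's, gives the derivative inequality on all of $(-\infty,\bar t]$ directly, so the continuity trick is not needed; but as written, the uniform tip-velocity claim and the error-versus-main-term comparison are gaps.
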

\begin{proof}
We have seen that, when $-t$ is large, $M_t$ is extremely close to the bowl soliton near $p_t$. In particular, $\frac{d}{dt} p_t$ is almost parallel to $-\nu(p_t, t)$. It follows that there is a time $T \leq 0$ such that  
    \[\bigg\langle x - p_t, \frac{d}{dt} p_t\bigg\rangle > 0\]
whenever $t \leq T$, $x \in M_t \cup \Omega_t$ and $|x - p_t| \geq \Lambda$. Consequently, 
    \[\frac{d}{dt}|x - p_t| = -\bigg\langle\frac{x-p_t}{|x-p_t|}, \frac{d}{dt}p_t \bigg\rangle < 0\]
whenever $t \leq T$, $x \in M_t \cup \Omega_t$ and $|x - p_t| \geq \Lambda$. 

Consider a time $\bar t \leq T$ and a point $\bar x \in M_{\bar t}$ such that $|\bar x - p_{\bar t}| \geq \Lambda$. We claim that $|\bar x - p_t| \geq |\bar x - p_{\bar t}|$ for all $t \leq \bar t$. If this is not the case then 
    \[\tilde t := \sup\{t \leq \bar t : |\bar x - p_t| < |\bar x - p_{\bar t}|\}\]
is finite. We have $\tilde t < \bar t$ and $|\bar x - p_t| \geq |\bar x - p_{\bar t}| \geq \Lambda$ for $t \in [\tilde t, \bar t]$. But our choice of $T$ ensures that $\frac{d}{dt}|\bar x - p_t|<0$ for $t \in [\tilde t, \bar t]$, and hence $|\bar x - p_{\tilde t}| > |\bar x - p_{\bar t}|$. This contradicts the definition of $\tilde t$, so $T$ has the desired property. 
\end{proof}

Next we employ the Neck Improvement Theorem to show that the end of $M_t$ becomes increasingly symmetric at large distances.

\begin{proposition}\label{end symmetric}
If $t \leq T$, and $x \in M_t$ satisfies $|x - p_t| \geq 2^{j/400}\Lambda$, then $(x,t)$ is $2^{-j}\varepsilon_1$-symmetric. 
\end{proposition}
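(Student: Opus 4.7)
The plan is to proceed by induction on $j$, using the Neck Improvement Theorem and Lemma~\ref{tip to neck growing} to propagate almost-rotational symmetry outward along the neck.

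\textbf{Base case $j=0$.} By the choice of $\Lambda$, every point $x \in M_t$ with $|x-p_t| \geq \Lambda$ lies at the centre of an $\varepsilon_1$-neck. Since an $\varepsilon_1$-neck is a graph over a round shrinking cylinder with small $C^{10}$-norm, the (normalised) rotation vector fields of that cylinder provide a set $\mathcal{K}$ witnessing the $\varepsilon_1$-symmetry of $(x,t)$ in the sense of Definition~\ref{symmetric point}, possibly after shrinking $\varepsilon_1$ by a universal factor that we absorb into the constant appearing in the neck improvement theorem.

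\textbf{Inductive step.} Suppose the proposition holds at step $j$. Fix $t \leq T$ and $x \in M_t$ with $|x-p_t| \geq 2^{(j+1)/400}\Lambda$; the goal is to verify the hypotheses of the Neck Improvement Theorem at $(x,t)$ so as to promote $2^{-j}\varepsilon_1$-symmetry in $\hat{\mathcal{P}}(x,t,L,L^2)$ into $2^{-(j+1)}\varepsilon_1$-symmetry at $(x,t)$. The key point is to check that every $(y,s)\in \hat{\mathcal{P}}(x,t,L,L^2)$ satisfies $|y-p_s|\geq 2^{j/400}\Lambda$; then induction gives $2^{-j}\varepsilon_1$-symmetry, and the estimate $|y-p_s|\geq \Lambda$ combined with the defining property of $\Lambda$ gives that $(y,s)$ lies at the centre of an $\varepsilon_1$-neck.

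\textbf{The distance estimate.} Since $t\leq T$ and $|x-p_t|\geq \Lambda$, Lemma~\ref{tip to neck growing} yields $|x-p_s|\geq |x-p_t|$ for every $s\leq t$. Moreover, by the defining property of $\Lambda$,
\[
G(x,t)^{-1} \leq \frac{|x-p_t|}{10^6\,\gamma(0,1,\dots,1)\,L}.
\]
Using Proposition~\ref{lower curvature} to bound $G$ above by $C\,G(x,t)$ in $\hat{\mathcal{P}}(x,t,L,L^2)$, the spatial diameter estimate in the definition of the parabolic neighbourhood, and integrating the flow equation from $s$ to $t$, one obtains $|y-x|\leq C L\, G(x,t)^{-1}$, where $C=C(n,\gamma,L)$. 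Combining these inequalities,
\[
|y-p_s| \;\geq\; |x-p_s| - |y-x| \;\geq\; |x-p_t|\Bigl(1 - \tfrac{C}{10^6\,\gamma(0,1,\dots,1)}\Bigr).
\]
Provided the constant $10^6$ in the definition of $\Lambda$ is chosen large enough that $C/(10^6\gamma(0,1,\dots,1)) \leq 1 - 2^{-1/400}$, this gives
\[
|y-p_s| \;\geq\; 2^{-1/400}\,|x-p_t| \;\geq\; 2^{j/400}\Lambda,
\]
which is what we needed.

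\textbf{Conclusion.} With the hypotheses of the Neck Improvement Theorem verified, we conclude that $(x,t)$ is $2^{-(j+1)}\varepsilon_1$-symmetric, completing the induction. The main obstacle is the bookkeeping in the distance estimate: the geometric factor $2^{1/400}$ is exactly what closes the induction, and making it work forces us to fix the constant in the definition of $\Lambda$ large compared to the Proposition~\ref{lower curvature} constant on parabolic neighbourhoods of size $L$.
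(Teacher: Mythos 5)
Your proposal is correct and follows essentially the same route as the paper: induction on $j$, with the inductive step powered by the Neck Improvement Theorem, Lemma~\ref{tip to neck growing}, and the smallness of the parabolic neighbourhood $\hat{\mathcal P}(\cdot,\cdot,L,L^2)$ relative to the distance to the tip that is built into the choice of $\Lambda$, closed by the same $2^{1/400}$ bookkeeping. The only difference is presentational --- the paper runs the step by contradiction (a non-symmetric point forces, via the contrapositive of Neck Improvement, a bad point whose distance to the tip is too small), whereas you verify the hypotheses of the Neck Improvement Theorem directly; your caveat about possibly enlarging the constant $10^6\gamma(0,1,\dots,1)L$ in the definition of $\Lambda$ is harmless, since $\Lambda$ is chosen after $L$ and the same argument (Lemma~\ref{neck-cap} plus Proposition~\ref{lower curvature}) produces a $\Lambda$ for any constant depending on $n$, $\gamma$ and $L$.
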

\begin{proof}
We argue by induction on $j$. The assertion is true for $j = 0$, by virtue of the fact that if $t \leq T$ and $|x - p_t|\geq\Lambda$, then $(x,t)$ lies at the center of an $\varepsilon_1$-neck. Suppose that $j \geq 1$ and the assertion is true for $j - 1$. If the assertion fails for $j$ then there is a time $\bar t \leq T$ and a point $\bar x \in M_{\bar t}$ such that $|\bar x - p_{\bar t}|\geq 2^{j/400}\Lambda$ and yet $(\bar x, \bar t)$ fails to be $2^{-j}\varepsilon_1$-symmetric. Appealing to the Neck Improvement Theorem, we conclude that there is a point $(x,t) \in \hat{\mathcal P}(\bar x, \bar t, L, L^2)$ such that either $(x,t)$ is not $2^{-j+1}\varepsilon_1$-symmetric, or else $(x,t)$ does not lie at the center of an $\varepsilon_1$-neck. In the former case the induction hypothesis implies $|x - p_t| \leq 2^{\frac{j-1}{400}}\Lambda$, and in the latter we have $|x - p_t| \leq \Lambda$. So in both cases $|x - p_t| \leq 2^{\frac{j-1}{400}}\Lambda$. Since $ t \leq \bar t \leq T$, Lemma~\ref{tip to neck growing} implies that $|\bar x - p_{\bar t}| \leq |\bar x - p_{t}|$. Combining all of this we obtain
    \begin{align*}
        |\bar x - p_{\bar t}| &\leq |\bar x - p_{t}|\\
        &\leq |x - p_t| + |\bar x - x|\\
        &\leq 2^{\frac{j-1}{400}}\Lambda + 10\gamma(0,1,\dots,1)LG(\bar x, \bar t)^{-1}\\
        &\leq 2^{-\frac{1}{400}}|\bar x - p_{\bar t}| + 10^{-5}|\bar x - p_{\bar t}|\\
        &<|\bar x - p_{\bar t}|.
    \end{align*}
This is a contradiction, so the assertion holds for $j$, and the claim follows by induction. 
\end{proof}

We now proceed with the main result of this section.

\begin{theorem}\label{rot symmetry}
For each $t \leq 0$ the hypersurface $M_t$ is rotationally symmetric. 
\end{theorem}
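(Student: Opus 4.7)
The plan is to follow the Brendle--Choi strategy, using the quantitative asymptotic symmetry from Proposition~\ref{end symmetric} together with the convergence to a bowl soliton near the tip $p_t$.

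First, for each sufficiently negative time $\bar t \leq T$, I would construct a single normalised set of rotation vector fields $\{\bar K_\alpha(\bar t) : 1 \leq \alpha \leq \tfrac{1}{2}n(n-1)\}$ on $\mathbb{R}^{n+1}$ by taking a limit of the fields $K_\alpha^{(x,\bar t)}$ provided by Proposition~\ref{end symmetric} as $|x - p_{\bar t}| \to \infty$. Since the symmetry error decays like $2^{-j}$ on the dyadic annuli $2^{j/400}\Lambda \leq |x - p_{\bar t}| \leq 2^{(j+1)/400}\Lambda$, the change in $K_\alpha^{(x,\bar t)}$ between adjacent points on the end can be summed along a chain, analogous to \cite[Section~5]{Brendle_Choi_b}. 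This produces a well-defined limit $\{\bar K_\alpha(\bar t)\}$ with the property that
\[|\langle \bar K_\alpha, \nu \rangle| \, G \leq C\, 2^{-j}\varepsilon_1 \qquad \text{on} \qquad \{x \in M_{\bar t} : |x - p_{\bar t}| \geq 2^{j/400}\Lambda\}.\]

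Second, I would analyse $u_\alpha := \langle \bar K_\alpha, \nu \rangle$ using the Jacobi equation. Since $\bar K_\alpha$ generates an ambient isometry and the $G$-flow is equivariant, a standard computation gives
\[\partial_t u_\alpha = \dot\gamma^{ij}(A)\,\nabla_i \nabla_j u_\alpha + \dot\gamma^{ij}(A)\, A_i^k A_{kj}\, u_\alpha.\]
The zeroth-order coefficient is positive, so the maximum principle cannot be applied directly to $u_\alpha$. However, since $G$ satisfies the same linearised equation, the ratio $\varphi_\alpha := u_\alpha/G$ satisfies a strictly parabolic equation with drift but no zeroth-order term. The first step provides decay $|\varphi_\alpha| \to 0$ as $|x - p_{\bar t}| \to \infty$. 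Near $p_{\bar t}$, the convergence of $M_t - p_t$ to the bowl soliton, which is rotationally symmetric and (by Bourni--Langford) unique, forces the axis of the limiting bowl to coincide with the common axis of the $\bar K_\alpha$, so that $|\varphi_\alpha|$ becomes arbitrarily small in a tip region as $-\bar t \to \infty$. Applying the parabolic maximum principle to $\varphi_\alpha$ in the compact region between the tip and a very distant neck, one concludes that $\varphi_\alpha \equiv 0$. Thus each $\bar K_\alpha(\bar t)$ is tangent to $M_{\bar t}$ at every point, which means $M_{\bar t}$ is invariant under the one-parameter group of rotations generated by $\bar K_\alpha(\bar t)$. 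The full set $\{\bar K_\alpha(\bar t)\}$ gives invariance under an $SO(n)$-action, i.e., rotational symmetry.

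Finally, to pass from $\bar t \leq T$ to all $t \leq 0$, I would invoke uniqueness of the smooth $G$-flow: once $M_{\bar t}$ is $SO(n)$-invariant, the flow preserves this invariance forward in time, so $M_t$ is rotationally symmetric for all $t \geq \bar t$; since $\bar t$ can be chosen arbitrarily negative, this covers every $t \leq 0$. The main obstacle is the second step: the construction of consistent rotation vector fields globally and the passage from the Jacobi equation to a maximum principle argument. The cleanest way to handle the unfavourable sign of the zeroth-order term is the reduction to $u_\alpha/G$, which uses that $G$ itself is a positive supersolution of the same equation, and the axis-matching at the tip relies essentially on the rigidity result of \cite{Bourni--Langford} together with the lower bound $\max_{M_t} G \geq c > 0$ from Proposition~\ref{curvature at tip}.
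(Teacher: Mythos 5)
Your outline captures the right spirit (Brendle--Choi symmetry improvement plus a maximum principle for $\langle K,\nu\rangle$), but the central step --- the maximum principle for $\varphi_\alpha = \langle\bar K_\alpha,\nu\rangle/G$ in ``the compact region between the tip and a very distant neck'' --- does not close as described. The tip is an interior point of that region, so the parabolic boundary consists of the distant neck (lateral boundary, over a whole time interval) and an \emph{initial time slice} filling the entire cap region. On that initial slice you have no smallness whatsoever: interior points are not $\varepsilon$-symmetric with respect to $\bar K_\alpha(\bar t)$, and the best available bound is $|\varphi_\alpha| \le |\bar K_\alpha|/G = O(\mathrm{diam})$, which does not improve as the initial time recedes. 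Your substitute --- smallness of $\varphi_\alpha$ ``near the tip as $-\bar t \to\infty$'' via convergence to the bowl soliton --- is both logically misplaced (the tip is not part of the parabolic boundary, and smallness in a limit of times does not give vanishing at a fixed $\bar t$) and circular: Bourni--Langford uniqueness is only up to rigid motions, so nothing forces the axis of the limiting bowl to agree with the axis of the far-field fields $\bar K_\alpha(\bar t)$; ruling out such a tilt is essentially the content of the theorem, not an input. In addition, your lateral boundary control is only established at time $\bar t$: to run a parabolic maximum principle you need $|\langle\bar K_\alpha,\nu\rangle|$ small on the neck at all earlier times in the interval, which requires a spacetime compatibility argument for the rotation fields (this is where the paper uses the overlap estimate as in \cite[Theorem~5.4]{Brendle_Choi_b} together with Lemma~\ref{tip to neck growing}).

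The paper's proof is designed precisely to avoid needing any information at the tip or at a fixed initial slice. For each scale $j$ it chooses fields $\mathcal K^{(j)}$ adapted to the spacetime region $\Omega^{(j)}$ of spatial radius $2^{j/400}\Lambda$ and temporal depth $2^{j/100}$, and considers $f^{(j)}_\alpha = \exp(-2^{-j/200}(\bar t-t))\,\langle K^{(j)}_\alpha,\nu\rangle/(G-2^{-j/400})$: the exponential weight crushes the crude $O(2^{j/100})$ interior bound at the initial time $\bar t - 2^{j/100}$, the division by $G-2^{-j/400}$ (rather than $G$) produces a zeroth-order coefficient of favourable sign once $G \ge 2^{-j/400}Q$ (Proposition~\ref{lower curvature} plus Proposition~\ref{curvature at tip}), and the smallness then propagates inward from the lateral neck boundary alone; letting $j\to\infty$ gives exact vanishing of $\langle\bar K_\alpha,\nu\rangle$ for all $t \le \bar t$ simultaneously. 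If you want to salvage your route, you must introduce some analogue of these three devices (scale-dependent fields with spacetime compatibility, a time weight, and a sign-corrected denominator); dividing by $G$ alone leaves a zeroth-order-free equation but gives you nothing to absorb the unbounded initial-slice data. Finally, your forward-in-time step also has a gap: for noncompact hypersurfaces there is no off-the-shelf uniqueness theorem for these fully nonlinear flows, which is why the paper instead runs a maximum principle for $\langle\bar K_\alpha,\nu\rangle$ against the supersolution $h(x,t)=e^{2Ct}(|x|^2+1)$ on $[T,0]$.
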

\begin{proof}
Fix an arbitrary time $\bar t \leq T$. For each $j$, let 
    \[\Omega^{(j)} = \{(x,t) : t \leq \bar t, \; x \in M_t, \; |x - p_t| \leq 2^{\frac{j}{400}}\Lambda\}.\]
Let $Q$ be a large constant to be chosen later. By Proposition~\ref{lower curvature}, when $j$ is sufficiently we have 
    \[G(x,t) \geq Q \Lambda |x - p_t|^{-1} \geq 2^{-j/400}Q\]
for each $(x,t) \in \Omega^{(j)}$. 

By Proposition~\ref{end symmetric}, every point in $\partial \Omega^{(j)}$ is $2^{-j}\varepsilon_1$-symmetric. That is, for each $(x,t) \in \partial \Omega^{(j)}$ there exists a normalised set of rotation vector fields  
    \[\mathcal K^{(x,t)} = \{K_\alpha^{(x,t)}:1 \leq \alpha \leq \tfrac{1}{2}n(n-1)\}\]
such that $\max_\alpha |K_\alpha^{(x,t)}|G \leq 10\gamma(0,1,\dots,1)$ at $(x,t)$ and $\max_\alpha |\langle K_\alpha^{(x,t)}, \nu\rangle| G \leq 2^{-j}\varepsilon_1$ in $\hat {\mathcal P}(x,t,10,100)$. Arguing as in \cite[Theorem~5.4]{Brendle_Choi_b}, we find that for each $j$ there exists a normalised set of rotation vector fields $\mathcal K^{(j)} = \{K_\alpha^{(j)}:1 \leq \alpha \leq \frac{1}{2}n(n-1)\}$ with the following property. If $(x,t) \in \partial \Omega^{(j)}$ is such that $\bar t - 2^{j/100} \leq t \leq \bar t$, then
    \[\inf_{\omega \in O(\frac{1}{2}n(n-1))} \max_\alpha \bigg|K_\alpha^{(j)} - \sum_{\beta = 1}^{\frac{1}{2}n(n-1)}\omega_{\alpha\beta}K_{\beta}^{(x,t)}\bigg| \leq C2^{-j/2}\]
at the point $(x,t)$. It follows that if $(x,t) \in \partial \Omega^{(j)}$ is such that $\bar t - 2^{j/100} \leq t \leq \bar t$ then 
    \[\max_\alpha |\langle K_\alpha^{(j)}, \nu\rangle| \leq C2^{-j/2}\]
at $(x,t)$. Finally, for $\bar t - 2^{j/100} \leq t \leq \bar t$ we have 
    \[|p_t - p_{\bar t}| \leq C2^{j/100},\]
and hence 
    \[\max_\alpha |\langle K_\alpha^{(j)}, \nu\rangle| \leq C2^{j/100}\]
for every $(x,t) \in \Omega^{(j)}$ such that $\bar t - 2^{j/100} \leq t \leq \bar t$. 

For each $1 \leq \alpha \leq \frac{1}{2}n(n-1)$ we define a function $f^{(j)}_\alpha : \Omega^{(j)} \to \mathbb{R}$ by 
    \[f^{(j)}_\alpha := \exp(-2^{-j/200}(\bar t - t)) \frac{\langle K_\alpha^{(j)}, \nu\rangle}{G - 2^{-j/400}}.\]
When $j$ is large each of these functions satisfies
    \[|f^{(j)}_\alpha(x,t)| \leq C2^{-j/4},\]
for every $(x,t) \in \partial \Omega^{(j)}$ such that $\bar t - 2^{j/100} \leq t \leq \bar t$, and for every $(x,t) \in \Omega^{(j)}$ such that $t = \bar t - 2^{j/100}$. Moreover, we have 
    \begin{align*}(\partial_t -\dot\gamma^{pq}\nabla_p \nabla_q) f^{(j)}_\alpha &= \frac{2}{G - 2^{-j/400}}\dot\gamma^{pq}\nabla_p G\nabla_q f^{(j)}_\alpha \\
    &- 2^{-j/400}\bigg(\frac{\dot\gamma^{pq}A^2_{pq}}{G - 2^{-j/400}} - 2^{-j/400}\bigg)f^{(j)}_\alpha.
    \end{align*}
We estimate the final term on the right by 
    \[\frac{\dot\gamma^{pq}A^2_{pq}}{G - 2^{-j/400}} - 2^{-j/400} \geq \frac{1}{C} \frac{G^2}{G - 2^{-j/400}} - 2^{-j/400} \geq \frac{1}{C}G - 2^{-j/400}.\]
Choosing $Q > C$ ensures that the right-hand side is positive when $j$ is large. We may then apply the the parabolic maximum principle to conclude that 
    \[|f^{(j)}_\alpha(x,t)| \leq C2^{-j/4}\]
for every $(x,t) \in \Omega^{(j)}$ such that $\bar t - 2^{j/100} \leq t \leq \bar t$. 

As $j \to \infty$, $\mathcal K^{(j)}$ converges to a normalised set of rotation vector fields $\bar{\mathcal K}$ such that $\langle \bar K_\alpha, \nu\rangle$ vanishes identically on $M_t$ for every $1\leq\alpha\leq\frac{1}{2}n(n-1)$ and $t \leq \bar t$. In particular, $M_{\bar t}$ is rotationally symmetric. Since $\bar t \leq T$ was chosen arbitrarily, we conclude that $M_t$ is rotationally symmetric for $t \leq T$.

It is now straightforward to show that $M_t$ is rotationally symmetric for all $t \leq 0$. The quantity $u_\alpha := \langle \bar K_\alpha, \nu \rangle$ satisfies
    \[\partial_t u_\alpha = \dot\gamma^{ij}\nabla_i \nabla_j u_\alpha + \dot \gamma^{ij}A_i^kA_{kj}u_\alpha.\]
Moreover, the function $h(x,t) = e^{2Ct}(|x|^2 + 1)$ satisfies
    \[\partial_t h = \dot\gamma^{ij}\nabla_i \nabla_j h + 2Ch -2e^{2Ct}\dot\gamma^{pq}g_{pq} > \dot\gamma^{ij}\nabla_i \nabla_j h + \dot \gamma^{ij}A_i^kA_{kj} h\]
for $t \geq T$ if $C$ is sufficiently large. By the maximum principle, $\sup_{M_t} \tfrac{|u_\alpha|}{h}$ is nonincreasing for $t \leq 0$. Since $\langle \bar K_\alpha, \nu\rangle = 0$ vanishes at $t =T$, we have $\langle \bar K_\alpha, \nu\rangle = 0$ on $M_t$ for all $t \leq 0$.
\end{proof}


\section{Uniqueness of ancient solutions with rotational symmetry}\label{translating section}

Let $M_t$, $t \in (-\infty,0]$, be a strictly convex ancient $G$-flow which is noncollapsing and uniformly two-convex. We assume in addition that $M_t$ is noncompact and rotationally symmetric about the $x_{n+1}$-axis in $\mathbb{R}^{n+1}$. Therefore, there is a convex function $f(r,t)$ such that $M_t$ is the set of points $(r\theta, f(r,t))$ for $\theta \in S^{n-1}$. The function $f$ satisfies 
    \[f_t = G\langle -e_{n+1}, \nu\rangle^{-1}\]
and hence 
    \[f_t = \gamma\bigg(\frac{f_{rr}}{1+f_r^2}, \frac{f_r}{r}, \dots, \frac{f_r}{r}\bigg).\]
We can also express $M_t$ in terms of the radius function $r(z,t)$, which is determined by 
    \[f(r(z,t), t) = z.\]
We have 
    \[r_t = - \gamma\bigg(-\frac{r_{zz}}{1+r_z^2}, \frac{1}{r}, \dots, \frac{1}{r}\bigg).\]
Since $M_t$ is strictly convex we may assume that
    \[r > 0, \qquad r_z > 0, \qquad r_t < 0 \qquad \text{and} \qquad r_{zz} < 0.\]
Without loss of generality, we may assume $r(0,0) = 0$.

We adapt the arguments in Section~6 of \cite{Brendle_Choi_a} (see also Section~6 of \cite{Brendle_Choi_b}) to prove that $M_t$ moves by translation. It follows that $M_t$ is the bowl soliton. 

\begin{proposition}\label{symmetry implies translating}
The hypersurfaces $M_t$ move by translation.
\end{proposition}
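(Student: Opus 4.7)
The plan, adapted from Section~6 of \cite{Brendle_Choi_a}, is to identify the expected translation speed $V$ from the bowl soliton limit at the tip, introduce a spacetime defect function $\Psi$ that vanishes precisely on translators with this speed and direction, and apply the parabolic maximum principle to conclude $\Psi \equiv 0$. Combined with the Bourni--Langford classification of convex, noncollapsing, uniformly two-convex $G$-translators \cite{Bourni--Langford}, this forces $M_t$ to be the bowl soliton, proving the proposition.

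To identify $V$, I would first use Proposition~\ref{curvature at tip} and the arguments of Section~\ref{rotational symmetry section} to show that, for all sufficiently negative $t$, $\max_{M_t} G$ is attained uniquely at a tip $p_t$ lying on the axis of symmetry. Proposition~\ref{differential harnack} makes $t\mapsto G(p_t,t)$ nondecreasing, and Lemma~\ref{curvature bound} provides a uniform upper bound, so $V := \lim_{t\to-\infty}G(p_t,t)\in(0,\infty)$ exists. Applying Proposition~\ref{compactness} to the shifted flows $M_{t+t_k}-p_{t_k}$ along any sequence $t_k\to-\infty$ yields a subsequential limit on which $G$ attains its spacetime supremum at $(0,0)$; by the equality case of Proposition~\ref{differential harnack} this limit is a translator, and \cite{Bourni--Langford} identifies it as the bowl soliton translating with speed $V$ along the symmetry axis. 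Since every subsequential limit is the same bowl, the convergence of the shifted flows is full, not merely subsequential.

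Orient the axis so the bowl translates in the $-e_{n+1}$ direction; any translator with this velocity satisfies $G = V\langle\nu, e_{n+1}\rangle$. Define $\Psi := G - V\langle\nu, e_{n+1}\rangle$ on $M_t$, which vanishes precisely on such translators. Differentiating $\Psi$ in time along the flow and using the standard evolution equations for $G$ and for $\nu$ (the latter reflecting the translation invariance of the $G$-flow in $e_{n+1}$) yields a linear parabolic equation for $\Psi$; a suitable normalization by $G > 0$ further eliminates the zeroth-order term. By Theorem~\ref{rot symmetry}, these quantities depend only on the profile parameter and on time. The boundary data for the maximum principle are as follows: at the tip, $\Psi(p_t,t) = G(p_t,t) - V \ge 0$ by the Harnack monotonicity, with $\Psi(p_t,t)\to 0$ as $t\to-\infty$ by the identification of $V$; on the cylindrical middle region, Proposition~\ref{asymptotic for u} gives sharp quantitative control of $\Psi$; and on the noncompact far end of $M_t$ at each fixed $t$, the full convergence to the bowl from the previous step forces $\Psi$ to decay. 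Applying the parabolic maximum principle on an expanding sequence of spacetime regions then forces $\Psi \equiv 0$, so $M_t$ is a translator of speed $V$ along $-e_{n+1}$.

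The main technical obstacle is this last step: executing the maximum principle on the noncompact spacetime $\bigcup_t M_t \times\{t\}$. Proposition~\ref{asymptotic for u} controls $\Psi$ on the cylindrical middle region and the bowl limit controls the tip, but the noncompact far end of $M_t$---where the cross-sectional radius grows unboundedly at each fixed $t$---requires a separate quantitative comparison of $M_t$ with a translated bowl. Performing this comparison, likely via a barrier argument in the spirit of Section~\ref{barrier section} combined with the refined geometric information from the Neck Improvement Theorem of Section~\ref{neck improvement section}, so as to propagate decay of $\Psi$ from the bulk of $M_t$ out to its far end and thereby legitimate the limiting maximum principle, will be the technically most delicate ingredient of the proof.
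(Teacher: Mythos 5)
Your outline parallels the paper's (both adapt Section~6 of \cite{Brendle_Choi_a}): identify $\mathcal G=\lim_{t\to-\infty}G(p_t,t)>0$ via Proposition~\ref{curvature at tip} and the Harnack inequality, note that the tip-recentred flows converge to a bowl of speed $\mathcal G$, and study the defect $\Psi=G-\mathcal G\langle\nu,e_{n+1}\rangle$; indeed your normalized quantity $\Psi/G$ is, up to sign and constants, exactly the paper's $f_t=G\langle -e_{n+1},\nu\rangle^{-1}$, and your final maximum-principle step with a weight is precisely how the paper extends the translator identity from $t\le T$ to $t\le 0$. But there is a genuine gap at the step you yourself flag as delicate, and the mechanism you propose for it does not work. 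The convergence $M_{t+t_k}-p_{t_k}\to$ bowl is only in $C^\infty_{\loc}$ after recentring at the tip; it gives no control of $\Psi$ at the spatial infinity of a \emph{fixed} time slice, nor in the intermediate region between the tip scale and the parabolic scale $(-t)^{1/2}$, and Proposition~\ref{asymptotic for u} only controls the region $|z|\le L(-t)^{1/2}$ in the rescaled picture. Moreover the Harnack inequality yields only the one-sided bound $f_t\ge\mathcal G$ (i.e.\ $\Psi\ge 0$ at the tip, with equality only in the limit $t\to-\infty$); since $\Psi$ satisfies an equation with a \emph{nonnegative} zeroth-order coefficient, a maximum principle cannot by itself force $\Psi\equiv 0$ from this. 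What closes the argument in the paper is the complementary upper bound: the maximum principle applied to $G\langle -e_{n+1},\nu\rangle^{-1}$ gives $f_{tr}\ge 0$ (Lemma~\ref{f_t increasing in space}), so everything reduces to showing that the speed at spatial infinity of each early time slice is \emph{exactly} $\mathcal G$, i.e.\ $\lim_{z\to\infty}r\,r_z=F(0,1)\mathcal G^{-1}$ independently of $t$ (Lemma~\ref{rr_z constant at infinity}).

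That far-end statement is not a soft consequence of any compactness or barrier input you cite; it occupies most of the paper's Section~\ref{translating section}: scaling-invariant neck estimates for $r_z,r_{zz},r_{zzz}$ (Lemmas~\ref{first derivative bound on a neck}--\ref{improved second derivative bound}), the quantitative relation $r(z,t)^2\approx 2F(0,1)(\mathcal T(z)-t)$ (Lemma~\ref{r^2 goes like -t}), the resulting sharp lower bound for $rr_z$ at $z=0$ at early times (Lemma~\ref{sharp lower bound rr_z at z=0}), a heat-kernel barrier $\psi(qz,t-s)$ to propagate that lower bound to all $z\ge 0$ (Lemma~\ref{sharp lower bound rr_z}), and the growth bound $r^2\ge\mathcal G^{-1}z$ needed to show the limit of $rr_z$ at infinity is time-independent. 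Your proposal acknowledges that some comparison at the far end is needed but offers no concrete construction that would yield the \emph{exact} asymptotic slope; without it, the squeeze $\mathcal G\le f_t\le\lim_{r\to\infty}f_t=\mathcal G$ cannot be run, and $\Psi\equiv 0$ does not follow. So the proposal recovers the framework of the proof but is missing its decisive quantitative ingredient.
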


Let $q_t = (0, f(0,t))$ denote the tip of $M_t$, and set $G_{\operatorname{tip}}(t)= G(q_t,t)$. As a consequence of the differential Harnack inequality we know that $G_{\operatorname{tip}}$ is nondecreasing with time, and hence 
    \[\mathcal G := \lim_{t \to -\infty} G_{\operatorname{tip}}\]
is well defined. Proposition~\ref{curvature at tip} implies that $\mathcal G > 0$.

Next we establish that $f_t(r,t)$ is a nondecreasing function of time. 

\begin{lemma}\label{f_t increasing in time}
    We have $f_{tt}(r,t) \geq 0$ everywhere.
\end{lemma}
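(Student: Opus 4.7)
The plan is to express $f_{tt}$ in terms of geometric quantities on $M_t$ and recognize the result as the left-hand side of the vectorial differential Harnack inequality \eqref{Harnack V} evaluated at a particular tangent vector.

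Parametrize $M_t$ by $\Phi(y, t) = (y, f(|y|, t))$ for $y \in \mathbb{R}^n$. Since $r_z > 0$, the function $f$ is strictly convex and increasing in $r$ (so the solid region $\Omega_t$ is the epigraph of $f$), and the outward normal $\nu = (\nabla f, -1)/\sqrt{1+|\nabla f|^2}$ satisfies $\nu_{n+1} := \langle \nu, e_{n+1}\rangle < 0$ pointwise on $M_t$. The identity $\Phi_t \cdot \nu = -G$ gives
\[f_t\, \nu_{n+1} = -G,\]
and the vertical velocity $\Phi_t = f_t e_{n+1}$ decomposes as $-G\nu + V_*$ with
\[V_* := f_t\,(e_{n+1} - \nu_{n+1}\nu)\]
tangent to $M_t$ (indeed, $V_*\cdot \nu = f_t\nu_{n+1} + G = 0$). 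Consequently, for any smooth function $\phi$ on $\bigcup_t M_t$, the derivative at fixed $y$ and the material derivative $\partial_t$ under the normal flow are related by
\[\partial_t|_y \phi = \partial_t \phi + f_t\, \langle \nabla \phi, e_{n+1}\rangle.\]

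Applying this with $\phi = G$ and $\phi = \nu_{n+1}$, and using the standard evolution $\partial_t \nu = \nabla G$ together with the Weingarten identity $\langle \nabla\nu_{n+1}, e_{n+1}\rangle = A((e_{n+1})^\top, (e_{n+1})^\top)$, I would differentiate $f_t \nu_{n+1} = -G$ at fixed $y$ and substitute to obtain
\[f_{tt}\,\nu_{n+1} = -\bigl[\partial_t G + 2 f_t\, \langle \nabla G, e_{n+1}\rangle + f_t^2\, A((e_{n+1})^\top, (e_{n+1})^\top)\bigr].\]
The expression in brackets is precisely $\partial_t G + 2\langle \nabla G, V\rangle + A(V, V)$ evaluated at $V = f_t\,(e_{n+1})^\top$, so Proposition~\ref{differential harnack} forces it to be nonnegative. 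Hence $f_{tt}\nu_{n+1} \leq 0$, and dividing by the negative quantity $\nu_{n+1}$ reverses the inequality to give $f_{tt} \geq 0$.

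The key insight is that the tangential correction $V_*$ needed to convert the normal-flow trajectory through a point of $M_t$ into the vertical trajectory at fixed horizontal position is exactly the tangent vector whose Harnack quantity reassembles $f_{tt}\nu_{n+1}$. Given this identification no real obstacle remains; the rest is routine use of the chain rule together with the standard evolutions of $\nu$ and $\nu_{n+1}$.
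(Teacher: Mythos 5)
Your argument is correct and is essentially the paper's proof: the paper applies Proposition~\ref{differential harnack} with the same vector field $V = G\langle -e_{n+1},\nu\rangle^{-1}e_{n+1}^{\top} = f_t\,e_{n+1}^{\top}$ and identifies $f_{tt}$ with the material derivative $(\partial_t + \nabla_V)(G\langle -e_{n+1},\nu\rangle^{-1})$, which is exactly your computation up to differentiating $f_t = G\langle -e_{n+1},\nu\rangle^{-1}$ rather than $f_t\nu_{n+1} = -G$. You merely spell out the chain-rule and Weingarten steps that the paper leaves implicit.
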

\begin{proof}
Proposition~\ref{differential harnack} asserts that
    \[\frac{\partial}{\partial t}G + 2\langle V, \nabla G\rangle + A(V,V) \geq 0\]
for every vector field $V$ on $M_t$. Setting $V = G\langle -e_{n+1}, \nu\rangle^{-1} e_{n+1}^{\top}$, where $\top$ denotes the component tangent to $M_t$, we obtain 
    \[\bigg(\frac{\partial}{\partial t} + V^i\nabla_i\bigg)(G\langle -e_{n+1}, \nu \rangle^{-1}) \geq 0.\]
On the other hand
    \[f_{tt} = \bigg(\frac{\partial}{\partial t} + V^i\nabla_i\bigg)(G\langle -e_{n+1}, \nu \rangle^{-1}),\]
so this implies the claim.
\end{proof}

We conclude that $f_t(r,t)$ is bounded from below by $\mathcal G$. 

\begin{lemma}\label{speed limit tip}
We have $f_t(r,t) \geq \mathcal G$ everywhere. Moreover, for each $r_0 > 0$, 
    \[\lim_{t\to-\infty} \sup_{r \leq r_0} f_t(r,t) = \mathcal G.\]
\end{lemma}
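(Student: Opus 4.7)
The plan is to translate the flow around the tip at a sequence of times tending to $-\infty$, extract a limiting translator of speed $\mathcal{G}$, and combine the resulting uniform convergence with the time-monotonicity of $f_t$ from Lemma~\ref{f_t increasing in time}.

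First I would fix a sequence $t_k \to -\infty$ and consider the shifted flows $\tilde M_t^k := M_{t+t_k} - q_{t_k}$. Each is a convex, noncollapsing, uniformly two-convex, noncompact, rotationally symmetric ancient $G$-flow with tip at the origin at $t=0$. Since $G_{\operatorname{tip}}$ is nondecreasing in time by Proposition~\ref{differential harnack}, and bounded above by the global curvature estimate stated right after Proposition~\ref{differential harnack}, the tip curvatures $\tilde G_k(0,0) = G_{\operatorname{tip}}(t_k)$ lie in a compact subset of $(0,\infty)$. Proposition~\ref{compactness} therefore delivers a $C^\infty_{\loc}$ subsequential limit $\tilde M_t$, $t \in (-\infty,0]$, which is again convex, noncollapsing, uniformly two-convex, noncompact, rotationally symmetric, and ancient.

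Next I would identify $\tilde M_t$ as the bowl soliton of tip speed $\mathcal{G}$. For every $t$, the tip curvature of $\tilde M_t$ equals $\lim_k G_{\operatorname{tip}}(t + t_k) = \mathcal{G}$. Since the first lemma of Section~\ref{rotational symmetry section} ensures that $G \leq G_{\operatorname{tip}}(s)$ on $M_s$ for $-s$ large, passing to the limit gives $\tilde G \leq \mathcal{G}$ everywhere on $\tilde M_t$. Hence $\tilde G$ attains its maximum $\mathcal{G}$ at the tip at every time, so $\partial_t \tilde G = 0$ and $\nabla \tilde G = 0$ there, meaning the Harnack inequality of Proposition~\ref{differential harnack} is saturated at the tip. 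The equality case forces $\tilde M_t$ to be a translating soliton, which by \cite{Bourni--Langford} must be the bowl soliton; normalised by tip position and tip curvature $\mathcal{G}$ it is unique. In particular, the convergence is full along $t_k \to -\infty$, and since the bowl translates with speed $\mathcal{G}$ along $e_{n+1}$, its profile $\tilde f$ obeys $\partial_t \tilde f \equiv \mathcal{G}$.

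To conclude, let $\tilde f^k(r,t) := f(r,t+t_k) - f(0,t_k)$, so $\partial_t \tilde f^k(r,0) = f_t(r,t_k)$. The $C^\infty_{\loc}$ convergence $\tilde f^k \to \tilde f$ gives $f_t(r, t_k) \to \mathcal{G}$ uniformly for $r \leq r_0$. For the first claim, fix any $(r,t)$ and choose $k$ large so that $t_k \leq t$; Lemma~\ref{f_t increasing in time} gives $f_t(r,t) \geq f_t(r, t_k)$, and passing to the limit yields $f_t(r,t) \geq \mathcal{G}$. The second claim is immediate from the uniform convergence. The main obstacle is verifying that the Harnack equality holds at the tip of $\tilde M_t$ at \emph{every} time, not only at $t=0$; this rests on the two-sided squeeze $\tilde G \leq \mathcal{G} = \tilde G(\text{tip})$, which in turn uses both that $\mathcal{G}$ is a genuine limit of tip curvatures and that the spatial maximum of $G$ on $M_s$ is attained at $q_s$ for $-s$ sufficiently large.
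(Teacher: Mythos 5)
Your argument is correct and follows essentially the same route as the paper: shift the flow to the tip along $t_k \to -\infty$, extract a limit via Proposition~\ref{compactness}, observe that the limit's tip speed is constant in time and equal to $\mathcal G$ so that the rigidity case of Proposition~\ref{differential harnack} forces the limit to be a translating soliton, and then combine the locally uniform convergence $f_t(\cdot,t_k)\to\mathcal G$ with $f_{tt}\geq 0$. The only difference is cosmetic: the appeal to \cite{Bourni--Langford} is not needed, since a rotationally symmetric translator with tip speed $\mathcal G$ already has vertical velocity $\mathcal G e_{n+1}$ and hence $\partial_t \tilde f \equiv \mathcal G$.
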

\begin{proof}
Fix a sequence $t_k \to -\infty$ and consider the sequence of shifted solutions $M_t^k = M_{t+t_k} - q_{t_k}$. By Proposition~\ref{compactness} this sequence converges to a rotationally symmetric limiting solution $\bar M_t$ which is defined for all $t \in (-\infty,\infty)$ and is strictly convex. Moreover, the speed of $\bar M_t$ at its tip is constant in time and equal to $\mathcal G$. Therefore, by Proposition~\ref{differential harnack}, the hypersurfaces $\bar M_t$ move by translation. We conclude that 
    \[\lim_{k \to \infty} \sup_{r \leq r_0} |f_t(r,t_k) - \mathcal G| = 0,\]
and since $f_{tt}(r,t) \geq 0$, it follows that $f_t(r,t) \geq \mathcal G$ at every point in spacetime.  
\end{proof}

Next we show that $f_t(r,t)$ is nondecreasing as a function of $r$. 

\begin{lemma}\label{f_t increasing in space}
We have $f_{tr}(r,t)\geq 0$ everywhere. 
\end{lemma}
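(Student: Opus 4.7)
The strategy parallels that of Lemma~\ref{f_t increasing in time}, combining the differential Harnack inequality (Proposition~\ref{differential harnack}) applied to a suitable tangent vector field with a maximum-principle argument to pin down the sign. Write $T_r = (\theta, f_r)/\sqrt{1+f_r^2}$ for the unit radial tangent on $M_t$ and $\lambda_1 = f_{rr}/(1+f_r^2)^{3/2}$ for the corresponding principal curvature. A direct differentiation of $f_t = G\sqrt{1+f_r^2}$ in $r$ yields the identity
\[
f_{tr} = \sqrt{1+f_r^2}\,G_r + \frac{G f_r f_{rr}}{\sqrt{1+f_r^2}} = (1+f_r^2)\bigl[\langle \nabla G, T_r\rangle + G f_r \lambda_1\bigr],
\]
where $\langle \nabla G, T_r\rangle = G_r/\sqrt{1+f_r^2}$ since $\nabla G$ lies along $T_r$ by rotational symmetry.

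The first goal is a quantitative bound on $|f_{tr}|$. Let $V_* = G f_r T_r$, which equals the tangential projection of $f_t\,e_{n+1}$; this is the vector used in the proof of Lemma~\ref{f_t increasing in time} to identify $f_{tt} = \sqrt{1+f_r^2}\,\mathrm{H}(V_*)$, where $\mathrm{H}(V) := \partial_t G + 2\langle \nabla G, V\rangle + A(V,V)$ denotes the Harnack quantity. I would apply Proposition~\ref{differential harnack} to the one-parameter family $V = V_* + \sigma T_r$ with $\sigma \in \mathbb{R}$: after expansion and combining like terms using the identity in the previous paragraph, the inequality $\mathrm{H}(V_* + \sigma T_r) \ge 0$ becomes the quadratic
\[
\mathrm{H}(V_*) + \frac{2\sigma f_{tr}}{1+f_r^2} + \sigma^2 \lambda_1 \ge 0 \quad \text{for all } \sigma \in \mathbb{R}.
\]
Since this holds for every $\sigma$, a discriminant calculation immediately yields the sharp control
\[
f_{tr}^2 \le \lambda_1(1+f_r^2)^{3/2}\,f_{tt},
\]
saturated precisely when the flow is a translator (so that $V_*$ coincides with the minimiser $V^* = -A^{-1}(\nabla G)$ of the Harnack functional).

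To fix the sign of $f_{tr}$, I argue via the parabolic maximum principle. By smoothness of the rotationally symmetric function $f$ at the origin, $f_r(0,t) = 0$ and $f_{tr}(0,t) = 0$ for all $t$. Differentiating the nonlinear evolution equation $f_t = G\sqrt{1+f_r^2}$ once in $r$ produces a linear second-order parabolic equation for $\psi := f_{tr}$, whose coefficients depend only on the current geometry of $M_t$. The convergence of the rescaled flow to a cylinder (Proposition~\ref{blow-down}), together with Lemma~\ref{speed limit tip}, forces $f_t(r,t) \to \mathcal{G}$ locally uniformly in $r$ as $t \to -\infty$, and hence $\psi \to 0$ in the same sense. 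Control of $\psi$ as $r \to \infty$ (where $M_t$ becomes cylindrical) is furnished by the self-shrinker barriers $\Sigma_a$ from Section~\ref{barrier section} combined with the sharp asymptotic of Proposition~\ref{asymptotic for u}. With $\psi \ge 0$ on the parabolic boundary of $\{r > 0\}\times(-\infty, 0]$ thereby established, the maximum principle forces $\psi \ge 0$ throughout.

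The main obstacle lies in this last step: producing the linearised equation for $f_{tr}$ explicitly enough that the maximum principle applies cleanly, and verifying adequate decay of $\psi$ at the two non-trivial boundary components (the limit $t \to -\infty$ and the spatial infinity $r \to \infty$), where the barriers and sharp asymptotics must be combined. The quadratic bound $f_{tr}^2 \le \lambda_1(1+f_r^2)^{3/2}\,f_{tt}$ from the Harnack argument is expected to play a quantitative role here, controlling $|f_{tr}|$ on compact spacetime pieces and thereby complementing the maximum-principle argument.
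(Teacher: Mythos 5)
Your Harnack step is correct as far as it goes, but it cannot prove the lemma: applying Proposition~\ref{differential harnack} to $V_*+\sigma T_r$ and taking the discriminant yields only the bound $f_{tr}^2 \leq \lambda_1(1+f_r^2)^{3/2} f_{tt}$, which controls the \emph{size} of $f_{tr}$ and is symmetric under $f_{tr}\mapsto -f_{tr}$; no amount of such two-sided control can determine the sign, which is the entire content of the statement. The sign is then delegated to a maximum principle for $\psi = f_{tr}$, and this is where the genuine gap lies, as you partly acknowledge. Differentiating the graphical evolution equation (or the linearised equation satisfied by $f_t$) in $r$ does not give $\psi$ a clean parabolic equation: the coefficients depend on $f_r$, $f_{rr}$ and $r$, so differentiation produces zeroth- and first-order source terms with no favourable sign, and a maximum principle asserting $\psi\geq 0$ from $\psi\geq 0$ on the parabolic boundary is simply not available in that form. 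Moreover the boundary data you invoke are not established: the barriers of Section~\ref{barrier section} and Proposition~\ref{asymptotic for u} describe the solution near the cylinder as $t\to-\infty$ at bounded $z$, not the spatial end $r\to\infty$ at a fixed time, and there is no a priori sign or decay for $f_{tr}$ there — indeed positivity of $f_{tr}$ near spatial infinity is essentially what one is trying to prove.

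The paper's proof sidesteps all of this by never differentiating in $r$. Since $G$ and $\langle -e_{n+1},\nu\rangle$ solve the same linearised equation, their ratio $f_t = G\langle -e_{n+1},\nu\rangle^{-1}$ satisfies a drift-diffusion equation with no zeroth-order term, so on the bounded region $\{r\leq r_0\}\times[T,t_0]$ its supremum is attained on the parabolic boundary. The lateral boundary $\{r=r_0\}$ is handled by Lemma~\ref{f_t increasing in time} (the supremum there is $f_t(r_0,t_0)$), and the initial slice $\{t=T\}$ is handled by letting $T\to-\infty$ and using Lemma~\ref{speed limit tip} together with $f_t\geq\mathcal G$; this gives $f_t(r,t_0)\leq f_t(r_0,t_0)$ for all $r\leq r_0$, i.e. $f_{tr}\geq 0$. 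If you want to salvage your outline, replace the maximum principle for $f_{tr}$ by this maximum principle for $f_t$ itself; the Harnack quadratic and the resulting bound on $|f_{tr}|$ are then unnecessary.
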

\begin{proof}
Fix a time $t_0 \leq 0$ and a radius $r_0 > 0$ at which $f(r_0,t_0)$ is defined. We consider the parabolic region
    \[Q_T := \{(x,t) \in \mathbb{R}^n \times [T, t_0] : x_1^2 + \dots + x_{n-1}^2 \leq r_0^2\}.\]
Using the evolution equations
    \[\frac{\partial}{\partial t} G = \dot \gamma^{ij} \nabla_i \nabla_j G + \dot \gamma^{ij} A_i^kA_{kj} G\]
and 
    \[\frac{\partial}{\partial t} \langle -e_{n+1}, \nu \rangle = \dot \gamma^{ij} \nabla_i \nabla_j \langle -e_{n+1}, \nu \rangle + \dot \gamma^{ij} A_i^kA_{kj} \langle -e_{n+1}, \nu \rangle\]
we conclude that 
    \[\sup_{Q_T} G \langle -e_{n+1}, \nu \rangle^{-1} = \max\bigg\{ \sup_{r \leq r_0, \; t = T} G  \langle -e_{n+1}, \nu \rangle^{-1}, \sup_{r = r_0, \; T \leq t \leq t_0} G  \langle -e_{n+1}, \nu \rangle^{-1} \bigg\}.\]
Since $f_t = G  \langle -e_{n+1}, \nu \rangle^{-1}$ it follows that 
    \begin{align*}
        \sup_{Q_T} f_t = \max\bigg\{ \sup_{r \leq r_0, \; t = T} f_t, \sup_{r = r_0, \; T \leq t \leq t_0} f_t \bigg\}\leq \max\bigg\{\sup_{r \leq r_0, \; t = T} f_t, \,f_t(r_0,t_0) \bigg\},
    \end{align*}
where we have made use of Lemma~\ref{f_t increasing in time}. Sending $T \to -\infty$ and appealing to Lemma~\ref{speed limit tip} we obtain 
    \[f_t(0, t_0) \leq \max \{\mathcal G, f_t(r_0,t_0) \} \leq f_t(r_0, t_0).\]
Since $r_0$ was arbitrary this completes the proof. 
\end{proof}

There exists a small constant $\varepsilon_0$ such that $q_t$ does not lie at the center of an $\varepsilon_0$-neck. Since $G$ is at least $\mathcal G$ at the point $q_t$, by Lemma~\ref{neck-cap}, there exists a decreasing function $\Lambda:(0,\varepsilon_0] \to \mathbb{R}$ such that if $x \in M_t$ satisfies $|x - q_t| \geq \Lambda(\varepsilon)$ then $x$ lies at the center of an $\varepsilon$-neck. 

Recall the notation $F(0,1) = \gamma(0,1,\dots,1)$.

\begin{lemma}\label{first derivative bound on a neck}
On every $\varepsilon_0$-neck we have $r r_z = r/f_r \leq 4
(F(0,1)+C_0\varepsilon_0)\mathcal G^{-1}$, where $C_0$ depends only on $n$ and $\gamma$. 
\end{lemma}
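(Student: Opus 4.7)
The plan is to multiply the flow equation $f_t = G\sqrt{1+f_r^2}$ through by $r/f_r$ so as to isolate $r/f_r$ against a quantity whose value on a neck is known explicitly, and then apply the lower bound $f_t \ge \mathcal{G}$ from Lemma~\ref{speed limit tip}.

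The algebraic manipulation is immediate. Since $M_t$ is the surface of revolution $\{(r(z,t)\theta,z):\theta\in S^{n-1}\}$, its $n-1$ equal rotational principal curvatures equal
\begin{equation*}
\lambda_{\parallel} = \frac{1}{r\sqrt{1+r_z^{\,2}}}.
\end{equation*}
Using $f_r = 1/r_z$, the identity $f_t = G\sqrt{1+f_r^2}$ rearranges to
\begin{equation*}
f_t\cdot\frac{r}{f_r} \;=\; r\,G\sqrt{1+r_z^{\,2}} \;=\; \frac{G}{\lambda_{\parallel}}.
\end{equation*}

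The neck hypothesis controls the right-hand side. After the parabolic rescaling $\lambda = G(\bar x, \bar t)\sqrt{2/F(0,1)}$ defining the neck, $M_t$ is a normal graph over the cylinder $\mathbb{R}\times S^{n-1}((2F(0,1)(1-\tilde t))^{1/2})$ whose height function is bounded by $\varepsilon_0$ in $C^{10}$. Since $M_t$ is rotationally symmetric, the $n-1$ rotational principal curvatures agree, so each $\tilde\lambda_{\parallel}$ differs from the cylinder value $(2F(0,1)(1-\tilde t))^{-1/2}$ by at most $C\varepsilon_0$. A short Taylor expansion using $\gamma(0,1,\dots,1)=F(0,1)$ and smoothness of $\gamma$ then shows that the cylinder ratio $F(0,1)$ is perturbed by at most $C_0\varepsilon_0$:
\begin{equation*}
\frac{\tilde G}{\tilde\lambda_{\parallel}} \;\le\; F(0,1) + C_0\,\varepsilon_0
\end{equation*}
throughout the parabolic neighbourhood, for some $C_0=C_0(n,\gamma)$. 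Both $G$ and $\lambda_{\parallel}$ scale as $\lambda$ under the rescaling, so this ratio is preserved, and the same bound holds in the original coordinates. Combining with the identity and $f_t\ge\mathcal{G}$ yields
\begin{equation*}
\mathcal{G}\cdot\frac{r}{f_r} \;\le\; f_t\cdot\frac{r}{f_r} \;=\; \frac{G}{\lambda_{\parallel}} \;\le\; F(0,1)+C_0\varepsilon_0,
\end{equation*}
which gives $r/f_r \le (F(0,1)+C_0\varepsilon_0)/\mathcal{G}\le 4(F(0,1)+C_0\varepsilon_0)/\mathcal{G}$, as claimed.

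No essential obstacle arises. The clean form of the bound stems entirely from the observation that $G/\lambda_{\parallel}$ simultaneously equals $r/f_r$ multiplied by $f_t$ (purely algebraically) and equals exactly $F(0,1)$ on a cylinder (by one-homogeneity of $\gamma$), so the $C^{10}$-closeness built into the neck definition controls precisely the combination produced by the rearrangement. The factor $4$ in the stated conclusion provides comfortable room for the constant $C_0$ and the suppressed small-$\varepsilon_0$ perturbations.
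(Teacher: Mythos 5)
Your proof is correct and follows essentially the same route as the paper: both arguments combine the lower bound $f_t \geq \mathcal G$ from Lemma~\ref{speed limit tip} with the observation that, by one-homogeneity of $\gamma$, the quantity $f_t \cdot r/f_r$ equals $\gamma(\lambda_{\mathrm{rad}}/\lambda_{\parallel},1,\dots,1)$, which the neck hypothesis forces to be at most $F(0,1)+C_0\varepsilon_0$. The paper phrases this by bounding the curvature ratio $rf_{rr}/((1+f_r^2)f_r)\leq \varepsilon_0(1+\varepsilon_0^2)^{1/2}$ directly, while you phrase it as scale-invariance of $G/\lambda_{\parallel}$ and closeness to the cylinder, but the content is identical.
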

\begin{proof}
On an $\varepsilon_0$-neck we have $\frac{1}{f_r} = r_z \leq \varepsilon_0$. Moreover, the principal curvature in the radial direction is bounded from above by $\varepsilon_0/r$. This gives
    \[\frac{f_{rr}}{(1+f_r^2)^{3/2}} \leq \frac{\varepsilon_0}{r},\]
and hence 
    \[\frac{rf_{rr}}{(1+f_r^2)f_r} \leq \varepsilon_0(1+\varepsilon_0^2)^{1/2}.\]
Using Lemma~\ref{speed limit tip} we obtain
    \[\mathcal G \leq f_t = \gamma\bigg(\frac{f_{rr}}{1+f_r^2}, \frac{f_r}{r}, \dots, \frac{f_r}{r}\bigg) = \gamma\bigg(\frac{rf_{rr}}{(1+f_r^2)f_r}, 1, \dots, 1\bigg)\frac{f_r}{r} \leq (F(0,1) + C_0\varepsilon_0)\frac{f_r}{r}.\]
\end{proof}

The following lemma establishes scaling-invariant derivative bounds for $r(z,t)$ on any $\varepsilon_0$-neck. 

\begin{lemma}\label{higher derivative bounds on a neck}
There is a constant $C_1 > 1$ such that on every $\varepsilon_0$-neck with $r \geq 1$ we have $r^m |\tfrac{d^m}{dz^m} r| \leq C_1$, $m = 1, 2, 3$.  
\end{lemma}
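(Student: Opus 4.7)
The $m=1$ case is nothing but Lemma~\ref{first derivative bound on a neck}, so the plan is really for $m=2$ and $m=3$. My approach has two stages: a parabolic rescaling plus interior regularity, which gives preliminary bounds $\bar r\,|r_{zz}| \leq C$ and $\bar r^2\,|r_{zzz}| \leq C$; and a sharpening step that uses the ancient-solution structure (via $f_t \geq \mathcal G$, $f_{tr} \geq 0$ and the differential Harnack inequality) to upgrade these to the claimed $\bar r^2\,|r_{zz}| \leq C_1$ and $\bar r^3\,|r_{zzz}| \leq C_1$.

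For the first stage, fix a neck point $(\bar z, \bar t)$ with $\bar r := r(\bar z, \bar t) \geq 1$, and parabolically rescale by $\lambda = 1/\bar r$ by setting
\[
\tilde r(\tilde z, \tilde t) := \lambda\, r(\lambda^{-1}\tilde z + \bar z,\, \lambda^{-2}\tilde t + \bar t).
\]
The $\varepsilon_0$-neck condition forces $\tilde r$ to be $\varepsilon_0$-close in $C^{10}$ to the profile of a unit-scale shrinking cylinder throughout $\hat{\mathcal P}(0,0,10,100)$. By Lemma~\ref{lemma uniform parabolicity} the rescaled flow is uniformly parabolic, and Proposition~\ref{lower curvature} pins $\tilde G$ between two fixed positive constants in a smaller parabolic neighbourhood. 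This allows me to apply Krylov's interior $C^{2,\alpha}$-estimate for convex (or concave) fully nonlinear parabolic equations followed by Schauder bootstrapping, exactly as in the proofs of Proposition~\ref{compactness} and Lemma~\ref{u derivatives in growing set}, to bound $|\tilde r_{\tilde z\tilde z}|$ and $|\tilde r_{\tilde z\tilde z\tilde z}|$ at the origin. The chain-rule identities $r_{zz} = \tilde r_{\tilde z\tilde z}/\bar r$ and $r_{zzz} = \tilde r_{\tilde z\tilde z\tilde z}/\bar r^2$ then yield the preliminary bounds.

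For the sharpening stage, I would rewrite the $G$-flow equation using degree-one homogeneity of $\gamma$ as
\[
f_t = \frac{f_r}{r}\,F\!\left(\frac{rf_{rr}}{(1+f_r^2)f_r},\, 1\right),
\]
and Taylor-expand $F(\,\cdot\,, 1)$ about its cylindrical value $F(0,1)$, exactly as in the proof of Lemma~\ref{first derivative bound on a neck}. Combining $f_t \geq \mathcal G$ (Lemma~\ref{speed limit tip}), the $m=1$ bound $f_r/r \geq \mathcal G/(F(0,1)+C_0\varepsilon_0)$, and the preliminary Krylov bound forces the argument $rf_{rr}/((1+f_r^2)f_r)$ to lie within $O(1/\bar r)$ of $0$; unpacking via $r_{zz} = -f_{rr}/f_r^3$ together with $f_r \geq \mathcal G\bar r/(F(0,1)+C_0\varepsilon_0)$ then delivers $\bar r^2|r_{zz}| \leq C_1$. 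Differentiating the identity in $z$ once more and using $f_{tr} \geq 0$ (Lemma~\ref{f_t increasing in space}) together with the differential Harnack inequality (Proposition~\ref{differential harnack}) to control the leading-order correction term gives the analogous $\bar r^3|r_{zzz}| \leq C_1$.

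The hard part will be precisely this sharpening step. The $\varepsilon_0$-neck condition combined with parabolic interior regularity alone only delivers the natural decay $\bar r^{m-1}|r^{(m)}| \leq C$; the extra power of $\bar r$ in the claimed bound must be extracted from the ancient-solution inequalities, and this requires careful bookkeeping of the $O(\varepsilon_0^2)$ error terms in the Taylor expansion of $F$ about $(0,1)$, as well as a delicate handling of the sign and size of the time-derivative correction when we pass from $m=2$ to $m=3$.
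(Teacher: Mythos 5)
Your first stage (parabolic rescaling plus Krylov/Schauder) is fine but only reproduces what the $\varepsilon_0$-neck hypothesis already gives, namely $\bar r^{m-1}|\partial_z^m r|\leq C$; as you note, the whole content of the lemma is the extra power of $\bar r$. The gap is in your sharpening step, and it is a genuine one. Writing $X:=\frac{rf_{rr}}{(1+f_r^2)f_r}>0$, the equation reads $F(X,1)=f_t\cdot\frac{r}{f_r}$, and the $m=2$ claim is equivalent to $X=O(1/\bar r)$. Since $F(\cdot,1)$ is increasing, bounding $X$ from above requires an \emph{upper} bound on the product $f_t\cdot\frac{r}{f_r}$. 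But the two inequalities you invoke go the wrong way: Lemma~\ref{speed limit tip} is a \emph{lower} bound $f_t\geq\mathcal G$, and Lemma~\ref{first derivative bound on a neck} is an \emph{upper} bound $\frac{r}{f_r}\leq(F(0,1)+C_0\varepsilon_0)\mathcal G^{-1}$; no upper bound on $f_t$ on a neck is available at this stage (Lemma~\ref{f_t increasing in space} only says $f_t$ increases in $r$). There is thus no contradiction with, say, $X\sim\varepsilon_0$ and $f_t\sim 2\mathcal G$ at the point in question. Worse, even a hypothetical two-sided bound $f_t\leq(1+C\varepsilon_0)\mathcal G$ would only give $X=O(\varepsilon_0)$, i.e.\ the naive neck bound; to extract $X=O(1/\bar r)$ from the pointwise identity you would need $f_t=\mathcal G+O(1/\bar r)$ and $rr_z=F(0,1)\mathcal G^{-1}+O(1/\bar r)$, which are precisely the sharp asymptotics proved later in the section (Lemmas~\ref{sharp lower bound rr_z at z=0}--\ref{rr_z constant at infinity}) and which rely on the present lemma through Lemma~\ref{improved second derivative bound} — so that route is circular. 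The appeal to $f_{tr}\geq0$ and the Harnack inequality for $m=3$ is not a concrete argument and inherits the same problem.

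The paper's mechanism extracts the extra power differently: it works with $u=\langle-e_{n+1},\nu\rangle$ rather than with the fully nonlinear graphical equation. The one-sided inequality $f_t=Gu^{-1}\geq\mathcal G$ is rearranged into the two-sided sup-bound $0<u\leq G\mathcal G^{-1}$, so $\sup|u|\leq CG$ on a parabolic neighbourhood of the neck point (using Proposition~\ref{lower curvature} to compare $G$ there). Since $u$ satisfies the \emph{linear} parabolic equation $\partial_t u=\dot\gamma^{ij}\nabla_i\nabla_j u+\dot\gamma^{ij}A_i^kA_{kj}u$, whose coefficients are controlled on the neck after rescaling, interior estimates at scale $G^{-1}$ give $|\nabla^m u|\leq CG^{m}\sup|u|\leq CG^{m+1}$: the additional factor of $G\sim1/r$ comes from the smallness of $u$ itself, not from any refined identity for the speed. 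One then converts back via $u=r_z(1+r_z^2)^{-1/2}$ and $g=(1+r_z^2)dz^2+r^2g_{S^{n-1}}$ to obtain $r^m|\partial_z^m r|\leq C_1$ for $m=1,2,3$. If you want to salvage your write-up, replace the Taylor-expansion sharpening by this linear-equation argument; your stage-one regularity estimates are then not needed.
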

\begin{proof}
For $m = 1$ we may appeal to Lemma~\ref{first derivative bound on a neck}. 

Set $u = \langle - e_{n+1}, \nu\rangle$. Since $f_t = G u^{-1}$, Lemma~\ref{speed limit tip} implies $u \leq G \mathcal{G}^{-1}$. Suppose $(x,t)$ lies at the center of an $\varepsilon_0$-neck. We may then use the evolution equation
    \[\frac{\partial}{\partial t} u = \dot\gamma^{ij}\nabla_i\nabla_j u + \dot\gamma^{ij}A_i^kA_{kj} u\]
and standard interior estimates for linear parabolic equations to conclude that 
    \[|\nabla^m u|^2 \leq C G^{2m+2}\]
at $(x,t)$. 

The claim follows from this estimate by writing the left-hand side in terms of $r$ and rearranging. Indeed, we have $u = r_z(1+r_z^2)^{-1/2}$, and the induced metric is 
    \[g = (1+r_z^2)dz^2 + r^2 g_{S^{n-1}}.\]
Using these formulae the rest of the argument is straightforward. 
\end{proof}

Next we prove a sharp estimate for $r_{zz}(z,t)$ which is valid when $r(z,t)$ is sufficiently large. 

\begin{lemma}\label{improved second derivative bound}
    Let $C_2 = 2 + 2\Lambda(\varepsilon_0) + \tfrac{64}{9}(F(0,1)+C_0\varepsilon_0)^2\mathcal G^{-2}$. If $r(z,t) \geq C_2$ then 
        \[|(rr_z)_z(z,t)| \leq C_3 r(z,t)^{-3/2} \qquad \text{and} \qquad 0 \leq - r_{zz}(z,t) \leq C r(z,t)^{-5/2}.\]
\end{lemma}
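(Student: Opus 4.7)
The strict inequality $-r_{zz}>0$ has already been noted at the beginning of this section, so I concentrate on the two quantitative bounds. Since the tip $q_t = (0, f(0,t))$ has zero radial coordinate, the Euclidean distance from the point $(r(z,t)\theta, z)\in M_t$ to $q_t$ is at least $r(z,t)\geq C_2 \geq 2\Lambda(\varepsilon_0)$. By the definition of $\Lambda$ (obtained from Lemma~\ref{neck-cap}), this places the point at the centre of an $\varepsilon_0$-neck, so Lemma~\ref{first derivative bound on a neck} and Lemma~\ref{higher derivative bounds on a neck} give $rr_z \leq 4(F(0,1)+C_0\varepsilon_0)\mathcal G^{-1}$, together with $r_z^2 = O(r^{-2})$, $-rr_{zz} = O(r^{-1})$, and $r^3|\partial_z^3 r| \leq C_1$.

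The central computation is the identity
\[
rr_z\, f_t \;=\; F\!\left(-\tfrac{rr_{zz}}{1+r_z^2},\,1\right),\qquad F(x,y) := \gamma(x,y,\ldots,y),
\]
obtained from $f_t = -r_t/r_z$ together with the evolution equation for $r$ and the one-homogeneity of $\gamma$. Since $F$ is increasing in its first argument and $r_{zz}\leq 0$, this identity gives the lower bound $rr_z\,f_t \geq F(0,1)$; combined with $f_t \geq \mathcal G$ (Lemma~\ref{speed limit tip}) and a Taylor expansion of $F$ at $(0,1)$ using $-rr_{zz}/(1+r_z^2) \leq C_1/r$, one obtains
\[
0 \;\leq\; rr_z\,f_t - F(0,1) \;=\; F_1(0,1)\cdot\frac{-rr_{zz}}{1+r_z^2} + O(r^{-2}) \;=\; O(r^{-1}),
\]
where $F_1 := \partial F/\partial x$. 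At this stage one has recovered the coarser bounds $-r_{zz} = O(r^{-2})$ and $|(rr_z)_z| = O(r^{-1})$ via $(rr_z)_z = r_z^2 + rr_{zz}$, but these still fall short of the claimed rates.

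To improve the estimate by a factor of $r^{1/2}$, I would differentiate the central identity in $z$. Writing $\phi := -rr_{zz}/(1+r_z^2)$, so that $(rr_z)_z = -\phi + O(r^{-3})$, differentiation yields the schematic equation
\[
F_1(\phi,1)\,\partial_z\phi \;=\; (rr_z)_z\, f_t \;+\; rr_z\, f_{tr}\, r_z \;+\; \text{l.o.},
\]
in which $f_{tr}\,r_z = \partial_z f_t \geq 0$ by Lemma~\ref{f_t increasing in space}. Substituting $(rr_z)_z = -\phi + O(r^{-3})$ and using $f_t \geq \mathcal G$ gives the linear first-order differential inequality
\[
F_1(0,1)\,\partial_z\phi + \mathcal G\,\phi \;\leq\; rr_z\,f_{tr}\,r_z + O(r^{-2}).
\]
Combined with the coarse bound $\phi = O(r^{-1})$ and suitable control of $rr_z\,f_{tr}\,r_z$ coming from the cylindrical blow-down, a maximum-principle or integrating-factor argument applied to $r^{3/2}\phi$ on $\{r\geq C_2\}$, with boundary value at $r = C_2$ controlled by Lemma~\ref{higher derivative bounds on a neck}, yields $\phi \leq Cr^{-3/2}$. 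This immediately gives $-r_{zz}\leq Cr^{-5/2}$, and the bound $|(rr_z)_z|\leq C_3 r^{-3/2}$ follows from $(rr_z)_z = r_z^2 + rr_{zz}$ and $r_z^2 = O(r^{-2})$.

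The principal obstacle is the quantitative control of the source term $rr_z\,f_{tr}\,r_z$: while $f_{tr}\geq 0$ gives the correct sign for an upper barrier, one must rule out that $f_{tr}$ is as large as $r^{-1/2}$. This should be achievable by exploiting the fact that our solution is asymptotic (in space) to a shrinking cylinder, along which $f_t$ is constant, together with the higher scale-invariant derivative bounds from Lemma~\ref{higher derivative bounds on a neck}. A bootstrap, first upgrading to $\phi = O(r^{-1-\eta})$ for some small $\eta>0$ and then iterating, may in practice be the cleanest route to the sharp rate.
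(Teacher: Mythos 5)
Your preliminary identity is correct: from $f_t=-r_t/r_z$ and one-homogeneity one indeed gets $rr_z\,f_t=F\bigl(-\tfrac{rr_{zz}}{1+r_z^2},1\bigr)$, and this recovers the coarse bounds. But the step that actually produces the gain of $r^{1/2}$ is missing. Your differential inequality for $\phi=-\tfrac{rr_{zz}}{1+r_z^2}$ has the source $rr_z\,f_{tr}\,r_z$ (plus error terms multiplied by $f_t$), and at this stage of the argument you have no quantitative upper bound on $f_{tr}$ — nor even a uniform upper bound on $f_t$ itself, since $f_t=F(\phi,1)/(rr_z)$ and a positive lower bound for $rr_z$ is only established later (Lemma~\ref{sharp lower bound rr_z}, Lemma~\ref{rr_z constant at infinity}); the Harnack monotonicities give $f_{tt}\ge 0$ and $f_{tr}\ge 0$ but no decay. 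Worse, the fix you suggest — that the solution is spatially asymptotic to a cylinder at fixed time, ``along which $f_t$ is constant'' — is in substance what Lemmas~\ref{r^2 goes like -t}--\ref{rr_z constant at infinity} prove, and those results use the present lemma as input, so the route you sketch is circular as stated. Since you yourself flag this as the principal obstacle and only express hope that it can be overcome, the proposal is an incomplete argument rather than a proof.

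The paper's mechanism is different and avoids any estimate on $f_{tr}$: it is a parabolic, not an ODE-in-$z$, argument. Fix $(\bar r,\bar t)$ with $\bar r\ge C_2$ and $\bar z=f(\bar r,\bar t)$. Because $f_r\ge (F(0,1)+C_0\varepsilon_0)^{-1}\mathcal G\,r$ on necks (Lemma~\ref{first derivative bound on a neck}) and because of the $\tfrac{64}{9}(F(0,1)+C_0\varepsilon_0)^2\mathcal G^{-2}$ term built into $C_2$, the region $\{r\ge \bar r/2\}$ has $z$-extent at least $\bar r^{3/2}$ at time $\bar t$; since $r(z,\cdot)$ is nonincreasing in $t$, the whole anisotropic parabolic cylinder $Q=[\bar z-\bar r^{3/2},\bar z+\bar r^{3/2}]\times[\bar t-\bar r^3,\bar t]$ lies in the neck region. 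One then writes the evolution equation $(rr_z)_t=a\,(rr_z)_{zz}+E$ with $C^{-1}\le a\le C$, where the scale-invariant $C^3$ bounds of Lemma~\ref{higher derivative bounds on a neck} give $\sup_Q|E|\le C\bar r^{-3}$ and $\sup_Q|rr_z|\le C$, and the interior gradient estimate on $Q$ yields
\[|(rr_z)_z|(\bar z,\bar t)\le C\bar r^{-3/2}\sup_Q|rr_z|+C\bar r^{3/2}\sup_Q|E|\le C\bar r^{-3/2},\]
whence $|r_{zz}|\le C\bar r^{-5/2}$ using $r_z^2\le C\bar r^{-2}$. The extra factor $\bar r^{-1/2}$ thus comes from the length scale $\bar r^{3/2}$ of the neighbourhood, not from any decay of a source term; if you want to salvage your approach you would need an independent bound of the form $|\partial_z f_t|=o(r^{-1/2})$ before the later asymptotic lemmas are available, and it is not clear how to obtain one.
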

\begin{proof}
Let us fix a point $(\bar r,\bar t)$ such that $\bar r \geq C_2$ and let $\bar z = f(\bar r, \bar t)$. We then have that every point $x = (r\theta, f(r,t))$  in $M_t$ with $r \geq \bar r$ lies at the center of an $\varepsilon_0$-neck. 

Using $f_r \geq (F(0,1)+C_0\varepsilon_0)^{-1} \mathcal{G} r$ we estimate
    \[\bar z - f\bigg(\frac{\bar r}{2}, \bar t\bigg) = \int_{\bar r/2}^{\bar r} f_r(r,\bar t)\,dr \geq \frac{3}{8}(F(0,1)+C_0\varepsilon_0)^{-1} \mathcal{G} \bar r^2.\]
Since $\bar r \geq \tfrac{64}{9}(F(0,1)+C_0\varepsilon_0)^2\mathcal G^{-2}$ this implies that $f(\tfrac{\bar r}{2}, \bar t) \leq \bar z - \bar r^{3/2}$. In other words, $r(z,\bar t) \geq \bar r/2$ for $z \geq \bar z - \bar r^{3/2}$. Since $r(z,t)$ is nonincreasing in time it follows that $r(z,t) \geq \bar r/2$ for $t \leq \bar t$ and $z \geq \bar z - \bar r^{3/2}$. In particular, $r \geq \bar r/2$ holds in the set $Q:= [\bar z - \bar r^{3/2}, \bar z + \bar r^{3/2}] \times [\bar t - \bar r^3, \bar t]$, and hence every point $(x,t)$ with $(x_{n+1},t) \in Q$ lies at the center of an $\varepsilon_0$-neck. 

We compute 
    \begin{align*}
        (rr_z)_t &= -r_z \gamma\bigg(-\frac{r_{zz}}{1+r_z^2}, \frac{1}{r}, \dots, \frac{1}{r}\bigg) + \dot \gamma^1\bigg(-\frac{r_{zz}}{1+r_z^2}, \frac{1}{r}, \dots, \frac{1}{r}\bigg)\bigg(\frac{r r_{zzz}}{1+r_z^2} - \frac{2rr_z r_{zz}^2}{(1+r_z^2)^2}\bigg)\\
        & + \frac{r_z}{r}\sum_{i=2}^n\dot \gamma^i\bigg(-\frac{r_{zz}}{1+r_z^2}, \frac{1}{r}, \dots, \frac{1}{r}\bigg).
    \end{align*}
Inserting 
    \begin{align*}
        \gamma\bigg(-\frac{r_{zz}}{1+r_z^2}, \frac{1}{r}, \dots, \frac{1}{r}\bigg) &= - \dot\gamma^1\bigg(-\frac{r_{zz}}{1+r_z^2}, \frac{1}{r}, \dots, \frac{1}{r}\bigg)\frac{r_{zz}}{1+r_z^2}\\
        &+ \frac{1}{r}\sum_{i=2}^n \dot\gamma^i\bigg(-\frac{r_{zz}}{1+r_z^2}, \frac{1}{r}, \dots, \frac{1}{r}\bigg)
        \end{align*}
we obtain 
    \begin{align*}
        (rr_z)_t &= \dot \gamma^1\bigg(-\frac{r_{zz}}{1+r_z^2}, \frac{1}{r}, \dots, \frac{1}{r}\bigg)\bigg(\frac{r_z r_{zz}}{1+r_z^2} + \frac{r r_{zzz}}{1+r_z^2} - \frac{2rr_z r_{zz}^2}{(1+r_z^2)^2}\bigg)\\
        &= \dot \gamma^1\bigg(-\frac{r_{zz}}{1+r_z^2}, \frac{1}{r}, \dots, \frac{1}{r}\bigg)\bigg( (rr_z)_{zz} - \frac{(2+3r_z^2)r_z r_{zz} + r r_z^2 r_{zzz}}{1+r_z^2} - \frac{2rr_z r_{zz}^2}{(1+r_z^2)^2}\bigg).
    \end{align*}

Let us define a smooth function $a = a(z,t)$ by
    \[a(z,t) = \dot \gamma^1\bigg(-\frac{r_{zz}(z,t)}{1+r_z(z,t)^2}, \frac{1}{r(z,t)}, \dots, \frac{1}{r(z,t)}\bigg),\]
so that we may write
    \[(rr_z)_t - a (rr_z)_{zz} =  - a \frac{(2+3r_z^2)r_z r_{zz} + r r_z^2 r_{zzz}}{1+r_z^2} -a \frac{2rr_z r_{zz}^2}{(1+r_z^2)^2}.\]
We have $C^{-1} \leq a \leq C$, so using Lemma~\ref{higher derivative bounds on a neck} we find that 
    \[\sup_Q |rr_z| \leq C \qquad \text{ and } \qquad \sup_Q |(rr_z)_t - a(rr_z)_{zz}| \leq C\bar r^{-3}.\]
Standard interior estimates for linear parabolic equations then give 
    \[|(rr_z)_z| \leq  C \bar r^{-3/2} \sup_{Q} |rr_z| + C \bar r^{3/2} \sup_Q|(rr_z)_t - a(rr_z)_{zz}|\]
at $(\bar z, \bar t)$. So we conclude that 
    \[|r_{zz}| \leq C \bar r^{-5/2}\]
at $(\bar z, \bar t)$. 
\end{proof}

For each $z<0$ we define $\mathcal T(z)$ such that $r(z,t) > 0$ for $t < \mathcal T(z)$ and 
    \[\lim_{t \to \mathcal T(z)} r(z,t) = 0.\] 

\begin{lemma}\label{r^2 goes like -t}
Whenever $z < 0$ and $t < \mathcal T(z)$ we have
    \[2F(0,1)(\mathcal T(z) - t) \leq r(z,t)^2.\]
If in addition $r(z,t) > 2C_2$ then we also have
    \[r(z,t)^2 \leq 2F(0,1)(\mathcal T(z) - t) + C(\mathcal T(z) - t)^{1/4} + C_2^2.\]
\end{lemma}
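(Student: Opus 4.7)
My plan is to differentiate $r^2$ in time and exploit the degree-one homogeneity of $\gamma$ to rewrite the evolution equation in the suggestive form
\[(r^2)_t = 2rr_t = -2\gamma(a, 1, \dots, 1), \qquad a := -\frac{rr_{zz}}{1+r_z^2}.\]
Note that $a \geq 0$ since $r_{zz} < 0$ by strict convexity. Both inequalities in the lemma will follow from analysing this single scalar ODE in $t$, using the already established neck structure of $M_t$ to control $a$.

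For the lower bound, monotonicity of $\gamma$ in its first argument together with $a \geq 0$ gives $\gamma(a, 1, \dots, 1) \geq F(0,1)$, hence $(r^2)_t \leq -2F(0,1)$. Integrating from $t$ to $\mathcal T(z)$ and using $r(z, s) \to 0$ as $s \to \mathcal T(z)$ immediately yields $r(z,t)^2 \geq 2F(0,1)(\mathcal T(z) - t)$. This half of the argument requires no information beyond strict convexity.

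For the upper bound, I would pick an intermediate time $t^\ast \in (t, \mathcal T(z))$ with $r(z, t^\ast) = C_2$; such a time exists by continuity of $r(z, \cdot)$ because $r(z,t) > 2C_2$ and $r \to 0$ at $\mathcal T(z)$. For every $s \in [t, t^\ast]$ we then have $r(z,s) \geq C_2 \geq 2\Lambda(\varepsilon_0)$, so the corresponding point $(r(z,s)\theta, z) \in M_s$ lies at the centre of an $\varepsilon_0$-neck. This triggers Lemma~\ref{first derivative bound on a neck}, which yields $r_z \leq C/r$, and Lemma~\ref{improved second derivative bound}, which yields $|r_{zz}| \leq Cr^{-5/2}$; combining them gives $0 \leq a \leq r|r_{zz}| \leq Cr^{-3/2}$. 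Since $(0, 1, \dots, 1)$ lies in the open cone $\Gamma$ and $\gamma$ is smooth, a first-order Taylor expansion at this point produces
\[|\gamma(a, 1, \dots, 1) - F(0,1)| \leq Ca \leq Cr(z,s)^{-3/2}.\]
Feeding the lower bound $r(z,s)^{-3/2} \leq C(\mathcal T(z)-s)^{-3/4}$ back in and integrating on $[t,t^\ast]$ yields
\[\Big|r(z,t^\ast)^2 - r(z,t)^2 + 2F(0,1)(t^\ast - t)\Big| \leq C\int_t^{t^\ast}(\mathcal T(z)-s)^{-3/4}\,ds \leq C(\mathcal T(z)-t)^{1/4}.\]
Substituting $r(z,t^\ast)^2 = C_2^2$ and $t^\ast - t \leq \mathcal T(z)-t$ then assembles the upper bound.

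The only technical point is verifying that the derivative estimates from Lemma~\ref{first derivative bound on a neck} and Lemma~\ref{improved second derivative bound} apply uniformly on $[t, t^\ast]$, which is precisely why $C_2$ was defined to absorb the factor $2\Lambda(\varepsilon_0)$. Beyond that, the argument is a routine ODE comparison; no new difficulty arises from $\gamma$ being fully nonlinear, because the single $\gamma$-dependent estimate (the Taylor expansion) uses only smoothness of $\gamma$ at the interior point $(0,1,\dots,1) \in \Gamma$.
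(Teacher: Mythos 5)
Your proposal is correct and follows essentially the same route as the paper: the lower bound comes from $r_{zz}<0$ together with homogeneity and monotonicity of $\gamma$, and the upper bound is obtained by stopping at the time when $r=C_2$, using the estimate $0\leq -r_{zz}\leq Cr^{-5/2}$ from Lemma~\ref{improved second derivative bound}, the Lipschitz bound $|\gamma(a,1,\dots,1)-F(0,1)|\leq Ca$, the conversion $r^{-3/2}\leq C(\mathcal T(z)-s)^{-3/4}$ via the lower bound, and integration in time.
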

\begin{proof}
Let us fix a point $(\bar z, \bar t)$ such that $\bar z < 0$. Since $r_{zz} < 0$ we have
    \[\frac{d}{dt}(r(\bar z,t)^2 + 2F(0,1)t) = -2\gamma\bigg(-\frac{r(\bar z,t) r_{zz}(\bar z, t)}{1+r_z(\bar z, t)^2}, 1, \dots, 1\bigg) + 2F(0,1) < 0,\]
and hence   
    \[r(\bar z, \bar t)^2 \geq 2F(0,1)(\mathcal T(\bar z) - \bar t).\]

From now on we assume $r(\bar z, \bar t) > 2C_2$. Let $\bar t < \tilde t < \mathcal T(\bar z)$ denote the time at which $r(\bar z, \tilde t) = C_2$. We have 
    \begin{align*}
    2F(0,1)&\geq 2\gamma\bigg(-\frac{r(\bar z,t) r_{zz}(\bar z, t)}{1+r_z(\bar z, t)^2}, 1, \dots, 1\bigg) - C|r(\bar z, t) r_{zz}(\bar z, t)|,
    \end{align*}
and hence
    \begin{align*}
    \frac{d}{dt} (r(\bar z, t)^2 + 2F(0,1)t) &= -2\gamma\bigg(-\frac{r(\bar z,t) r_{zz}(\bar z, t)}{1+r_z(\bar z, t)^2}, 1, \dots, 1\bigg)+ 2F(0,1)\\
    &\geq -C|r(\bar z, t) r_{zz}(\bar z, t)|
    \end{align*}
for $\bar t < t < \tilde t$. Lemma~\ref{improved second derivative bound} now gives
    \[\frac{d}{dt} (r(\bar z, t)^2 + 2F(0,1)t) \geq -Cr(\bar z, t)^{-3/2}.\]
Integrating this inequality over $\bar t < t < \tilde t$ gives 
    \begin{align*}
    r(\bar z , \bar t)^2 - r(\bar z, \tilde t)^2 &= \,2F(0,1)(\tilde t - \bar t) - \int_{\bar t}^{\tilde t}\frac{d}{dt} (r(\bar z, t)^2 + 2F(0,1)t)\, dt\\
     &\leq  \,2F(0,1)(\tilde t - \bar t) + C \int_{\bar t}^{\tilde t} r(\bar z, t)^{-3/2} \, dt\\
     &\leq  \,2F(0,1)(\tilde t - \bar t) + C \int_{\bar t}^{\tilde t}(\mathcal T(\bar z) - t)^{-3/4}\,dt\\
    &\leq  \,2F(0,1)(\mathcal T(\bar z) - \bar t) + C(\mathcal T(\bar z) - \bar t)^{1/4}.
    \end{align*}
Inserting $r(\bar z, \tilde t)^2 = C_2^2$ gives the claim.
\end{proof}

Using the estimates for $r(z,t)$ established in Lemma~\ref{r^2 goes like -t}, we can now prove a sharp lower bound for $r(z,t)r_z(z,t)$ at $z = 0$. 

\begin{lemma}\label{sharp lower bound rr_z at z=0}
Let us fix $\delta > 0$ arbitrarily. We have 
    \[r(0,t)r_z(0,t) \geq F(0,1)(\mathcal G^{-1} -\delta)\]
whenever $-t$ is sufficiently large.
\end{lemma}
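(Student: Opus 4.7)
The plan is to convert the geometric statement into a control on $f_t$. From the PDE $f_t = (f_r/r) F(\Phi, 1)$ with $\Phi := r f_{rr}/((1+f_r^2)f_r)$, equivalently $\Phi = -r r_{zz}/(1+r_z^2)$, I would deduce the identity
\[
P(t) := r(0, t) r_z(0, t) = \frac{r(0,t)}{f_r(r(0,t),t)} = \frac{F(\Phi(0, t), 1)}{f_t(r(0, t), t)}.
\]
The bound $|r_{zz}| \leq C r^{-5/2}$ for $r \geq C_2$ from Lemma~\ref{improved second derivative bound}, together with $r(0, t)^2 \geq 2F(0, 1)(-t)$, gives $\Phi(0, t) \leq C(-t)^{-3/4}$, so $F(\Phi(0, t), 1) \to F(0, 1)$ as $t \to -\infty$. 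The lemma therefore reduces to showing $\limsup_{t \to -\infty} f_t(r(0, t), t) \leq \mathcal G$.

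To obtain this upper bound I would introduce the auxiliary quantity $Q(z, t) := r(z, t)^2 - 2F(0, 1)(\mathcal T(z) - t)$, defined for $z < 0$ and $t \leq \mathcal T(z)$. By Lemma~\ref{r^2 goes like -t} we have $0 \leq Q \leq C(\mathcal T(z) - t)^{1/4}$ whenever $r(z, t) \geq 2C_2$, and $Q_t = -2(F(\Phi, 1) - F(0, 1)) \leq 0$. Differentiating in $z$ using $\mathcal T'(z) = 1/f_t(0, \mathcal T(z))$ yields
\[
P(z, t) = \frac{F(0, 1)}{f_t(0, \mathcal T(z))} + \frac{Q_z(z, t)}{2}.
\]
Since $f_t(0, s) \to \mathcal G$ as $s \to -\infty$ by Lemma~\ref{speed limit tip}, the first term tends to $F(0, 1)/\mathcal G$ as $z \to -\infty$. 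The estimate $|P_z| \leq C r^{-3/2}$ from Lemma~\ref{improved second derivative bound} would then be used to transfer the asymptotic for $P$ at $z = z_0(t)$ (for a carefully coordinated choice of $z_0 \to -\infty$) back to $z = 0$, closing the argument.

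The main obstacle I foresee is controlling the error term $Q_z$ and the propagation error in the $P_z$-estimate simultaneously. A naive bound $|P(0, t) - P(z_0, t)| \leq C \int_{z_0}^0 r(z, t)^{-3/2}\,dz$ combined with the neck behaviour $f_r \sim r\mathcal G/F(0,1)$ gives an error of order $(-t)^{1/4}$ for any natural choice of $z_0$, which is too weak. I expect the resolution to come from a maximum-principle argument yielding decay (or a definite one-sided sign) for $Q_z$ itself, obtained by deriving a parabolic inequality for $Q_z$ from $Q_t \leq 0$ together with the boundary behaviour $Q(z, \mathcal T(z)) = 0$. Once a rough upper bound on $P$ is in hand, one can feed it back into the identity above and bootstrap to sharpen the estimates, eventually producing the pointwise lower bound claimed in the lemma.
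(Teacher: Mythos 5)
Your reductions are correct as far as they go: the identity $r r_z = F(\Phi,1)/f_t$ (with $\Phi = -rr_{zz}/(1+r_z^2)$), the smallness of $\Phi(0,t)$ via Lemma~\ref{improved second derivative bound}, the monotonicity $Q_t\leq 0$ for $Q=r^2-2F(0,1)(\mathcal T(z)-t)$, and the identity $rr_z = F(0,1)/f_t(0,\mathcal T(z)) + \tfrac{1}{2}Q_z$ are all valid. But the proof has a genuine gap exactly where you flag it: everything hinges on a one-sided bound for $Q_z$ (equivalently, on transporting the tip-speed asymptotics of Lemma~\ref{speed limit tip} out to $z=0$, where $r(0,t)\to\infty$, a regime Lemma~\ref{speed limit tip} does not reach), and no argument is given. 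The suggested fix --- a maximum principle giving a sign or decay for $Q_z$ --- is not substantiated: $Q_z$ does not obviously satisfy a usable parabolic inequality (differentiating the fully nonlinear equation produces terms with no sign, and near the tip $rr_z$ is governed by $F(1,1)$ rather than $F(0,1)$, so a global one-sided sign for $Q_z$ is doubtful). Likewise, your own estimate shows that propagating from a height $z_0$ with $\mathcal T(z_0)$ very negative costs $|z_0|\cdot r^{-3/2}\sim(-t)^{1/4}$, so that route fails as written. In short, the reduction to $\limsup_{t\to-\infty} f_t(r(0,t),t)\leq\mathcal G$ restates the difficulty rather than resolving it.

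The missing idea (and what the paper does) is to avoid any pointwise control of $Q_z$ by working at the single scale $R=r(0,t)\sim\sqrt{-t}$ and taking a difference quotient over the height interval $[-2R,-R]$, which lies well inside the neck region where $r$ is comparable to $R$. There one combines the lower bound $r(-R,t)^2\geq 2F(0,1)(\mathcal T(-R)-t)$ with the upper bound $r(-2R,t)^2\leq 2F(0,1)(\mathcal T(-2R)-t)+CR^{1/2}+C_2^2$ from Lemma~\ref{r^2 goes like -t}; since $\mathcal T(-R),\mathcal T(-2R)\sim -R\to-\infty$, Lemma~\ref{speed limit tip} applies to the tip during that interval and gives $\mathcal T(-R)-\mathcal T(-2R)\geq(\mathcal G^{-1}-\delta/4)R$, so $r(-R,t)^2-r(-2R,t)^2\geq 2F(0,1)(\mathcal G^{-1}-\delta/2)R$ after absorbing the $O(R^{1/2})$ errors (this is precisely your $Q_z$-estimate in averaged form, where dividing by the window length $R$ makes the additive errors harmless). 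The mean value theorem then yields a point in $[-2R,-R]$ with $rr_z\geq F(0,1)(\mathcal G^{-1}-\delta/2)$, and the bound $|(rr_z)_z|\leq C_3R^{-3/2}$ from Lemma~\ref{improved second derivative bound} transfers this to $z=0$ with error $O(R^{-1/2})$, since the propagation distance is only $2R$. Without this (or an equivalent) mechanism your outline does not close.
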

\begin{proof}
In the following we assume $-t$ is so large that $R = r(0,t) \geq 2C_2$. Recall that $C_2 \geq \Lambda(\varepsilon_0)$. Consequently, every point $(x,t)$ with $x_{n+1} = 0$ lies at the center of an $\varepsilon_0$-neck. This implies $|r(z,t) - R| \leq \varepsilon_0 R$ for $|z| \leq 2R$. Then we have $R \geq C_2$ for $|z| \leq 2R$, so by Lemma~\ref{improved second derivative bound} we have
    \[|(rr_z)_z| \leq C_3 R^{-3/2}.\]
Lemma~\ref{r^2 goes like -t} implies $r(0,t) \to \infty$ as $t \to -\infty$, so we may assume $-t$ is so large that $R^{1/2} \geq 4C_3 \delta^{-1}$. The fundamental theorem of calculus then gives
    \[|r(z,t)r_z(z,t) - r(0,t)r_z(0,t)| \leq 2C_3 R^{-1/2} \leq \delta /2\]
for $|z| \leq 2R$. Lemma~\ref{r^2 goes like -t} gives
    \[r(-R,t)^2 \geq 2F(0,1)(\mathcal T(-R) - t) \qquad \text{and} \qquad r(-2R,t)^2 \geq 2F(0,1)(\mathcal T(-2R) - t).\]
Assuming $-t$ is so large that $r(-2R,t) >2C_2$, the second of these inequalities and Lemma~\ref{r^2 goes like -t} imply
    \begin{align*}
    r(-2R,t)^2 &\leq 2F(0,1)(\mathcal T(-2R) - t) + C(\mathcal T(-2R) - t)^{1/4} + C_2^2\\
    &\leq 2F(0,1)(\mathcal T(-2R) - t)+ Cr(-2R,t)^{1/2} + C_2^2\\
    &\leq 2F(0,1)(\mathcal T(-2R) - t) + CR^{1/2} + C_2^2.
    \end{align*}
We thus have
    \begin{align*}
        r(-R,t)^2 - r(-2R, t)^2 &\geq 2F(0,1)(\mathcal T(-R) - \mathcal T(-2R))- C_4R^{1/2} - C_2^2.
    \end{align*}
On the other hand, Lemma~\ref{speed limit tip} implies
    \[\mathcal T(-R,t) - \mathcal T(-2R,t) \geq (\mathcal G^{-1} 
 - \delta/4)R\]
for sufficiently large $-t$. We may assume $-t$ is so large that 
    \[C_2^2 + C_4R^{1/2} \leq F(0,1) \delta R/2,\]
so that combining these estimates gives
    \begin{align*}
    r(-R,t)^2 - r(-2R, t)^2 &\geq 2F(0,1)(\mathcal G^{-1} 
 - \delta/2)R.
    \end{align*}
Therefore, if $-t$ is sufficiently large,
    \[\sup_{z \in [-2R,-R]} r(z,t)r_z(z,t) \geq F(0,1)(\mathcal G^{-1} 
 - \delta/2).\]
Recalling that $r(0,t)r_z(0,t) \geq r(z,t)r_z(z,t) - \delta/2$ for $|z| \leq 2R$, we finally obtain
    \[r(0,t)r_z(0,t) \geq F(0,1)(\mathcal G^{-1} - \delta)\]
for $-t$ sufficiently large. 
\end{proof}

We recall \cite[Proposition~6.9]{Brendle_Choi_a}. 

\begin{lemma}
We define a smooth function $\psi : (0,\infty)\times(0,\infty) \to \mathbb{R}$ by 
    \[\psi(z,t) = \frac{1}{\sqrt{4\pi t}}\int_0^\infty (e^{-\frac{(z-y)^2}{4t}} - e^{-\frac{(z+y)^2}{4t}})\,dy.\]
Then $\psi$ is a solution to the heat equation $\psi_t = \psi_{zz}$. Moreover, for each $z > 0$ and $t > 0$ we have $\psi_{zz}(z,t) < 0$, and 
    \[\lim_{z \to 0} \psi(z,t) = 0, \qquad \lim_{z \to \infty} \psi(z,t) = 1, \qquad \lim_{t\to 0} \psi(z,t) = 1, \qquad \lim_{t\to \infty} \psi(z,t) = 0.\]
\end{lemma}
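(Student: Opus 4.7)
The plan is to rewrite $\psi$ in closed form using the error function and then read off every claimed property. After changing variables $u = (y-z)/\sqrt{4t}$ in the first integral defining $\psi$ and $v = (y+z)/\sqrt{4t}$ in the second, both integrals become Gaussian integrals over half-lines whose lower limits are $\mp z/\sqrt{4t}$, respectively. Subtracting gives
\[
\psi(z,t) \;=\; \frac{1}{\sqrt{\pi}}\int_{-z/\sqrt{4t}}^{z/\sqrt{4t}} e^{-u^2}\,du \;=\; \operatorname{erf}\!\left(\frac{z}{\sqrt{4t}}\right).
\]

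With this closed form, every remaining claim is a one-line computation. For the heat equation, differentiate to get $\psi_z(z,t) = (\pi t)^{-1/2}\,e^{-z^2/4t}$, which is the standard one-dimensional heat kernel (up to normalisation) and thus satisfies $(\psi_z)_t = (\psi_z)_{zz}$; integrating in $z$ and using that $\psi$ is smooth and bounded shows $\psi_t = \psi_{zz}$. Alternatively, one directly computes
\[
\psi_{zz}(z,t) \;=\; -\,\frac{z}{2\sqrt{\pi}\,t^{3/2}}\,e^{-z^2/4t} \;=\; \psi_t(z,t),
\]
using $\partial_t(z/\sqrt{4t}) = -z/(4t^{3/2})$ and the chain rule. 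The same explicit expression for $\psi_{zz}$ immediately yields the strict concavity $\psi_{zz}(z,t) < 0$ for all $z,t > 0$.

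The four limits reduce to the elementary facts $\operatorname{erf}(0) = 0$ and $\lim_{x \to \infty} \operatorname{erf}(x) = 1$, applied to the argument $z/\sqrt{4t}$: as $z \to 0^+$ with $t$ fixed, or as $t \to \infty$ with $z$ fixed, the argument tends to $0$, so $\psi \to 0$; as $z \to \infty$ with $t$ fixed, or as $t \to 0^+$ with $z$ fixed, the argument tends to $+\infty$, so $\psi \to 1$.

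There is no real obstacle here — the entire lemma is a bookkeeping exercise once one recognises that the integrand in the definition is exactly the method-of-images heat kernel on the half-line with Dirichlet boundary data at $z=0$, so that $\psi$ is the unique bounded solution of the heat equation on $(0,\infty)$ with initial datum $1$ and $\psi(0,t)=0$; concavity in $z$ is then the statement that a positive, monotone, bounded solution of the heat equation with Dirichlet data at $0$ is concave in the spatial variable, which is transparent from the explicit formula above.
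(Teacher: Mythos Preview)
Your proof is correct and complete. The paper itself does not prove this lemma at all; it simply recalls the statement from \cite[Proposition~6.9]{Brendle_Choi_a} without argument. Your closed-form identification $\psi(z,t) = \operatorname{erf}(z/\sqrt{4t})$ via the two substitutions is clean, and once that identity is in hand every claimed property is indeed a one-line check, exactly as you carry out. So your approach supplies a self-contained elementary proof where the paper outsources the result to the literature.
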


Using the function $\psi(z,t)$ to construct an appropriate barrier, we extend the lower bound for $r(0,t)r_z(0,t)$ proven in Lemma~\ref{sharp lower bound rr_z at z=0} to $z \geq 0$. 

\begin{lemma}\label{sharp lower bound rr_z}
Given $\delta > 0$ there is a time $\bar t$ dependig on $\delta$ such that 
    \[r(z,t)r_z(z,t) \geq F(0,1)(\mathcal G^{-1} - \delta)\]
for all $z \geq 0$ and $t \leq \bar t$. 
\end{lemma}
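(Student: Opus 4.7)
The plan is to compare $p := rr_z$ with a barrier built from the heat-kernel-like function $\psi$ of the preceding lemma, thereby propagating the pointwise bound at $z=0$ to all $z \geq 0$. First I would verify that $p$ approximately satisfies a linear parabolic equation where $r$ is large. From the computation carried out in the proof of Lemma~\ref{improved second derivative bound},
\[p_t - a(z,t)\,p_{zz} = -a(z,t)\,\frac{(2+3r_z^2)r_zr_{zz} + rr_z^2 r_{zzz}}{1+r_z^2} - a(z,t)\,\frac{2rr_zr_{zz}^2}{(1+r_z^2)^2},\]
where $a(z,t) := \dot\gamma^1\bigl(-r_{zz}/(1+r_z^2),\,1/r,\dots,1/r\bigr)$. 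Combining Lemma~\ref{higher derivative bounds on a neck} with Lemma~\ref{improved second derivative bound} shows that every error term is bounded by $Cr^{-7/2}$ on $\varepsilon_0$-necks with $r \geq C_2$. Moreover, by the zero-homogeneity of $\dot\gamma^1$ and its continuity, $a(z,t) \to \bar a := \dot\gamma^1(0,1,\dots,1)$ as $r \to \infty$. Since $\inf_{z \geq 0} r(z,t) = r(0,t) \to \infty$ as $t \to -\infty$ by Lemma~\ref{r^2 goes like -t}, for $-t$ sufficiently large both the error and the discrepancy $a - \bar a$ are uniformly small on $\{z \geq 0\}$.

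Next I would introduce the subsolution
\[\phi_\sigma(z,t) := \lambda\bigl(1 - \psi(z,\,\bar a(\sigma - t))\bigr), \qquad \lambda := F(0,1)\bigl(\mathcal G^{-1} - \tfrac{\delta}{2}\bigr),\]
defined for $t < \sigma$, choosing $\sigma \leq \bar t_0$ with $\bar t_0 = \bar t_0(\delta/2)$ the time furnished by Lemma~\ref{sharp lower bound rr_z at z=0}. Using $\psi_t = \psi_{zz}$, a direct computation gives $(\phi_\sigma)_t - \bar a(\phi_\sigma)_{zz} = 2\bar a \lambda\,\psi_{zz} < 0$, so $\phi_\sigma$ is a strict subsolution of $\partial_t - \bar a \partial_z^2$ with a definite spare $2\bar a\lambda|\psi_{zz}|$ available to absorb both the error in the true equation for $p$ and the discrepancy $a - \bar a$, provided $(z,t)$ is kept in a region where $|\psi_{zz}|$ is bounded below. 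The boundary data are favourable: $\phi_\sigma(0,t) = \lambda \leq p(0,t)$ for $t \leq \bar t_0$ by Lemma~\ref{sharp lower bound rr_z at z=0}, and $\phi_\sigma(z,\sigma^-) = 0 \leq p(z,\sigma)$. Applying the parabolic maximum principle to $p - \phi_\sigma$ on a localised cylinder $Q := [0,z_1] \times [t_1,\sigma]$ yields $p \geq \phi_\sigma$ on $Q$. Letting $z_1 \to \infty$ and $t_1 \to -\infty$, and then using $\phi_\sigma(z,t) \to \lambda$ as $\sigma - t \to \infty$ for each fixed $z \geq 0$, we obtain $p(z,t) \geq \lambda - o(1) \geq F(0,1)(\mathcal G^{-1} - \delta)$ for $z \geq 0$ and $t$ sufficiently negative.

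The main obstacle is handling the artificial boundary data introduced by the localisation. At $z = z_1$ one needs $p(z_1,t) \geq \phi_\sigma(z_1,t)$; since $\phi_\sigma(z_1,\cdot) \to 0$ as $z_1 \to \infty$ and $p \geq 0$, this is easily arranged once $z_1$ is large enough. The hard part is the initial-time condition at $t = t_1$, where $\phi_\sigma(z,t_1) \to \lambda$ as $t_1 \to -\infty$; this requires $\liminf_{t_1 \to -\infty} \inf_{z \in [0,z_1]} p(z,t_1) \geq \lambda$, which is essentially what we are trying to prove. I would deal with this by invoking the tip-centered blow-up analysis used in the proof of Lemma~\ref{speed limit tip}: the shifted hypersurfaces $M_{t+t_k} - q_{t_k}$ subconverge to a translating $G$-soliton of speed $\mathcal G$, which by \cite{Bourni--Langford} must be the rotationally symmetric bowl, on which $rr_z \to F(0,1)/\mathcal G$ at the asymptotic end. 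Choosing $|t_1|$ large enough (depending on $z_1$) then yields the required initial bound up to arbitrarily small error, and sending this error to zero closes the argument.
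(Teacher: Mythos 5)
Your overall strategy (compare $rr_z$ with a barrier built from the function $\psi$) is the right one, but your barrier is oriented the wrong way in time, and this creates a circularity that your patch does not repair. In $\phi_\sigma(z,t)=\lambda\bigl(1-\psi(z,\bar a(\sigma-t))\bigr)$ the heat time is $\sigma-t$, so $\phi_\sigma\to\lambda$ as $t\to-\infty$ and $\phi_\sigma\to 0$ as $t\to\sigma^-$. On the truncated cylinder $[0,z_1]\times[t_1,\sigma]$ the parabolic boundary consists of the initial slice $t=t_1$ and the two lateral sides, and on two of these three pieces the comparison hypothesis is essentially the statement to be proven: at $t=t_1$ you need $rr_z\gtrsim\lambda$ for all $z\in[0,z_1]$, and at $z=z_1$ you need $rr_z(z_1,t)\geq\phi_\sigma(z_1,t)$, which for $t\ll\sigma$ is again close to $\lambda$ (not close to $0$ --- $\phi_\sigma(z_1,\cdot)$ is only small for $t$ near $\sigma$), while all you know there is $rr_z>0$. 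The proposed rescue via the tip blow-up does not work: the shifted flows $M_{t+t_k}-q_{t_k}$ converge to the bowl only in $C^\infty_{\loc}$, i.e.\ on regions of bounded size around the tip, whereas at time $t_1$ the slab $\{0\leq z\leq z_1\}$ lies at distance comparable to $\mathcal G|t_1|$ from $q_{t_1}$, so the blow-up gives no information about $rr_z$ there. (A fixed compact $z$-range could instead be handled by Lemma~\ref{sharp lower bound rr_z at z=0} together with $|(rr_z)_z|\leq C_3 r^{-3/2}$, but only for $z\lesssim r(0,t)^{3/2}$, so this too fails to supply the boundary data uniformly as $z_1\to\infty$.) A further soft spot is the absorption of the inhomogeneity: the ``spare'' $2\bar a\lambda|\psi_{zz}|$ tends to zero as $\sigma-t\to\infty$ and as $z\to\infty$, which is precisely where you need it.

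The paper's proof avoids all of this with two devices you did not use. First, the inequality $1+rr_{zz}\geq 0$ from Lemma~\ref{improved second derivative bound}, combined with $r_z>0$ and $r_{zz}<0$, shows that the lower-order terms in the evolution of $rr_z$ have a favourable sign, so $rr_z$ is an exact supersolution of $\partial_t-\tfrac{a}{1+r_z^2}\partial_z^2$ wherever $r\geq C_1+C_2$; no error absorption and no replacement of the variable coefficient $a$ by $\bar a$ is needed. Second, the barrier $\psi^{\delta,s}(z,t)=F(0,1)\bigl(\mathcal G^{-1}-2\delta-\mathcal G^{-1}\psi(qz,t-s)\bigr)$ is run \emph{forward} from time $s$: as $t\to s^+$ and as $z\to\infty$ it is negative (since $\psi\to1$ there), so the only boundary information required is the $z=0$ bound from Lemma~\ref{sharp lower bound rr_z at z=0}; a first-touching point is excluded by taking $q$ large (using $a\geq C^{-1}$ and $r_z^2\leq\varepsilon_0$), and letting $s\to-\infty$ then relaxes the barrier up to $F(0,1)(\mathcal G^{-1}-2\delta)$ at every fixed $(z,t)$. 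As it stands, your argument has a genuine gap at the initial and lateral boundaries of the comparison region.
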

\begin{proof}
By Lemma~\ref{improved second derivative bound}, we have $1 + r r_{zz} \geq 0$ for $r \geq C_1 + C_2$. This implies 
    \[(rr_z)_t = a \frac{(rr_z)_{zz}}{1+r_z^2} - a\frac{2r_zr_{zz}(1 + r_z^2 + rr_{zz})}{(1+r_z^2)^2} \leq a \frac{(rr_z)_{zz}}{1+r_z^2}\]
for $r \geq C_1 + C_2$. By Lemma~\ref{sharp lower bound rr_z} we may choose $\bar t$ so that 
    \[r(0,t)r_z(0,t) \geq F(0,1)(\mathcal G^{-1} - \delta)\]
for every $t \leq \bar t$. Moreover, for a suitable choice of $\bar t$ we can arrange that $r(z,t) \geq C_1 + C_2$ for all $z \geq 0$ and $t \leq \bar t$. For each $s < \bar t$ we define a barrier function $\psi^{\delta, s}(z,t)$ by 
    \[\psi^{\delta,s}(z,t) = F(0,1)(\mathcal G^{-1} - 2\delta - \mathcal G^{-1}\psi(qz,t-s)), \qquad (z,t) \in (0,\infty)\times(s, \bar t],\]
where $q$ is a positive constant. We claim that if $q$ is chosen correctly then $rr_z \geq \psi^{\delta, s}$ for all $z \geq 0$ and $t \in (s,\bar t]$. 

By our choice of $\bar t$, 
    \[r(0,t)r_z(0,t) \geq F(0,1)(\mathcal G^{-1} - \delta) > \limsup_{z\to 0} \psi^{\delta,s}(z,t)\]
for each $t \in (s,\bar t]$. Moreover, 
    \[\liminf_{z\to\infty} r(z,t)r_z(z,t) \geq 0 > \limsup_{z\to\infty}\psi^{\delta,s}(z,t)\]
for each $t \in (s,\bar t]$. Finally, 
    \[r(z,s)r_z(z,s) \geq 0 > \limsup_{t \to s} \psi^{\delta,s}(z,t).\]
Thus, if the inequality $rr_z > \psi^{\delta,s}$ fails, then there is a point $(\tilde z, \tilde t) \in (0,\infty)\times(s,\bar t]$ such that $r(\tilde z, \tilde t)r_z(\tilde z, \tilde t) = \psi^{\delta,s}(\tilde z, \tilde t)$ and $r(\tilde z, t)r_z(\tilde z, t) > \psi^{\delta,s}(\tilde z, t)$ for $t \in (s,\bar t)$. At $(\tilde z, \tilde t)$ we have
    \[a \frac{(\psi^{\delta,s})_{zz}}{1+r_z^2} \leq a \frac{(rr_z)_{zz}}{1+r_z}^2 \leq (rr_z)_t \leq \psi^{\delta,s}_t = q^{-2} \psi^{\delta,s}_{zz}.\]
Since $a \geq C^{-1}$ and $r_z^2 \leq \varepsilon_0$, recalling that $\psi^{\delta,s}_{zz} > 0$, we have a contradiction if $q > 100 + C$.

We thus conclude that $r(z,t)r_z(z,t) > \psi^{\delta,s}(z,t)$ for all $z > 0$ and $t \in (s, \bar t]$, where $s$ can take any value such that $s < \bar t$. Sending $s \to -\infty$, this gives 
    \[r(z,t)r_z(z,t) \geq F(0,1)(\mathcal G^{-1} - 2\delta)\]
for all $z \geq 0$ and $t \leq \bar t$. 
\end{proof}

Lemma~\ref{sharp lower bound rr_z} lets us conclude that, at sufficiently early times, $r(z,t)^2$ grows at least linearly in $z$ as $z \to \infty$. 

\begin{lemma}\label{r goes like sqrt z}
We can find a time $T \in (-\infty, 0]$ such that $r(z,t)^2 \geq \mathcal G^{-1} z$ whenever $z \geq 0$ and $t \leq T$. In particular, if $t \leq T$ then the function $f(r,t)$ is defined for all $r \geq 0$. 
\end{lemma}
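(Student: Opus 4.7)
The plan is to obtain the lemma by a single integration in the $z$-variable of the sharp lower bound on $r r_z$ established in Lemma~\ref{sharp lower bound rr_z}. The upshot of that lemma is that, for any prescribed $\delta>0$, we can find $\bar t$ so that
\[
r(z,t)\,r_z(z,t) \;\geq\; F(0,1)(\mathcal G^{-1}-\delta)
\qquad \text{for all } z\geq 0 \text{ and } t\leq \bar t.
\]
Since $\frac{\partial}{\partial z}\bigl(r(z,t)^2\bigr)=2\,r(z,t)\,r_z(z,t)$, this is nothing but the differential inequality
\[
\frac{\partial}{\partial z}\bigl(r(z,t)^2\bigr)\;\geq\; 2F(0,1)(\mathcal G^{-1}-\delta).
\]

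First, I would fix $\delta>0$ small enough that $2F(0,1)(\mathcal G^{-1}-\delta)\geq \mathcal G^{-1}$; this is the quantitative choice matching the asymptotic behaviour of the bowl translator identified earlier in the section (where one has $r^2\sim 2F(0,1)\mathcal G^{-1} z$ as $z\to\infty$). With $\delta$ so chosen, set $T:=\bar t$ provided by Lemma~\ref{sharp lower bound rr_z}. Then for every $t\leq T$ integration from $0$ to $z$ yields
\[
r(z,t)^2 \;\geq\; r(0,t)^2 + 2F(0,1)(\mathcal G^{-1}-\delta)\,z \;\geq\; \mathcal G^{-1} z,
\]
since $r(0,t)^2\geq 0$. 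This gives the main inequality.

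For the ``in particular'' clause, note that by hypothesis $r_z(\cdot,t)>0$, so $z\mapsto r(z,t)$ is strictly increasing on its domain. The lower bound just established forces $r(z,t)\to\infty$ as $z\to\infty$, so the range of $r(\cdot,t)$ is all of $[0,\infty)$. Since $r(\cdot,t)$ is by construction the inverse of $f(\cdot,t)$, this means $f(r,t)$ is defined for every $r\geq 0$, as claimed.

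There is no real obstacle here: the work lies entirely in Lemma~\ref{sharp lower bound rr_z} (which in turn rests on the tip-to-cylinder transition analysis, Lemmas~\ref{f_t increasing in time}--\ref{sharp lower bound rr_z at z=0}, and a barrier argument using the heat-kernel derived function $\psi$). The only delicate point in the present lemma is the quantitative choice of $\delta$; as indicated, this is dictated by the asymptotic translator profile and does not introduce any new analytic difficulty.
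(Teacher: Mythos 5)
Your proposal is correct and takes essentially the same route as the paper: one fixes $T$ via Lemma~\ref{sharp lower bound rr_z} so that $2\,r r_z$ is bounded below by $\mathcal G^{-1}$ on $\{z\geq 0\}\times(-\infty,T]$ and then integrates in $z$ by the fundamental theorem of calculus. Your explicit choice of $\delta$ with $2F(0,1)(\mathcal G^{-1}-\delta)\geq \mathcal G^{-1}$ is exactly the paper's (implicit) step of arranging $r r_z \geq \tfrac{1}{2}\mathcal G^{-1}$, and your remark on the monotonicity and unboundedness of $r(\cdot,t)$ just spells out the paper's unstated justification of the ``in particular'' clause.
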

\begin{proof}
By Lemma~\ref{sharp lower bound rr_z} we can choose $T$ so that $r(z,t)r_z(z,t) \geq \tfrac{1}{2}\mathcal G^{-1}$ for all $z \geq 0$ and $t \leq T$. The assertion then follows from the fundamental theorem of calculus. 
\end{proof}

With these preparations out of the way, we can show that the limit of $r(z,t)r_z(z,t)$ as $z \to \infty$ is constant in time. 

\begin{lemma}\label{rr_z constant at infinity}
For each $t \leq T$ we have $\lim_{z \to \infty} r(z,t)r_z(z,t) = F(0,1)\mathcal G^{-1}$.
\end{lemma}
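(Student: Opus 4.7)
The plan is to sandwich $r(z,t)r_z(z,t)$ between matching upper and lower asymptotic bounds. For the upper bound $\limsup_{z\to\infty}rr_z(z,t)\leq F(0,1)\mathcal G^{-1}$, I fix $\varepsilon>0$ and apply Lemma~\ref{neck-cap}: there is some $R(\varepsilon)<\infty$ such that any point on $M_t$ at distance at least $R(\varepsilon)$ from the tip $p_t$ lies at the center of an $\varepsilon$-neck. At such a point, repeating the chain of inequalities in the proof of Lemma~\ref{first derivative bound on a neck} with $\varepsilon_0$ replaced by $\varepsilon$ yields $r/f_r\leq\mathcal G^{-1}(F(0,1)+C_0\varepsilon)$, using only the bound $f_t\geq\mathcal G$ of Lemma~\ref{speed limit tip} together with $1/f_r\leq\varepsilon$ and $|rf_{rr}/((1+f_r^2)f_r)|\leq C\varepsilon$ on an $\varepsilon$-neck. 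Letting $\varepsilon\to 0$ delivers the claim for every $t\leq T$.

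For the matching lower bound, I start from Lemma~\ref{sharp lower bound rr_z}: given any $\delta>0$ there exists $\bar t(\delta)$ with $rr_z(z,t)\geq F(0,1)(\mathcal G^{-1}-\delta)$ for every $z\geq 0$ and $t\leq\bar t(\delta)$. This already gives the conclusion for $t\leq\bar t(\delta)$. To extend to the window $\bar t(\delta)<t\leq T$, I rerun the barrier argument from that proof on the half-strip $\{z\geq 0\}\times(s,t_0]$ for the fixed $t_0\leq T$ of interest, with the same heat-kernel barrier
\[\psi^{\delta,s}(z,t)=F(0,1)\bigl(\mathcal G^{-1}-2\delta-\mathcal G^{-1}\psi(qz,t-s)\bigr)\]
and $q>100+C$. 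The subsolution inequality $(rr_z)_t\leq a(rr_z)_{zz}/(1+r_z^2)$, which holds wherever $r\geq C_1+C_2$, together with $\psi^{\delta,s}_t=q^{-2}\psi^{\delta,s}_{zz}$ and $\psi^{\delta,s}_{zz}>0$, rules out any first contact of $rr_z$ with $\psi^{\delta,s}$ from above provided the data on the parabolic boundary dominate. Sending $s\to-\infty$ and then $\delta\to 0$ completes the lower bound.

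The one nontrivial ingredient is the boundary control $r(0,t)r_z(0,t)\geq F(0,1)(\mathcal G^{-1}-\delta)$ on $t\in[s,t_0]$. For $t\leq\bar t(\delta/2)$ this is Lemma~\ref{sharp lower bound rr_z at z=0}; for $\bar t(\delta/2)<t\leq T$ I use that the tip-shifted flows $M_{t+t_k}-q_{t_k}$ converge in $C^\infty_{\loc}$ to the bowl soliton $\bar M_t$ as $t_k\to-\infty$, and that on the bowl $\hat r\hat r_z$ tends to $F(0,1)\mathcal G^{-1}$ at large height above the tip. Since the level $z=0$ in the original frame sits at height $-f(0,t)\to\infty$ in the tip-centered frame as $t\to-\infty$, the convergence forces $r(0,t)r_z(0,t)\to F(0,1)\mathcal G^{-1}$; combined with Lemma~\ref{f_t increasing in time} and Lemma~\ref{f_t increasing in space} this is enough for the required bound on $[s,t_0]$. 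The main obstacle I anticipate is precisely this extension of the $z=0$ lower bound to all $t\leq T$: because $rr_z$ is a subsolution rather than a supersolution, direct forward-in-time propagation via the maximum principle is unavailable, and the geometric translator asymptotics near the tip must be fed in by hand.
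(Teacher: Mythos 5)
Your upper bound and the case $t\leq\bar t(\delta)$ are fine, but the step you yourself flag as the main obstacle is a genuine gap, not a technicality. To run the barrier $\psi^{\delta,s}$ on $(s,t_0]$ with $t_0$ up to $T$ you need $r(0,t)r_z(0,t)\geq F(0,1)(\mathcal G^{-1}-\delta)$ for \emph{all} $t\in(s,t_0]$, in particular on the fixed window $(\bar t(\delta),T]$, and nothing you invoke supplies this. The convergence of the tip-shifted flows $M_{t+t_k}-q_{t_k}$ to the bowl is a statement as $t_k\to-\infty$ and says nothing about $r(0,t)r_z(0,t)$ at times in a fixed compact window near $T$; Lemma~\ref{sharp lower bound rr_z at z=0} likewise only covers $-t$ large. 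The monotonicity lemmas cannot close this: since $f_{rr}\geq 0$ one has $rr_z=r/f_r\geq F(0,1)/f_t$, so the needed lower bound on $rr_z$ at $(r(0,t),t)$ is an \emph{upper} bound on $f_t$ there, whereas Lemmas~\ref{f_t increasing in time}, \ref{f_t increasing in space} and \ref{speed limit tip} only give $f_t\geq\mathcal G$ (i.e.\ upper bounds on $rr_z$) and propagate lower bounds on $f_t$ forward in time. An upper bound $f_t\leq\mathcal G+O(\delta)$ at moderate radius and times near $T$ is essentially the conclusion one is trying to reach (it is exactly what Lemma~\ref{rr_z constant at infinity} plus $f_{tr}\geq0$ is used to derive in Proposition~\ref{symmetry implies translating}), so as written the extension is circular. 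There is also a secondary point: on the enlarged strip the subsolution inequality for $rr_z$ is only available where $r\geq C_1+C_2$, which must be checked near $z=0$ for $t$ up to $t_0$.

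The paper avoids the late-time boundary issue altogether. It fixes $\bar t(\delta)$ from Lemma~\ref{sharp lower bound rr_z} and then shows the liminf at spatial infinity is independent of $t$ on $t\leq T$: by Lemma~\ref{higher derivative bounds on a neck}, $|(rr_z)_t|\leq C r^{-2}$ for $r\geq C_2$, and by Lemma~\ref{r goes like sqrt z}, $r(z,t)^2\geq\mathcal G^{-1}z$ for $t\leq T$, so at fixed large $z$ integrating in time over $[\bar t,t]$ changes $rr_z$ by at most $C(t-\bar t)\mathcal G/z\to0$ as $z\to\infty$. Hence $\liminf_{z\to\infty}r r_z(\cdot,t)=\liminf_{z\to\infty}r r_z(\cdot,\bar t)\geq F(0,1)(\mathcal G^{-1}-2\delta)$ for every $t\leq T$, and $\delta\to0$ finishes the proof. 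If you want to keep your barrier framework, you should replace your bowl-convergence step by this time-derivative transfer (or simply adopt it outright), since it is what actually bridges the window $(\bar t(\delta),T]$.
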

\begin{proof}
Lemma~\ref{first derivative bound on a neck} gives 
    \[\limsup_{z\to\infty} r(z,t)r_z(z,t) = F(0,1)\mathcal G^{-1}\] 
for $t \leq 0$. So it suffices to show that 
    \[\liminf_{z \to \infty} r(z,t)r_z(z,t) \geq F(0,1)\mathcal G^{-1}\]
for $t \leq T$. Given any $\delta$, Lemma~\ref{sharp lower bound rr_z} provides a $\bar t \leq T$ such that 
    \[\liminf_{z\to\infty}r(z,\bar t)r_z(z,\bar t) \geq F(0,1)(\mathcal G^{-1} - 2\delta).\]
Moreover, Lemma~\ref{higher derivative bounds on a neck} gives
    \[|(rr_z)_t| \leq a \, \bigg|\frac{r_z r_{zz}}{1+r_z^2} +  \frac{r r_{zzz}}{1+r_z^2} +  \frac{2rr_z r_{zz}^2}{(1+r_z^2)^2}\bigg|  \leq \frac{C}{r^2}\]
for $r \geq C_2$. Using Lemma~\ref{r goes like sqrt z} we obtain
    \[\liminf_{z \to \infty} r(z,t)r_z(z,t) = \liminf_{z \to \infty}r(z,\bar t)r_z(z,\bar t) \geq F(0,1)(\mathcal G^{-1} - 2\delta)\]
for $t \leq T$. Since $\delta$ can be chosen arbitrarily small, this completes the proof. 
\end{proof}

We finally conclude that the hypersurfaces $M_t$ move by translation.

\begin{proof}[Proof of Proposition~\ref{symmetry implies translating}]
Since $rr_z = \tfrac{r}{f_r}$, Proposition~\ref{rr_z constant at infinity} implies 
    \[\lim_{r\to\infty} \frac{f_r}{r} = F(0,1)^{-1}\mathcal G\]
for $t \leq T$. Using this fact, the evolution equation for $f$, and the fact that 
    \[\lim_{r\to\infty} \frac{f_rr}{1+f_r^2} = 0,\]
we obtain 
    \[\lim_{r\to\infty}f_t(r,t) = F(0,1) \cdot \lim_{r\to\infty} \frac{f_r}{r} = \mathcal G\]
for $t \leq T$. Using Lemma~\ref{f_t increasing in space} we find that $f_t(r,t) \leq \mathcal G$ for $r \geq 0$ and $t \leq T$. Therefore, Lemma~\ref{speed limit tip} implies that $f_t(r,t) = \mathcal G$ for $r \geq 0$ and $t \leq T$. That is, the hypersurfaces $M_t$ move by translation with velocity $\mathcal{G}e_{n+1}$ for $t \leq T$. 

It is now straightforward to show that $M_t$ moves by translation for all $t \leq 0$. The quantity $v := G + \langle \mathcal G e_{n+1}, \nu \rangle$ satisfies
    \[\frac{\partial}{\partial t} v = \dot\gamma^{ij}\nabla_i \nabla_j v + \dot \gamma^{ij}A_i^kA_{kj}v.\]
Moreover, the function $h(x,t) = e^{2Ct}(|x|^2 + 1)$ satisfies
    \[\frac{\partial}{\partial t} h > \dot\gamma^{ij}\nabla_i \nabla_j h + \dot \gamma^{ij}A_i^kA_{kj} h\]
for $t \geq T$ if the constant $C$ is sufficiently large. By the maximum principle, $\sup_{M_t} \tfrac{|v|}{h}$ is nonincreasing for $t \leq 0$. Since $v$ vanishes at $t = T$, we have $G = -\langle \mathcal G e_{n+1}, \nu\rangle$ on $M_t$ for all $t \leq 0$. That is, $M_t$ moves by translation with velocity $\mathcal G e_{n+1}$ for $t\leq 0$.
\end{proof}

We now have all the ingredients required to prove Theorem~\ref{main}.

\begin{proof}[Proof of Theorem~\ref{main}]
Let $M_t$, $t \in (-\infty, 0]$, be a convex ancient $G$-flow which is noncollapsing and uniformly two-convex. Suppose also that $M_t$ is noncompact. If $M_t$ fails to be strictly convex, then it is a self-similarly shrinking cylinder by Lemma~\ref{splitting}. If $M_t$ is strictly convex then it is rotationally symmetric by Theorem~\ref{rot symmetry}. Therefore, $M_t$ is a translating soliton by Proposition~\ref{symmetry implies translating}. This completes the proof. 
\end{proof}


\appendix

\section{Asymptotics for the bowl soliton}\label{translator asymptotics}

A $G$-flow $M_t$ moves by translation with velocity $V$ if and only if
    \[G = -\langle V, \nu\rangle.\]
Suppose $V = \frac{1}{2}e_{n+1}$ and $M_0 = \graph u$. Then this equation can be expressed as
    \begin{equation}\label{translator}
    \gamma\bigg(\bigg(\delta^{ik} - \frac{D^i u D^k u}{1+|Du|^2}\bigg)\frac{D_k u D_j u}{\sqrt{1+|Du|^2}}\bigg)\bigg) = \frac{1}{2} \frac{1}{\sqrt{1+|Du|^2}}.
    \end{equation}
In case $u(x) = \zeta(\rho)$ where $\rho = |x|$, we have
    \begin{equation}\label{translator zeta}
    \gamma\bigg(\frac{\zeta_{\rho\rho}}{1+\zeta_\rho^2}, \frac{\zeta_\rho}{\rho}, \dots, \frac{\zeta_\rho}{\rho}\bigg) = \frac{1}{2}.    
    \end{equation}
We use the notation $F(x_1, x_2) = \gamma(x_1, x_2, \dots, x_2)$ and $\dot F^i(x) = \frac{\partial F}{\partial x_i}(x)$. As we explain in Section~\ref{barrier section}, there is a function $f:U \to \mathbb{R}$ such that $F(x_1, x_2) = x_3$ if and only if $x_1 = f(x_2, x_3)$, where 
    \[U = \{(x_2,x_3) \in \mathbb{R}^2_+ : 0 < x_3/x_2 < Q\}\]
and $Q := \lim_{x_1 \to \infty} F(x_1, 1)$. In case $\zeta$ is strictly increasing and strictly convex we may equivalently express \eqref{translator zeta} in the form 
    \begin{equation}\label{translator zeta implicit}
    \zeta_{\rho\rho} = (1+\zeta_\rho^2)f\bigg(\frac{\zeta_\rho}{\rho}, \frac{1}{2}\bigg).    
    \end{equation}

Up to parabolic rescalings and rigid motions of the ambient space there is a unique translating $G$-flow which is convex and rotationally symmetric. We refer to this solution as the bowl soliton. Existence and uniqueness of the bowl soliton were established in \cite{rengaswami2021rotationally}. We note that existence also follows from Proposition~\ref{prop: sol psi on (0,R)}---simply set $a = \infty$ in the statement and proof. Uniqueness is proven by a standard argument---given two bowl solitons with velocity $V = \frac{1}{2} e_{n+1}$, shift one of them vertically until both agree at some $\bar \rho > 0$. The maximum principle then ensures that the two solutions coincide for $\rho \leq \bar \rho$, and so by uniqueness of solutions to ODEs they also coincide for $\rho \geq 1$. 

Let us denote by $M_t$ the bowl soliton which translates with velocity $V = \frac{1}{2}e_{n+1}$ and whose tip lies at the origin. Let $\zeta(\rho)$ denote the profile function of $M_0$. In \cite{rengaswami2021rotationally} it was shown that under the assumption $(0,1,\dots,1) \in \Gamma$ (which we impose throughout this paper), the hypersurface $M_0$ is an entire graph. That is, $\zeta(\rho)$ is defined for all $\rho \geq 0$. This can also be deduced from Lemma~\ref{prop: sol psi on (0,R)}, which implies the bounds
    \[C^{-1} \rho \leq \zeta_\rho \leq C \rho,\]
where $C = C(n,\gamma)$ is a positive constant. Our goal is to establish the following asymptotic expansion for $\zeta(\rho)$ as $\rho \to \infty$.

\begin{proposition}\label{prop: expand}
As $\rho \to \infty$, the function $\zeta$ satisfies
    \begin{equation*}\label{eq: zetaRho_infty}
        \zeta_\rho(\rho) = \frac{1}{2\gamma(0,1,\dots,1)} \rho - 2\dot \gamma^1(0,1,\dots,1) \rho^{-1} + o\left(\rho^{-1}\right).
    \end{equation*}
It follows that as $\rho \to \infty$ we have
    \begin{equation*}\label{eq: zeta_infty}
         \zeta(\rho) = \frac{1}{4\gamma(0,1,\dots,1)} \rho^2 - 2\dot \gamma^1(0,1,\dots,1) \log \rho + o(\log \rho).
    \end{equation*}
\end{proposition}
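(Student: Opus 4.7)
The plan is to reduce the asymptotic analysis of $\zeta$ to an ODE problem for the ratio $v(\rho) := \zeta_\rho(\rho)/\rho$. Writing \eqref{translator zeta} in the form $F(\zeta_{\rho\rho}/(1+\zeta_\rho^2), v) = 1/2$ and Taylor-expanding in the first slot, using $F(0,y) = yF(0,1)$ and $\dot F^1(0,y) = a := \dot F^1(0,1)$ (both from one-homogeneity), I obtain the key identity
\begin{equation*}
\frac{1}{2} = v\,F(0,1) + a\,\frac{\zeta_{\rho\rho}}{1+\zeta_\rho^2} + O\!\left(\frac{\zeta_{\rho\rho}^2}{(1+\zeta_\rho^2)^2}\right).
\end{equation*}
Alternatively, substituting $\zeta_\rho = v\rho$ in \eqref{translator zeta implicit} yields the autonomous first-order ODE
\begin{equation*}
\rho\,v'(\rho) = (1+\rho^2 v^2)\,g(v) - v, \qquad g(v) := f(v,1/2),
\end{equation*}
in which $g$ is strictly decreasing with $g(v_\infty) = 0$ at $v_\infty := 1/(2F(0,1))$. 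Proposition~\ref{prop: sol psi on (0,R)} (specialised to $a = \infty$) gives the uniform bounds $C^{-1} \leq v \leq C$, and strict convexity $\zeta_{\rho\rho} > 0$ forces $v < v_\infty$.

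First I would show $v(\rho) \to v_\infty$ as $\rho \to \infty$. Fix $v^* < v_\infty$ arbitrary. The right-hand side of the ODE at $v = v^*$ equals $(1+\rho^2(v^*)^2)g(v^*) - v^*$, which is strictly positive for $\rho$ large; hence $v$ cannot cross $v^*$ downwards once $\rho$ is large. Moreover, at any point with $v(\rho) \leq v^*$ and $\rho$ large, the same expression forces $v'(\rho) \geq c(v^*)\rho$, so $v$ must rise above $v^*$ within a bounded range of $\rho$. This yields $\liminf_{\rho\to\infty} v(\rho) \geq v^*$ for every $v^* < v_\infty$, proving convergence.

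The sharp rate is then extracted by studying $w(\rho) := \rho^2(v_\infty - v(\rho)) \geq 0$. Using $w' = 2w/\rho - \rho^2 v'$, substituting the ODE for $v$, and Taylor-expanding $g$ around $v_\infty$ via $g'(v_\infty) = -F(0,1)/a$ (obtained by differentiating $F(f(y,1/2),y) = 1/2$ implicitly at $y = v_\infty$), a direct computation gives
\begin{equation*}
w'(\rho) + \rho\,v_\infty^2\mu\,w(\rho) = \rho\,v_\infty + R(\rho, w), \qquad \mu := \frac{F(0,1)}{a},
\end{equation*}
with remainder $|R(\rho, w)| \leq C(1 + w + w^2)/\rho$ for $\rho$ large. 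One checks the algebraic identity $(v_\infty^2\mu)(2a) = v_\infty$, so the quasi-equilibrium is $w_* := 2a$; the deviation $\tilde w := w - 2a$ then satisfies $\tilde w' + \rho v_\infty^2\mu\,\tilde w = R(\rho, \tilde w + 2a)$. Because the damping coefficient $\rho v_\infty^2\mu$ grows linearly, its fundamental solution $\exp(-v_\infty^2\mu\rho^2/2)$ decays super-exponentially, and variation of parameters with source $O(1/\rho)$ yields $\tilde w(\rho) = O(1/\rho^2)$. Consequently $v(\rho) = v_\infty - 2a/\rho^2 + O(1/\rho^4)$, which upon multiplication by $\rho$ is the first claim; integrating from any fixed $\rho_0 > 0$ delivers the expansion for $\zeta$, with the additive constant absorbed into the $o(\log\rho)$ error.

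The main obstacle is justifying the quadratic remainder estimate, which requires an a priori $L^\infty$ bound on $w$---Step 1 only furnishes $w = o(\rho^2)$. To bootstrap I would first use the Taylor identity above: since $\zeta_{\rho\rho}/(1+\zeta_\rho^2) \to 0$ (as $v \to v_\infty$), for large $\rho$ the quadratic error term is absorbed into half the linear term, giving $\zeta_{\rho\rho}/(1+\zeta_\rho^2) \leq (2/a)F(0,1)(v_\infty - v) = 2\mu w/\rho^2$; combined with $\zeta_\rho \leq C\rho$ this yields $\zeta_{\rho\rho} \leq Cw$. A simple maximum principle argument on the ODE for $w$ then shows that whenever $w$ exceeds a suitable large constant, the linear damping $-\rho v_\infty^2\mu w$ strictly dominates both the source $\rho v_\infty$ and the remainder $R$, forcing $w$ downward. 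This produces the uniform bound $w \leq C$, at which point the variation-of-parameters calculation described above yields $\tilde w(\rho) = O(1/\rho^2)$ and closes the argument.
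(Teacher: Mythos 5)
Your argument is correct, and its first step (passing to $v=\zeta_\rho/\rho$, using monotonicity of $f$ in its first slot and the lower bound $v\geq C^{-1}$ to force $v\to \tfrac{1}{2F(0,1)}$) is essentially identical to the paper's. Where you diverge is in extracting the next-order coefficient: the paper proceeds through two successive sign-of-the-derivative arguments, first for $\xi=\zeta_\rho-\tfrac{\rho}{2F(0,1)}$ (showing $\xi\to 0$) and then for $\lambda=\rho\,\xi$ (showing $\lambda\to -2\dot F^1(0,1)$), each time Taylor-expanding $f$ at the cylindrical point and reading off the sign of the right-hand side. You instead work directly with $w=\rho^2(v_\infty-v)=-\lambda$, derive the damped equation $w'+\rho v_\infty^2\mu\,w=\rho v_\infty+R$, obtain an a priori bound on $w$ by a maximum-principle absorption, and then conclude by an integrating-factor (Duhamel) computation; this skips the paper's intermediate step $\xi\to 0$ and yields the sharper rate $w=2\dot F^1(0,1)+O(\rho^{-2})$, at the modest cost of the boundedness bootstrap. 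Both routes rest on the same two analytic inputs: the identity $\partial_y f(v_\infty,\tfrac12)=-F(0,1)/\dot F^1(0,1)$ and smoothness of $f$ in a neighbourhood of $(v_\infty,\tfrac12)$, which lies on $\partial U$ but is harmless because $(0,1,\dots,1)\in\Gamma$ and $\Gamma$ is open, so $F$ extends smoothly to slightly negative first argument. One expository point to make explicit in your bootstrap: the statement that the damping dominates $R$ once $w$ exceeds a large constant is not true by itself, since $R$ contains a term of size $Cw^2/\rho$, which is comparable to $\rho w$ when $w\sim\rho^2$; you must invoke $w=o(\rho^2)$ from Step 1 (equivalently the smallness of $f(v,\tfrac12)$, which you do mention when absorbing the quadratic Taylor error into half the linear term) to see that this term is $o(\rho w)$ before the maximum-principle argument closes.
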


For the mean curvature flow, Proposition~\ref{prop: expand} was proven in \cite{CSS07}. We refer also to \cite{rengaswami2023bowl}, which contains an independent proof of Proposition~\ref{prop: expand} (in addition to bowl soliton asymptotics for some other classes of flows). 

\begin{proof}[Proof of Proposition~\ref{prop: expand}]
Let $\vartheta = \frac{\zeta_\rho}{\rho}$. We compute
    \[\vartheta_\rho = \frac{1}{\rho}\bigg((1+\rho^2\vartheta^2) f\bigg(\vartheta, \frac{1}{2}\bigg) - \vartheta\bigg).\]
Since $f(\frac{1}{2F(0,1)},\frac{1}{2}) = 0$ and $f$ is strictly decreasing in its first argument, we see that $\vartheta_\rho <0$ whenever $\vartheta > \frac{1}{2F(0,1)}$. On the other hand, if $\vartheta < \frac{1}{2F(0,1)}$ and $\rho$ is large, we have $\vartheta_\rho > 0$. It follows that $\vartheta \to \frac{1}{2F(0,1)}$ as $\rho \to \infty$. Equivalently, we have 
    \[\zeta_\rho = \frac{1}{2F(0,1)}\rho + o(\rho)\]
as $\rho \to \infty$. 

Next we claim that $\zeta_\rho = \frac{1}{2F(0,1)}\rho + o(1)$ as $\rho \to \infty$. Let us define $\xi := \zeta_\rho - \frac{1}{2F(0, 1)}\rho$. We have 
\begin{align*}
    \xi_\rho 
    & = \left( 1 + \xi^2 + \frac{1}{F(0,1)}\,\rho \xi + \frac{1}{4F(0,1)^2}\,\rho^2 \right) \, f\left( \frac{\xi}{\rho}+ \frac{1}{2F(0,1)}, \, \frac{1}{2}\right) - \frac{1}{2F(0,1)}.
\end{align*}

Since $\frac{\xi}{\rho} = o(1)$, we may perform a Taylor expansion of $f$ about the point $(1, F(0,1))$ to obtain
    \begin{align*}
    f\left( 2F(0,1)\frac{\xi}{\rho}+1, F(0,1)\right)&=2F(0,1) \frac{\partial f}{\partial x_2} (1, F(0,1)) \frac{\xi}{\rho} + o(1)\frac{\xi}{\rho}\\
    &= -\frac{2F(0,1)^2}{\dot F^1(0,1)} \frac{\xi}{\rho} + o(1)\frac{\xi}{\rho},
    \end{align*}
and hence
\begin{align*}
    \xi_\rho 
    & = \left(\frac{1}{4F(0,1)^2}\rho^2 + o(\rho)\right) \bigg(-\frac{F(0,1)}{\dot F^1(0,1)} \frac{\xi}{\rho} + o(1)\frac{\xi}{\rho}\bigg) - \frac{1}{2F(0,1)}.
\end{align*}
We conclude that $\xi_\rho < 0$ whenever $\xi >0$ and $\rho$ is large, and $\xi_\rho > 0$ whenever $\xi<0$ and $\rho$ is large. It follows that $\xi \to 0$ as $\rho \to \infty$. 

Now consider the function $\lambda := \rho \xi$. Since $\xi = o(1)$ we have $\lambda = o(\rho)$ as $\rho \to \infty$. We claim that $\lambda \to -2\dot F^1(0,1)$ as $\rho \to \infty$. To begin with we compute
\begin{align*}
    \lambda_\rho 
    &  = \frac{\rho}{2F(0, 1)}\left( 1 + \frac{\lambda^2}{\rho^2} + \frac{\lambda}{F(0, 1)} + \frac{\rho^2}{4F(0, 1)^2} \right)f\left( 2F(0, 1) \frac{\lambda}{\rho^2} + 1, \, F(0, 1)\right)\\
    &- \frac{\rho}{2F(0, 1)} + \frac{\lambda}{\rho},
\end{align*}
and so conclude that 
\begin{align*}
    \lambda_\rho  &= \frac{\rho}{2F(0, 1)}\left( \frac{\rho^2}{4F(0, 1)^2} + o(\rho^2)\right)f\left( 2F(0, 1) \frac{\lambda}{\rho^2} + 1, \, F(0, 1)\right) - \frac{\rho}{2F(0, 1)} + o(1).
\end{align*}
as $\rho \to \infty$. Performing a Taylor expansion as above, we obtain 
\begin{align*}
    \lambda_\rho  &= \frac{\rho}{2F(0, 1)}\left( \frac{\rho^2}{4F(0, 1)^2} + o(\rho^2)\right)\bigg(-\frac{2F(0,1)^2}{\dot F^1(0,1)} \frac{\lambda}{\rho^2} + o(1)\frac{\lambda}{\rho^3}\bigg) - \frac{\rho}{2F(0, 1)} + o(1)\\
    &= \left( \frac{\rho^2}{4F(0, 1)^2} + o(\rho^2)\right)\bigg(-\frac{1}{\dot F^1(0,1)} \frac{\lambda}{\rho} + o(1)\frac{\lambda}{\rho^2}\bigg) - \frac{\rho}{2F(0, 1)} + o(1).
\end{align*}
From this we observe that $\lambda_\rho < 0$ whenever $\lambda > -2\dot F^1(0,1)$ and $\rho$ is large, and $\lambda_\rho > 0$ whenever $\lambda < -2\dot F^1(0,1)$ and $\rho$ is large. It follows that $\lambda \to -2\dot F^1(0,1)$ as $\rho \to \infty$. 

In summary, we have shown that 
    \[\rho\bigg(\zeta_\rho - \frac{1}{2F(0,1)}\rho\bigg) = -2\dot F^1(0,1) + o(1)\]
as $\rho \to \infty$. From this the claim follows. 
\end{proof}


\bibliographystyle{alpha}
\bibliography{references}

\end{document}